\newcommand{\Acal}{{\mathcal A}}
\newcommand{\Ccal}{{\mathcal C}}
\newcommand{\Dcal}{{\mathcal D}}
\newcommand{\Ecal}{{\mathcal E}}
\newcommand{\Ical}{{\mathcal I}}
\newcommand{\Jcal}{{\mathcal J}}
\newcommand{\Mcal}{{\mathcal M}}
\newcommand{\Pcal}{{\mathcal P}}
\newcommand{\Ucal}{{\mathcal U}}
\newcommand{\acal}{\Acal}
\newcommand{\ccal}{\Ccal}
\newcommand{\dcal}{\Dcal}
\newcommand{\mcal}{\Mcal}
\newcommand{\GG}{{\mathbb G}}
\newcommand{\ZZ}{{\mathbb Z}}
\renewcommand{\SS}{\mathbb{S}}
\newcommand{\NN}{{\mathbb N}}
\newcommand\reallywidehat[1]{%
\savestack{\tmpbox}{\stretchto{%
  \scaleto{%
    \scalerel*[\widthof{\ensuremath{#1}}]{\kern-.6pt\bigwedge\kern-.6pt}%
    {\rule[-\textheight/2]{1ex}{\textheight}}
  }{\textheight}%
}{0.5ex}}%
\stackon[1pt]{#1}{\tmpbox}%
}
\newcommand*\bigcdot{{\mathpalette\bigcdot@{.5}}}
\newcommand*\bigcdot@[2]{\mathbin{\vcenter{\hbox{\scalebox{#2}{$\m@th#1\bullet$}}}}}
\g@addto@macro\bfseries{\boldmath}
\tikzset{shorten <>/.style={shorten >=#1,shorten <=#1}}
\newcommand{\kr}{\kern -2pt}
\DeclareMathOperator{\id}{id}
\DeclareMathOperator{\Hom}{Hom}
\DeclareMathOperator{\End}{End}
\DeclareMathOperator{\Funct}{Funct}
\DeclareMathOperator{\Ho}{Ho}
\DeclareMathOperator{\der}{D}
\DeclareMathOperator{\Ker}{Ker}
\DeclareMathOperator{\coker}{Coker}
\newcommand{\enh}{{\text{\normalfont enh}}}
\newcommand{\op}{{\text{\normalfont op}}}
\DeclareMathOperator{\Spc}{Spc}
\DeclareMathOperator{\cat}{Cat}
\DeclareMathOperator{\Cat}{Cat}
\newcommand{\catomega}{\smash{{\vstretch{.8}{\widehat{\vstretch{1.25}{\Cat}}}{}^\omega}}}
\newcommand{\cathat}{\smash{{\vstretch{.8}{\widehat{\vstretch{1.25}{\Cat}}}}}}
\newcommand{\catl}{\smash{{\vstretch{.8}{\widehat{\vstretch{1.25}{\Cat}}}{}^L}}}
\DeclareMathOperator{\colim}{colim}
\newcommand{\lex}{{\text{\normalfont lex}}}
\newcommand{\rex}{{\text{\normalfont rex}}}
\DeclareMathOperator{\Ind}{Ind}
\let\Pr\relax
\DeclareMathOperator{\Pr}{Pr}
\DeclareMathOperator{\Sp}{Sp}
\DeclareMathOperator{\Ab}{Ab}
\newcommand{\cn}{{\normalfont\text{cn}}}
\DeclareMathOperator{\Ext}{Ext}
\DeclareMathOperator{\Tor}{Tor}
\DeclareMathOperator{\Groth}{Groth}
\DeclareMathOperator{\groth}{Groth}
\newcommand{\cp}{{\normalfont \text{cp}}}
\DeclareMathOperator{\lincat}{LinCat}
\newcommand{\catlr}{\lincat_R}
\newcommand{\catlk}{\lincat_k}
\DeclareMathOperator{\CAlg}{CAlg}
\DeclareMathOperator{\Mod}{Mod}
\DeclareMathOperator{\LMod}{LMod}
\DeclareMathOperator{\RMod}{RMod}
\DeclareMathOperator{\BMod}{BMod}
\DeclareMathOperator{\Spec}{Spec}
\newtheorem{proposition}[subsubsection]{Proposition}
\newtheorem{lemma}[subsubsection]{Lemma}
\newtheorem{theorem}[subsubsection]{Theorem}
\newtheorem{corollary}[subsubsection]{Corollary}
\newtheoremstyle{note}{8.0pt plus 2.0pt minus 4.0pt}{8.0pt plus 2.0pt minus 4.0pt}{}{}{\bfseries}{.}{.5em}{} 
\theoremstyle{note}
\newtheorem{example}[subsubsection]{Example}
\newtheorem{remark}[subsubsection]{Remark}
\newtheorem{notation}[subsubsection]{Notation}
\newtheorem{construction}[subsubsection]{Construction}
\newtheorem{definition}[subsubsection]{Definition}
\def\inmain{1}
\title{Classification of fully dualizable linear categories}
\author{G. Stefanich}
\date{}
\begin{document}


\begin{abstract}
We prove that if $R$ is a G-ring then every fully dualizable $R$-linear cocomplete category is equivalent to a twist by a $\GG_m$-gerbe of the category of modules over a finite \'etale $R$-algebra.  We also show that this holds more generally over an arbitrary commutative ring under an additional compact generation hypothesis. We  include variants of these results that apply to $R$-linear graded categories, and to the context of $\infty$-categories linear over connective commutative ring spectra.
\end{abstract}

\maketitle

\vspace{-0.25cm}

\tableofcontents

\newpage


\section{Introduction}

 Let $R$ be a commutative ring. Recall that an $R$-algebra $A$ is said to be Azumaya if there exists another $R$-algebra $B$ such that $A \otimes_R B$ is Morita equivalent to $R$. In other words, $A$ is Azumaya if it defines a tensor invertible object in the Morita $2$-category $\operatorname{Mor}_R$ of algebras and bimodules over $R$.

We may regard $\operatorname{Mor}_R$ as a full subcategory of the $2$-category $\catlr$ of  $R$-linear cocomplete categories and colimit preserving functors, via the embedding that maps each $R$-algebra to its category of left modules. In the same way that $\operatorname{Mor}_R$ has a symmetric monoidal structure induced by tensor product of algebras, there is a compatible symmetric monoidal structure on  $\catlr$, where for each pair of $R$-linear cocomplete categories $\Ccal, \Dcal$ the tensor product $\Ccal \otimes_R \Dcal$ is the universal recipient of a functor $\Ccal \times \Dcal \rightarrow \Ccal \otimes_R \Dcal$ which is colimit preserving and $R$-linear in each variable. From this point of view, an $R$-algebra $A$ is Azumaya if and only if its category of left modules is an invertible object of $\catlr$. 

To each Azumaya $R$-algebra $A$ one may attach a $\GG_m$-gerbe $\mathcal{G}(A)$ on $\Spec(R)$. In general, for any $\GG_m$-gerbe $\mathcal{G}$ 
 one may define an $R$-linear category $\Mod_{R,\mathcal{G}}^\heartsuit$ of $R$-modules twisted by $\mathcal{G}$, which in the case $\mathcal{G} = \mathcal{G}(A)$ recovers the category of left $A$-modules.  The category $\smash{\Mod_{R,\mathcal{G}}^\heartsuit}$ defines an invertible object of $\catlr$ for every gerbe $\mathcal{G}$. Our first main theorem states that if $R$ is a G-ring then every invertible category arises in this way:

\begin{theorem}\label{teo principal introduction}
Let $R$ be a G-ring. Then every invertible object of $\catlr$ is of the form $\Mod_{R,\mathcal{G}}^\heartsuit$ for some $\GG_m$-gerbe $\mathcal{G}$ on $\Spec(R)$. In particular, the group of equivalence classes of invertible objects of $\catlr$ is isomorphic to $H^2(\Spec(R), \GG_m)$.
\end{theorem}

We may regard theorem \ref{teo principal introduction} as providing a description of categorified line bundles in algebraic geometry. This  paper is concerned, more generally, with a categorification of the notion of vector bundle.  Classically, vector bundles on $\Spec(R)$ are the same as dualizable $R$-modules. In the categorical context, however, there are many dualizable objects of $\catlr$ which do not behave like vector bundles: for instance, the category of modules over any $R$-algebra is dualizable. 

The situation improves if instead of dualizable objects we study \emph{fully dualizable} objects: these are those dualizable objects of $\catlr$ for which the unit and counit of the duality admit a further colimit preserving right adjoint.  Our next result provides a classification of fully dualizable categories over G-rings:

\begin{theorem}\label{theorem principal 2 introduction}
 Let $R$ be a G-ring. Then every fully dualizable object of $\catlr$ is of the   form $\Mod^\heartsuit_{\tilde{R}, \mathcal{G}}$ for some finite \'etale $R$-algebra $\tilde{R}$ and $\GG_m$-gerbe $\mathcal{G}$ on $\Spec(\tilde{R})$.
\end{theorem}

Theorem  \ref{theorem principal 2 introduction} applies in particular in the case when $R$ is of finite type over $\ZZ$. As we shall see, a variant of the main result from \cite{ToenRings} shows that if $R$ is an arbitrary commutative ring, then every fully dualizable object $\ccal$ of $\catlr$ such that $\ccal$ and its dual are compactly generated is obtained by extension of scalars from a subring $S \subseteq R$ of finite type over $\ZZ$. It follows from this that our results remain valid over arbitrary commutative rings under an additional compact generation hypothesis.

Just like the basic example  of an invertible $R$-linear category is given by the category of modules over an Azumaya $R$-algebra, the basic example  of a fully dualizable category is given by the category of modules over an $R$-algebra which is both separable and dualizable as an $R$-module. Under the dictionary of theorem \ref{theorem principal 2 introduction}, these correspond to fully dualizable categories for which the gerbe $\mathcal{G}$ is of the form $\mathcal{G}(A)$ for some Azumaya $\tilde{R}$-algebra $A$. 

In general not every $\GG_m$-gerbe over an affine scheme is of this form.  In fact, it was shown by Gabber \cite{GabberAzumaya, deJong} that  a $\GG_m$-gerbe over an affine scheme arises from an Azumaya algebra if and only if its associated \'etale cohomology class is torsion.  Over a field every class is torsion, so theorem \ref{theorem principal 2 introduction} specializes to the following:

\begin{corollary}\label{corollary description fully dualizables}
 Let $k$ be a field. Then every fully dualizable\footnote{As we shall see, this corollary remains true if one only assumes smoothness instead of full dualizability.} object of $\catlk$ is the category of left modules over a separable $k$-algebra.
\end{corollary}

Fully dualizable objects of any $2$-category give rise, under the cobordism hypothesis, to two dimensional fully extended topological field theories \cite{LurieField}. Motivated by this connection, in \cite{BDSPV} the authors survey a number of notions of categorified vector space, and prove, building on results of Tillmann \cite{Tillmann}, that in all those cases the fully dualizable objects arise from separable $k$-algebras. Corollary \ref{corollary description fully dualizables} provides a strengthening of their classification, and recovers it after restriction to various subcategories of $\catlk$.\footnote{All the $2$-categories of categorified vector spaces considered in  \cite{BDSPV} embed inside the full subcategory of $\catlk$ on the $k$-linear Grothendieck abelian categories generated by compact projective objects. A lot of the work involved in proving our main results consists of showing that one may deduce the existence of compact projective generators (\'etale locally on $R$) from the condition of full dualizability.}

As remarked previously, while theorem \ref{theorem principal 2 introduction} shows that the class of fully dualizable categories is very constrained, there are many more categories which are only one time dualizable. In \cite{BCJF} the authors study the question of dualizability for various $k$-linear categories of interest, and conjecture that every dualizable $k$-linear category is generated by compact projective objects. This conjecture was verified in some cases in \cite{Chirvasitu}, however we will show it to be false in general (see examples \ref{example almost modules} and \ref{example non torsion}).

The theory of $R$-linear categories admits a globalization, which is given by the theory of quasicoherent sheaves of categories. Since general additive  categories do not satisfy Zariski descent, when globalizing one typically restricts attention to Grothendieck abelian categories, which were proven to satisfy fpqc descent  in appendix D of \cite{SAG}. In our context this is not a major restriction: as we shall see, for any commutative ring $R$ the dualizable objects of $\catlr$ are automatically Grothendieck abelian. Theorems \ref{teo principal introduction} and \ref{theorem principal 2 introduction} admit the following globalization:

\begin{corollary}\label{corollary invertible sheaves of categories}
Let $X$ be a stack that admits a cover by spectra of G-rings. Then every invertible (resp. fully dualizable) quasicoherent sheaf of Grothendieck abelian categories on $X$ is equivalent to a twist by a $\GG_m$-gerbe of the categorical structure sheaf of $X$ (resp. a finite \'etale stack over $X$).
\end{corollary}

Theorem \ref{theorem principal 2 introduction} is deduced from a more general result that may be applied not only to $R$-linear categories, but to graded $R$-linear categories as well. In this case, instead of considering categories enriched over $R$, we consider categories enriched over a symmetric monoidal $R$-linear category $\acal$ subject to certain tameness conditions (see theorem \ref{theo abelian con coefficients} for the precise requirements). While the main content of theorem \ref{theorem principal 2 introduction} is the \'etale local triviality of  fully dualizable categories, this no longer holds in the more general context: for instance, there exist $\ZZ/2\ZZ$-graded Azumaya algebras over algebraically closed fields which are not Morita equivalent to the unit algebra. Nevertheless, we are able to show that if $\ccal$ is a fully dualizable $\acal$-linear cocomplete category, then $\ccal$ is \'etale locally on $\Spec(R)$ equivalent to the category of modules over an Azumaya algebra in $\acal$.

\addtocontents{toc}{\protect\setcounter{tocdepth}{1}}
\subsection*{Spectral variants}

We devote the remainder of this introduction to discussing a variant of the above results that applies to linear $\infty$-categories. In this case we allow  $R$ to be a connective $E_\infty$-ring spectrum, and we are concerned with classifying fully dualizable objects of the symmetric monoidal $(\infty,2)$-category $\lincat_{R, \infty}$ of $R$-linear cocomplete $\infty$-categories. 

The role played by the abelian group $\GG_m$ in the previous discussion is now played by the $E_\infty$-group $\operatorname{GL}_1$ classifying units. As before, to each $\operatorname{GL}_1$-torsor on an affine scheme $\Spec(R)$ one may associate a twist $\Mod^\cn_{R, \mathcal{G}}$ of the $\infty$-category of connective $R$-modules, which is an invertible object of $\lincat_{R, \infty}$. We may formulate our main theorem in this setting as follows:

\begin{theorem}\label{teo introduction spectral}
 Let $R$ be a connective $E_\infty$-ring such that $\pi_0(R)$ is a G-ring. Then every fully dualizable object of $\lincat_{R, \infty}$ is of the   form $\Mod^\cn_{\tilde{R}, \mathcal{G}}$ for some finite \'etale $R$-algebra $\tilde{R}$ and $\operatorname{GL}_1$-gerbe $\mathcal{G}$ on $\smash{\Spec(\tilde{R})}$.
\end{theorem}

As in the classical setting, theorem \ref{teo introduction spectral} holds over any connective $E_\infty$-ring under an additional compact generation hypothesis. Furthermore, we deduce theorem \ref{teo introduction spectral} from a more general version that applies to $\infty$-categories linear over a base symmetric monoidal $R$-linear $\infty$-category $\Mcal$ subject to certain tameness conditions, see theorem \ref{theo prestable con coefficients}.

 Theorem \ref{teo introduction spectral} may be specialized to yield a classification of invertible linear $\infty$-categories:

\begin{corollary}\label{coro invertibles spectral}
Let $R$ be a connective $E_\infty$-ring such that $\pi_0(R)$ is a G-ring. Then every invertible object of $\lincat_{R, \infty}$ is of the   form $\Mod^\cn_{R, \mathcal{G}}$ for some  $\operatorname{GL}_1$-gerbe $\mathcal{G}$ on $\smash{\Spec(R)}$. In particular, the  group of equivalence classes of invertible objects of $\lincat_{R, \infty}$ is isomorphic to $H^2(\Spec(R), \operatorname{GL}_1) = H^2(\Spec(\pi_0(R)), \GG_m)$.
\end{corollary}

In \cite{ToenAzumaya}, To\"{e}n introduced a notion of derived Azumaya algebra over simplicial commutative rings, which was later extended to the setting of commutative ring spectra in \cite{BrauerSpectral} and \cite{AGBrauer}. If $R$ is a connective commutative ring spectrum, then an $R$-algebra $A$ is Azumaya if and only if its $\infty$-category of left module spectra defines an invertible object of the symmetric monoidal $(\infty,2)$-category $\lincat_{R, \infty, \text{st}}$ of $R$-linear cocomplete \emph{stable} $\infty$-categories. As shown by To\"{en} (in the setting of simplicial commutative rings) and Antieau-Gepner (in the setting of connective commutative ring spectra) every invertible object of $\lincat_{R, \infty, \text{st}}$  which is compactly generated arises from an Azumaya $R$-algebra, and furthermore these are classified up to Morita equivalence by $H^2(\Spec(R), \operatorname{GL}_1) \times H^1(\Spec(R), \ZZ)$.

In the unstable setting, the $\infty$-category of connective modules over a connective Azumaya algebra defines an invertible object of  $\lincat_{R, \infty}$, however not every invertible object arises in this way: this happens if and only if the associated cohomology class is torsion. It was shown by Lurie in \cite{SAG} chapter 11 that for any connective commutative ring spectrum $R$, there is an isomorphism between $H^2(\Spec(R), \operatorname{GL}_1)$ and the group of equivalence classes of invertible objects $\ccal$ of $\lincat_{R, \infty}$ such that  $\ccal$ and $\ccal^{-1}$ are compactly generated Grothendieck prestable $\infty$-categories. Corollary \ref{coro invertibles spectral} strengthens this result in the case when $\pi_0(R)$ is a G-ring, by removing all hypotheses on $\ccal$.\footnote{The requirement that $\ccal$ and $\ccal^{-1}$ are Grothendieck prestable is not a major restriction: as we shall see, every dualizable object of $\lincat_{R, \infty}$ is automatically Grothendieck prestable. The main feature of corollary \ref{coro invertibles spectral} is that it removes all compact generation hypotheses.}

One may wonder whether a variant of theorem \ref{teo introduction spectral} holds when working with $\lincat_{R, \infty, \text{st}}$ instead of $\lincat_{R, \infty}$. As stated this is too much to hope for: even if $R = k$ is a field, the derived $\infty$-category of quasicoherent sheaves on any smooth and proper variety over $k$ is a fully dualizable object which is not of the stated form. The study of compactly generated fully dualizable objects $\lincat_{k, \infty, \text{st}}$ and the question of to what extent these arise from geometric objects is the subject of active research \cite{OrlovSchemes, OrlovFinite, RaedscheldersStevenson}.

If we focus on invertible objects of $\lincat_{R, \infty, \text{st}}$ instead, one may expect that they all arise from (non necessarily connective) Azumaya $R$-algebras. As already remarked, this holds under an additional compact generation hypothesis. Our next result removes this compact generation hypothesis in the case when $R$ is truncated (that is, $R$ has finitely many nonzero homotopy groups)\footnote{An earlier version of this paper contained this result only in the case when $R$ is Artinian. An extension to the case of truncated Noetherian $E_\infty$-rings was also found, independently, by Antieau and Ramzi.}:

\begin{theorem}\label{teo principal stable introduction}
Let $R$ be a truncated connective $E_\infty$-ring. Then every invertible object of $\lincat_{R, \infty, \normalfont{\text{st}}}$ is the $\infty$-category of left modules over an Azumaya $R$-algebra.
\end{theorem}

In the language of \cite{ToenAzumaya} section 2, this shows that the derived algebraic Brauer group and the big derived categorical Brauer group agree in the truncated case.

\addtocontents{toc}{\protect\setcounter{tocdepth}{2}}
\subsection{Conventions and notation}

In the main body of the paper we use the convention where the word category stands for $\infty$-category, and use the term $(1,1)$-category or classical category if we wish to refer to the classical notion. Each category $\ccal$ has a Hom bifunctor $\Hom_\ccal(-,-)$ whose target is the category $\Spc$ of homotopy types. If $\ccal$ is equipped with an action of a monoidal category $\Mcal$ and the action admits Hom objects we denote by $\Hom^\enh_\ccal(-, -)$ the induced relative Hom bifunctor with target $\Mcal$. In other words, this is such that for every triple of objects $X$ in $\Mcal$ and $Y, Z$ in $\ccal$ we have $\Hom_\ccal(X \otimes Y, Z) = \Hom_\Mcal(X, \Hom^\enh_\ccal(Y, Z))$. We usually write $\End_\ccal(X)$ and $\End_\ccal^\enh(X)$ instead of $\Hom_\ccal(X, X)$ and $\Hom_\ccal^\enh(X, X)$.

For each category $\ccal$ and each $n \geq -2$ we denote by $\ccal_{\leq n}$ the full subcategory of $\ccal$ on the $n$-truncated objects. If the inclusion $\ccal_{\leq n} \rightarrow \ccal$ admits a left adjoint, this will be denoted by $\tau_{\leq n}$. We say that a category $\ccal$ is an $(n,1)$-category if $\ccal = \ccal_{\leq n-1}$. For each category $\ccal$ we denote by $\Ho(\ccal)$ its homotopy category; in other words, this is the universal $(1,1)$-category equipped with a functor from $\ccal$.

We denote by $\Ab$ the category of abelian groups, and $\Sp$ the category of spectra. For each commutative ring spectrum $R$ we denote by $\Mod_R$ the category of $R$-module spectra. If $R$ is connective we will denote by $\Mod_R^\cn$ the full subcategory of $\Mod_R$ on the connective $R$-module spectra, and by $\Mod_R^\heartsuit$ the full subcategory of $\Mod_R^\cn$ on the $0$-truncated objects. This applies in particular to the case when $R$ is a (classical) commutative ring: in this case $\Mod_R^\heartsuit$ is the category of $R$-modules in abelian groups, while $\Mod_R$ is its derived category.

We fix a sequence of nested universes. Objects belonging to each of the first three universes are called small, large and very large, respectively. We let $\cat$ be the category of small categories, $\cathat$ the category of large categories, $\catl$ the subcategory of $\cathat$ on the categories with small colimits and colimit preserving functors, and $\Pr^L$ the full subcategory of $\catl$ on the presentable categories. For each pair of categories $\ccal$ and $\dcal$ we denote by $\Funct(\ccal, \dcal)$ the category of functors from $\ccal$ to $\dcal$. If $\ccal$ and $\dcal$ belong to $\catl$, we let $\Funct^L(\ccal, \dcal)$ be the full subcategory of $\Funct(\ccal, \dcal)$ on the colimit preserving functors.

We will frequently consider $\catl$ and $\Pr^L$ as symmetric monoidal categories as in \cite{HA} chapter 4.8, where for each pair of objects $\Ccal, \Dcal$ the tensor product $\Ccal \otimes \Dcal$ is the universal recipient of a bifunctor from $\Ccal \times \Dcal$ which preserves colimits in each coordinate. In particular, commutative algebras in $\catl$ are the same as cocomplete symmetric monoidal categories for which the symmetric monoidal structure is compatible with colimits. Commutative algebras in $\Pr^L$ will be called presentable symmetric monoidal categories.

\subsection{Acknowledgments}

I would like to thank Bertrand Toën for mentioning to me the problem of classifying invertible presentable stable $\infty$-categories. I am also grateful to Ben Antieau, Ko Aoki, David Ben-Zvi, Jacob Lurie, Naruki Masuda, Akhil Mathew, David Reutter, Peter Scholze, and Markus Zetto for conversations related to the subject of this paper. Part of this work was carried out at the Max Planck Institute for Mathematics in Bonn, and I am grateful to the institute for its hospitality and support.


\ifx\inmain\undefined
\bibliographystyle{myamsalpha2}
\bibliography{References}
\fi


\section{Linear categories}

This section contains preliminary material on the theory of linear categories that will be used throughout the paper. We begin in \ref{subsection linear cats} with a review of the notion of cocomplete category linear over a commutative algebra in $\catl$.  This recovers in particular the notion of cocomplete category linear over a (connective) commutative ring spectrum. We include here a proof of the fact that dualizable categories linear over a presentable base are automatically presentable, which forms the first step in the proof of the main theorems of this paper.

In \ref{subsection abelian} we study the theory of Grothendieck abelian categories linear over a base symmetric monoidal Grothendieck abelian category $\Acal$. We show that some basic aspects of the theory of Grothendieck abelian categories (tensor products, Gabriel-Popescu theorem) hold in this relative context as long as we require $\Acal$ to be generated by compact projective objects and rigid. We then discuss the notion of flatness for objects in an $\Acal$-linear Grothendieck abelian category, which will be needed in section \ref{section G rings}.

In \ref{subsection spectral categories} we review the notions of spectral and semisimple categories, and prove an $\acal$-linear version of a basic structure result from \cite{Spectral} that relates spectral categories to self-injective von Neumann regular algebras. This will be used in our classification of smooth categories over a rigid semisimple base in section \ref{section invertible semisimple}.

Finally, in \ref{subsection prestables} we review the theory of Grothendieck prestable categories from \cite{SAG}, and discuss a version relative to a base symmetric monoidal Grothendieck prestable category. For the most part, the material here is parallel to  that of \ref{subsection abelian}. We also include a general discussion of how linearity interacts with the passage to derived categories.

\subsection{General notions}\label{subsection linear cats}

We begin with some  background on the notion of linear category.

\begin{definition}
Let $\Mcal$ be a commutative algebra in $\catl$. An $\Mcal$-linear cocomplete category is an $\Mcal$-module in $\catl$. An $\Mcal$-linear colimit preserving functor is a morphism of $\Mcal$-modules in $\catl$.
\end{definition}

\begin{example}
Let $\Mcal$ be a commutative algebra in $\catl$. Then $\Mcal$ has a structure of   $\Mcal$-linear cocomplete category. For every   $\Mcal$-linear cocomplete category $\ccal$, evaluation at the unit induces an equivalence between  the category of   $\mcal$-linear colimit preserving functors $\Mcal \rightarrow \ccal$ and $\ccal$. The inverse to this  equivalence associates to each object $X$ in $\ccal$ an $\Mcal$-linear enhancement of the functor $- \otimes X: \Mcal \rightarrow \ccal$. We may summarize this by saying that $\Mcal$ is the free $\Mcal$-linear cocomplete category on one object.
\end{example}

\begin{example}\label{example LMod}
Let $\Mcal$ be a commutative algebra in $\catl$ and let $A$ be an algebra in $\Mcal$. Then the category $\LMod_A(\Mcal)$ of left $A$-modules in $\Mcal$ has a structure of $\Mcal$-linear cocomplete category. Thinking about $A$ as a left $A$-module we obtain an object of $\LMod_A(\Mcal)$, which itself admits a right $A$-module structure. In fact $A$ is the universal algebra in $\Mcal$ equipped with a right action on the left $A$-module $A$: in other words, $A$ is (the opposite of) the algebra of endomorphisms of the left $A$-module $A$.

Assume now given another   $\Mcal$-linear  cocomplete category $\Ccal$. Then for each  $\Mcal$-linear colimit preserving functor $f: \LMod_A(\Mcal) \rightarrow \Ccal$ we obtain a right $A$-module $f(A)$ in $\Ccal$. The assignment $f \mapsto f(A)$ turns out to induce an equivalence between the category of  $\Mcal$-linear colimit preserving functors $\LMod_A(\Mcal) \rightarrow \Ccal$ and the category of right $A$-modules in $\ccal$ (\cite{HA} theorem 4.8.4.1). The inverse to this equivalence maps a right $A$-module $M$ in $\Ccal$ to an $\Mcal$-linear enhancement of  the relative tensor product functor $M \otimes_A - : \LMod_A(\Mcal) \rightarrow \Ccal$. We may summarize this by saying that $\LMod_A(\Mcal)$ is the universal   $\Mcal$-linear cocomplete  category on a right $A$-module.
\end{example}

If $\Mcal$ is a commutative algebra in $\catl$ then $\Mod_{\Mcal}(\catl)$ inherits a closed symmetric monoidal structure from $\catl$. It makes sense in particular to consider dualizable and invertible objects in $\Mod_{\Mcal}(\catl)$. The following proposition shows that as long as $\Mcal$ is presentable, dualizability automatically implies presentability.

\begin{proposition}\label{proposition dualizable is presentable}
Let $\Mcal$ be a presentable symmetric monoidal category and let $\ccal$ be a dualizable object of $\Mod_\Mcal(\catl)$. Then $\ccal$ is presentable.
\end{proposition}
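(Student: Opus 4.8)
The plan is to use the duality data for $\ccal$ to produce a resolution of the identity functor of $\ccal$ indexed by a \emph{small} diagram, to extract from it a small generating family together with enough accessibility, and then to invoke the recognition criterion for presentable categories.

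First I would fix a dual $\ccal^\vee$ of $\ccal$ in $\Mod_\Mcal(\catl)$, with evaluation $\mathrm{ev}\colon \ccal^\vee \otimes_\Mcal \ccal \to \Mcal$ and coevaluation $\mathrm{coev}\colon \Mcal \to \ccal \otimes_\Mcal \ccal^\vee$ satisfying the triangle identities; recall that $\ccal^\vee$ is then also dualizable, and that dualizability provides, for every $\Mcal$-module $\mathcal{D}$, a natural equivalence $\ccal^\vee \otimes_\Mcal \mathcal{D} \xrightarrow{\ \sim\ } \underline{\operatorname{Fun}}_\Mcal(\ccal,\mathcal{D})$ with the internal hom of $\Mod_\Mcal(\catl)$. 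Using that $\Mcal$ is presentable symmetric monoidal I would also choose a regular cardinal $\kappa$ for which $\Mcal$ is $\kappa$-compactly generated by a small symmetric monoidal subcategory $\Mcal_0 \subseteq \Mcal$ containing the unit, so that $\Mcal \simeq \mathrm{Ind}_\kappa(\Mcal_0)$ as a symmetric monoidal category.

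The main step concerns the object $e \in \ccal \otimes_\Mcal \ccal^\vee$ classified by $\mathrm{coev}$ — equivalently, under $\ccal \otimes_\Mcal \ccal^\vee \simeq \underline{\operatorname{End}}_\Mcal(\ccal)$, the identity functor of $\ccal$. The heart of the argument is to write $e$ as a colimit $e \simeq \operatorname*{colim}_{i \in I}(c_i \boxtimes d_i)$ over a \emph{small} index category $I$ with $c_i \in \ccal$ and $d_i \in \ccal^\vee$; for this one first checks that $\ccal \otimes_\Mcal \ccal^\vee$ is generated under small colimits and the $\Mcal$-action by the image of $\ccal \times \ccal^\vee$, and then uses a model of the relative tensor product in $\catl$ together with the presentability of $\Mcal$ to see that the colimit presenting $e$ may be taken over a small $I$. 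Transporting this presentation through the equivalences above yields an equivalence of $\Mcal$-linear endofunctors $\operatorname{id}_\ccal \simeq \operatorname*{colim}_{i \in I}\big(\langle d_i, - \rangle \otimes c_i\big)$, where $\langle d_i, - \rangle := \mathrm{ev}(d_i \boxtimes -)\colon \ccal \to \Mcal$ is colimit-preserving and $\otimes$ denotes the $\Mcal$-action. From this resolution of the identity it follows that every object of $\ccal$ is a small colimit of objects of the form $m \otimes c_i$ with $m \in \Mcal_0$, so that $\{m \otimes c_i : m \in \Mcal_0,\ i \in I\}$ is a small generating family for $\ccal$; feeding the resolution back in (and using the $\kappa$-accessibility of $\Mcal$) lets one bound mapping spaces in $\ccal$ and, after enlarging $\kappa$, shows that these generators are $\kappa$-compact. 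Since $\ccal$ is cocomplete by hypothesis, the recognition theorem then gives that $\ccal$ is presentable. One could also phrase the output as a realization of $\ccal$ as a retract in $\Mod_\Mcal(\catl)$ of $\LMod_A(\Mcal)$ for a suitable algebra $A$ in $\Mcal$, which is presentable by Example~\ref{example LMod}, and conclude using that presentable categories are stable under retracts in $\catl$.

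I expect the main obstacle to be exactly the claim isolated in the previous paragraph: controlling the relative tensor product $\ccal \otimes_\Mcal \ccal^\vee$ at the level of objects — in particular, knowing that the coevaluation object is a \emph{small} colimit of decomposables $c_i \boxtimes d_i$ — at a stage where neither $\ccal$ nor $\ccal^\vee$ is yet known to be presentable, so that the usual presheaf-theoretic descriptions of the tensor product are not available a priori. I expect this to require a concrete model of $\otimes_\Mcal$ in $\catl$, e.g.\ via the two-sided bar construction reducing to the tensor product over the unit of $\catl$, together with a bootstrapping step that first produces a small, locally small subcategory of $\ccal^\vee$ controlling $\mathrm{ev}$. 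Granting this, the manipulations with the triangle identities and the passage from a small generating family to presentability are routine.
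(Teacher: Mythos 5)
Your strategy is genuinely different from the paper's, and its central step is exactly where it breaks down --- as you yourself anticipate. The claim that the coevaluation object of $\ccal \otimes_\Mcal \ccal^\vee$ is a \emph{small} colimit of decomposable objects $c_i \otimes d_i$ cannot be extracted from a bar-construction model of $\otimes_\Mcal$ together with presentability of $\Mcal$: the geometric realization only contributes a further $\Delta^{\op}$-indexed (hence small) colimit, so the difficulty is already present for the absolute tensor product $\ccal \otimes \ccal^\vee$ in $\catl$, and for a merely cocomplete, not-yet-known-to-be-presentable $\ccal$ the objects of $\ccal \otimes \ccal^\vee$ are only reached from decomposables by \emph{large} colimits. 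Knowing that the coevaluation object admits a small such presentation is essentially equivalent to the accessibility you are trying to establish, so the argument is circular at its core. The later steps inherit the same problem: bounding mapping spaces out of $m \otimes c_i$ by ``feeding the resolution back in'' requires accessibility of the functors $\langle d_i, -\rangle$ beyond mere colimit preservation, which is not available before presentability is known. The retract-of-$\LMod_A(\Mcal)$ reformulation has the same issue; that statement is proved in the literature only once presentability (or some form of small generation) is already in hand.

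The paper sidesteps all object-level analysis by moving one categorical level up: $\Mod_\Mcal(\catl)$ is itself a very large presentable category, $\kappa$-compactly generated, for $\kappa$ the smallest large cardinal, by the $\Mcal$-modules lying in $\Pr^L$. Since its tensor product is compatible with large colimits and its unit $\Mcal$ is $\kappa$-compact, the standard adjunction $\Hom(\ccal, \colim_i \Dcal_i) \simeq \Hom(\Mcal, \ccal^\vee \otimes_\Mcal \colim_i \Dcal_i)$ shows that every dualizable object is $\kappa$-compact, hence a retract of a small colimit of presentable modules, hence presentable. If you wish to salvage your route, you would in effect need to prove this compactness statement first --- at which point the direct argument is already complete.
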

\begin{proof}
Let $\kappa$ be the smallest large cardinal. It is proven in \cite{Pres} section 5.1 (in particular, proposition 5.1.7 and corollary 5.1.15) that $\Mod_\Mcal(\catl)$ is a very large presentable category $\kappa$-compactly generated by those $\Mcal$-modules which belong to $\Pr^L$. Since the symmetric monoidal structure on $\Mod_\Mcal(\catl)$ is compatible with large colimits and the unit is $\kappa$-compact, we have that every dualizable object is $\kappa$-compact and therefore presentable.
\end{proof}

For each commutative algebra $\Mcal$ in $\catl$ we will denote by $- \otimes_{\Mcal}-$ the tensor product on $\Mod_\Mcal(\catl)$, and by $\Funct_{\Mcal}(-,-)$ the internal Hom. If $\Mcal$ is presentable, then these bifunctors restrict to $\Mod_{\Mcal}(\Pr^L)$.

\begin{example}\label{example tensor with amod}
Let $\Mcal$ be a commutative algebra in $\catl$. Let $A$ be an algebra in $\Mcal$ and let $\Ccal$ be an   $\Mcal$-linear cocomplete category. Then we have an $\Mcal$-bilinear functor 
\[
\LMod_A(\Mcal) \times \Ccal \rightarrow \LMod_A(\Ccal)
\]
that sends a pair $(M, X)$ to $M \otimes X$.  This induces an equivalence 
\[
\LMod_A(\Mcal) \otimes_\Mcal \Ccal = \LMod_A(\Ccal)
\]
 (see \cite{HA} theorem 4.8.4.6).  In particular, if $\Ccal$ is the category $\RMod_B(\Mcal)$ of right modules over some algebra $B$ in $\Mcal$ we obtain an equivalence 
\[
\LMod_A(\Mcal) \otimes_\Mcal \RMod_B(\Mcal) = \LMod_A(\RMod_B(\Mcal)) = {}_A\kr\BMod_B(\Mcal).
\]
\end{example}

\begin{example}
Let $\Mcal$ be a commutative algebra in $\catl$ and let $A$ be an algebra in $\Mcal$. Then by example \ref{example tensor with amod} we have an equivalence 
\[
\LMod_A(\Mcal) \otimes_\Mcal \RMod_A(\Mcal) = \LMod_A(\RMod_B(\Mcal)) = {}_A\kr\BMod_A(\Mcal).
\]
 The diagonal bimodule for $A$ defines an object in $\LMod_A(\Mcal) \otimes_\Mcal \RMod_A(\Mcal)$, which then extends uniquely to an $\Mcal$-linear colimit preserving functor 
 \[
 \eta: \Mcal \rightarrow \LMod_A(\Mcal) \otimes_\Mcal \RMod_A(\Mcal).
 \]
 As discussed in \cite{HA} remark 4.8.4.8, the map $\eta$ exhibits $\LMod_A(\Mcal)$ and $\RMod_A(\Mcal)$ as dual objects in $\Mod_{\Mcal}(\catl)$. 
\end{example}

\begin{remark}
Let $\Mcal$ be a commutative algebra in $\catl$. Then we have a symmetric monoidal colimit preserving functor $\Cat \rightarrow \Mod_\Mcal(\catl)$ obtained by composing the free cocompletion functor $\Cat \rightarrow \catl$ with the free module functor $\catl \rightarrow \Mod_\mcal(\catl)$. It follows from this that $\Mod_\Mcal(\catl)$ has a structure of symmetric monoidal $2$-category, with the Hom category between two objects $\ccal$ and $\dcal$ being given by the category underlying $\Funct_{\Mcal}(\ccal, \dcal)$.

It makes sense in particular to consider adjunctions in $\Mod_\Mcal(\catl)$. Given an $\Mcal$-linear colimit preserving functor $f: \ccal \rightarrow \dcal$, we have that $f$ admits a right (resp. left) adjoint in $\Mod_\mcal(\catl)$ if and only if it admits a colimit preserving right (resp. left) adjoint as a functor of categories, which commutes strictly with the action of $\Mcal$.
\end{remark}

Assume given a morphism $f: \Mcal \rightarrow \Mcal'$ of commutative algebras in $\catl$. Then we obtain a symmetric monoidal extension of scalars functor 
\[
- \otimes_\Mcal \Mcal' : \Mod_{\Mcal}(\catl) \rightarrow \Mod_{\Mcal'}(\catl),
\] and a restriction of scalars right adjoint to it. If $\Ccal$ is an $\Mcal$-linear cocomplete category then the unit of the adjunction provides an $\Mcal$-linear colimit preserving functor $\Ccal \rightarrow \Ccal \otimes_\Mcal \Mcal'$ which we call extension of scalars along $f$. A right adjoint to it (which automatically exists if we work with presentable categories) is called restriction of scalars along $f$.

\begin{example}
Let $f: \Mcal \rightarrow \Mcal'$ be a morphism of commutative algebras in $\catl$. Let $A$ be an algebra in $\Mcal$ and consider the algebra $f(A)$ in $\Mcal'$. We may regard $\LMod_{f(A)}(\Mcal')$ as a cocomplete $\Mcal$-linear category by restriction of scalars long $f$. Then $f(A)$ becomes a right $A$-module in $\LMod_{f(A)}(\Mcal')$, so it induces an $\Mcal$-linear colimit preserving functor $\LMod_A(\Mcal) \rightarrow \LMod_{f(A)}(\Mcal')$, as discussed in example \ref{example LMod}. It follows from the universal properties of $\LMod_A(\Mcal)$ and $\LMod_{f(A)}(\Mcal')$  that this functor induces an equivalence 
\[
\LMod_A(\Mcal) \otimes_{\Mcal} \Mcal' = \LMod_{f(A)}(\Mcal').
\]
\end{example}

\begin{example}\label{example stabilize linear}
 Let $\Mcal$ be a commutative algebra in $\catl$ and let $\Mcal' = \Mcal \otimes \Sp$ be the stabilization of $\Mcal$. Then for each  $\Mcal$-linear cocomplete category $\ccal$ the extension of scalars functor $\Ccal \rightarrow \Ccal \otimes_\Mcal \Mcal'$ presents $\Ccal \otimes_\Mcal \Mcal'$ as the stabilization of $\Ccal$. In other words, the stabilization of an $\Mcal$-module is automatically a module over the stabilization of $\Mcal$. It follows in particular that the functor of restriction of scalars $\Mod_{\Mcal'}(\catl) \rightarrow \Mod_{\Mcal}(\catl)$ is fully faithful, and its image consists of those $\Mcal$-modules which are stable.
\end{example}

\begin{example}\label{example truncate linear}
Let $\Mcal$ be  a commutative algebra in $\catl$. Fix an $n \geq 1$ and  let $\Mcal' = \Mcal \otimes \Spc_{\leq n-1}$. Then for each $\Mcal$-linear cocomplete category $\ccal$ the extension of scalars functor $\Ccal \rightarrow \Ccal \otimes_{\Mcal} \Mcal'$ induces an equivalence $\Ccal \otimes_{\Mcal} \Mcal' = \Ccal \otimes \Spc_{\leq n-1}$. It follows in particular that the functor of restriction of scalars $\Mod_{\Mcal'}(\catl) \rightarrow \Mod_{\Mcal}(\catl)$ is fully faithful, and its image consists of those $\Mcal$-modules which are $(n,1)$-categories.
\end{example}

We now specialize the above discussion to obtain a notion of cocomplete categories linear over a base connective commutative ring spectrum.

\begin{definition}\label{def R linear cocomplete}
Let $R$ be a connective commutative ring spectrum. An $R$-linear cocomplete category is a $\Mod_R^\cn$-linear cocomplete category. An $R$-linear colimit preserving functor is a morphism in $\Mod_{\Mod_R^\cn}(\catl)$.
\end{definition} 

\begin{remark}
Let $R$ be a connective commutative ring spectrum. Then every $R$-linear cocomplete category is automatically additive. 
\end{remark}

\begin{remark}
Let $R$ be a connective commutative ring spectrum and let $\Ccal$ be an $R$-linear cocomplete category. The action of $\Mod_R^\cn$ on $\Ccal$ provides a monoidal functor $\Mod_R^\cn \rightarrow \Funct(\Ccal, \Ccal)$, which after passing to endomorphisms of the identity yields an $E_2$-map from $R$ into the center of $\Ccal$. In particular, given an element $x$ in $R$ and an object $X$ in $\Ccal$ we have an endofunctor of $X$ given by
\[
X = R \otimes X \xrightarrow{x \otimes \id} R \otimes X = X
\]
which we usually denote by $x: X \rightarrow X$ and call the action of $x$ on $X$.
\end{remark}

\begin{remark}
If $R$ is a (non necessarily connective) commutative ring spectrum then one may consider $\Mod_R$-linear cocomplete categories. We call these $R$-linear cocomplete stable categories. In the case when $R$ is connective, it follows from example \ref{example stabilize linear} that an $R$-linear cocomplete stable category in this sense is the same as an $R$-linear cocomplete category in the sense of definition \ref{def R linear cocomplete} which is in addition stable.
\end{remark}

\begin{remark}
For every connective commutative ring spectrum $R$ one may consider $(\Mod^\cn_R)_{\leq n-1}$-linear categories. We call these $R$-linear cocomplete $(n,1)$-categories. It follows from example \ref{example truncate linear} that an $R$-linear cocomplete $(n,1)$-category in this sense is the same as an $R$-linear cocomplete category in the sense of definition \ref{def R linear cocomplete} which is in addition an $(n,1)$-category. This will frequently be used in the case when $n = 0$ and $R$ is a (classical) commutative ring: in this case one obtains a notion of classical $R$-linear cocomplete category, which is simply a $\Mod^\heartsuit_R$-linear category.
\end{remark}

\begin{example}
Let $R$ be a connective commutative ring spectrum and let $\ccal$ be an $R$-linear cocomplete category. Then specializing examples \ref{example stabilize linear} and \ref{example truncate linear} yields the following:
\begin{itemize}
\item The stabilization $\ccal \otimes \Sp$ has a structure of $R$-linear cocomplete stable category.
\item For each $n \geq 0$ the $(n,1)$-category $\ccal \otimes \Spc_{\leq n-1}$ as a structure of $\tau_{\leq n-1}(R)$-linear cocomplete $(n,1)$-category.
\end{itemize}
\end{example}

\begin{remark}
Let $f: R \rightarrow R'$ be a morphism of connective commutative ring spectra. Then we obtain a symmetric monoidal extension of scalars functor $f^*: \Mod^\cn_R \rightarrow \Mod^\cn_{R'}$. For each $R$-linear cocomplete category $\Ccal$ we will denote by $\Ccal \otimes_R R'$ its extension of scalars along $f^*$, and by $- \otimes_R R': \Ccal \rightarrow \Ccal \otimes_R R'$ the corresponding extension of scalars functor. The composition $\Ccal \rightarrow \Ccal \otimes_R R' \rightarrow \Ccal$ is given by tensoring with the $R$-module $R'$, while the unit of the adjunction is given by tensoring with the map of $R$-modules $R \rightarrow R'$. Similar considerations apply to the case when $R$ and $R'$ are non necessarily connective commutative ring spectra and we extend scalars along $f^*: \Mod_R  \rightarrow \Mod_{R'} $.
\end{remark}


\subsection{Grothendieck abelian categories}\label{subsection abelian}

We now proceed with some recollections on the theory of Grothendieck abelian categories.

\begin{definition}\label{def grothendieck abelian}
A Grothendieck abelian category is an abelian category $\Ccal$ which is presentable and such that filtered colimits in $\Ccal$ are exact. We denote by $\Groth_1$ the category of Grothendieck abelian categories and colimit preserving functors.
\end{definition}

Definition \ref{def grothendieck abelian} is equivalent to the (perhaps more common) definition where presentability of $\Ccal$ is replaced by the requirement that $\Ccal$ is locally small, admits small colimits, and admits a generator. The following result provides an ample source of examples.

\begin{proposition}[\cite{SAG} proposition 10.6.3.1]
Let $\Ccal, \Dcal$ be presentable categories and assume given a functor $G: \Dcal \rightarrow \Ccal$ which is conservative and preserves small limits\footnote{In fact only preservation of finite limits is necessary.} and colimits. If $\Ccal$ is a Grothendieck abelian category, then so is $\Dcal$.
\end{proposition}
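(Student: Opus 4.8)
The plan is to verify in turn the three defining conditions of a Grothendieck abelian category for $\Dcal$ --- that it is an ordinary ($1$-)category, that it is abelian, and that its filtered colimits are exact --- in each case transporting the structure or property along $G$ from $\Ccal$. The one tool used throughout is the following \emph{detection principle}: since $\Dcal$ is presentable it admits all small limits and colimits, so if $G$ preserves the limit (resp.\ colimit) of a given diagram in $\Dcal$, then --- forming the genuine limit (resp.\ colimit) there and comparing --- conservativity of $G$ shows that $G$ reflects that limit (resp.\ colimit) as well; more generally, any canonical morphism of $\Dcal$ built out of finite limits and colimits is an equivalence as soon as its image under $G$ is one. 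As a first application, $\Dcal$ is a $1$-category: an object $d$ is $0$-truncated exactly when the diagonal $d \to d \times_{d \times d} d$ is an equivalence, a condition involving only finite limits, so $d$ is $0$-truncated whenever $Gd$ is; since $\Ccal$ is an ordinary category all of its objects are $0$-truncated, hence so are all objects of $\Dcal$.

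Next I would show $\Dcal$ is abelian, now free to use classical criteria since $\Dcal$ is ordinary. It is pointed: $G$ sends the canonical map from the initial to the terminal object of $\Dcal$ to the analogous map in $\Ccal$, which is an equivalence as $\Ccal$ is pointed, hence so is the original. It is semiadditive: for $X, Y$ in $\Dcal$ the canonical map $X \sqcup Y \to X \times Y$ goes under $G$ to an equivalence, hence is one. It is additive: a semiadditive ordinary category is additive iff for every object $X$ the shear endomorphism $\sigma_X\colon X \oplus X \to X \oplus X$ with matrix $\bigl(\begin{smallmatrix}\id & 0\\\id & \id\end{smallmatrix}\bigr)$ is invertible, and $G(\sigma_X) = \sigma_{GX}$ is invertible in the additive category $\Ccal$, so $\sigma_X$ is invertible. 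Thus $\Dcal$ is pre-abelian --- additive, with all kernels and cokernels by presentability. Finally, a pre-abelian category is abelian iff the canonical map from the coimage to the image of every morphism is an isomorphism; since $G$ preserves kernels and cokernels it preserves coimages, images, and this comparison, which for $Gf$ is an isomorphism because $\Ccal$ is abelian, so conservativity concludes the argument.

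It remains to see that filtered colimits in $\Dcal$ are exact, i.e.\ that the filtered-colimit functor preserves finite limits (both exist since $\Dcal$ is presentable). For a filtered system of finite-limit cones in $\Dcal$, the canonical comparison between the colimit of the limits and the limit of the colimits is carried by $G$ --- which preserves filtered colimits and finite limits --- to the corresponding comparison in $\Ccal$, an equivalence because $\Ccal$ has exact filtered colimits; conservativity transfers it back. (Note that this and the previous paragraph use only that $G$ preserves \emph{finite} limits, as anticipated by the footnote.) The only place requiring real care is the abelian step: transferring ``abelian'' along $G$ hinges on having recast additivity and the exactness of the coimage--image comparison as the invertibility of explicit canonical maps, so that the conservativity-plus-preservation mechanism can be applied; once that is in place, and similarly for the $1$-categoricity, everything is formal.
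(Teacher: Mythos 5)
Your proof is correct. Note that the paper itself gives no argument for this proposition—it is quoted from \cite{SAG}—so there is nothing internal to compare against; your write-up is the standard "transfer along a conservative, finite-limit- and colimit-preserving functor" argument, and it correctly isolates each ingredient ($0$-truncatedness of objects, pointedness, semiadditivity, invertibility of the shear map, the coimage--image comparison, and the filtered-colimit/finite-limit interchange) as the invertibility of a canonical map built from finite limits and colimits, so that preservation plus conservativity applies; it also genuinely uses only finite limits, consistent with the footnote.
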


\begin{corollary}
Let $\Acal$ be a Grothendieck abelian category equipped with a monoidal structure compatible with colimits. Let $A$ be an algebra in $\Acal$. Then the category $\LMod_A(\Acal)$ of left $A$-modules in $\Acal$ is a Grothendieck abelian category.
\end{corollary}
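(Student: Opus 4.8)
The plan is to deduce this from the preceding proposition, applied to the forgetful functor
\[
U: \LMod_A(\Acal) \longrightarrow \Acal.
\]
Taking $\Dcal = \LMod_A(\Acal)$, $\Ccal = \Acal$ and $G = U$, it suffices to verify that $\LMod_A(\Acal)$ is presentable, that $U$ is conservative, and that $U$ preserves small limits and small colimits.

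First I would use the standard identification of $\LMod_A(\Acal)$ with the category of modules over the monad $T = A \otimes (-): \Acal \rightarrow \Acal$, whose monad structure is induced by the algebra structure on $A$; under this identification $U$ becomes the forgetful functor to $\Acal$ (see \cite{HA} section 4.2.3). Since the monoidal structure on $\Acal$ is compatible with colimits, the monad $T$ preserves small colimits, and is in particular accessible; combined with presentability of $\Acal$ this gives that $\LMod_A(\Acal)$ is presentable. The forgetful functor $U$ is conservative, it creates small limits, and it creates those small colimits which are preserved by $T$; since $T$ preserves all small colimits, $U$ creates --- and in particular preserves --- all small limits and colimits. The hypotheses of the preceding proposition are therefore satisfied, and we conclude that $\LMod_A(\Acal)$ is a Grothendieck abelian category.

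The argument is essentially bookkeeping, and there is no real obstacle. The single nontrivial input is that the monad $A \otimes (-)$ preserves small colimits, which is precisely the hypothesis that the monoidal structure on $\Acal$ is compatible with colimits. In particular there is no need to check abelianness of $\LMod_A(\Acal)$ by hand, as it is part of the conclusion of the cited proposition. One could alternatively bypass the monadic language by combining examples \ref{example LMod} and \ref{example tensor with amod} with the presentability results of \cite{HA} section 4.8.4, but the route above seems the most direct.
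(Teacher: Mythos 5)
Your proof is correct and is exactly the intended application of the preceding proposition: one applies it to the forgetful functor $\LMod_A(\Acal) \rightarrow \Acal$, which is conservative and creates all small limits and colimits because the monad $A \otimes (-)$ preserves colimits. The paper leaves this verification implicit, and your monadic bookkeeping fills it in correctly.
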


Limits of Grothendieck abelian categories exist along left exact colimit preserving functors:

\begin{proposition}[\cite{SAG} proposition C.5.4.21]\label{prop limits along left exact}
Let $F: \Ical \rightarrow \Groth_1$ be a diagram whose transition maps are left exact. Then $F$ admits a limit which is preserved by the inclusion of $\Groth_1$ inside $\Pr^L$.
\end{proposition}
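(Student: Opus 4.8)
The plan is to form the limit in $\Pr^L$ and check directly that it is again Grothendieck abelian. A colimit preserving functor between Grothendieck abelian categories is by definition a morphism in $\Pr^L$ between their underlying presentable categories, so $\Groth_1$ is a full subcategory of $\Pr^L$; hence it suffices to show that the limit $\ccal := \lim_{\Ical} F$ formed in $\Pr^L$ (which exists, as $\Pr^L$ admits all small limits) lies in $\Groth_1$. This will automatically identify $\ccal$ with the limit of $F$ in $\Groth_1$ and show that the inclusion preserves it.

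Since the forgetful functor $\Pr^L \to \widehat{\Cat}$ preserves small limits, $\ccal$ is the limit of the underlying categories of the $F(i)$ and its projections $p_i \colon \ccal \to F(i)$ are colimit preserving. All transition functors of $F$ preserve small colimits, so colimits in $\ccal$ are computed objectwise and are preserved by each $p_i$; the hypothesis that the transition functors are left exact enters precisely here, giving that finite limits in $\ccal$ are likewise computed objectwise and preserved by each $p_i$. Thus $\ccal$ has finite limits and all small colimits, each $p_i$ is exact in both directions, and the family $\{p_i\}$ is jointly conservative. It then follows formally that $\ccal$ is abelian: it is pointed with objectwise zero object, finite products and coproducts in $\ccal$ are computed objectwise and coincide since they do in each $F(i)$, so $\ccal$ is semiadditive and in fact additive as its mapping spaces $\lim_i \mathrm{Map}_{F(i)}(p_i X, p_i Y)$ are grouplike, and for any morphism $f$ the objectwise kernel, cokernel, image and coimage exhibit $\mathrm{coim}(f) \to \mathrm{im}(f)$ as an equivalence because this holds after applying the jointly conservative family $\{p_i\}$. (Alternatively, one can invoke any axiomatization of abelian categories by finite limits, finite colimits and exactness conditions that are detected by a jointly conservative family of functors preserving finite limits and colimits.)

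Finally $\ccal$ is presentable, being a limit in $\Pr^L$, and filtered colimits in $\ccal$ are exact because a finite limit diagram and a filtered colimit are both computed objectwise and filtered colimits are exact in each $F(i)$. Hence $\ccal \in \Groth_1$, which as explained completes the proof. The only substantive point is the objectwise computation of finite limits in $\ccal$ — and with it the exactness of the projections $p_i$ — which is exactly what left exactness of the transition maps provides; once that is in hand, everything reduces to the abstract nonsense of a jointly conservative family of exact functors into abelian categories.
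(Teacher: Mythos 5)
The paper does not actually prove this statement --- it is quoted verbatim from \cite{SAG} (Proposition C.5.4.21) --- so there is no internal argument to compare against; what you have written is a self-contained verification, and it is correct. The skeleton is sound: $\Groth_1$ is full in $\Pr^L$, limits in $\Pr^L$ are computed in $\cathat$ with colimit-preserving projections, and the left exactness of the transition maps is exactly what makes finite limits objectwise, after which abelianness, presentability, and exactness of filtered colimits all follow from the jointly conservative family of functors preserving finite limits and all colimits. Two small points are worth making explicit if you write this up. First, before you may speak of $\ccal$ being abelian you need it to be a $(1,1)$-category; this holds because $\Map_{\ccal}(X,Y) = \lim_i \Map_{F(i)}(p_iX, p_iY)$ is a limit of $0$-truncated spaces and truncated objects are closed under limits, so your parenthetical about grouplike mapping spaces should be upgraded to ``discrete and grouplike.'' Second, your phrase ``computed objectwise'' conflates preservation and detection: the projections preserve colimits (they are legs of a cone in $\Pr^L$) and preserve finite limits (by the dual of the standard statement on limits of $\infty$-categories along limit-preserving functors), and a cone is (co)limiting if and only if its images are, by combining preservation with joint conservativity; you use the detection direction for $\operatorname{coim}(f) \to \operatorname{im}(f)$ and for exactness of filtered colimits, so it deserves the one-line justification. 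With those glosses the argument is complete and is essentially the expected proof of the cited result.
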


\begin{corollary}
The category $\Groth_1$ admits small products, and these are preserved by the inclusions $\Groth_1 \rightarrow \Pr^L \rightarrow \cathat$.
\end{corollary}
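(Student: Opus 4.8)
The plan is to deduce the statement immediately from Proposition \ref{prop limits along left exact} by viewing a small product as a limit over a discrete diagram.

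First I would observe that a family $(\Ccal_s)_{s \in S}$ of Grothendieck abelian categories indexed by a small set $S$ is the same datum as a diagram $F \colon S \to \Groth_1$, where $S$ is regarded as a discrete category. Since $S$ has no non-identity morphisms, the transition maps of $F$ are identities, which are in particular left exact colimit preserving functors, so Proposition \ref{prop limits along left exact} applies. It produces a limit of $F$ in $\Groth_1$, preserved by the inclusion $\Groth_1 \hookrightarrow \Pr^L$. As a limit over a discrete category is by definition a product, this shows that $\prod_{s \in S} \Ccal_s$ exists in $\Groth_1$ and that the inclusion $\Groth_1 \to \Pr^L$ carries it to the product of the $\Ccal_s$ in $\Pr^L$.

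It then remains to check that $\Pr^L \to \cathat$ preserves this product. Here I would invoke the standard fact that $\Pr^L$ is closed under small limits inside $\cathat$; concretely, the product of a small family of presentable categories formed in $\cathat$ is again presentable and is the product in $\Pr^L$ (this can be read off from the description of limits in $\Pr^L$ via the equivalence with $(\Pr^R)^{\mathrm{op}}$, or verified directly). Composing the two preservation statements yields the corollary.

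I do not expect a genuine obstacle here: the whole content is carried by Proposition \ref{prop limits along left exact}. The only points requiring care are the bookkeeping identification of the limit of a discrete diagram with a product, so that the universal property obtained in $\Groth_1$ is indeed that of the product, and recalling the precise statement and reference for preservation of small limits by $\Pr^L \hookrightarrow \cathat$.
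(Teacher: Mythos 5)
Your proof is correct and matches the paper's intended argument: the corollary is stated as an immediate consequence of Proposition \ref{prop limits along left exact} applied to a discrete diagram (whose identity transition maps are trivially left exact), combined with the standard fact that the inclusion $\Pr^L \rightarrow \cathat$ preserves small limits. No issues.
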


We can also form colimits of diagrams of right adjointable diagrams:

\begin{proposition}\label{proposition colimits of right adjointable}
Let $F: \Ical \rightarrow \Groth_1$ be a diagram whose transition maps admit colimit preserving right adjoints. Then $F$ admits a colimit which is preserved by the inclusion of $\Groth_1$ inside $\Pr^L$.
\end{proposition}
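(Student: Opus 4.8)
The plan is to deduce this from Proposition~\ref{prop limits along left exact} by passing to right adjoints. Since the morphisms of $\Groth_1$ are precisely the colimit preserving functors, $\Groth_1$ is a full subcategory of $\Pr^L$; in particular $F$ may be regarded as a diagram in $\Pr^L$, and as $\Pr^L$ admits small colimits it has a colimit $\Ccal$ there. Recall that $\Pr^L$ and $\Pr^R$ are anti-equivalent via passage to adjoints (\cite{HTT} section 5.5.3); under this equivalence $F$ corresponds to a diagram $F^R \colon \Ical^{\mathrm{op}} \to \Pr^R$ whose transition maps are the right adjoints of those of $F$, and $\Ccal$ is identified with the limit of $F^R$, which may equally be computed in $\cathat$.

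Now the hypothesis enters. The transition maps of $F^R$ are the right adjoints of the transition maps of $F$, which by assumption are colimit preserving; and being right adjoints they preserve finite limits, hence are left exact. Since the objects of $F^R$ coincide with those of $F$ and are therefore Grothendieck abelian, $F^R$ factors as a diagram $\Ical^{\mathrm{op}} \to \Groth_1$ with left exact transition maps. Proposition~\ref{prop limits along left exact} then produces a limit of $F^R$ in $\Groth_1$ that is preserved by the inclusion $\Groth_1 \hookrightarrow \Pr^L$.

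To conclude one matches the two descriptions. The inclusions $\Pr^L \hookrightarrow \cathat$ and $\Pr^R \hookrightarrow \cathat$ both preserve limits, so the limit of $F^R$ taken in $\Groth_1$, in $\Pr^L$, in $\Pr^R$ and in $\cathat$ all coincide; by the first paragraph this common object is $\Ccal$. Hence $\Ccal$ lies in $\Groth_1$, and being a colimit of $F$ in the ambient category $\Pr^L$ that happens to lie in the full subcategory $\Groth_1$, it is also a colimit of $F$ in $\Groth_1$, visibly preserved by the inclusion.

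The argument is essentially formal; its two substantive inputs are the anti-equivalence between $\Pr^L$ and $\Pr^R$ and Proposition~\ref{prop limits along left exact}. The point that genuinely requires care is the bookkeeping: one must verify that the forgetful functors to $\cathat$ are compatible with the anti-equivalence, so that the colimit of $F$ built inside $\Pr^L$ and the limit of $F^R$ built inside $\Groth_1$ name the same object.
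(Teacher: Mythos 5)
Your proof is correct and follows the same route as the paper: identify the colimit of $F$ in $\Pr^L$ with the limit in $\cathat$ of the diagram $F^R$ of right adjoints, observe that $F^R$ consists of Grothendieck abelian categories and left exact colimit preserving functors, and invoke Proposition~\ref{prop limits along left exact}. Your version merely spells out the bookkeeping with the $\Pr^L$/$\Pr^R$ anti-equivalence more explicitly than the paper does.
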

\begin{proof}
It suffices to show that the colimit of $F$ in $\Pr^L$ is Grothendieck abelian. This colimit agrees with the limit of the diagram $F^R : \Ical^\op \rightarrow \cathat$ obtained by passing to right adjoints of the morphisms in $F$. This a diagram of Grothendieck abelian categories and left exact colimit preserving functors. The fact that the limit is Grothendieck abelian now follows from proposition \ref{prop limits along left exact}.
\end{proof}

\begin{corollary}
The category $\Groth_1$ admits small direct sums, and these are preserved by the inclusion $\Groth_1 \rightarrow \Pr^L$. In particular, small direct sums and small direct products agree in $\Groth_1$.
\end{corollary}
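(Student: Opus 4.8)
The plan is to deduce the statement from proposition \ref{proposition colimits of right adjointable} together with the standard description of colimits in $\Pr^L$. The key observation is that a small direct sum is a colimit indexed by a \emph{discrete} category, and for a discrete diagram the hypothesis of proposition \ref{proposition colimits of right adjointable} --- that the transition maps admit colimit preserving right adjoints --- is vacuously satisfied, since the only transition maps are identities. Hence proposition \ref{proposition colimits of right adjointable} immediately yields that $\Groth_1$ admits small direct sums and that these are preserved by the inclusion $\Groth_1 \rightarrow \Pr^L$.

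For the final assertion, I would first invoke the corollary to proposition \ref{prop limits along left exact} established above, which already shows that $\Groth_1$ admits small products and that these are preserved by the inclusions $\Groth_1 \rightarrow \Pr^L \rightarrow \cathat$. Combined with the previous paragraph, this reduces the problem to the purely formal fact that small direct sums and small direct products coincide in $\Pr^L$.

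To verify that last fact the argument parallels the proof of proposition \ref{proposition colimits of right adjointable}: given a small family $\{\Ccal_i\}_{i \in I}$ of presentable categories, its coproduct in $\Pr^L$ is computed by passing to right adjoints, i.e.\ as the limit over the discrete category $I$ of the corresponding diagram in $\cathat$, which is simply the product $\prod_{i \in I} \Ccal_i$ formed in $\cathat$. On the other hand, since the inclusion $\Pr^L \rightarrow \cathat$ preserves limits, this same product computes the product of the family in $\Pr^L$. Therefore coproducts and products agree in $\Pr^L$, and the desired identification in $\Groth_1$ follows by transporting along the inclusion.

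I expect no serious obstacle: the statement is essentially a formal consequence of results already in hand, the only nontrivial input --- that colimits in $\Pr^L$ are computed by passing to right adjoints --- having already been used to prove proposition \ref{proposition colimits of right adjointable}. The one point deserving a moment of care is making sure that ``direct sum'' is read as coproduct over a discrete index set, so that the transition-map hypothesis of proposition \ref{proposition colimits of right adjointable} is genuinely vacuous rather than merely mild.
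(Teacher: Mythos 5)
Your argument is correct and is exactly the intended (implicit) proof: apply proposition \ref{proposition colimits of right adjointable} to a discrete diagram, where the transition-map hypothesis holds trivially, and combine with the preceding corollary on products plus the standard fact that coproducts in $\Pr^L$ are computed as products of the right-adjoint (here, the same) diagram. No gaps.
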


There is a good theory of tensor products of Grothendieck abelian categories:

\begin{theorem}[\cite{SAG} theorem C.5.4.16, \cite{Tensor} theorem 5.4]\label{teo tensor product abelian}
Let $\Ccal, \Dcal$ be Grothendieck abelian categories. Then their tensor product $\Ccal \otimes \Dcal$ (formed in $\Pr^L$) is Grothendieck abelian. In particular, the symmetric monoidal structure on the category $\Mod_{\Ab}(\Pr^L)$ of presentable additive $(1,1)$-categories and colimit preserving functors restricts to a symmetric monoidal structure on $\Groth_1$.
\end{theorem}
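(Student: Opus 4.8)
The plan is to reduce to the case of module categories by means of the Gabriel--Popescu theorem, exploiting the fact that the tensor product in $\Pr^L$ carries accessible reflective localizations to accessible reflective localizations.

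I would begin with the case $\Ccal = \Mod^\heartsuit_R$ for an ordinary ring $R$, with $\Dcal$ an arbitrary Grothendieck abelian category. Since $\Mod^\heartsuit_R$ is a presentable additive $(1,1)$-category, its tensor product with $\Dcal$ in $\Pr^L$ coincides with the one taken in $\Mod_{\Ab}(\Pr^L)$ (this uses that $\Ab$ is idempotent in $\Pr^L$), and example \ref{example tensor with amod} with $\Mcal = \Ab$ identifies it with $\LMod_R(\Dcal)$, the category of left $R$-modules in $\Dcal$. The forgetful functor $\LMod_R(\Dcal) \rightarrow \Dcal$ is conservative and preserves small limits and colimits, so by \cite{SAG} proposition 10.6.3.1 the category $\Mod^\heartsuit_R \otimes \Dcal = \LMod_R(\Dcal)$ is Grothendieck abelian. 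In particular this handles the case in which both $\Ccal$ and $\Dcal$ are module categories.

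For a general $\Ccal$ I would apply the Gabriel--Popescu theorem: choosing a generator $G$ of $\Ccal$ and letting $R$ be its opposite endomorphism ring, the functor $\mathrm{Hom}_\Ccal(G,-)\colon \Ccal \rightarrow \Mod^\heartsuit_R$ is fully faithful and admits a left exact left adjoint $L_\Ccal$, so $\Ccal$ is an accessible reflective localization of $\Mod^\heartsuit_R$. Tensoring with $\Dcal$ and using that $- \otimes \Dcal$ preserves accessible reflective localizations (\cite{HA}), we obtain that $\Ccal \otimes \Dcal$ is an accessible reflective localization of $\Mod^\heartsuit_R \otimes \Dcal = \LMod_R(\Dcal)$, with reflector $L_\Ccal \otimes \mathrm{id}_\Dcal$. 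The source is Grothendieck abelian by the previous paragraph, and an exact reflective localization of a Grothendieck abelian category is again Grothendieck abelian (a standard fact of localization theory), so it remains to show that the reflector $L_\Ccal \otimes \mathrm{id}_\Dcal$ is left exact.

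This is the step where I expect the real work to lie. What has to be proved is that the Gabriel (Serre-type) class of morphisms inverted by $L_\Ccal$ remains such after applying $- \otimes \Dcal$; equivalently, that the formation of Gabriel quotients of Grothendieck abelian categories commutes with the tensor product. This is delicate --- it is not enough to invert the maps $0 \rightarrow T$ with $T$ in the kernel of $L_\Ccal$, since the associated torsion-free localization need not be left exact. One piece is easy: when $\Dcal = \Mod^\heartsuit_S$ is a module category, the conservative limit-preserving functor $\LMod_R(\Mod^\heartsuit_S) \rightarrow \Mod^\heartsuit_R$ that forgets the $S$-action intertwines $L_\Ccal \otimes \mathrm{id}$ with $L_\Ccal$ applied to underlying $R$-modules, so $L_\Ccal \otimes \mathrm{id}_{\Mod^\heartsuit_S}$ is left exact because $L_\Ccal$ is; but for a general $\Dcal$ there is no such forgetful functor, and controlling finite limits inside the Gabriel quotient of $\LMod_R(\Dcal)$ is precisely the technical heart of \cite{SAG} theorem C.5.4.16 and \cite{Tensor} theorem 5.4. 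Granting this, $\Ccal \otimes \Dcal$ is Grothendieck abelian; the final assertion of the theorem then follows, since $\Ccal \otimes \Dcal$, being additive, is automatically computed inside $\Mod_{\Ab}(\Pr^L)$.
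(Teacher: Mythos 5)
This theorem is not proved in the paper at all --- it is imported by citation from \cite{SAG} (theorem C.5.4.16) and \cite{Tensor} (theorem 5.4) --- so there is no internal argument to measure you against; the comparison has to be with the cited proofs. Your reduction is correct as far as it goes and follows the strategy of the second reference: identify $\Mod^\heartsuit_R \otimes \Dcal$ with $\LMod_R(\Dcal)$ and settle that case by \cite{SAG} proposition 10.6.3.1, then present a general $\Ccal$ as a left exact accessible localization $L_\Ccal$ of $\Mod^\heartsuit_R$ via Gabriel--Popescu and tensor the localization with $\Dcal$. But the proposal stops exactly where the theorem begins. The left exactness of $L_\Ccal \otimes \id_\Dcal$ is not a technicality one may grant oneself: every step before it is formal, and your own text concedes that this is ``precisely the technical heart'' of the cited results. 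As written, the argument establishes only the module-category case, which was already known; the one additional case you dispose of ($\Dcal = \Mod^\heartsuit_S$, via the forgetful functor to $R$-modules) is subsumed by the first paragraph and gives no purchase on general $\Dcal$, for exactly the reason you identify --- there is no conservative limit-preserving functor to a module category to transport left exactness along. So there is a genuine gap, and it is the whole theorem.

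For what it is worth, the two cited proofs close this gap in quite different ways, and neither is a routine verification. The route in \cite{SAG} avoids the analysis of Gabriel quotients entirely: one first proves the prestable analogue (theorem \ref{teo tensor product prestable} in this section, i.e.\ \cite{SAG} theorem C.4.2.1), and then uses the identification of the $0$-truncated objects of a presentable category $\Ecal$ with $\Ecal \otimes \Spc_{\leq 0}$, together with idempotency of $\Spc_{\leq 0}$ in $\Pr^L$, to recognize $\Ccal \otimes \Dcal$ as the heart of the Grothendieck prestable category $\der(\Ccal)_{\geq 0} \otimes \der(\Dcal)_{\geq 0}$; hearts of Grothendieck prestable categories are Grothendieck abelian, and one is done. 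The approach of \cite{Tensor} is the completed version of your sketch: it realizes $\Ccal \otimes \Dcal$ as sheaves on a tensor product of linear sites and verifies directly that the induced localization of $\LMod_R(\Dcal)$ (equivalently, of a two-sided bimodule category) is left exact. Either way, the step you deferred has to be carried out, and doing so is the substance of the proof.
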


In particular, it makes sense to consider commutative algebras in $\Groth_1$. We call these symmetric monoidal Grothendieck abelian categories. Note that this terminology leaves implicit the fact that the tensor operation commutes with colimits in each variable.

\begin{definition}
Let $\acal$ be a symmetric monoidal Grothendieck abelian category. An $\acal$-linear Grothendieck abelian category is an object of $\Mod_\Acal(\Groth_1)$.
\end{definition}

In other words, an $\acal$-linear Grothendieck abelian category  is an $\acal$-linear presentable category (in the sense of section \ref{subsection linear cats}) which is in addition a Grothendieck abelian category. In the case when $\acal = \Mod_R^\heartsuit$ is the category of classical modules over a connective $E_\infty$-ring $R$, we call these $R$-linear Grothendieck abelian categories.

Since the class of colimits that we have available in $\Groth_1$ is relatively restricted, one has to be careful when forming relative tensor products. There is however a class of commutative algebras that admit a well behaved theory of relative tensor products.

\begin{definition}
Let $\acal$ be a commutative algebra in $\groth$. Assume that $\acal$ is generated by compact projective objects. We say that $\acal$ is rigid if compact projective and dualizable objects of $\acal$ coincide.
\end{definition}

\begin{example}
Let $R$ be a commutative ring. Then $\Mod_R(\Ab)$ is rigid.
\end{example}

We fix for the remainder of this section a base symmetric monoidal Grothendieck abelian category $\acal$, generated by compact projective objects and rigid.

\begin{remark}\label{remark tensor compact projectives}
 Let $\Ccal$ be an $\Acal$-linear Grothendieck abelian category. Then if $X$ is a compact projective object of $\Acal$ the functor $X \otimes - : \Ccal \rightarrow \Ccal$ admits both a left and a right adjoint, given by $X^\vee \otimes -$. In particular, $X \otimes -$ admits a colimit preserving right adjoint, and therefore it maps compact projective objects of $\Ccal$ to compact projective objects.
\end{remark}

\begin{remark}
Let $f: \Ccal \rightarrow \Dcal$ be a morphism in $\Mod_\Acal(\catl)$ and assume that $f$ admits a right adjoint $f^R$ (as a functor of categories, ignoring the $\Acal$-action). Then $f^R$ commutes laxly with the action of $\Acal$, and strictly with the action of the dualizable objects in $\Acal$. In particular, since $\Acal$ is generated under colimits by dualizable objects, we see that if $f^R$ is colimit preserving then it commutes strictly with the action of $\Acal$.
\end{remark}

\begin{proposition}\label{proposition action has colimit preserving adjoint}
Let $\ccal$ be an $\acal$-module in $\catl$. Then the action map $\acal \otimes \ccal \rightarrow \ccal$ admits a colimit preserving right adjoint.
\end{proposition}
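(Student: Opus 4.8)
The plan is the following. A right adjoint $a^R$ to the action map $a : \acal \otimes \ccal \to \ccal$ exists by the adjoint functor theorem, both categories being presentable and $a$ preserving colimits, so the only point is to show that $a^R$ preserves colimits. One cannot verify this directly inside $\acal \otimes \ccal$, since $\ccal$ is an arbitrary $\acal$-module and in particular need not be generated by compact projective objects; instead I would reduce to the case $\ccal = \acal$, where the tensor square $\acal \otimes \acal$ \emph{is} again generated by compact projective objects.

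For the reduction, regard $\acal \otimes \acal$ as an $\acal$-module through its second tensor factor. Then the multiplication $m : \acal \otimes \acal \to \acal$ is a morphism in $\Mod_\acal(\catl)$ (here one uses that $\acal$ is commutative), and the first tensor factor exhibits $\acal \otimes \acal$ as the free $\acal$-module on the underlying category of $\acal$. Consequently there is a canonical equivalence $(\acal \otimes \acal) \otimes_\acal \ccal \simeq \acal \otimes \ccal$, and unwinding the definitions identifies $m \otimes_\acal \id_\ccal$ with the action map $a$. Now suppose we have shown that $m$ admits a colimit preserving right adjoint $m^R$. By the remark preceding the proposition, $m^R$ then automatically commutes strictly with the action of $\acal$ (since $\acal$ is generated under colimits by its dualizable objects, which by rigidity are its compact projective objects), so that $m \dashv m^R$ is an adjunction in the symmetric monoidal $2$-category $\Mod_\acal(\catl)$. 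Applying to it the $2$-functor $- \otimes_\acal \ccal$ yields an adjunction $a \dashv (m^R \otimes_\acal \id_\ccal)$ whose right adjoint is a $1$-morphism of $\Mod_\acal(\catl)$, hence colimit preserving. This settles the general case.

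It thus remains to show that $m^R$ preserves colimits. By theorem \ref{teo tensor product abelian} the category $\acal \otimes \acal$ is Grothendieck abelian; moreover it is again rigid, hence generated under colimits by compact projective objects, and in it compact projective and dualizable objects coincide. The functor $m$ is colimit preserving and symmetric monoidal --- it is induced by the symmetric monoidal functor $\acal \times \acal \to \acal$, $(X,Y) \mapsto X \otimes Y$ --- and a symmetric monoidal functor preserves dualizable objects; by rigidity of $\acal \otimes \acal$ and of $\acal$ it follows that $m$ carries compact projective objects to compact projective objects. Hence $m$ sends a generating set $S$ of compact projective objects of $\acal \otimes \acal$ to compact projective objects of $\acal$. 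Finally, for each $P \in S$ the functor $\operatorname{Hom}_{\acal \otimes \acal}(P, m^R(-)) \simeq \operatorname{Hom}_\acal(m(P), -)$ preserves all small colimits, as $m(P)$ is compact projective; since a set of compact projective generators of a Grothendieck abelian category is jointly conservative and the associated corepresentable functors preserve all small colimits, such a set jointly detects colimits, and one concludes that $m^R$ preserves all small colimits.

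The two halves of the reduction --- identifying $a$ with $m \otimes_\acal \id_\ccal$, and transporting the adjunction $m \dashv m^R$ along $- \otimes_\acal \ccal$ --- are formal, as is the density argument at the end. The essential input, and the only place where one really needs $\acal$ to be rigid rather than merely generated by compact projective objects, is the assertion that $\acal \otimes \acal$ is again rigid; I would isolate this as a separate lemma on tensor products of rigid symmetric monoidal Grothendieck abelian categories.
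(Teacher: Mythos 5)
Your proof is correct, and its essential input coincides with the paper's: everything reduces to showing that the multiplication $\mu: \acal \otimes \acal \rightarrow \acal$ admits a colimit preserving right adjoint which is moreover $\acal$-linear (for the module structure on the source through one tensor factor), both facts coming from $\acal$ being generated by compact projective objects which are dualizable. Where you differ is in how the general case is deduced. The paper resolves $\ccal$ by its Bar resolution $\acal^{\otimes \bullet + 1} \otimes \ccal$, notes that the action maps on the levels of the resolution are tensor products with $\mu$, and checks by hand that the face-map squares are vertically right adjointable, so that the right adjoints assemble over the geometric realization. You instead identify the action map with $\mu \otimes_\acal \id_\ccal$ under the equivalence $(\acal \otimes \acal) \otimes_\acal \ccal \simeq \acal \otimes \ccal$, promote $\mu \dashv \mu^R$ to an adjunction internal to the symmetric monoidal $2$-category $\Mod_\acal(\catl)$, and apply the $2$-functor $- \otimes_\acal \ccal$. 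This is a legitimate repackaging: the adjointability checks in the paper's argument are precisely a hands-on verification that the adjunction survives $- \otimes_\acal \ccal$, so your route buys brevity at the cost of leaning on the $2$-categorical structure of $\Mod_\acal(\catl)$ and the preservation of adjunctions by $2$-functors (the paper does assert this structure in an earlier remark, so this is available).

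Two small points. First, your opening appeal to the adjoint functor theorem presumes $\ccal$ presentable, which an arbitrary $\acal$-module in $\catl$ need not be (cf.\ the discussion around dualizability and presentability earlier in the section); this is harmless only because your argument in fact constructs the right adjoint as $\mu^R \otimes_\acal \id_\ccal$ rather than relying on its abstract existence, but the framing should be adjusted accordingly. Second, full rigidity of $\acal \otimes \acal$ is more than you need: it suffices that the external products $X \otimes Y$ of compact projective objects generate $\acal \otimes \acal$ and are carried by $\mu$ to the objects $X \otimes Y$ of $\acal$, which are dualizable and hence compact projective by rigidity of $\acal$ itself; so the separate lemma you propose can be avoided.
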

\begin{proof}
We first prove the proposition in the case when $\Ccal = \Acal \otimes \Ccal'$ is a free $\Acal$-module. In this case the action map is obtained by tensoring the multiplication map $\mu: \Acal \otimes \Acal \rightarrow \Acal$ with the identity on $\Ccal'$. We may thus reduce to showing that $\mu$ admits a colimit preserving right adjoint. This is a consequence of the fact that $\Acal$ is generated by compact projective objects and that such objects are preserved by tensor products.

We now prove the general case. Consider the Bar resolution $\Acal^{\otimes \bullet + 1} \otimes \Ccal$ of $\Ccal$. Then the action map $\Acal \otimes \Ccal \rightarrow \Ccal$ is the colimit of the action maps $\Acal \otimes (\Acal^{\otimes \bullet + 1} \otimes \Ccal) \rightarrow \Acal^{\otimes \bullet + 1} \otimes \Ccal$. Since the Bar resolution is levelwise free we see that each of these maps admits a colimit preserving right adjoint. To prove the proposition it remains to show that for each face map $\sigma: [n] \rightarrow [n+1]$ in $\Delta$ the induced commutative square
\begin{equation}
\begin{tikzcd}
\Acal \otimes (\Acal^{\otimes n+2} \otimes \Ccal) \arrow{d}{} \arrow{r}{} & \Acal \otimes (\Acal^{\otimes n+1} \otimes \Ccal) \arrow{d}{} \\
\Acal^{\otimes n+2} \otimes \Ccal \arrow{r}{}  & \Acal^{\otimes n+1} \otimes \Ccal
\end{tikzcd}
\end{equation}
is vertically right adjointable. If $\sigma$ is not the $0$-th face then the above square is a tensor product of the square
\[
\begin{tikzcd}
\Acal \otimes \Acal \arrow{d}{\mu} \arrow{r}{\id} & \Acal \otimes \Acal \arrow{d}{\mu} \\
\Acal \arrow{r}{\id} & \Acal 
\end{tikzcd}
\] 
with
\[
\begin{tikzcd}
\Acal^{\otimes n+1} \otimes \Ccal \arrow{d}{\id} \arrow{r}{} & \Acal^{\otimes n} \otimes \Ccal \arrow{d}{\id} \\
\Acal^{\otimes n+1} \otimes \Ccal \arrow{r}{}  & \Acal^{\otimes n} \otimes \Ccal
\end{tikzcd}
\]
and our claim follows from the fact that these two are vertically right adjointable. It remains to analyze the case when $\sigma$ is the $0$-th face. In this case the square (1) is obtained by tensoring the square
\[
\begin{tikzcd}
\Acal \otimes \Acal \otimes \Acal \arrow{d}{\mu \otimes \id} \arrow{r}{\id \otimes \mu} & \Acal \otimes \Acal \arrow{d}{\mu} \\
\Acal \otimes \Acal \arrow{r}{\mu} & \Acal
\end{tikzcd}
\]
with $\Acal^{\otimes n} \otimes \Ccal$.  We may thus reduce to showing that the above is vertically right adjointable. This amounts to showing that the right adjoint to the multiplication map $\Acal \otimes \Acal \rightarrow \Acal$ is $\Acal$-linear. This follows from the fact that $\Acal$ is generated under colimits by dualizable objects.
\end{proof}

\begin{corollary}\label{coro tensor products over A}
The full subcategory of $\Mod_\acal(\Pr^L)$ on the $\acal$-linear Grothendieck abelian categories is closed under tensor products. In other words, $\Mod_\acal(\groth_1)$ admits a symmetric monoidal structure that makes the inclusion $\Mod_\acal(\groth_1) \rightarrow \Mod_\acal(\Pr^L)$ symmetric monoidal.
\end{corollary}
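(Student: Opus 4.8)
The plan is to show that if $\Ccal$ and $\Dcal$ are $\Acal$-linear Grothendieck abelian categories, then their relative tensor product $\Ccal \otimes_\Acal \Dcal$, formed in $\Mod_\Acal(\Pr^L)$, is again Grothendieck abelian. Granting this, the corollary is formal: the full subcategory of $\Mod_\Acal(\Pr^L)$ spanned by the $\Acal$-linear Grothendieck abelian categories contains the unit $\Acal$ and is closed under the tensor product, so it inherits a symmetric monoidal structure for which the inclusion into $\Mod_\Acal(\Pr^L)$ is symmetric monoidal.

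To compute $\Ccal \otimes_\Acal \Dcal$ I would use the two-sided bar resolution (cf.\ \cite{HA}): it is the geometric realization of the simplicial object $[n] \mapsto \Ccal \otimes \Acal^{\otimes n} \otimes \Dcal$, where $\otimes$ denotes the tensor product of presentable additive categories. This realization is a geometric realization (in particular a sifted colimit) in $\Mod_\Acal(\Pr^L)$, hence is preserved by the forgetful functor to $\Pr^L$; thus the underlying presentable category of $\Ccal \otimes_\Acal \Dcal$ is the colimit in $\Pr^L$ of the above simplicial diagram. By iterated application of theorem \ref{teo tensor product abelian}, each term of this diagram is Grothendieck abelian, so it is a diagram in $\Groth_1$. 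The strategy is then to invoke proposition \ref{proposition colimits of right adjointable}: once every transition map of this diagram is known to admit a colimit preserving right adjoint, that proposition identifies the colimit in $\Pr^L$ with a Grothendieck abelian category, which is exactly what is wanted.

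It therefore remains to check that the face and degeneracy maps of the bar resolution admit colimit preserving right adjoints. Since tensoring a colimit preserving functor that admits a colimit preserving right adjoint with the identity of a presentable additive category again yields a functor with this property (a functor preserves adjunctions), it suffices to treat the elementary maps. The two outer face maps are the action maps $\Acal \otimes \Dcal \to \Dcal$ and $\Ccal \otimes \Acal \to \Ccal$ tensored with identities, and these admit colimit preserving right adjoints by proposition \ref{proposition action has colimit preserving adjoint}. The inner face maps are built from the multiplication $\Acal \otimes \Acal \to \Acal$, which admits a colimit preserving right adjoint because $\Acal$ is generated by compact projective objects and these are preserved by the tensor product (this is the first step in the proof of proposition \ref{proposition action has colimit preserving adjoint}). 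Finally, the degeneracy maps are obtained from the unit map $\Ab \to \Acal$ by tensoring with identities; its right adjoint is the functor $\Acal \to \Ab$ corepresented by the unit object of $\Acal$, and this is colimit preserving because the unit object, being dualizable, is compact projective by rigidity. With all transition maps right adjointable, proposition \ref{proposition colimits of right adjointable} applies and finishes the argument.

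I do not expect a genuinely hard step here; the statement is an assembly of the preceding results. The two points requiring some care are the identification of $\Ccal \otimes_\Acal \Dcal$ with the bar resolution computed in $\Pr^L$ (so that proposition \ref{proposition colimits of right adjointable}, which is a statement about colimits in $\Pr^L$, becomes applicable), and the treatment of the degeneracy maps, which succeeds precisely because rigidity forces the unit object of $\Acal$ to be compact projective, hence the unit map $\Ab \to \Acal$ to have a colimit preserving right adjoint.
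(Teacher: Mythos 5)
Your proposal is correct and follows essentially the same route as the paper: identify $\Ccal \otimes_\Acal \Dcal$ with the geometric realization of the bar construction $\Ccal \otimes \Acal^{\otimes \bullet} \otimes \Dcal$ in $\Pr^L$, note each term is Grothendieck abelian by theorem \ref{teo tensor product abelian}, and apply proposition \ref{proposition colimits of right adjointable} after checking via proposition \ref{proposition action has colimit preserving adjoint} that the transition maps admit colimit preserving right adjoints. The only difference is that you also verify the degeneracy maps explicitly (using that rigidity forces the unit of $\Acal$ to be compact projective), whereas the paper restricts attention to the face maps; both are fine.
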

\begin{proof}
Let $\Ccal$ and $\Dcal$ be a pair of $\acal$-linear Grothendieck abelian categories. The relative tensor product $\ccal \otimes_\acal \Dcal$ in $\Pr^L$  is the geometric realization of the Bar construction $\ccal \otimes \acal^{\otimes \bullet} \otimes \Dcal$. By virtue of proposition \ref{proposition colimits of right adjointable}, to show that $\ccal \otimes_\acal \Dcal$ is Grothendieck abelian, it suffices to show that the face maps in the Bar construction admit colimit preserving right adjoints. This follows from proposition \ref{proposition action has colimit preserving adjoint}.
\end{proof}

\begin{corollary}
Let $\ccal$ and $\dcal$ be a pair of $\acal$-linear Grothendieck abelian categories. Then the functor $\ccal \otimes \dcal \rightarrow \ccal \otimes_\acal \dcal$ admits a colimit preserving right adjoint. In particular, for each pair of compact projective objects $X$ in $\ccal$ and $Y$ in $\dcal$, the object $X \otimes Y$ in $\ccal \otimes_\acal \dcal$ is compact projective.
\end{corollary}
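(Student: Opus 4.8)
The plan is to exhibit the right adjoint of the canonical functor $q \colon \ccal \otimes \dcal \to \ccal \otimes_\acal \dcal$ as a component of a limit cone, and to read off colimit preservation from the way limits of Grothendieck abelian categories are formed. Following the proof of Corollary \ref{coro tensor products over A}, recall that $\ccal \otimes_\acal \dcal$ is the colimit in $\Pr^L$ of the Bar construction $B_\bullet = \ccal \otimes \acal^{\otimes \bullet} \otimes \dcal$, that $B_0 = \ccal \otimes \dcal$, and that $q$ is the component of the colimit cocone at $[0]$. As observed there, the face maps of $B_\bullet$ admit colimit preserving right adjoints, being obtained by tensoring with identities the action maps $\acal \otimes \ccal \to \ccal$ and $\acal \otimes \dcal \to \dcal$ (outer faces) or the multiplication $\acal \otimes \acal \to \acal$ (inner faces), to which Proposition \ref{proposition action has colimit preserving adjoint} and the fact, established in its proof, that the multiplication of $\acal$ has a colimit preserving right adjoint apply. (The degeneracies of $B_\bullet$ admit colimit preserving right adjoints too, since the unit of $\acal$ is dualizable and hence compact projective by rigidity; alternatively one may discard them, as $\Delta^\op_{\mathrm{inj}} \hookrightarrow \Delta^\op$ is cofinal.)

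The main step is the following. Exactly as in the proof of Proposition \ref{proposition colimits of right adjointable}, the colimit of $B_\bullet$ in $\Pr^L$ agrees with the limit, formed in $\cathat$, of the cosimplicial diagram $B^\bullet$ obtained from $B_\bullet$ by passing to right adjoints, and under this identification the colimit cocone components become, upon passing to right adjoints, the components of the limit cone; in particular the right adjoint of $q$ is the component at $[0]$ of the limit cone $\ccal \otimes_\acal \dcal \to B^\bullet$. The diagram $B^\bullet$ takes values in Grothendieck abelian categories (by Theorem \ref{teo tensor product abelian}), and its transition maps are left exact (being right adjoints) and colimit preserving (by the preceding paragraph); hence by Proposition \ref{prop limits along left exact} its limit is computed in $\Groth_1$ and is preserved by the inclusion $\Groth_1 \hookrightarrow \Pr^L$. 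Since limits in $\Pr^L$ are computed in $\cathat$ and have colimit preserving cone maps, we conclude that the component of the limit cone at $[0]$, i.e. the right adjoint of $q$, preserves colimits.

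For the last assertion, note that the object $X \otimes Y$ of $\ccal \otimes_\acal \dcal$ is classified by the composite colimit preserving functor $\Ab \simeq \Ab \otimes \Ab \to \ccal \otimes \dcal \to \ccal \otimes_\acal \dcal$, whose first map is the tensor product of the colimit preserving functors $\Ab \to \ccal$ and $\Ab \to \dcal$ classifying $X$ and $Y$ and whose second map is $q$. Since $X$ and $Y$ are compact projective, the functors classifying them admit colimit preserving right adjoints, and hence so does their tensor product; and $q$ admits a colimit preserving right adjoint by the first part. Therefore the composite admits a colimit preserving right adjoint, which is to say $X \otimes Y$ is a compact projective object of $\ccal \otimes_\acal \dcal$.
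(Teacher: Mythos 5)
Your proof is correct and takes essentially the same route as the paper's: the one-line proof in the text appeals to precisely the fact you spell out, namely that for a geometric realization in $\Pr^L$ whose face maps admit colimit preserving right adjoints, the cocone map out of the zeroth term is right adjointable by the corresponding projection from the limit of the right-adjoint diagram, which lies in $\Pr^L$. Your deduction of the compact projectivity of $X \otimes Y$ via classifying functors is the standard consequence (the same principle as in remark \ref{remark tensor compact projectives}), so nothing further is needed.
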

\begin{proof}
Follows directly from the fact that the functor $\ccal \otimes \dcal \rightarrow \ccal \otimes_\acal \dcal$ arises from the geometric realization of a simplicial diagram whose face maps have colimit preserving right adjoints.
\end{proof}

We will frequently use the following relative variant of the notion of generator:

\begin{definition}\label{definition A generator}
Let $\ccal$ be an $\acal$-linear Grothendieck abelian category. An object $G$ in $\ccal$ is said to be an $\acal$-generator if $\ccal$ is generated by the family of objects $X \otimes G$ with {$X$ in $\acal$.} 
\end{definition}

\begin{remark}
Let $\Ccal$ be an $\acal$-linear Grothendieck abelian category. Then an object $G$ in $\ccal$ is an $\acal$-generator if and only if $\ccal$ is generated by the family of objects $X \otimes G$ with $X$ a compact projective object of $\acal$.
\end{remark}

The following is an $\acal$-linear version of the Gabriel-Popescu theorem:

\begin{proposition}\label{prop lex localization}
Let $\Ccal$ be an $\Acal$-linear Grothendieck abelian category and let $G$ be an $\acal$-generator for $\Ccal$. Let $A$ be the opposite of the algebra of endomorphisms of $G$ associated to the action of $\Acal$ on $\Ccal$. Then the functor
\[
G \otimes_A - : \LMod_A(\Acal) \rightarrow \Ccal 
\]
is an $\Acal$-linear left exact localization. Furthermore, it is an equivalence if and only if $G$ is compact projective.
\end{proposition}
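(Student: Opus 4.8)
The plan is to imitate the proof of the classical Gabriel--Popescu theorem, carried out $\Acal$-linearly.

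First I would pin down the right adjoint of $F := G \otimes_A - : \LMod_A(\Acal) \to \Ccal$. The object $G$ corresponds to an $\Acal$-linear colimit preserving functor $L_G : \Acal \to \Ccal$, $Y \mapsto Y \otimes G$, whose right adjoint $\underline{\mathrm{Hom}}_\Ccal(G,-) : \Ccal \to \Acal$ is characterized by $\mathrm{Hom}_\Acal(Y, \underline{\mathrm{Hom}}_\Ccal(G,X)) = \mathrm{Hom}_\Ccal(Y \otimes G, X)$; it acquires a natural left $A$-module structure via the action of $A = \underline{\mathrm{End}}_\Ccal(G)^\op$ on $G$. Factoring $L_G$ as $\Acal \xrightarrow{-\otimes A} \LMod_A(\Acal) \xrightarrow{F} \Ccal$ and passing to right adjoints (using that the forgetful functor $\LMod_A(\Acal) \to \Acal$ is the right adjoint of $-\otimes A$) identifies the right adjoint $\Gamma$ of $F$ with $X \mapsto \underline{\mathrm{Hom}}_\Ccal(G,X)$, regarded as a left $A$-module. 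Now $F$ is $\Acal$-linear and colimit preserving between Grothendieck abelian categories, hence automatically right exact, so it remains to show that (i) the counit $\epsilon_X : G \otimes_A \underline{\mathrm{Hom}}_\Ccal(G,X) \to X$ is an isomorphism for every $X$ (equivalently $\Gamma$ is fully faithful), and (ii) $F$ is left exact. The last sentence of the statement will then be deduced at the end.

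For (i), surjectivity of $\epsilon_X$ is the easy half: $G \otimes_A \underline{\mathrm{Hom}}_\Ccal(G,X)$ is a quotient of $\underline{\mathrm{Hom}}_\Ccal(G,X) \otimes G$ through which the evaluation $\underline{\mathrm{Hom}}_\Ccal(G,X) \otimes G \to X$ factors, so $\mathrm{im}(\epsilon_X)$ equals the image of the evaluation; and every morphism $Y \otimes G \to X$ with $Y$ compact projective factors through the evaluation (by the adjunction $L_G \dashv \underline{\mathrm{Hom}}_\Ccal(G,-)$), so $\mathrm{im}(\epsilon_X)$ contains the images of all such morphisms; since $G$ is an $\Acal$-generator these generate $X$, so $\epsilon_X$ is an epimorphism. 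For injectivity: since $G$ is an $\Acal$-generator, the functor $\underline{\mathrm{Hom}}_\Ccal(G,-)$ detects the zero object (if $\underline{\mathrm{Hom}}_\Ccal(G,N) = 0$ then $\mathrm{Hom}_\Ccal(Y \otimes G, N) = 0$ for all compact projective $Y$, whence $N = 0$), so it is enough to prove $\underline{\mathrm{Hom}}_\Ccal(G, \ker \epsilon_X) = 0$. This is exactly the content of the key lemma in the classical Gabriel--Popescu argument, which I would run in the present setting: present $\underline{\mathrm{Hom}}_\Ccal(G,X)$ as a cokernel of a map between free $A$-modules on compact projective objects of $\Acal$, identify $\ker\epsilon_X$ as a subquotient of a coproduct of objects of the form $P \otimes G$, and use the $\Acal$-generator property iteratively; the role played classically by compactness of the generator is here supplied by the compactness of the generators of $\Acal$ together with the exactness of filtered colimits in $\Ccal$. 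This is the main obstacle.

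For (ii) I would reduce to the case of a monomorphism with free target. First, $F$ preserves filtered colimits, filtered colimits are exact in $\Ccal$, and every $A$-module $M$ is the filtered union of the images of maps $P \otimes A \to M$ with $P$ compact projective in $\Acal$ (such free modules generate $\LMod_A(\Acal)$, which one checks by applying the forgetful functor), so it suffices to show $F$ preserves monomorphisms $M \hookrightarrow N$ with $M$ finitely generated in this sense. Next, pulling back an epimorphism from a projective onto a quotient, the standard splitting argument shows that it is enough for $F$ to preserve monomorphisms with \emph{projective} target. Finally, every projective of $\LMod_A(\Acal)$ is a direct summand of a coproduct of free modules $P_j \otimes A$ with $P_j$ compact projective, and a finitely generated submodule of such a coproduct factors through a finite sub-coproduct, i.e. through $P \otimes A$ for a single compact projective $P$; so it suffices to show $G \otimes_A M \to P \otimes G$ is a monomorphism whenever $M$ is a submodule of $P \otimes A$. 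This is again the Gabriel--Popescu key lemma, proved as in (i).

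It remains to treat the equivalence criterion. If $F$ is an equivalence then $G = F(A)$ is compact projective, since $A = A \otimes \mathbf{1}$ is compact projective in $\LMod_A(\Acal)$ ($\mathbf{1}$ is compact projective in $\Acal$ by rigidity, and free modules on compact projectives are compact projective). Conversely, suppose $G$ is compact projective. Then by Remark \ref{remark tensor compact projectives} the object $P \otimes G$ is compact projective in $\Ccal$ for every compact projective $P$ of $\Acal$, so $\underline{\mathrm{Hom}}_\Ccal(G,-)$ --- and therefore $\Gamma$ --- preserves colimits, hence is $\Acal$-linear. By (i), $\Gamma$ is fully faithful, so its essential image is closed under colimits; it is also closed under the $\Acal$-action and contains $\Gamma(G) \cong A$ (the left $A$-module $\underline{\mathrm{End}}_\Ccal(G)$ with its precomposition action is the regular module). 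Since the free modules $Y \otimes A$ generate $\LMod_A(\Acal)$ under colimits, the essential image of $\Gamma$ is all of $\LMod_A(\Acal)$, so $\Gamma$, and hence $F$, is an equivalence.
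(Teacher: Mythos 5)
Your overall architecture is genuinely different from the paper's: you run the classical Gabriel--Popescu argument directly (identify the right adjoint $\Gamma = \Hom^\enh_\Ccal(G,-)$ with its left $A$-module structure, prove the counit is an isomorphism, prove left exactness by reducing to monomorphisms into free modules $P \otimes A$), whereas the paper sidesteps all of this: it forms the full subcategories $\Dcal \subseteq \LMod_A(\Acal)$ and $\Ccal_0 \subseteq \Ccal$ spanned by $X \otimes A$ and $X \otimes G$ for $X$ compact projective, uses rigidity to compute $\Hom_{\LMod_A(\Acal)}(X\otimes A, Y \otimes A) \cong \Hom_{\Ccal}(X \otimes G, Y \otimes G)$ via duals, and then quotes the many-object version of the classical Gabriel--Popescu theorem for the generating family $\{X \otimes G\}$ through the square of free sifted cocompletions $\Pcal^1_\Sigma(\Dcal) \simeq \Pcal^1_\Sigma(\Ccal_0) \rightarrow \Ccal$. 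Your treatment of the equivalence criterion at the end is correct and matches the paper's in substance.

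The gap is that both of your steps (i) and (ii) bottom out in ``the Gabriel--Popescu key lemma, which I would run in the present setting,'' and you never run it; you yourself flag it as ``the main obstacle.'' That lemma \emph{is} the content of the theorem --- everything else in your write-up is adjunction bookkeeping and standard reductions. The classical proof of the key lemma is an element-wise argument (choosing elements of $\Hom(G,X)$, lifting them through the generator, comparing submodules generated by finite families), and its adaptation to the present setting --- where $A$ is an algebra object of $\Acal$ rather than an ordinary ring, and ``finitely generated submodule'' must be interpreted relative to the compact projective objects of $\Acal$ --- is exactly where the hypotheses of rigidity and compact projective generation have to enter; your sketch does not indicate how they do. The paper's reduction to the known many-object statement exists precisely to avoid redoing this argument: the rigidity computation of Hom sets transports the problem to a family of honest generators of $\Ccal$, where the classical theorem (in the form of \cite{Kuhn} theorem 2.1 or \cite{SAG} theorem C.2.2.1) applies verbatim. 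To make your proof complete you must either write out the enriched key lemma in full or perform a reduction of the paper's type.
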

\begin{proof}
Let $\Ccal_0$ be the full subcategory of $\ccal$ on the objects of the form $X \otimes G$ where $X$ is a compact projective object of $\Acal$, and let $\Dcal$ be the full subcategory of $\LMod_A(\Acal)$ on the objects of the form $X \otimes A$ where $X$ is a compact projective object of $\Acal$. If $X, Y$ are compact projective objects of $\Acal$ then we have
\begin{align*}
\Hom_{\LMod_A(\Acal)}(X \otimes A, Y \otimes A) &= \Hom_{\LMod_A(\Acal)}(X \otimes Y^\vee \otimes A, A) \\ &= \Hom_{\Ccal}(X \otimes Y^\vee \otimes G, G)  \\ &= \Hom_{\Ccal}(X \otimes G, Y \otimes G)
\end{align*}
and therefore $G \otimes_A -$ restricts to an equivalence $\Dcal \rightarrow \Ccal_0$. We now have a commutative square of  categories
\[
\begin{tikzcd}
\Pcal^1_\Sigma(\Dcal) \arrow{d}{} \arrow{r}{} & \Pcal^1_\Sigma(\Ccal_0) \arrow{d}{} \\
\LMod_A(\Acal) \arrow{r}{ G \otimes_A -} & \Ccal
\end{tikzcd}
\]
where the categories on the top row are the $(1,1)$-categories obtained from $\Dcal$ and $\Ccal_0$ by freely adjoining sifted colimits, and the vertical arrows are the unique sifted colimit preserving extensions of the inclusions $\Dcal \rightarrow \LMod_A(\Acal)$ and $\Ccal_0 \rightarrow \Ccal$. The upper horizontal arrow is an equivalence since $G \otimes_A -$ restricts to an equivalence $\Dcal \rightarrow \Ccal_0$. Furthermore, $\Dcal$ is a generating family of compact projective objects of $\LMod_A(\Acal)$, and therefore the left vertical arrow is an equivalence as well. 

The fact that $G \otimes_A -$ is a left exact localization now follows from the fact that the functor $\Pcal^1_\Sigma(\Ccal_0) \rightarrow \Ccal$ is a left exact localization, due to the many object version of the classical Gabriel-Popescu theorem (see \cite{Kuhn} theorem 2.1, or theorem C.2.2.1 of \cite{SAG}). It remains to show that $G \otimes_A -$ is an equivalence if and only if $G$ is compact projective. The only if direction follows from the fact that $A$ is a compact projective left $A$-module. To prove the if direction, we observe that if $G$ is compact projective then $\Ccal_0$ is a generating family of compact projective objects of $\Ccal$, so that the functor $\Pcal^1_\Sigma(\Ccal_0) \rightarrow \Ccal$ is an equivalence.
\end{proof}

We finish this section with a discussion of flatness in the context of $\Acal$-linear Grothendieck abelian categories.

\begin{definition}\label{definition flat abelian}
Let $\Ccal$ be an $\Acal$-linear Grothendieck abelian category. We say that an object $X$ in $\Ccal$ is flat over $\acal$ if the functor $- \otimes X : \Acal \rightarrow \Ccal$ is left exact. In cases when the base $\acal$ is clear from the context we simply  say that $X$ is flat.
\end{definition}

\begin{example}
Let $R$ be a commutative ring and let $\Ccal$ be an $R$-linear Grothendieck abelian category. Since every monomorphism of $R$-modules is a transfinite composition of pushouts of inclusions of ideals into $R$, we have that an object $X$ in $\Ccal$ is flat if and only if the morphism
\[
I \otimes X \xrightarrow{i \otimes \id} R \otimes X = X
\]
is a monomorphism for all inclusions of ideals $i: I \rightarrow R$.
\end{example}

As a particular case of definition \ref{definition flat abelian} we obtain a notion of flatness for objects of $\acal$. These are characterized by the following variant of Lazard's theorem:

\begin{proposition}\label{prop lazard classico}
Let $X$ be an object of $\Acal$. Then $X$ is flat if and only if it is a filtered colimit of compact projective objects.
\end{proposition}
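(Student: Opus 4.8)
The plan is to reduce the statement to the classical Lazard theorem by exploiting the Gabriel–Popescu-type presentation of $\Acal$ established earlier. The ``if'' direction is the easy one: a filtered colimit of compact projective objects is a filtered colimit of flat objects (compact projectives are flat since $-\otimes X$ for $X$ compact projective is even exact, having a right adjoint $-\otimes X^\vee$ that preserves all colimits and in particular is left exact, by Remark \ref{remark tensor compact projectives}), and the class of flat objects is closed under filtered colimits because filtered colimits in the Grothendieck abelian category $\Acal$ are exact, hence commute with the finite limits that one must preserve to be left exact. So the substance is in the ``only if'' direction.

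For the ``only if'' direction, I would first set up the Gabriel–Popescu presentation of $\Acal$ itself. Since $\Acal$ is generated by compact projective objects, pick a generating set and let $P$ be their direct sum, an (honest, i.e.\ not merely $\Acal$-relative) compact projective generator; apply Proposition \ref{prop lex localization} — or just the classical many-object Gabriel–Popescu theorem cited there — with $A = \mathrm{End}(P)^{\mathrm{op}}$, obtaining an exact localization $L: \LMod_A(\Ab) \to \Acal$ whose right adjoint $\iota = \Hom(P,-)$ is fully faithful, colimit preserving (as $P$ is compact projective), and left exact. Now suppose $X \in \Acal$ is flat. The claim is that $X = L(M)$ for a flat $A$-module $M$, namely $M := \iota(X)$. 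Indeed $\iota$ preserves flatness: $\iota$ is left exact and colimit preserving, and tensoring over $A$ with $\iota(X)$ should be identified, via the projection formula / the equivalence $\LMod_A(\Acal) \simeq \Acal$-modules in modules, with $\iota \circ (-\otimes X) \circ \iota$-type composites on the relevant generators — more carefully, for an inclusion of left ideals $J \hookrightarrow A$ one checks $J \otimes_A \iota(X) \to \iota(X)$ is a monomorphism by writing it as $L$ applied to, and then $\iota$ reflecting monos from, the corresponding inclusion in $\Acal$. Then by classical Lazard, $M = \iota(X)$ is a filtered colimit $\colim_i F_i$ of finitely generated free, in particular compact projective, $A$-modules. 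Applying the colimit preserving functor $L$ gives $X \simeq L(M) \simeq \colim_i L(F_i)$, and each $L(F_i)$ is a finite direct sum of copies of $L(A) = P$, hence compact projective in $\Acal$ by Remark \ref{remark tensor compact projectives} (or directly). Thus $X$ is a filtered colimit of compact projectives, as desired.

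The step I expect to be the main obstacle — and the one deserving the most care in the write-up — is verifying that the right adjoint $\iota$ of the Gabriel–Popescu localization actually carries flat objects of $\Acal$ to flat $A$-modules. The subtlety is that $\iota$ is only \emph{left} exact, not exact, so one cannot simply say ``$-\otimes_A \iota(X)$ preserves monomorphisms because $\iota$ does''; one must genuinely use that the unit $\mathrm{id} \to \iota L$ is an equivalence on the essential image, that $L$ is exact, and that $L$ sends the canonical monomorphisms $J \otimes_A \iota(X) \hookrightarrow \iota(X)$ (a priori only in $\LMod_A(\Ab)$) to the monomorphisms $(LJ) \otimes X \hookrightarrow X$ in $\Acal$ witnessing flatness of $X$, then invoke that $\iota$, being left exact and fully faithful, reflects monomorphisms. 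An alternative route that sidesteps some of this is to avoid $\iota$ entirely: work directly inside $\Acal$, observe that flatness of $X$ plus Proposition \ref{prop lex localization} applied to $\Ccal = \Acal$ exhibits the slice category of compact projectives mapping to $X$ as filtered (this is the real content — that the maps $P_\alpha \to X$ from compact projectives form a filtered diagram with colimit $X$), which is essentially a reprise of Lazard's original argument in the many-object setting; either way the filteredness of the relevant indexing category is the crux.
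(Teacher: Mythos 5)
The ``if'' direction is fine, though the justification is garbled: what makes $-\otimes X$ left exact for $X$ compact projective is that it admits a \emph{left} adjoint $-\otimes X^\vee$ (remark \ref{remark tensor compact projectives}); a right adjoint is irrelevant to left exactness. The ``only if'' direction has two genuine problems. First, the single-object setup does not exist in general: when the generating family of compact projectives is infinite, its direct sum $P$ is projective but not compact, so $\iota = \Hom(P,-)$ does not preserve filtered colimits, $L$ is a nontrivial left exact localization rather than an equivalence, and---fatally for your last step---$L(A) = P$ is not compact projective, so the objects $L(F_i)$ are merely projective and the argument would only show that $X$ is a filtered colimit of projectives. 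This can be repaired by using the many-object presentation $\Acal \simeq \Pcal^1_\Sigma(\Dcal)$ for a small generating subcategory $\Dcal \subseteq \acal^\cp$ (an honest equivalence, since the generators are compact projective) together with the ringoid version of Lazard's theorem, under which finitely generated projective modules do correspond to compact projective objects of $\Acal$.

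Second, and more importantly, the step you yourself single out as the main obstacle---that flatness of $X$ in the monoidal sense of definition \ref{definition flat abelian} implies flatness of $\iota(X)$ as a module over the endomorphism ring(oid)---is where the entire content of the proposition lies, and your sketch of it does not parse into an argument: $J \otimes_A \iota(X)$ is an abelian group that is not of the form ``$L$ applied to'' or ``$\iota$ applied to'' anything, so neither exactness of $L$ nor mono-reflection by $\iota$ can be brought to bear directly. The comparison between the two flatness notions is precisely where the two standing hypotheses on $\Acal$ enter, and your argument invokes neither: rigidity identifies $\Hom(Q,X)$ with $\Hom(1_\acal, Q^\vee \otimes X)$ for $Q$ compact projective, and compact projectivity of the unit makes $\Hom(1_\acal,-)$ colimit preserving, so that $-\otimes_\Dcal \iota(X)$ gets identified with the composite of the equivalence between left $\Dcal$-modules and $\Acal$, the functor $-\otimes X$, and $\Hom(1_\acal,-)$; only then does left exactness of $-\otimes X$ yield the required monomorphisms $J \otimes_\Dcal \iota(X) \hookrightarrow \iota(X)(Q)$. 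With these inputs your reduction does close, but at that point it is essentially a repackaging of the paper's proof, which is your ``alternative route'': the paper shows directly that the relevant category of elements is filtered, by passing to the left fibration classifying $\Hom(1_\acal, -\otimes X)$, using flatness to produce left cones and compact projectivity of $1_\acal$ to descend the cone point to $\acal^\cp$. As written, your proposal correctly locates the crux but does not close it.
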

\begin{proof}
We first show that if $X$ is a filtered colimit of compact projective objects then it is flat. Since filtered colimits in $\Acal$ are left exact it suffices to consider the case when $X$ is compact projective. This follows from the fact that the functor $- \otimes X : \Acal \rightarrow \Acal$ has a left adjoint.

Assume now that $X$ is flat. Let $\acal^\cp$ be the full subcategory of $\acal$ on the compact projective objects and consider the functor $F(-): (\acal^\cp)^\op \rightarrow \Spc$ represented by $X$. We wish to show that this functor defines an ind-object of $\acal^\cp$. Let $D: \acal^\cp \rightarrow (\acal^\cp)^\op$ be the dualization equivalence. We will prove that $F(D(-)): \acal^\cp \rightarrow \Spc$ defines a pro-object of $\acal^\cp$.

Let $p: \Ecal \rightarrow \acal$ be the left fibration associated to the functor $\Hom_{\acal}(1_\acal, - \otimes X)$. Then the base change of $p$ to $\acal^\cp$ is the left fibration classifying $F(D(-))$. We have to show that every finite diagram $G: \Ical \rightarrow \Ecal \times_\acal \acal^\cp$ admits a left cone. The fact that $X$ is flat implies that the functor $\Hom_{\acal}(1_\acal, - \otimes X)$ is left exact, and therefore $G$ extends to a left cone $G^\lhd: \Ical^\lhd \rightarrow \Ecal$. Let $\overline{Y} = (Y, \rho: 1_\acal \rightarrow Y \otimes X)$ be the value of $G^\lhd$ at the cone point. To show that $G$ extends to a left cone in $\Ecal \times_\acal \acal^\cp$ it is enough to prove that $\overline{Y}$ receives a map from an object in $\Ecal \times_\acal \acal^\cp$. This amounts to showing that there exists a map $Y' \rightarrow Y$ from a compact projective object with the property that $\rho$ factors through $Y' \otimes X$. This follows from the fact that $1_\acal$ is compact projective.
\end{proof}

We now study the behavior of flatness under tensor products.

\begin{proposition}\label{prop tensoring left exact}
Let $f: \Ccal \rightarrow \Ccal'$ and $g: \Dcal \rightarrow \Dcal'$   morphisms in $\Mod_{\Acal}(\Groth_1)$. If $f$ and $g$ are left exact then $f \otimes_{\Acal} g : \Ccal \otimes_\acal \Dcal \rightarrow \Ccal' \otimes_\Acal \Dcal'$ is left exact.
\end{proposition}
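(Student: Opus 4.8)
The plan is to reduce the general case to the case where one of the two functors is an identity, and then further to the case of free modules, where left exactness can be checked by hand using the structure results established above. First I would observe that $f \otimes_{\Acal} g$ factors as the composite
\[
\Ccal \otimes_\Acal \Dcal \xrightarrow{f \otimes_\Acal \id_\Dcal} \Ccal' \otimes_\Acal \Dcal \xrightarrow{\id_{\Ccal'} \otimes_\Acal g} \Ccal' \otimes_\Acal \Dcal',
\]
so it suffices to treat the case $g = \id$ (the other factor being symmetric). Thus we are reduced to showing: if $f : \Ccal \to \Ccal'$ is a left exact morphism in $\Mod_\Acal(\Groth_1)$, then $f \otimes_\Acal \id_\Dcal$ is left exact for every $\Acal$-linear Grothendieck abelian $\Dcal$.

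Next I would reduce to the case where $\Dcal$ is free. Every $\Acal$-linear Grothendieck abelian category $\Dcal$ is the geometric realization of the Bar resolution $\Acal^{\otimes \bullet + 1} \otimes \Dcal$, whose terms are free $\Acal$-modules and whose face maps admit colimit preserving right adjoints (by Proposition \ref{proposition action has colimit preserving adjoint}). Tensoring with $\Ccal$ and $\Ccal'$ over $\Acal$, and using that these realizations are computed in $\Groth_1$ (Corollary \ref{coro tensor products over A} together with Proposition \ref{proposition colimits of right adjointable}), we may try to exhibit $f \otimes_\Acal \id_\Dcal$ as a realization of a simplicial diagram of functors $f \otimes_\Acal \id_{\Acal^{\otimes n+1} \otimes \Dcal}$. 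Since a geometric realization of a simplicial diagram of left exact functors between Grothendieck abelian categories need not be left exact in general, the more robust approach is instead to check left exactness objectwise: a functor between Grothendieck abelian categories is left exact iff it preserves finite limits, and since both source and target are filtered colimits (along left exact transition maps, after passing to right adjoints as in Proposition \ref{proposition colimits of right adjointable}) of the corresponding categories built from the free pieces, and finite limits commute with filtered colimits in a Grothendieck abelian category, it is enough to handle the case $\Dcal = \Acal \otimes \Dcal_0$ free.

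For $\Dcal = \Acal \otimes \Dcal_0$ with $\Dcal_0 \in \Groth_1$ we have $\Ccal \otimes_\Acal \Dcal = \Ccal \otimes \Dcal_0$ and $\Ccal' \otimes_\Acal \Dcal = \Ccal' \otimes \Dcal_0$, and $f \otimes_\Acal \id_\Dcal$ becomes $f \otimes \id_{\Dcal_0}$, the absolute (over $\Ab$) tensor of $f$ with $\Dcal_0$. So the proposition reduces to the statement that the absolute tensor product over $\Groth_1$ of a left exact colimit preserving functor with any Grothendieck abelian category is left exact. The hard part is precisely this absolute statement, and I expect it to be the main obstacle: one needs to know that left exactness is preserved under $- \otimes \Dcal_0$ in $\Groth_1$. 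I would handle it by realizing $f$ itself, via the $\acal$-linear Gabriel–Popescu theorem (Proposition \ref{prop lex localization}) applied with $\Acal = \Ab$ — equivalently the classical Gabriel–Popescu theorem — as a left exact localization $\LMod_A(\Ab) \to \Ccal'$ composed with a functor out of $\LMod_A(\Ab)$; a left exact colimit preserving functor out of a category of modules is determined by a flat module, and flatness is detected by monomorphisms $I \otimes X \to X$, a condition visibly preserved by $- \otimes \Dcal_0$ since that operation is exact. Tensoring the localization $\LMod_A(\Ab) \otimes \Dcal_0 = \LMod_A(\Dcal_0) \to \Ccal' \otimes \Dcal_0$ with $\Dcal_0$ preserves left exactness because left exact localizations are stable under base change in $\Groth_1$, and composing with the flat-module functor tensored with $\Dcal_0$ (still corresponding to a flat object, hence still left exact) gives the claim.
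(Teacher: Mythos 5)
Your opening reduction (factoring $f \otimes_{\Acal} g$ as a composite and treating $f \otimes_\Acal \id_\Dcal$ alone) matches the paper, but each of the two subsequent steps contains a genuine gap. First, the reduction to free modules via the Bar resolution: the Bar construction is indexed by $\Delta^{\op}$, which is sifted but not filtered, so the source and target of $f \otimes_\Acal \id_\Dcal$ are \emph{not} filtered colimits of the free pieces, and commutation of finite limits with filtered colimits cannot be invoked. What one actually has is a limit over $\Delta$ of the diagrams of right adjoints, and to deduce left exactness levelwise one would additionally need the squares relating $f \otimes \id$ to the face maps to be vertically right adjointable --- a Beck--Chevalley condition you do not verify. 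The paper sidesteps all of this by instead using the $\Acal$-linear Gabriel--Popescu theorem (proposition \ref{prop lex localization}) to present $\Ccal$ and $\Dcal$ as left exact localizations of $\LMod_A(\Acal)$ and $\LMod_B(\Acal)$.

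Second, and more seriously, the crux --- that left exactness survives tensoring, equivalently that left exact localizations in $\Groth_1$ are stable under base change --- is asserted rather than proved. This is precisely the hard point: the paper's proof devotes its entire second half to it, identifying exactly which colimit preserving functors factor through $p \otimes q$, concluding that $p \otimes q$ is the localization at the union of the two classes of arrows inverted by $p \otimes \id$ and $\id \otimes q$, and then invoking \cite{SAG} lemma C.4.3.1 to obtain left exactness. Your fallback via flat modules does not close this gap either: the left exact colimit preserving functor $\LMod_A(\Ab) \to \Ccal'$ is classified by a flat right $A$-module $M$ \emph{in $\Ccal'$}, not in $\Ab$, so the criterion ``$I \otimes X \to X$ is a monomorphism for all ideals $I$'' and Lazard-type presentations (proposition \ref{prop lazard classico} applies only to objects of the base) are unavailable, and it is not clear why flatness of $M$ in $\Ccal'$ yields left exactness of the induced functor $\LMod_A(\Dcal_0) \to \Ccal' \otimes \Dcal_0$ without already knowing the proposition.
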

\begin{proof}
Since $f \otimes_\Acal g$ is the composition of $f \otimes_\Acal \id_{\Dcal}$ and $\id_{\Ccal'} \otimes_\Acal g$, it suffices to prove that these two functors are left exact. Changing the role of $f$ and $g$ we may reduce to showing that $f \otimes_\Acal \id_{\Dcal}$ is left exact. Pick an algebra $B$ in $\Acal$ and an $\Acal$-linear left exact localization $q: \LMod_B(\Acal) \rightarrow \Dcal$. We have a commutative square of categories
\[
\begin{tikzcd}
\Ccal\otimes_\Acal \LMod_B(\Acal) \arrow{r}{f \otimes  \id} \arrow{d}{\id \otimes  q} & \Ccal' \otimes_\Acal \LMod_B(\Acal) \arrow{d}{\id \otimes  q} \\
\Ccal \otimes_\Acal \Dcal \arrow{r}{f \otimes  \id} & \Ccal' \otimes_\Acal \Dcal.
\end{tikzcd}
\]
Here the upper horizontal arrow is equivalent to the functor $\LMod_B(\Ccal) \rightarrow \LMod_B(\Ccal')$ induced by $f$, and is therefore left exact. To prove the proposition it will suffice to show that the left vertical arrow is a left exact localization, and that the right vertical arrow is left exact. Changing the role of $\Ccal$ and $\Ccal'$ we see that it suffices to show that the left vertical arrow is a left exact localization.

Pick an algebra $A$ in $\Acal$ and an $\Acal$-linear left exact localization $p: \LMod_A(\Acal) \rightarrow \Ccal$. We now have a commutative square of categories
\[
\begin{tikzcd}
\LMod_A(\Acal) \otimes_\acal \LMod_B(\Acal) \arrow{r}{p \otimes  \id} \arrow{d}{\id \otimes  q} & \Ccal \otimes_\Acal \LMod_B(\Acal) \arrow{d}{\id \otimes   q} \\
\LMod_A(\Acal) \otimes_\acal  \Dcal \arrow{r}{p \otimes \id} & \Ccal \otimes_\Acal \Dcal.
\end{tikzcd}
\]
The upper horizontal arrow is equivalent to the functor $\LMod_B(\LMod_A(\Acal))  \rightarrow \LMod_B(\Ccal)$ induced by $p$, and is therefore a left exact localization. Similarly, the left vertical arrow is a left exact localization. To prove the proposition it will suffice to show that the diagonal map $p \otimes  q$ is a left exact localization as well.

We have that $p \otimes q$ is an epimorphism in $\Pr^L$, with the property that a map 
\[
f: \LMod_A(\Acal) \otimes_\Acal \LMod_B(\Acal)  \rightarrow \Ecal
\]
factors through $\Ccal \otimes_\Acal \Dcal$ if and only if its restriction to $\LMod_A(\Acal) \otimes \LMod_B(\Acal)$ factors through $\Ccal \otimes \Dcal$. Similarly, the upper horizontal arrow (resp. left vertical arrow) is an epimorphism with the property that $f$ factors through it if and only if the restriction of $f$ to $\LMod_A(\Acal) \otimes \LMod_B(\Acal)$  factors through $\Ccal \otimes \LMod_{B}(\Acal)$ (resp. $\LMod_A(\Acal) \otimes \Dcal$). It follows that $f$ factors through $p \otimes  q$ if and only if it factors through both $p \otimes  \id$ and $\id \otimes  q$, so that $p \otimes  q$ is a localization at the union of the class of arrows inverted by the latter two maps. The fact that $p \otimes  q$ is left exact now follows from \cite{SAG} lemma C.4.3.1.
\end{proof}

\begin{corollary}\label{coro tensor flats}
Let $\Ccal, \Dcal$ be $\Acal$-linear Grothendieck abelian categories, and let $X, Y$ be flat objects of $\Ccal$ and $\Dcal$ respectively. Then the object $X \otimes Y$ in $\Ccal \otimes_\Acal \Dcal$ is flat.
\end{corollary}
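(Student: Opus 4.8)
\emph{Proposal.} The plan is to reduce the statement to Proposition~\ref{prop tensoring left exact} by recognizing the functor $- \otimes (X \otimes Y)$ as a relative tensor product, over $\Acal$, of the two functors $- \otimes X$ and $- \otimes Y$.

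First I would invoke the universal property making $\Acal$ the free $\Acal$-linear cocomplete category on one object: the object $X$ of $\Ccal$ corresponds to an $\Acal$-linear colimit preserving functor $F_X \colon \Acal \rightarrow \Ccal$ enhancing $- \otimes X$, and likewise $Y$ corresponds to $F_Y \colon \Acal \rightarrow \Dcal$ enhancing $- \otimes Y$. Since $\Acal$, $\Ccal$ and $\Dcal$ are all $\Acal$-linear Grothendieck abelian categories and $F_X$, $F_Y$ are morphisms in $\Mod_{\Acal}(\Groth_1)$, the hypotheses that $X$ and $Y$ are flat translate (by Definition~\ref{definition flat abelian}) into the statement that $F_X$ and $F_Y$ are left exact. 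Proposition~\ref{prop tensoring left exact} then applies and shows that
\[
F_X \otimes_{\Acal} F_Y \colon \Acal \otimes_{\Acal} \Acal \longrightarrow \Ccal \otimes_{\Acal} \Dcal
\]
is left exact.

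Next I would transport this conclusion along the canonical equivalence $\Acal \otimes_{\Acal} \Acal \simeq \Acal$, obtaining a left exact functor $\Acal \rightarrow \Ccal \otimes_{\Acal} \Dcal$. This functor is $\Acal$-linear and colimit preserving, and it sends the unit $1_{\Acal}$ to $F_X(1_{\Acal}) \otimes F_Y(1_{\Acal}) = X \otimes Y$; hence, again by the universal property of $\Acal$, it must coincide with $- \otimes (X \otimes Y)$. Left exactness of the latter is exactly the assertion that $X \otimes Y$ is flat over $\Acal$.

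The argument is formal once the identification is in place, so the only point requiring care — and thus the main obstacle — is verifying that the composite of $F_X \otimes_{\Acal} F_Y$ with the equivalence $\Acal \simeq \Acal \otimes_{\Acal} \Acal$ genuinely recovers $- \otimes (X \otimes Y)$, i.e. that the bookkeeping collapsing $\Acal \otimes_{\Acal} \Acal$ to $\Acal$ is compatible with all the module structures in play. This I expect to handle by appealing once more to the fact that an $\Acal$-linear colimit preserving functor out of $\Acal$ is determined by its value on the unit, so that matching the two functors on $1_{\Acal}$ suffices.
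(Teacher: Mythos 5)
Your proposal is correct and is essentially identical to the paper's own proof: both identify $- \otimes (X \otimes Y)$ with the composite $\Acal \simeq \Acal \otimes_{\Acal} \Acal \xrightarrow{F_X \otimes F_Y} \Ccal \otimes_{\Acal} \Dcal$ and conclude by Proposition~\ref{prop tensoring left exact}. The final "bookkeeping" check you flag is handled exactly as you suggest, by the universal property of $\Acal$ as the free $\Acal$-linear cocomplete category on one object.
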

\begin{proof}
Let $F: \Acal \rightarrow \Ccal$ (resp. $G: \Acal \rightarrow \Dcal$) be the unique $\Acal$-linear colimit preserving functor sending the unit to $X$ (resp. $Y$). Then $X \otimes Y$ is the image of the unit under the composite functor
\[
\Acal = \Acal \otimes_\Acal \Acal \xrightarrow{F \otimes G} \Ccal \otimes_\Acal \Dcal.
\]
To show that $X \otimes Y$ is flat we must show that the above functor is left exact. This is a direct consequence of proposition \ref{prop tensoring left exact}.
\end{proof}


\subsection{Spectral categories}\label{subsection spectral categories}

We now review the notion of spectral categories from \cite{Spectral}.

\begin{definition}
Let $\Ccal$ be a Grothendieck abelian category. We say that $\Ccal$ is spectral if every exact sequence in $\Ccal$ splits.
\end{definition}

Various finiteness conditions become equivalent for objects in a spectral category:

\begin{proposition}\label{prop equiv finiteness}
Let $\Ccal$ be a spectral category and let $X$ be an object in $\Ccal$. The following are equivalent:
\begin{enumerate}[\normalfont(a)]
\item $X$ is a finite direct sum of simple objects.
\item $X$ is compact.
\item $X$ is finitely generated.\footnote{An object $X$ in a Grothendieck abelian category is said to be finitely generated if $X$ is compact as an object in its poset of subobjects.}
\end{enumerate}
\end{proposition}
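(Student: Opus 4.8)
The plan is to establish the three equivalences by proving $(a)\Rightarrow(c)$, $(c)\Rightarrow(a)$, $(c)\Rightarrow(b)$ and $(b)\Rightarrow(c)$. Beyond $\Ccal$ being Grothendieck abelian, the only feature that will enter is that every short exact sequence in $\Ccal$ splits; in particular every object of $\Ccal$ is projective, and every subobject of an object of $\Ccal$ is also a retract, hence a quotient, of it.

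For $(c)\Rightarrow(b)$, assume $X$ is finitely generated. I would show that the corepresentable functor $\Hom_{\Ccal}(X,-)\colon \Ccal \to \Ab$ preserves all small colimits, which in particular shows that $X$ is compact. Since in a Grothendieck abelian category every small colimit is a cokernel of a map between coproducts, it suffices to check that $\Hom_{\Ccal}(X,-)$ is right exact and preserves coproducts. Right exactness is automatic because $X$ is projective. Preservation of coproducts uses finite generation: for any map $f\colon X \to \bigoplus_\gamma Z_\gamma$, the preimages under $f$ of the finite subcoproducts form a directed family of subobjects of $X$ whose union is $X$ (here one uses exactness of filtered colimits to commute $f^{-1}$ with the union), so $f$ factors through a finite subcoproduct; a short diagram chase then identifies $\Hom_{\Ccal}(X,\bigoplus_\gamma Z_\gamma)$ with $\bigoplus_\gamma \Hom_{\Ccal}(X,Z_\gamma)$. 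For the converse $(b)\Rightarrow(c)$, recall that $X$ is the directed union of its finitely generated subobjects, hence a filtered colimit of them; if $X$ is compact the identity of $X$ factors through one of these subobjects, which must then equal $X$.

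For $(a)\Rightarrow(c)$: if $X = S_1\oplus\cdots\oplus S_n$ with each $S_i$ simple, then an elementary induction valid in any abelian category shows that every subobject of $X$ is semisimple of length at most $n$, and a semisimple subobject of the same length as an ambient semisimple object must coincide with it; hence a directed family of subobjects of $X$ with union $X$ contains a member of maximal length, which is all of $X$, so $X$ is compact in its poset of subobjects. The implication $(c)\Rightarrow(a)$ is where the work lies. First, a finitely generated $X$ is Noetherian: every subobject $Y\subseteq X$ is a direct summand (splitting), hence a quotient of $X$, hence finitely generated, so every ascending chain of subobjects has finitely generated union and therefore stabilizes. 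Noetherianity produces, for each nonzero subobject $Z\subseteq X$, a maximal proper subobject, and thus a simple quotient of $Z$; splitting that quotient off exhibits a simple subobject of $Z$. A socle argument now concludes: were the sum of all simple subobjects of $X$ a proper subobject, a complement of it would be a nonzero subobject containing a nonzero simple subobject, which would then lie in both the socle and a complement of it --- impossible; so $X$ is semisimple, and a semisimple finitely generated object is a finite direct sum of simples.

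The step I expect to be the main obstacle is $(c)\Rightarrow(a)$: in a general Grothendieck abelian category a finitely generated object need be neither Noetherian nor semisimple, and this is precisely the place where the splitting hypothesis must be used --- once to see that subobjects are quotients (so the object is Noetherian) and once to turn simple quotients into simple subobjects and run the socle argument. I also note that the argument for $(c)\Rightarrow(a)$ upgrades with essentially no extra effort to the statement that \emph{every} object of a spectral category is a direct sum of simple objects: any nonzero object contains a nonzero finitely generated --- hence Noetherian --- subobject, hence a simple subobject, so its socle is the whole object.
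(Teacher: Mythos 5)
Your arguments for (a)$\Leftrightarrow$(c) and for (c)$\Rightarrow$(b) are correct and close in spirit to the paper's. The paper likewise deduces Noetherianity of a finitely generated $X$ from the fact that its subobjects are summands, but then proves $X$ is also Artinian (by building an increasing chain of complements of a decreasing chain) to conclude finite length, where you instead run a socle argument; and the paper proves compactness only for simple objects (which suffices once (c)$\Rightarrow$(a) is known), where you prove it directly for any finitely generated object. Both variants work, and your factorization-through-a-finite-subcoproduct argument is the standard one.

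There is, however, a genuine error in your (b)$\Rightarrow$(c) step: you invoke the claim that every object of $\Ccal$ is the directed union of its finitely generated subobjects. That is the assertion that $\Ccal$ is locally finitely generated, which is \emph{not} automatic for a Grothendieck abelian category, and it fails for precisely the spectral categories this proposition is meant to cover: by the proposition itself the finitely generated objects are the finite direct sums of simples, so if every object were the union of its finitely generated subobjects then $\Ccal$ would be semisimple, whereas proposition \ref{prop equivalences semisimple} shows that semisimplicity is a strictly stronger condition than spectrality. Fortunately the implication (b)$\Rightarrow$(c) needs no such input: given a directed family of subobjects $X_\alpha$ of $X$ with union $X$, exactness of filtered colimits identifies $X$ with $\colim X_\alpha$, so compactness lets $\id_X$ factor through some $X_\alpha$, forcing $X_\alpha = X$. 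The same misconception undermines your closing remark: it is \emph{not} true that every object of a spectral category is a direct sum of simple objects, since a nonzero object need not contain any nonzero finitely generated subobject; that conclusion requires the additional hypothesis of local finite generation, which is exactly the content of proposition \ref{prop equivalences semisimple}.
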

\begin{proof}
Condition (b) clearly implies (c). Assume now that $X$ is finitely generated. Since every subobject of $X$ is a direct summand of $X$, we see that every subobject of $X$ is also finitely generated. Hence $X$ is Noetherian. Assume now given a decreasing sequence of subobjects $X_n$ of $X$. Using the fact that $\Ccal$ is spectral we may inductively construct a sequence of complements $X_n^c$ for $X_n$ with the property that $X_n^c \subseteq X_{n+1}^c$ for all $n$. Since $X$ is Noetherian we have that the sequence $X_n^c$ is eventually constant, and hence $X_n$ is eventually constant as well. We conclude that $X$ is also Artinian, and so it has finite length. The fact that $X$ is spectral now implies that $X$ is a finite direct sum of simple objects. Thus we see that (c) implies (a).

It remains to show that (a) implies (b). For this it suffices to show that if $S$ is a simple object in $\Ccal$, then $S$ is compact. We will do so by showing that  $\Hom^\enh_\Ccal(S, -) : \Ccal \rightarrow \Ab$ preserves colimits. The fact that it is right exact follows from the fact that $\Ccal$ is spectral. We may therefore reduce to showing that $\Hom^\enh_\Ccal(S, -)$ preserves infinite direct sums. 

Let $Y_\alpha$ be a family of objects of $\Ccal$ indexed by a set $\Lambda$. We need to show that the map
\[
\bigoplus_{\alpha \in \Lambda} \Hom^\enh_\Ccal(S, Y_\alpha) \rightarrow \Hom^\enh_\Ccal(S, \bigoplus_{\alpha \in \Lambda} Y_\alpha)
\]
is an isomorphism. The fact that the above is a monomorphism is a general fact about Grothendieck abelian categories (and does not use the simplicity of $S$). It remains to show that every morphism $S \rightarrow \bigoplus_{\alpha \in \Lambda} Y_\alpha$ factors through $\bigoplus_{\alpha \in \Lambda'} Y_\alpha$ for some finite subset $\Lambda' \subseteq \Lambda$. This follows from the fact that $S$ is finitely generated, since $\bigoplus_{\alpha \in \Lambda}Y_\alpha$ is the filtered union of the subobjects $\bigoplus_{\alpha \in \Lambda'} Y_\alpha$ over all finite $\Lambda'$.
\end{proof}

The most familiar spectral categories are the semisimple ones, which admit a number of equivalent characterizations:

\begin{proposition}\label{prop equivalences semisimple}
Let $\Ccal$ be a Grothendieck abelian category. The following are equivalent:
\begin{enumerate}[\normalfont (a)]
\item $\Ccal$ is locally finitely generated\footnote{A Grothendieck abelian category $\Ccal$ is said to be locally finitely generated if it is generated by its finitely generated objects.} and spectral.
\item $\Ccal$ is generated by compact projective objects and spectral.
\item Every object of $\Ccal$ is a direct sum of simple objects.
\end{enumerate}
\end{proposition}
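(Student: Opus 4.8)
The plan is to establish the cycle \textnormal{(a)} $\Rightarrow$ \textnormal{(b)} $\Rightarrow$ \textnormal{(c)} $\Rightarrow$ \textnormal{(a)}, with most of the work in \textnormal{(a)} $\Leftrightarrow$ \textnormal{(b)} already done by Proposition \ref{prop equiv finiteness}. The only preliminary I need is that in a spectral Grothendieck abelian category \emph{every} object is projective: given an epimorphism $g \colon Y \to Z$ and a map $h \colon W \to Z$, the pullback $Y \times_Z W \to W$ is again an epimorphism, hence split by spectrality, and composing a splitting with $Y \times_Z W \to Y$ lifts $h$. Now if $\Ccal$ satisfies \textnormal{(a)} it is generated by its finitely generated objects; by Proposition \ref{prop equiv finiteness} these are precisely the compact objects, and they are projective by the remark just made, so $\Ccal$ is generated by compact projective objects and \textnormal{(b)} holds. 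Conversely, if \textnormal{(b)} holds then $\Ccal$ is generated by compact objects, which by Proposition \ref{prop equiv finiteness} are finitely generated, so $\Ccal$ is locally finitely generated and \textnormal{(a)} holds.

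For \textnormal{(b)} $\Rightarrow$ \textnormal{(c)} the first step is to show that every nonzero object $X$ of $\Ccal$ contains a simple subobject. Choosing a compact projective object $P$ with a nonzero map $P \to X$ (possible since such objects generate $\Ccal$), the image $P' \subseteq X$ is nonzero and is a quotient of $P$; by spectrality this quotient splits, so $P'$ is a retract of $P$ and hence compact, and Proposition \ref{prop equiv finiteness} exhibits $P'$ as a nonzero finite direct sum of simple objects, any summand of which is a simple subobject of $X$. With this in hand I run a Zorn's lemma argument: among all sets $\{S_i\}_{i \in I}$ of simple subobjects of $X$ whose sum in $X$ is direct, pick a maximal one and set $S = \bigoplus_{i \in I} S_i \subseteq X$. (Zorn applies because ``the sum is direct'' has finite character: the canonical map $\bigoplus_{i \in I} S_i \to X$ is the filtered colimit over finite $I' \subseteq I$ of the maps $\bigoplus_{i \in I'} S_i \to X$, and filtered colimits of monomorphisms are monomorphisms.) Spectrality produces a complement $X = S \oplus S'$; if $S' \neq 0$ then $S'$ contains a simple subobject $T$ by the previous step, and adjoining $T$ to the family keeps the sum direct since $S \cap T = 0$, contradicting maximality. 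Hence $X = S = \bigoplus_{i \in I} S_i$, which is \textnormal{(c)}.

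For \textnormal{(c)} $\Rightarrow$ \textnormal{(a)} I first check that every short exact sequence $0 \to A \to B \to C \to 0$ in $\Ccal$ splits: writing $B = \bigoplus_i S_i$ and choosing, by Zorn, a maximal subset $J$ of indices with $A \cap \bigoplus_{i \in J} S_i = 0$, the modular law forces $A \oplus \bigoplus_{i \in J} S_i = B$, so $A$ is a direct summand and $\Ccal$ is spectral. Moreover each simple object is finitely generated --- its poset of subobjects is the two-element poset $0 \subsetneq S$ --- and by \textnormal{(c)} the simple objects generate $\Ccal$, since every object is the coproduct of its simple summands. Hence $\Ccal$ is locally finitely generated, completing \textnormal{(a)} and the cycle.

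I expect the middle implication \textnormal{(b)} $\Rightarrow$ \textnormal{(c)} to be the main obstacle. The two points requiring care are that the Zorn argument is run over an honest set --- which is fine, since the simple subobjects of a fixed object form a set as $\Ccal$ is locally small --- and that directness of an infinite family of subobjects is a finite-character condition, for which one uses precisely the exactness of filtered colimits in a Grothendieck abelian category. Everything else is routine: the equivalence \textnormal{(a)} $\Leftrightarrow$ \textnormal{(b)} rests entirely on Proposition \ref{prop equiv finiteness}, and the ``semisimple implies split exact sequences'' step is the standard modular-law argument.
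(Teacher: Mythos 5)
Your proof is correct, and its skeleton is the same as the paper's: reduce (a)$\Leftrightarrow$(b) to Proposition \ref{prop equiv finiteness}, and prove the main implication by repeatedly extracting simple subobjects and using spectrality to split them off. The one structural difference is in the bookkeeping for that main implication: the paper runs a transfinite recursion, building a strictly increasing chain $X_\alpha$ of direct sums of simples until it exhausts $X$, whereas you take a Zorn-maximal family of simple subobjects with direct sum and show its complement must vanish. These are interchangeable; your route requires the explicit finite-character lemma (directness of an infinite family is detected on finite subfamilies, via exactness of filtered colimits), while the paper's route needs the corresponding check at limit ordinals, so neither buys much over the other. Two smaller points in your favor: you supply the argument that every object of a spectral category is projective (needed for (a)$\Rightarrow$(b) and left implicit in the paper, which attributes the whole equivalence to Proposition \ref{prop equiv finiteness}), and you prove (c)$\Rightarrow$(a) honestly via the maximality/modularity splitting argument, whereas the paper again waves at Proposition \ref{prop equiv finiteness} --- note that to invoke that proposition one must already know $\Ccal$ is spectral, which is exactly what (c)$\Rightarrow$(a) has to establish. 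One cosmetic caution: for the Zorn arguments, the right justification that subobjects of a fixed object form a set is well-poweredness (which holds since $\Ccal$ is presentable), not merely local smallness.
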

\begin{proof}
The fact that (a) and (b) are equivalent, and the fact that (c) implies these, are both consequences of proposition \ref{prop equiv finiteness}. It remains to show that if (a) and (b) hold then every object of $\Ccal$ is a direct sum of simple objects. Let $X$ be an object of $\Ccal$. We construct a strictly increasing transfinite sequence of subobjects $X_\alpha$ of $X$ by induction as follows:
\begin{itemize}
\item Let $X_0 = 0$.
\item If $\alpha$ is a limit ordinal then we let $X_\alpha = \colim_{\beta < \alpha} X_\beta$.
\item Assume $\alpha = \beta + 1$ is a successor ordinal and $X_\alpha \neq X$. Choose a complement $Y$ for $X_\beta$ inside $X$. Since $\Ccal$ is assumed to be locally finitely generated we may pick a nonzero finitely generated subobject $Y'$ of $Y$. Since $\Ccal$ is spectral, an application of proposition \ref{prop equiv finiteness} shows that $Y'$ contains a simple subobject $S$. We let $X_\alpha = X_\beta \oplus S$.
\end{itemize}
The above construction ends whenever it reaches a small cardinal $\alpha$ with $X_\alpha = X$. The proposition now follows from the fact that $X_\alpha$ is a direct sum of simple objects for all $\alpha$.
\end{proof}

\begin{definition}
Let $\Ccal$ be a Grothendieck abelian category. We say that $\ccal$ is semisimple if it satisfies the equivalent conditions of proposition \ref{prop equivalences semisimple}.
\end{definition}

We will be interested in understanding spectral categories linear over a base. We fix for the remainder of this section a symmetric monoidal Grothendieck abelian category $\acal$, rigid and generated by compact projective objects.  We will need the notion of von Neumann regularity for algebras in $\acal$. Before introducing this notion we recall some basic concepts from ring theory in the relative context:

\begin{definition}
Let $A$ be an algebra in $\Acal$. 
\begin{itemize}
\item A left ideal of $A$ is a subobject of $A$ in $\LMod_A(\Acal)$.
\item A left $A$-module $M$ is said to be flat if the functor $- \otimes_A M : \RMod_A(\acal) \rightarrow \acal$ is left exact.
\item We say that $R$ is left self-injective if $R$ is an injective object of $\LMod_A(\acal)$.
\end{itemize}
We also define the right variants of the above notions in a similar way.
\end{definition}

\begin{lemma}\label{lemma equivalences dualizable}
Let $A$ be an algebra in $\Acal$ and let $M$ be a left $A$-module. The following are equivalent:
\begin{enumerate}[\normalfont(a)]
\item The left module $M$ admits a left dual.
\item The functor $- \otimes_A M : \RMod_A(\acal) \rightarrow \acal$   preserves limits.
\item $M$ is finitely generated projective.
\item $M$ is a retract of a free left $A$-module on a dualizable object of $\Acal$.
\item $M$ is finitely presented and flat.
\end{enumerate}
\end{lemma}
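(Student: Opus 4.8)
The plan is to prove the cycle of implications $(a) \Leftrightarrow (b) \Rightarrow (d) \Rightarrow (c) \Rightarrow (e) \Rightarrow (a)$, exploiting the fact that $\Acal$ is rigid and compactly generated by projective objects at exactly the point where passage from $\Acal$ to $A$ requires it. First I would dispatch $(a) \Leftrightarrow (b)$ abstractly: in the symmetric monoidal setting a left dual of $M$ in the $\RMod_A(\Acal)$-$\Acal$ bimodule picture is precisely a right adjoint to $- \otimes_A M$ that is itself given by tensoring, and since colimit-preserving functors out of $\RMod_A(\Acal)$ automatically correspond to bimodules, admitting a left dual is equivalent to $- \otimes_A M$ having a colimit-preserving right adjoint. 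That right adjoint exists as an ordinary functor between presentable categories by the adjoint functor theorem; the real content of $(b)$ is that the right adjoint is again colimit preserving, i.e. $- \otimes_A M$ preserves limits, which is the standard reformulation of dualizability for a one-object-generated module category.

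For $(a) \Rightarrow (d)$: write $M$ as a filtered colimit, or rather observe that $\LMod_A(\Acal)$ is generated by the compact projective objects $X \otimes A$ with $X$ a compact projective (equivalently, dualizable) object of $\Acal$, by Remark \ref{remark tensor compact projectives} and the Gabriel–Popescu analysis. A dualizable object of a compactly generated category built this way is a retract of a finite colimit of generators; here finiteness plus the rigidity of $\Acal$ collapses such a finite colimit into a single $X \otimes A$ with $X$ dualizable in $\Acal$ (using that dualizable objects of $\Acal$ are closed under finite direct sums and that the generators $X \otimes A$ are closed under the relevant operations). Thus $M$ is a retract of some $X \otimes A$, which is $(d)$. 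For $(d) \Rightarrow (c)$: a free module $X \otimes A$ on a dualizable $X$ is finitely generated projective because $X$ is compact projective in $\Acal$ (rigidity), tensoring with $A$ is monoidal, and retracts of finitely generated projectives are finitely generated projective. For $(c) \Rightarrow (e)$: finitely generated projective immediately gives finitely presented, and flatness of a projective module is formal — it is a retract of a free module, free modules are flat since $- \otimes_A (X \otimes A) = X \otimes -$ is left exact (again by rigidity of $\Acal$, $X$ dualizable implies $X \otimes -$ preserves limits), and retracts of flat modules are flat.

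The substantive implication, and where I expect the main obstacle, is $(e) \Rightarrow (a)$ — a Lazard-type argument in the relative setting. The plan is to show that a finitely presented flat left $A$-module $M$ is a retract of a free module on a dualizable object, by combining Proposition \ref{prop lazard classico} with a finite-presentation argument. Since $M$ is flat, I would like to exhibit $M$ as a filtered colimit of modules of the form $X \otimes A$ with $X$ dualizable in $\Acal$; the obstruction is that flatness of $M$ as an $A$-module is a statement about $- \otimes_A M$ on $\RMod_A(\Acal)$, not directly about $- \otimes M$ on $\Acal$, so one cannot quote Proposition \ref{prop lazard classico} verbatim and must instead run the analogous fibration/ind-object argument with $\RMod_A(\Acal)$ and its compact projective generators $X \otimes A$ in place of $\Acal$ and its compact projectives. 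Concretely, one considers the functor on $(\RMod_A(\Acal)^{\mathrm{cp}})^{\mathrm{op}}$ represented by $M$ under $- \otimes_A M$, shows flatness makes it left exact hence a filtered colimit of corepresentables, so $M = \colim_i (X_i \otimes A)$ filtered with each $X_i$ dualizable; then finite presentation forces the identity of $M$ to factor through one stage $X_i \otimes A$, exhibiting $M$ as a retract of it, which is $(d)$ — and we have already shown $(d) \Rightarrow (a)$ modulo closing the loop, so it suffices to observe $(d) \Rightarrow (a)$ directly since a retract of a dualizable object is dualizable and $X \otimes A$ is dualizable (its dual being $X^\vee \otimes A$ as a right module, the duality data coming from that of $X$ in $\Acal$ and of $A$ over itself). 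The delicate point throughout is keeping track of which ``compactness'' and ``projectivity'' are used where, and ensuring that the rigidity hypothesis on $\Acal$ is what licenses replacing arbitrary finite colimits of generators by a single dualizable-on-$A$ generator.
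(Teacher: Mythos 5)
Your overall cycle is workable, but two of its links have genuine problems. First, the identification underlying (a) $\Leftrightarrow$ (b) is inconsistent: you assert that a left dual amounts to a colimit-preserving right adjoint of $- \otimes_A M$, and then gloss this as ``$- \otimes_A M$ preserves limits.'' These are different conditions. Take $M = A$: the functor $- \otimes_A A$ is the forgetful functor $\RMod_A(\Acal) \rightarrow \Acal$, which preserves all limits (so (b) holds, as it must, since $A$ is dualizable over itself), yet its right adjoint is the cofree module functor, which is colimit preserving only when $A$ itself is dualizable in $\Acal$. The correct dictionary, and the one the paper uses, is that a left dual of $M$ corresponds to a \emph{left} adjoint of $- \otimes_A M$; rigidity of $\Acal$ guarantees that any such left adjoint is automatically $\Acal$-linear, and the adjoint functor theorem then converts ``preserves limits'' into ``admits a left adjoint,'' giving (a) $\Leftrightarrow$ (b).

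Second, your argument for (a) $\Rightarrow$ (d) does not go through: a retract of a \emph{finite colimit} of the generators $X \otimes A$ cannot be collapsed to a retract of a single $X \otimes A$ --- a coequalizer of free modules is an arbitrary finitely presented module, and finite colimits of projectives need not be projective. What you actually need is that $M$ is compact \emph{projective}, i.e.\ a retract of a finite \emph{direct sum} of generators, and that is precisely what is not yet known at this stage. The paper's device is to pass to the dual: the duality datum gives $\Hom_{\RMod_A(\Acal)}(M^\vee, -) = \Hom_\Acal(1_\acal, - \otimes_A M)$, which preserves sifted colimits because $1_\acal$ is compact projective; hence $M^\vee$ is a compact projective right $A$-module, so a retract of a single $V \otimes A$ with $V$ dualizable, and dualizing back yields (d) for $M$. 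Finally, your route for (e) $\Rightarrow$ (d) leans on a relative Lazard theorem for $A$-modules and on ``finitely presented implies compact,'' neither of which is established (and proposition \ref{prop lazard classico} does not apply verbatim, as you note). The paper avoids both: it chooses a presentation $P' \rightarrow P \rightarrow M \rightarrow 0$ with $P, P'$ finitely generated projective and runs a five-lemma argument against products (which are exact in $\Acal$ since it is generated by compact projectives) to show that $- \otimes_A M$ preserves products; combined with flatness this gives (b) directly, with no Lazard input.
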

\begin{proof}
The existence of a left dual to $M$ is equivalent to the existence of an $\Acal$-linear left adjoint to $-\otimes_A M$. Since $\Acal$ is generated under colimits by its dualizable objects any left adjoint to $- \otimes_A M$ is automatically $\Acal$-linear. The equivalence of (a) and (b) now follows directly from the adjoint functor theorem.

The fact that (c) implies (d) is a direct consequence of the fact that $\LMod_A(\Acal)$ is generated by free modules on dualizable objects. The fact that (d) implies (c) follows from the fact that the property of being compact projective is stable under retracts and passage to free modules. 

We now show that (a) implies (d). If $M$ admits a left dual $M^\vee$ then  we have 
\[
\Hom_{\RMod_A(\Acal)}(M^\vee, -) = \Hom_\Acal(1_\acal, - \otimes_A M)
\] which preserves sifted colimits since the unit in $\Acal$ is compact projective. It follows that in this case $M^\vee$ is a compact projective right $A$-module. Since $\RMod_A(\Acal)$ is generated by objects of the form $A \otimes V$ with $V$ a dualizable object of $\Acal$ we conclude that $M^\vee$ is a retract of $V \otimes A$ for some dualizable $V$. The right $A$-module $V \otimes A$ admits a right dual given by $V^\vee \otimes A$. It follows that $M$ is a retract of $V^\vee \otimes A$, so that (d) holds.

Assume now that (d) holds, so that $M$ is retract of $A \otimes V$ for some dualizable $V$. Then $- \otimes_A M$ is a retract of the composite functor
\[
\RMod_A(\Acal) \xrightarrow{ - \otimes_A A}  \Acal \xrightarrow{ \otimes V} \Acal.
\]
Each of the two functors above preserve limits. It follows that $- \otimes_A M$ preserves limits, so that (b) holds.

It remains to show that properties (a) through (d) are equivalent to (e). Assume first that (a) through (d) hold. The flatness of $M$ is then a consequence of (b), while the fact that $M$ is finitely presented follows from (d).
 
We finish the proof by showing that (e) implies (b). Since $M$ is flat, it is enough to show that $- \otimes_A M$ preserves products. Pick an exact sequence $P' \rightarrow P \rightarrow M \rightarrow 0$ with $P, P'$ finitely generated projective left $A$-modules. Let $N_\alpha$ be a family of right $A$-modules. Then we have a commutative diagram
\[
\begin{tikzcd}
(\prod_\alpha N_\alpha) \otimes_A P' \arrow{d}{} \arrow{r}{} & (\prod_\alpha N_\alpha) \otimes_A P \arrow{r}{} \arrow{d}{} & (\prod_\alpha N_\alpha) \otimes_A M \arrow{r}{} \arrow{d}{} & 0  \\
\prod_\alpha N_\alpha \otimes_A P' \arrow{r}{} &\prod_\alpha N_\alpha \otimes_A P \arrow{r}{} & \prod_\alpha N_\alpha \otimes_A M \arrow{r}{}& 0.
\end{tikzcd}
\]
The upper row is evidently exact, and the bottom row is exact since $\Acal$ is generated by compact projective objects (and thus products are exact in $\acal$). We now finish by observing that the left and middle vertical arrows are isomorphisms, by applying the equivalence of (b) and (c) for the modules $P$ and $P'$.
\end{proof}

\begin{proposition}\label{prop equivalences VN}
Let $A$ be an algebra in $\Acal$. The following are equivalent:
\begin{enumerate}[\normalfont(a)]
\item Every finitely generated submodule of a finitely generated projective left $A$-module is a direct summand.
\item Every finitely generated submodule of a finitely generated projective right $A$-module is a direct summand.
\item Every finitely presented left $A$-module is projective.
\item Every finitely presented right $A$-module is projective.
\item Every left $A$-module is flat.
\item Every right $A$-module is flat.
\end{enumerate}
\end{proposition}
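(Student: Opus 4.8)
The plan is to prove the two ``one--sided'' chains $\mathrm{(a)}\Rightarrow\mathrm{(e)}\Rightarrow\mathrm{(c)}\Rightarrow\mathrm{(a)}$ and $\mathrm{(b)}\Rightarrow\mathrm{(f)}\Rightarrow\mathrm{(d)}\Rightarrow\mathrm{(b)}$, and then to connect the two sides through the equivalence $\mathrm{(e)}\Leftrightarrow\mathrm{(f)}$. Throughout I use the following elementary facts about $\LMod_A(\Acal)$ and $\RMod_A(\Acal)$: both are Grothendieck abelian categories generated by the free modules $A\otimes X$ on compact projective objects $X$ of $\Acal$ (equivalently, by rigidity, on dualizable objects); each such free module is projective, since $\Hom(A\otimes X,-)$ is the composite of the exact forgetful functor with $\Hom_\Acal(X,-)$; hence both module categories have enough projectives. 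Moreover the relative tensor product $\otimes_A\colon\RMod_A(\Acal)\times\LMod_A(\Acal)\to\Acal$ is additive and right exact in each variable, and it carries a free module on a dualizable object to an exact functor (for $X$ dualizable, $(A\otimes X)\otimes_A-\simeq X\otimes(-)\colon\Acal\to\Acal$, which is exact because $X$ has a dual). Consequently the left derived functors $\mathrm{Tor}^A_\bullet$ exist and, resolving both variables by free modules on dualizable objects and running the two hyper--Tor spectral sequences of the resulting double complex, they are \emph{balanced}: they may be computed by resolving either variable.

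\textbf{The bridge $\mathrm{(e)}\Leftrightarrow\mathrm{(f)}$.} Granting balancing, this is immediate: a left module $M$ is flat iff $\mathrm{Tor}^A_1(-,M)=0$, and a right module $N$ is flat iff $\mathrm{Tor}^A_1(N,-)=0$, so both $\mathrm{(e)}$ and $\mathrm{(f)}$ assert exactly that $\mathrm{Tor}^A_1$ vanishes identically.

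\textbf{The formal steps on each side.} For $\mathrm{(c)}\Leftrightarrow\mathrm{(a)}$ (and, with left and right interchanged, $\mathrm{(d)}\Leftrightarrow\mathrm{(b)}$): if $\mathrm{(c)}$ holds and $N\subseteq P$ is a finitely generated submodule of a finitely generated projective, choose an epimorphism $A\otimes X'\to N$ from a free module on a compact projective $X'$; then $P/N$ is the cokernel of the composite $A\otimes X'\to P$, hence finitely presented, hence projective by $\mathrm{(c)}$, so $0\to N\to P\to P/N\to 0$ splits and $N$ is a direct summand of $P$. Conversely, if $\mathrm{(a)}$ holds and $M$ is finitely presented, pick a presentation $P_1\to P_0\to M\to 0$ by finitely generated projective modules; the image of $P_1\to P_0$ is finitely generated and lies in the finitely generated projective $P_0$, hence is a direct summand by $\mathrm{(a)}$, so $M$ is a direct summand of $P_0$ and therefore projective. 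Finally $\mathrm{(e)}\Rightarrow\mathrm{(c)}$ (and $\mathrm{(f)}\Rightarrow\mathrm{(d)}$) is Lemma~\ref{lemma equivalences dualizable}: a finitely presented module is flat by $\mathrm{(e)}$, hence finitely generated projective.

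\textbf{The nonformal step $\mathrm{(a)}\Rightarrow\mathrm{(e)}$.} Let $M$ be an arbitrary left module and fix an epimorphism $\pi\colon P\to M$ with $P=\bigoplus_i A\otimes X_i$ a direct sum of free modules on compact projectives, and let $K=\ker\pi$. Since $P$ is flat, the long exact sequence of $\mathrm{Tor}^A$ identifies $\mathrm{Tor}^A_1(N,M)$ with $\ker(N\otimes_A K\to N\otimes_A P)$ for every right module $N$, and $\mathrm{Tor}^A_1(-,M)=0$ already forces $-\otimes_A M$ to be left exact, i.e. $M$ to be flat. Now write $K$ as the filtered colimit of its finitely generated submodules $K_j$. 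Since $P$ is the filtered union of its finite partial sums, each $K_j$ lies in some $P_j=\bigoplus_{i\in F_j}A\otimes X_i$, which is finitely generated projective and a direct summand of $P$; so $K_j$ is a direct summand of $P_j$ by $\mathrm{(a)}$, hence of $P$, and $N\otimes_A K_j\to N\otimes_A P$ is a split monomorphism. Because $N\otimes_A-$ preserves filtered colimits and filtered colimits are exact in a Grothendieck abelian category, $N\otimes_A K\to N\otimes_A P$ is a filtered colimit of monomorphisms with fixed target, hence a monomorphism, and $M$ is flat. Combining the two chains with $\mathrm{(e)}\Leftrightarrow\mathrm{(f)}$ yields the equivalence of all six conditions. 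The main obstacle is precisely this passage between the left and the right conditions: classically it is the manifest symmetry of ``$axa=a$'', but in the element--free relative setting one is forced to route it through the symmetry of the derived tensor product, so the existence, balancing, and filtered--colimit compatibility of relative $\mathrm{Tor}$ carry the real weight of the argument.
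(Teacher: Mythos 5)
Your proof is correct, and the local pieces (the formal equivalences $(a)\Leftrightarrow(c)$ and $(e)\Leftrightarrow(c)$ via Lemma \ref{lemma equivalences dualizable}) coincide with the paper's. Where you genuinely diverge is the left--right bridge. The paper proves $(a)\Rightarrow(f)$ directly: given a monomorphism $i\colon M'\to M$ of left modules and a right module $N$, it reduces by filtered colimits first to $M$ finitely presented (hence projective by $(c)$) and then to $M'$ finitely generated, at which point $(a)$ makes $i$ split and $N\otimes_A i$ is trivially monic. You instead stay on the left side, proving $(a)\Rightarrow(e)$ by splitting the finitely generated submodules of the kernel of a free cover --- structurally the same ``filtered colimit of split monomorphisms'' reduction, just applied to a presentation of $M$ rather than to an arbitrary inclusion --- and then cross sides via the symmetry of a balanced relative $\mathrm{Tor}$. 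Your balancing claim does hold here (both module categories have enough projectives generated by free modules on dualizable objects, and such free modules are flat on either side since $(A\otimes X)\otimes_A-\simeq X\otimes-$ is exact for $X$ dualizable), so the double-complex argument goes through; but note that even your same-side step $(a)\Rightarrow(e)$ quietly needs a piece of this machinery, namely the long exact sequence of $\mathrm{Tor}_1(-,M)$ in the first variable, to pass from ``$N\otimes_A K\to N\otimes_A P$ is monic for all $N$'' to left exactness of $-\otimes_A M$. The trade-off is clear: the paper's route is entirely elementary and never mentions derived functors, at the cost of making the left--right passage look like an ad hoc reduction; your route invests in setting up balanced relative $\mathrm{Tor}$, which is more infrastructure than the proposition requires but makes the symmetry between $(e)$ and $(f)$ conceptually transparent.
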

\begin{proof}
Let $N \subseteq M$ be a finitely generated submodule of a finitely generated projective left $A$-module. Then $M/N$ is a finitely presented left $R$-module. Conversely, every finitely presented left $A$-module may be written in such a way. It follows from this that (a) and (c) are equivalent.

Since every left $A$-module is a filtered colimit of finitely presented left $A$-modules, and filtered colimits preserve flatness, we see that (c) implies (e). The fact that (e) implies (c) is a direct consequence of lemma \ref{lemma equivalences dualizable}.

The same arguments applied to $A^\op$ prove the equivalence of (b), (d), and (f). To finish it suffices to show that the left versions imply the right versions. Assume that (a) holds. Let $i: M' \rightarrow M$ be an inclusion of left $A$-modules and let $N$ be a right $A$-module. We will show that $N \otimes_A i$ is a monomorphism.

Write $M$ as a filtered colimit of a family of finitely presented left $A$-modules $M_\alpha$. Then $i$ is a filtered colimit of the induced inclusions $i_\alpha: M' \times_{M} M_\alpha \rightarrow M_\alpha$. It suffices to show that $N \otimes_A i_\alpha$ is a monomorphism for all $\alpha$. In other words, we may reduce to the case when $M$ is finitely presented.

Write $M'$ as a filtered union of finitely generated subobjects $M'_\beta$. Then $i$ is the filtered colimit of the inclusions $i_\beta: M'_\beta \rightarrow M$, and it suffices to show that $N \otimes_A i_\beta$ is a monomorphism for all $\beta$. In other words, we may further reduce to the case when $M'$ is finitely generated. This now follows from the fact that $i$ is the inclusion of a summand.
\end{proof}

\begin{definition}
Let $A$ be an algebra in $\Acal$. We say that $A$ is von Neumann regular if it satisfies the equivalent conditions of proposition \ref{prop equivalences VN}.
\end{definition}

The following is an $\acal$-linear version of \cite{Spectral} theorem 2.1:

\begin{proposition}\label{prop classif spectral}
Let $\Ccal$ be an $\Acal$-linear spectral category. Then there exists a left self-injective von Neumann regular algebra $A$ in $\Acal$ and an $\Acal$-linear left exact localization 
\[
\LMod_A(\Acal) \rightarrow \Ccal.
\]
\end{proposition}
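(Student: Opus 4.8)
The plan is to deduce the statement from the $\acal$-linear Gabriel--Popescu theorem (proposition \ref{prop lex localization}), applied to an arbitrary $\acal$-generator of $\Ccal$, and then to read off von Neumann regularity and self-injectivity of the resulting endomorphism algebra from spectrality of $\Ccal$. Concretely, I would choose any $\acal$-generator $G$ for $\Ccal$; such a $G$ exists, since $\Ccal$ is presentable and any generator of the underlying abelian category is automatically an $\acal$-generator (the family $X \otimes G$ with $X$ in $\acal$ contains $1_\acal \otimes G = G$). Letting $A$ be the opposite of the algebra of endomorphisms of $G$ associated to the action of $\acal$, proposition \ref{prop lex localization} gives an $\acal$-linear left exact localization $L := G \otimes_A - \colon \LMod_A(\acal) \rightarrow \Ccal$; denote by $j$ its fully faithful right adjoint, so that $j(G) = A$ in $\LMod_A(\acal)$. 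It then remains to check that $A$ is left self-injective and von Neumann regular. Self-injectivity is easy: since $\Ccal$ is spectral, every monomorphism in $\Ccal$ splits, so every object of $\Ccal$ is injective; in particular $G$ is injective. As $L$ is left exact and preserves colimits it is exact, hence $j$ preserves injective objects, and therefore $A = j(G)$ is injective in $\LMod_A(\acal)$.

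For von Neumann regularity I would verify condition (a) of proposition \ref{prop equivalences VN}. Using lemma \ref{lemma equivalences dualizable} to write finitely generated projective modules as retracts of free modules on dualizable (equivalently, compact projective) objects of $\acal$, together with the modular law for subobject lattices, this reduces to showing that for every morphism $\phi \colon X' \otimes A \rightarrow X \otimes A$ with $X, X'$ compact projective in $\acal$, the subobject $\operatorname{im}(\phi)$ is a direct summand of $X \otimes A$. Put $\psi := L(\phi) \colon X' \otimes G \rightarrow X \otimes G$. Since $\Ccal$ is spectral, $\operatorname{im}(\psi)$ is a direct summand of $X \otimes G$ and the epimorphism $X' \otimes G \twoheadrightarrow \operatorname{im}(\psi)$ admits a section; from this data one extracts the idempotent $e \colon X \otimes G \rightarrow X \otimes G$ onto $\operatorname{im}(\psi)$, together with a morphism $h \colon X \otimes G \rightarrow X' \otimes G$ satisfying $\psi h = e$ and $e \psi = \psi$. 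The key point is that, as shown in the course of the proof of proposition \ref{prop lex localization}, $L$ restricts to an equivalence from the full subcategory of $\LMod_A(\acal)$ on the modules $Y \otimes A$ (with $Y$ compact projective in $\acal$) onto the full subcategory of $\Ccal$ on the objects $Y \otimes G$. Since $\psi$, $e$ and $h$ are all morphisms between objects of this form, they lift uniquely to morphisms $\phi$, $\hat e$, $\hat h$ of $\LMod_A(\acal)$, and faithfulness forces the relations $\hat e^2 = \hat e$, $\phi \hat h = \hat e$ and $\hat e \phi = \phi$. The idempotent $\hat e$ splits $X \otimes A$, and the last two relations give $\operatorname{im}(\phi) \subseteq \operatorname{im}(\hat e) = \operatorname{im}(\phi \hat h) \subseteq \operatorname{im}(\phi)$, so that $\operatorname{im}(\phi) = \operatorname{im}(\hat e)$ is a direct summand of $X \otimes A$, as desired.

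I expect the verification of von Neumann regularity to be the main obstacle. The naive approach --- transporting a splitting in $\Ccal$ back to $\LMod_A(\acal)$ along $j$ --- fails, because a left exact localization need not reflect direct summands. What rescues the argument is that $L$ is not merely a localization: it restricts to an \emph{equivalence} on the subcategories of free modules on compact projective objects, and this allows idempotents, and hence direct sum decompositions, to be lifted from $\Ccal$ back to $\LMod_A(\acal)$.
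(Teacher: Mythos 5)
Your argument is correct and matches the paper's proof in all essentials: both obtain $A$ from an $\acal$-generator via proposition \ref{prop lex localization}, deduce left self-injectivity from exactness of the localization functor together with the fact that every object of a spectral category is injective, and verify condition (a) of proposition \ref{prop equivalences VN} by transporting a split image factorization from $\Ccal$ back to $\LMod_A(\Acal)$. The only (cosmetic) difference is in the last step: the paper observes that $M$, $M'$ and $\alpha$ lie in the image of the fully faithful right adjoint $i$ (using idempotent completeness of $\Ccal$ to handle retracts of free modules) and pushes the retraction-followed-by-section factorization through $i$, which is the same mechanism as your restricted equivalence $\Dcal \simeq \Ccal_0$ --- so the ``naive approach'' you dismiss at the end is in fact the paper's argument, made valid by precisely the locality observation you rely on.
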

\begin{proof}
Let $G$ be an $\acal$-generator for $\Ccal$ and let $A$ be the opposite to the algebra of endomorphisms of $G$. We will show that $A$ is left self-injective von Neumann regular.

Denote by $q: \LMod_A(\Acal) \rightarrow \Ccal$ the functor of tensoring with $G$ and by $i$ its right adjoint. Since $q$ is left exact and every object of $\Ccal$ is injective, we see that every left $A$-module in the image of $i$ is injective. In particular this holds for $i(G) = A$, and so $A$ is left self-injective.

It remains to show that $A$ is von Neumann regular. We will do so by showing that condition (a) in proposition \ref{prop equivalences VN} holds. Let $M$ be a finitely generated projective left $A$-module and let $N$ be a finitely generated submodule of $M$. We may write $N$ as the image of a map $\alpha: M' \rightarrow M$ of finitely generated projective left $A$-modules. Each of $M$ and $M'$ is a direct summand of a left $A$-module of the form $A \otimes X$ with $X$ a dualizable object of $\Acal$. Since $A \otimes X = i(G \otimes X)$ and $\Ccal$ is idempotent complete, we see that $M$ and $M'$, and therefore also $\alpha$, belong to the image of $i$. Since $\Ccal$ is spectral, every morphism in $\Ccal$ may be written as the composition of a retraction followed by a section. Hence $\alpha$ is a composition of a retraction followed by a section. This is necessarily equivalent to the image factorization for $\alpha$, so we deduce that the inclusion $N \rightarrow M$ is a section, as desired.
\end{proof}


\subsection{Grothendieck prestable categories} \label{subsection prestables}

We now review the theory of Grothendieck prestable categories, as introduced in \cite{SAG} appendix C.

\begin{definition}\label{def prestable}
A Grothendieck prestable category is a presentable category $\Ccal$ satisfying the following properties:
\begin{enumerate}[\normalfont (a)]
\item The initial and final objects of $\Ccal$ agree (that is, $\Ccal$ is pointed).
\item Every cofiber sequence in $\Ccal$ is also a fiber sequence.
\item Every map in $\Ccal$ of the form $f: X \rightarrow \Sigma(Y)$ is the cofiber of its fiber.
\item Filtered colimits and finite limits commute in $\Ccal$.
\end{enumerate}

We denote by $\Groth_\infty$ the full subcategory of $\Pr^L$ on the Grothendieck prestable categories.
\end{definition}

\begin{remark}\label{remark groth prestable vs t structure}
Let $\Ccal$ be a Grothendieck prestable category. Then the functor $\Ccal \rightarrow \Ccal \otimes \Sp = \Sp(\Ccal)$ is fully faithful, and identifies $\Ccal$ with the connective half of t-structure on $\Sp(\Ccal)$. In particular, $\Ccal$ is additive, and moreover it makes sense to consider for each nonnegative integer $n$ the homology functor $H_n = \Omega^n: \Ccal \rightarrow \Ccal^\heartsuit = \Ccal_{\leq 0}$. Note that $\Ccal^\heartsuit$ is a Grothendieck abelian category.
\end{remark}

It turns out that the assignment $\Ccal \mapsto \Sp(\Ccal)$ provides a one to one correspondence between Grothendieck prestable categories and presentable stable categories equipped with a with right complete t-structure compatible with filtered colimits. One virtue of working with Grothendieck prestable categories instead of t-structures is that being Grothendieck prestable is a property of a category (as opposed to a t-structure on a presentable stable category which is a piece of structure).

\begin{example}
Let $\Ccal$ be a Grothendieck abelian category. Then the derived category $\der(\Ccal)$ is a presentable stable category with a right complete t-structure compatible with filtered colimits. By virtue of remark \ref{remark groth prestable vs t structure} the connective half of this t-structure is Grothendieck prestable. We denote this category by $\der(\Ccal)_{\geq 0}$.
\end{example}

The following result provides an ample source of Grothendieck prestable categories:

\begin{proposition}[\cite{SAG} proposition 10.4.3.1]
Let $\Ccal, \Dcal$ be presentable categories and assume given a functor $G: \Ccal \rightarrow \Dcal$ which is conservative and preserves small limits\footnote{Only preservation of finite limits is necessary.} and colimits. If $\Ccal$ is a Grothendieck prestable category then so is $\Dcal$.
\end{proposition}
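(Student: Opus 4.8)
The argument is entirely formal: the plan is to verify conditions (a)--(d) of Definition \ref{def prestable} for $\Dcal$ (which is presentable by hypothesis) one at a time, using that each condition asserts that a canonical comparison map assembled from finite limits, finite colimits and filtered colimits is an equivalence, and that such an assertion is reflected by a conservative functor preserving the relevant (co)limits.

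First I would treat (a). Since $G$ preserves the initial object (the colimit of the empty diagram) and the terminal object (the limit of the empty diagram), it carries the unique map $\emptyset_\Dcal \to \ast_\Dcal$ to the unique map $\emptyset_\Ccal \to \ast_\Ccal$, which is an equivalence because $\Ccal$ is pointed; conservativity of $G$ then forces $\emptyset_\Dcal \to \ast_\Dcal$ to be an equivalence, so $\Dcal$ is pointed. I would then record the consequence that $G(0_\Dcal) \simeq 0_\Ccal$ --- now that $0_\Dcal$ is the initial object of $\Dcal$ and $G$ preserves initial objects --- so that $G$ carries cofiber sequences, fiber sequences, and suspensions in $\Dcal$ to the corresponding notions in $\Ccal$.

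The remaining conditions follow the same template. For (b), a cofiber sequence $X \to Y \to Z$ in $\Dcal$ is a pushout square with a zero corner; applying $G$ produces such a square in $\Ccal$, which is a pullback because $\Ccal$ is prestable; since $G$ preserves the pullback of the cospan $0_\Dcal \to Z \leftarrow Y$ and reflects equivalences, the original square in $\Dcal$ is a pullback, i.e.\ a fiber sequence. For (c), given $f \colon X \to \Sigma Y$ in $\Dcal$, the canonical map from the cofiber of the fiber of $f$ to $\Sigma Y$ is carried by $G$ to the corresponding canonical map for $G(f) \colon G(X) \to \Sigma G(Y)$, which is an equivalence since $\Ccal$ satisfies (c); conservativity of $G$ then finishes. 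For (d), given a diagram $X \colon \Ical \times \Ical' \to \Dcal$ with $\Ical$ filtered and $\Ical'$ finite, the canonical map $\colim_{\Ical} \lim_{\Ical'} X \to \lim_{\Ical'} \colim_{\Ical} X$ is sent by $G$ --- which preserves filtered colimits and finite limits --- to the analogous comparison map in $\Ccal$, an equivalence since $\Ccal$ satisfies (d); reflecting along $G$ concludes.

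The only point requiring attention, routine but not vacuous, is the verification that $G$ applied to each of these canonical comparison maps is again the canonical comparison map for the image diagram; this is handled by observing that each such map is determined by a universal property transported by $G$. I expect this bookkeeping to be the main (and only real) obstacle; in particular no use is made of the adjoints of $G$ or of any (co)monadicity statement.
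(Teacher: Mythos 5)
Your argument is the standard one, and since the paper defers the proof of this proposition to the cited reference, there is nothing more specific to compare against: verifying axioms (a)--(d) of Definition \ref{def prestable} by applying $G$ to the relevant canonical comparison maps and invoking conservativity is exactly how this kind of statement is proved, and each of your four verifications is correct, including the bookkeeping point that $G$ carries each comparison map to the comparison map of the image diagram because it preserves the (finite) limits and (filtered) colimits involved.

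One thing you should make explicit rather than pass over silently: your proof uses $G$ as a functor from $\Dcal$ to $\Ccal$ --- you apply $G$ to the map $\emptyset_\Dcal \to \ast_\Dcal$, to pushout squares in $\Dcal$, and to comparison maps in $\Dcal$, and then use conservativity to reflect equivalences back into $\Dcal$ --- whereas the proposition as printed takes $G\colon \Ccal \to \Dcal$ with $\Ccal$ the category assumed to be Grothendieck prestable. With the printed orientation your steps do not typecheck, and in fact the printed statement is false as it stands: take $\Ccal = 0$ (which is Grothendieck prestable) and $\Dcal$ any pointed presentable category that is not prestable, such as $\Spc_*$; the functor selecting the zero object preserves all limits and colimits and is (vacuously) conservative. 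So the statement must be read with the roles of source and target as in the Grothendieck abelian analogue stated earlier in the paper, where $G\colon \Dcal \to \Ccal$ and the conclusion concerns the source; that is the statement you have proved, and it is the intended one, but your write-up should say so.
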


\begin{corollary}
Let $\Acal$ be a Grothendieck prestable category equipped with a monoidal structure compatible with colimits. Let $A$ be an algebra in $\Acal$. Then the category $\LMod_A(\Acal)$ of left $A$-modules in $\Acal$ is a Grothendieck prestable category.
\end{corollary}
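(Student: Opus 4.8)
The plan is to deduce this from the preceding proposition (\cite{SAG} proposition 10.4.3.1), exactly as one does for the analogous statement about Grothendieck abelian categories in \ref{subsection abelian}. One applies that proposition to the forgetful functor $U: \LMod_A(\Acal) \rightarrow \Acal$, so that everything reduces to checking its hypotheses: that $\LMod_A(\Acal)$ is presentable, that $U$ is conservative, and that $U$ preserves small limits and small colimits.

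First I would recall that $\LMod_A(\Acal)$ is presentable whenever $\Acal$ is presentable and its monoidal structure is compatible with colimits; this is a standard fact about module categories (see \cite{HA} corollary 4.2.3.7), and the free module functor left adjoint to $U$ exhibits $U$ as an accessible functor between presentable categories. Conservativity of $U$ is immediate, since a morphism of left $A$-modules whose underlying morphism in $\Acal$ is an equivalence is itself an equivalence. Preservation of small limits is equally routine: limits of left $A$-modules are computed on underlying objects (\cite{HA} corollary 4.2.3.3).

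The one hypothesis that genuinely uses the compatibility of the monoidal structure with colimits is preservation of small colimits by $U$. Sifted colimits of left $A$-modules are always created by $U$, so only finite coproducts require attention; but once the functors $A \otimes -$ and $- \otimes A$ preserve all colimits, the forgetful functor in fact creates all small colimits (\cite{HA} corollary 4.2.3.5), and in particular preserves them. This is the only point of friction in the argument, and it amounts to invoking the correct statement from \cite{HA}. With the three hypotheses verified, the preceding proposition applies and shows that $\LMod_A(\Acal)$ is Grothendieck prestable.
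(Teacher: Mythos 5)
Your argument is exactly the intended one: the paper states this as an immediate corollary of the preceding proposition, applied to the forgetful functor $\LMod_A(\Acal) \rightarrow \Acal$, whose conservativity, limit-preservation, and (using compatibility of the tensor product with colimits) colimit-preservation are the standard facts from \cite{HA} section 4.2.3 that you cite. Your verification of the hypotheses is correct and matches the paper's (implicit) proof.
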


The notion of projective object in a Grothendieck abelian category admits a version in the setting of Grothendieck prestable categories:

\begin{definition}
Let $\Ccal$ be a Grothendieck prestable category. We say that an object $P$ in $\Ccal$ is projective if every map $X \rightarrow P$ in $\Ccal$ which is an epimorphism on $H_0$ admits a section.
\end{definition}

\begin{remark}
Let $\Ccal$ be a Grothendieck prestable category. Then an object $P$ in $\Ccal$ is projective if and only if $\Hom_\Ccal(P, - ): \Ccal \rightarrow \Spc$ preserves geometric realizations. In other words, if and only if $P$ is projective in the sense of \cite{HTT} section 5.5.8.
\end{remark}

We may think about Grothendieck prestable categories generated under colimits by compact projective objects as many object versions of connective ring spectra. In that setting there is a close relation between projective modules over a connective ring spectrum $R$ and projective modules over $\pi_0(R)$ (see \cite{HA} corollary 7.2.2.19). The following proposition is an extension of that relation:

\begin{proposition}\label{proposition projectives vs heart}
Let $\Ccal$ be a Grothendieck prestable category generated under colimits by compact projective objects. Then
\begin{enumerate}[\normalfont (1)]
\item The truncation functor $H_0 : \ccal \rightarrow \ccal^\heartsuit$ sends projective objects to projective objects and compact objects to compact objects.
\item The $0$-truncations of the compact projective objects of $\ccal$ provide a family of compact projective generators for $\ccal^\heartsuit$.
\item The functor $\Ho(H_0): \Ho(\ccal) \rightarrow \Ho(\ccal^\heartsuit) = \ccal^\heartsuit$ induced at the level of homotopy categories restricts to an equivalence between the full subcategories on the projective objects, which in turn restricts to an equivalence on the full subcategories on the compact projective objects.
\end{enumerate}
\end{proposition}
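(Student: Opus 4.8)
The strategy is to reduce everything to an explicit description of the mapping spaces out of compact projective objects of $\ccal$. Write $\iota\colon \ccal^\heartsuit \hookrightarrow \ccal$ for the inclusion of the heart, so that $H_0$ is left adjoint to $\iota$. Since the $t$-structure on $\Sp(\ccal)$ corresponding to $\ccal$ is compatible with filtered colimits, $\iota$ preserves filtered colimits, and hence $H_0$ --- a left adjoint whose right adjoint preserves filtered colimits --- preserves compact objects. As $H_0$ is moreover colimit preserving, writing $\iota M$ as a colimit of compact projective objects of $\ccal$ and applying $H_0$ shows that the objects $H_0 G$, with $G$ a compact projective object of $\ccal$, generate $\ccal^\heartsuit$ under colimits. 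This already gives the ``compact'' half of (1) and the ``generating'' part of (2).

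The technical core is the claim that for every compact projective object $G$ of $\ccal$ and every object $Z$ of $\ccal$ which is $k$-connective (that is, $H_i Z = 0$ for $i < k$), the space $\Hom_\ccal(G, Z)$ is $(k-1)$-connected. To prove it, one writes such a $Z$ as $\Sigma^k Y$ with $Y$ in $\ccal$ (suspension and loops being inverse equivalences between $\ccal$ and its subcategory of $k$-connective objects), presents $Y$ as a geometric realization of a simplicial object whose terms are small coproducts of compact projective objects (possible since $\ccal$ is generated under colimits by its compact projectives), and observes that $\Sigma^k$ of each such term is a coproduct of objects of the form $\Sigma^k H$ with $H$ compact projective. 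Since $G$ is projective it commutes with the geometric realization, and since it is compact it commutes with the coproducts; so the claim reduces to showing that $\Hom_\ccal(G, \Sigma^k H)$ is $(k-1)$-connected for $H$ compact projective, which follows by induction on $k$ from the identification $\Sigma W \simeq \lvert [n] \mapsto W^{\oplus n}\rvert$ (the bar construction on $W$, regarded as an abelian group object of $\ccal$): this gives $\Hom_\ccal(G, \Sigma W) \simeq B\,\Hom_\ccal(G, W)$, the base case $\Hom_\ccal(G, H)$ being merely nonempty. \emph{This connectivity estimate is the main obstacle;} everything else is formal.

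Granting the estimate, the principal consequence is that for $G$ compact projective the functor $\Hom_{\ccal^\heartsuit}(H_0 G, -) \cong \pi_0\Hom_\ccal(G, \iota(-))$ is exact: applying $\Hom_\ccal(G,-)$ to the fibre sequence $\iota K \to \iota M \to \iota M''$ attached to a short exact sequence $0 \to K \to M \to M'' \to 0$ in $\ccal^\heartsuit$ (which is also a cofibre sequence, since $\ccal$ is prestable) and using that $\Hom_\ccal(G, \Sigma\iota K)$ is connected gives surjectivity on $\pi_0$. In particular each $H_0 G$ is projective in $\ccal^\heartsuit$, which together with the first paragraph proves (2). Applying $\Hom_\ccal(G,-)$ instead to the fibre sequence $\tau_{\geq 1}X \to X \to \iota H_0 X$ shows, in the same way, that $\pi_0\Hom_\ccal(G, X) \to \Hom_{\ccal^\heartsuit}(H_0 G, H_0 X)$ is bijective for every $X$ in $\ccal$. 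Consequently a projective object $P$ of $\ccal$ is a retract of a small coproduct of compact projective objects: choosing compact projective generators $G_\lambda$ and using the bijection to realize every morphism $H_0 G_\lambda \to H_0 P$, one produces a map $\bigoplus G_\lambda \to P$ which is an epimorphism on $H_0$, and it splits because $P$ is projective. Applying $H_0$ then exhibits $H_0 P$ as a retract of a small coproduct of compact projective objects of $\ccal^\heartsuit$, hence as a projective object; this completes (1).

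For (3), full faithfulness of $\Ho(H_0)$ on projective objects reduces, via the retract presentations above and naturality, to the case of coproducts $\bigoplus_i G_i$ of compact projective objects, where it follows from the bijection of the previous paragraph together with $\Hom_\ccal(\bigoplus_i G_i, -) = \prod_i \Hom_\ccal(G_i, -)$ and the fact that $\pi_0$ commutes with products. For essential surjectivity, a projective object $Q$ of $\ccal^\heartsuit$ is a retract of some $\bigoplus_i H_0 G_i = H_0(\bigoplus_i G_i)$; the bijection identifies $\pi_0\Hom_\ccal(\bigoplus_i G_i, \bigoplus_i G_i)$ with $\Hom_{\ccal^\heartsuit}(\bigoplus_i H_0 G_i, \bigoplus_i H_0 G_i)$, so the idempotent cutting out $Q$ lifts to an idempotent in the homotopy category of $\ccal$, which lifts further to a coherent idempotent and splits because $\ccal$, being presentable, is idempotent complete; the resulting retract of $\bigoplus_i G_i$ is projective and maps under $H_0$ to $Q$. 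Finally $\Ho(H_0)$ preserves compact objects by the first paragraph, and conversely if $P$ is projective with $H_0 P$ compact then $H_0 P$ is a retract of a finite coproduct of the $H_0 G_\lambda$, and transporting this retraction through the equivalence on projective objects just established exhibits $P$ as a retract of a finite coproduct of compact objects, hence compact; so $\Ho(H_0)$ restricts to an equivalence on compact projective objects as well.
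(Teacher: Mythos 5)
Your argument is correct, and its skeleton is the same as the paper's: everything hinges on the connectivity of $\Hom_\Ccal(P,Z)$ for $P$ projective and $Z$ $1$-connective, together with lifting and splitting an idempotent for essential surjectivity. The difference lies in how that connectivity input is obtained and propagated. The paper works with arbitrary projective objects throughout: it identifies the comparison map $\Hom_\Ccal(X,Y)\rightarrow\Hom_{\Ccal^\heartsuit}(H_0X,H_0Y)$ with composition with the unit $Y\rightarrow H_0Y$, reads off surjectivity and injectivity on $\pi_0$ directly from the defining property of projectives (preservation of geometric realizations by $\Hom_\Ccal(X,-)$), the relevant fiber being $\Hom_\Ccal(X,\tau_{\geq 1}Y)$, and the projectivity part of (1) is a one-line adjunction argument. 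You instead prove the connectivity estimate explicitly for compact projectives via the bar presentation of $\Sigma W$ as the geometric realization of $[n]\mapsto W^{\oplus n}$ (using compactness to pass through coproducts and projectivity to pass through realizations), and then bootstrap to arbitrary projectives by exhibiting them as retracts of coproducts of compact projectives; the projectivity part of (1) then falls out of that retract presentation rather than being argued directly. What your route buys is a self-contained, quantitative statement ($\Hom_\Ccal(G,Z)$ is $(k-1)$-connected for $Z$ $k$-connective) and an explicit description of the projective objects of $\Ccal$; what it costs is the extra reduction steps, which the paper avoids by invoking the characterization of projectives for the single case $k=1$ it needs. The one point where you are no more careful than the paper is the passage from a homotopy idempotent to a split coherent one, which the paper elides in exactly the same way.
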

\begin{proof}
We first prove (1). The fact that $H_0$ sends compact objects to compact objects follows directly from the fact that the inclusion $\ccal^\heartsuit \rightarrow \ccal$ preserves filtered colimits. The fact that $H_0$ sends projective objects to projective objects follows from the fact that the inclusion $\ccal^\heartsuit \rightarrow \ccal$ maps epimorphisms to morphisms which induce epimorphisms on $H_0$. 

Item (2) follows directly from (1) together with the fact that $H_0$ is a localization. It  remains to establish (3). We first prove fully faithfulness. Let $X, Y$ be a pair of projective objects of $\ccal$. Then the map $\Hom_{\ccal}(X, Y) \rightarrow \Hom_{\ccal^\heartsuit}(H_0(X), H_0(Y))$ induced by $H_0$ is equivalent to the map $\eta_*: \Hom_{\ccal}(X, Y) \rightarrow \Hom_{\ccal}(X, H_0(Y))$ of composition with the unit $\eta: Y \rightarrow H_0(Y)$. The fact that $X$ is projective and $\eta$ induces an equivalence on $H_0$ implies that $\eta_*$ is an effective epimorphism. Its fiber is given by $\Hom_{\ccal}(X, \tau_{\geq 1}(Y))$ which is connected since $X$ is projective. We conclude that $\eta_*$ induces an equivalence on $\pi_0$, and therefore $\Ho(H_0)$ is fully faithful on the full subcategory on the projective objects.

It remains to prove surjectivity. In other words, we have to show that every (compact) projective object of $\ccal^\heartsuit$ is the image under $H_0$ of a (compact) projective object of $\ccal$. We establish the case of compact projective objects, the proof in the projective case being similar. Let $Y$ be a compact projective object of $\ccal^\heartsuit$.  Applying (2) we may find a compact projective object $X$ in $\ccal$ such that $Y$ is a retract of $H_0(X)$. Let $r: H_0(X) \rightarrow H_0(X)$ be the induced retraction. The fully faithfulness part of (3) allows us to lift $r$ to an idempotent endomorphism $\rho$ of the image of $X$ inside $\Ho(\ccal)$. Let $X'$ be a representative in $\ccal$ of the image of $\rho$. Then $X'$ is a direct summand of $X$ and therefore it is compact projective. The proof finishes by observing that $H_0(X') = \operatorname{Im}(r) = Y$. 
\end{proof}

There is a good theory of tensor products of Grothendieck prestable categories:

\begin{theorem}[\cite{SAG} theorem C.4.2.1]\label{teo tensor product prestable}
Let $\Ccal, \Dcal$ be Grothendieck prestable categories. Then their tensor product $\Ccal \otimes \Dcal$ (formed in $\Pr^L$) is Grothendieck prestable. In particular, the symmetric monoidal structure on the category $\Mod_{\Sp^\cn}(\Pr^L)$ of presentable additive categories and colimit preserving functors restricts to a symmetric monoidal structure on $\Groth_\infty$.
\end{theorem}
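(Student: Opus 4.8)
The plan is to follow the same route as the abelian development of \ref{subsection abelian}: first treat the case where $\Ccal$ and $\Dcal$ are generated by compact projective objects, and then reduce the general case to this one by means of a Gabriel-Popescu style presentation. Suppose first that $\Ccal$ and $\Dcal$ are generated by compact projective objects, and let $\Ccal_0$, $\Dcal_0$ be their (essentially small, $\Sp^\cn$-enriched) full subcategories of compact projective objects, so that $\Ccal \simeq \Funct^{\times}(\Ccal_0^\op, \Sp^\cn)$ and $\Dcal \simeq \Funct^{\times}(\Dcal_0^\op, \Sp^\cn)$ are the categories of additive functors to connective spectra --- the prestable counterpart of the final assertion of proposition \ref{prop lex localization}. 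Then $\Ccal \otimes \Dcal$ is identified with the category of functors $\Ccal_0^\op \times \Dcal_0^\op \rightarrow \Sp^\cn$ which are additive in each variable; since evaluation at a pair of objects of $\Ccal_0 \times \Dcal_0$ preserves sifted colimits, the objects $X \otimes Y$ with $X$ compact projective in $\Ccal$ and $Y$ compact projective in $\Dcal$ are compact projective in $\Ccal \otimes \Dcal$, and as $\otimes$ is colimit preserving in each variable these objects generate $\Ccal \otimes \Dcal$ under colimits. Hence $\Ccal \otimes \Dcal$ is again generated by compact projective objects, and in particular Grothendieck prestable; in the case of a single generator this recovers the equivalence $\LMod_R(\Sp^\cn) \otimes \LMod_S(\Sp^\cn) \simeq \LMod_{R \otimes S}(\Sp^\cn)$.

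For the general case, I would choose left exact localizations $p \colon \widetilde{\Ccal} \rightarrow \Ccal$ and $q \colon \widetilde{\Dcal} \rightarrow \Dcal$ with $\widetilde{\Ccal}$ and $\widetilde{\Dcal}$ generated by compact projective objects (a prestable Gabriel-Popescu presentation, obtained as in proposition \ref{prop lex localization} by taking $\widetilde{\Ccal} = \LMod_A(\Sp^\cn)$ for $A$ the opposite of the endomorphism ring of a generator of $\Ccal$). Then $p \otimes q$ factors as $(p \otimes \id) \circ (\id \otimes q)$, and each factor is an epimorphism in $\Pr^L$ --- because the functor of tensoring with a fixed presentable category has a right adjoint and so preserves epimorphisms, while left exact localizations are exactly the left exact epimorphisms in $\Pr^L$ --- and is moreover left exact by the prestable counterpart of proposition \ref{prop tensoring left exact}, proved by the same argument. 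Thus $\Ccal \otimes \Dcal$ is a left exact localization of the Grothendieck prestable category $\widetilde{\Ccal} \otimes \widetilde{\Dcal}$. Since finite limits and filtered colimits in a left exact localization are obtained by applying the localization functor to the corresponding limits and colimits in the ambient category, a left exact localization of a Grothendieck prestable category is again Grothendieck prestable, and we are done.

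The main obstacle lies not in the formal manipulations above but in the two inputs specific to the prestable setting: the Gabriel-Popescu presentation, and the prestable counterpart of proposition \ref{prop tensoring left exact}. These are faithful parallels of the abelian statements proved earlier, but they do not follow formally from them, and establishing them is essentially the content of \cite{SAG} appendix~C. Alternatively, one can argue directly with stabilizations, equipping $\Sp(\Ccal \otimes \Dcal) \simeq \Sp(\Ccal) \otimes_{\Sp} \Sp(\Dcal)$ with the t-structure whose connective part is generated under colimits and extensions by the objects $X \otimes Y$ with $X$ in $\Ccal$ and $Y$ in $\Dcal$ (an accessible t-structure by \cite{HA} proposition 1.4.4.11). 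Right completeness of this t-structure is immediate, since those objects and their shifts generate $\Sp(\Ccal) \otimes_{\Sp} \Sp(\Dcal)$ under colimits; but proving that it is compatible with filtered colimits --- which does not follow formally, as the generators need not be compact --- again requires exploiting that $\Ccal$ and $\Dcal$ are themselves Grothendieck prestable.
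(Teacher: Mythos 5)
The paper offers no proof of this statement---it is imported directly from \cite{SAG} as theorem C.4.2.1---so there is no internal argument to compare yours against; what you have written is essentially a reconstruction of the argument of \cite{SAG} appendix C. The outline is sound: the compactly projectively generated case reduces to the explicit functor category of bi-additive functors $\Ccal_0^\op \times \Dcal_0^\op \rightarrow \Sp^\cn$, which is Grothendieck prestable because limits and the relevant colimits are computed pointwise in $\Sp^\cn$ (note that ``generated by compact projective objects'' alone does not imply Grothendieck prestable---$\Ab$ is additive, presentable and generated by a compact projective object but fails condition (b) of definition \ref{def prestable}---so in that step it is really the functor-category description, not the generation statement, that does the work). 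The general case then follows from three inputs: (i) every Grothendieck prestable category is an accessible left exact localization of one of this form; (ii) tensoring such a localization with an identity functor again yields a left exact localization; (iii) left exact localizations preserve Grothendieck prestability. You correctly identify (i) and (ii) as the substantive inputs supplied by \cite{SAG}.

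Two soft spots are worth naming. First, the presentation you propose, $\widetilde{\Ccal} = \LMod_A(\Sp^\cn)$ with $A$ the endomorphism algebra of a single generator, is a left exact localization only when $\Ccal$ is separated---this is precisely the hypothesis of proposition \ref{prop lex localization prestable} and the point of the surrounding discussion of generators versus colimit generators. For a general, possibly non-separated, Grothendieck prestable category one must use the many-object presentation by a colimit-dense small additive subcategory; as written, your reduction silently excludes the non-separated case. Second, input (iii) is true but less formal than your one-line justification suggests: condition (c) of definition \ref{def prestable} concerns maps $X \rightarrow \Sigma Y$ where $\Sigma$ is the suspension of the \emph{localized} category, which is $L \Sigma i$ rather than the restriction of the ambient suspension, so the axiom does not simply transport along the localization functor; this is a genuine (if standard) lemma in \cite{SAG}. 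Neither issue is fatal, and both are resolved in the reference you defer to, but they are exactly where the content of the theorem lives.
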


In particular, it makes sense to consider commutative algebras in $\Groth_\infty$. We call these symmetric monoidal Grothendieck prestable categories. Note that this terminology leaves implicit the fact that the tensor operation commutes with colimits in each variable.

\begin{definition}
Let $\Mcal$ be a commutative algebra in $\Groth_\infty$. An $\Mcal$-linear Grothendieck prestable category is an object of $\Mod_\Mcal(\Groth_\infty)$.
\end{definition}

In other words, an $\Mcal$-linear Grothendieck prestable category is an $\Mcal$-linear presentable category (in the sense of section \ref{subsection linear cats}) which is in addition a Grothendieck prestable category. In the case when $\Mcal = \Mod^\cn_R$ is the category of connective modules over a connective $E_\infty$-ring $R$ we call these $R$-linear Grothendieck prestable categories.

Since the class of colimits that we have available in $\Groth_\infty$ is relatively restricted, one has to be careful when forming relative tensor products. There is however a class of commutative algebras that admit a well behaved theory of relative tensor products.

\begin{definition}
Let $\Mcal$ be a commutative algebra in $\Groth_\infty$. Assume that $\Mcal$ is generated under colimits by compact projective objects. We say that $\Mcal$ is rigid if compact projective and dualizable objects of $\Mcal$ coincide.
\end{definition}

\begin{example}
Let $R$ be a connective commutative ring spectrum. Then $\Mod_R^\cn$ is rigid.
\end{example}

We fix for the remainder of this section a symmetric monoidal Grothendieck prestable category $\Mcal$ generated under colimits by compact projective objects and rigid. We begin with the observation that $\Mcal$-linear Grothendieck prestable categories are closed under tensor products:

\begin{proposition}
The full subcategory of $\Mod_\Mcal(\Pr^L)$ on the $\Mcal$-linear Grothendieck prestable categories is closed under tensor products. In other words, $\Mod_\Mcal(\Groth_\infty)$ admits a symmetric monoidal structure that makes the inclusion $\Mod_\Mcal(\Groth_\infty) \rightarrow \Mod_\Mcal(\Pr^L)$ symmetric monoidal.
\end{proposition}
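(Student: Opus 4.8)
The plan is to mimic, essentially verbatim, the proof of Corollary \ref{coro tensor products over A} in the abelian setting. Given $\Mcal$-linear Grothendieck prestable categories $\ccal$ and $\dcal$, their relative tensor product $\ccal \otimes_\Mcal \dcal$ computed in $\Pr^L$ is the geometric realization of the Bar construction $\ccal \otimes \Mcal^{\otimes \bullet} \otimes \dcal$. Each term of this simplicial object is Grothendieck prestable by Theorem \ref{teo tensor product prestable}, so it suffices to establish two things: first, that the face maps of the Bar construction admit colimit preserving right adjoints; and second, that $\Groth_\infty$ is closed in $\Pr^L$ under colimits of diagrams whose transition maps admit colimit preserving right adjoints, i.e.\ the prestable analogue of Proposition \ref{proposition colimits of right adjointable}.

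For the first point I would prove the prestable analogue of Proposition \ref{proposition action has colimit preserving adjoint}: for every $\Mcal$-module $\ccal$ in $\Pr^L$, the action map $\Mcal \otimes \ccal \rightarrow \ccal$ admits a colimit preserving right adjoint. The argument transcribes the abelian one. One first treats free modules $\ccal = \Mcal \otimes \ccal'$, where the action map is $\mu \otimes \id_{\ccal'}$ for $\mu : \Mcal \otimes \Mcal \rightarrow \Mcal$ the multiplication, so the claim reduces to the assertion that $\mu$ has a colimit preserving right adjoint; this holds because $\Mcal$ is generated under colimits by compact projective objects and such objects are preserved by $\otimes$ (the prestable version of Remark \ref{remark tensor compact projectives}, using rigidity so that compact projective $=$ dualizable). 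For a general module one runs the levelwise free Bar resolution $\Mcal^{\otimes \bullet + 1} \otimes \ccal$ and checks that each face square is vertically right adjointable; as in the abelian proof this reduces to the right adjointability of the identity square for $\mu$ (clear) and to the $\Mcal$-linearity of the right adjoint to $\mu$, which holds because $\Mcal$ is generated under colimits by dualizable objects. All of the auxiliary facts about right adjoints commuting laxly, and then strictly on dualizables, with the $\Mcal$-action used in \ref{subsection abelian} have verbatim prestable counterparts.

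For the second point, the colimit in $\Pr^L$ of a diagram with right adjointable transition maps agrees with the limit in $\cathat$ of the diagram of right adjoints, which is a diagram of Grothendieck prestable categories and left exact colimit preserving functors; one then invokes the prestable analogue of Proposition \ref{prop limits along left exact}, namely that $\Groth_\infty$ is closed under such limits, which is proved in \cite{SAG}. Granting this, the geometric realization of $\ccal \otimes \Mcal^{\otimes \bullet} \otimes \dcal$ is Grothendieck prestable, so $\Mod_\Mcal(\Groth_\infty)$ inherits a symmetric monoidal structure from $\Mod_\Mcal(\Pr^L)$. I expect the main obstacle to be precisely this closure of $\Groth_\infty$ under limits along left exact colimit preserving functors; once it is in hand the remainder is a direct transcription of the abelian argument, with rigidity of $\Mcal$ supplying every place where the abelian proof used the coincidence of compact projective and dualizable objects.
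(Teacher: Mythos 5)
Your proposal is correct and follows essentially the same route as the paper, which likewise reduces to the Bar construction, a prestable analogue of Proposition \ref{proposition action has colimit preserving adjoint}, and closure of $\Groth_\infty$ under colimits along right adjointable maps (the paper cites \cite{SAG} remark C.3.5.4 for this last point, which supplies the closure statement you flag as the main remaining obstacle).
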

\begin{proof}
Completely analogous to the proof of corollary \ref{coro tensor products over A}. One first shows that for any $\Mcal$-module $\Ccal$ in $\Pr^L$ the action map  $\Mcal \otimes \Ccal \rightarrow \Ccal$ admits a colimit preserving right adjoint, imitating the proof of proposition \ref{proposition action has colimit preserving adjoint}. The role of proposition \ref{proposition colimits of right adjointable} is then played by \cite{SAG} remark C.3.5.4.
\end{proof}

We now formulate a Grothendieck prestable version of proposition \ref{prop lex localization}.

\begin{definition}\label{definition generator in groth prestable}
Let $\ccal$ be an $\Mcal$-linear Grothendieck prestable category. We say that an object $G$ in $\ccal$ is an $\Mcal$-generator if for every object $Y$ in $\ccal$ there exists an object $X$ in $\Mcal$ and a morphism $X \otimes G \rightarrow Y$ inducing an epimorphism on $H_0$.
\end{definition}

In the context of Grothendieck prestable categories some attention needs to be paid to the distinction between generators and colimit generators. Unlike the situation with Grothendieck abelian categories, it is possible for an object $G$ in $\ccal$ to be an $\Mcal$-generator in the sense of definition \ref{definition generator in groth prestable} and the family of objects $X \otimes G$ not generate $\ccal$ under colimits (for instance, $0$ is always an $\Mcal$-generator whenever $\ccal$ is stable). As shown in \cite{SAG} theorem 2.1.6, the distinction disappears when $\ccal$ is assumed to be separated:

\begin{definition}
Let $\Ccal$ be a Grothendieck prestable category. We say that $\Ccal$ is separated if it contains no nonzero $\infty$-connective objects.\footnote{An object of $\Ccal$ is $\infty$-connective if all its homology objects vanish.}
\end{definition}

\begin{proposition}\label{prop lex localization prestable}
Let $\Ccal$ be a separated $\Mcal$-linear Grothendieck prestable category and let $G$ be an $\Mcal$-generator for $\Ccal$. Let $A$ be the opposite of the algebra of endomorphisms of $G$ associated to the action of $\Mcal$ on $\Ccal$. Then the functor
\[
G \otimes_A - : \LMod_A(\Mcal) \rightarrow \Ccal 
\]
is an $\Mcal$-linear left exact localization. Furthermore, it is an equivalence if and only if $G$ is compact projective.
\end{proposition}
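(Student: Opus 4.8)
The plan is to imitate the proof of proposition~\ref{prop lex localization} step by step, replacing the $(1,1)$-categorical sifted cocompletion $\Pcal^1_\Sigma$ by the $\infty$-categorical one $\Pcal_\Sigma$ and invoking separatedness at the one point where the ordinary Gabriel--Popescu theorem was available automatically in the abelian case. I would begin by introducing $\Ccal_0 \subseteq \Ccal$, the full subcategory on the objects $X \otimes G$ with $X$ a compact projective object of $\Mcal$, and $\Dcal \subseteq \LMod_A(\Mcal)$, the full subcategory on the objects $X \otimes A$ with $X$ compact projective. Both are small additive $\infty$-categories closed under finite coproducts (since $X \oplus X'$ is compact projective when $X$ and $X'$ are), so $\Pcal_\Sigma(\Ccal_0)$ and $\Pcal_\Sigma(\Dcal)$ are Grothendieck prestable categories generated by compact projective objects. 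Using that $\Mcal$ is rigid, so that a compact projective $Y$ admits a dual $Y^\vee$, together with the identification $i(G) = A$ of the value at $G$ of the right adjoint $i$ to $G \otimes_A -$, the computation
\begin{align*}
\Hom_{\LMod_A(\Mcal)}(X \otimes A, Y \otimes A) &= \Hom_{\LMod_A(\Mcal)}(X \otimes Y^\vee \otimes A, A) \\
&= \Hom_\Ccal(X \otimes Y^\vee \otimes G, G) \\
&= \Hom_\Ccal(X \otimes G, Y \otimes G)
\end{align*}
shows that $G \otimes_A -$ restricts to an equivalence $\Dcal \xrightarrow{\sim} \Ccal_0$.

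Next I would examine the commutative square
\[
\begin{tikzcd}
\Pcal_\Sigma(\Dcal) \arrow{d} \arrow{r} & \Pcal_\Sigma(\Ccal_0) \arrow{d} \\
\LMod_A(\Mcal) \arrow{r}{G \otimes_A -} & \Ccal
\end{tikzcd}
\]
whose vertical functors are the unique sifted-colimit-preserving extensions of the inclusions. The top functor is $\Pcal_\Sigma$ of an equivalence, hence an equivalence; the left functor is an equivalence because the objects $X \otimes A$ with $X$ compact projective in $\Mcal$ are compact projective in $\LMod_A(\Mcal)$ (the forgetful functor to $\Mcal$ preserves sifted colimits and $X$ is compact projective) and generate it under colimits (as $\Mcal$ is generated by compact projectives). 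Thus $G \otimes_A -$ is identified with the right vertical functor $\Pcal_\Sigma(\Ccal_0) \to \Ccal$, and it remains to prove that this is a left exact localization. Since $G$ is an $\Mcal$-generator, writing an arbitrary object of $\Mcal$ as a colimit of compact projectives and using that $H_0$ is right exact shows that the objects of $\Ccal_0$ already admit morphisms onto every object of $\Ccal$ which are epimorphisms on $H_0$; since $\Ccal$ is separated, \cite{SAG} theorem 2.1.6 then guarantees that $\Ccal_0$ generates $\Ccal$ under colimits, after which the prestable form of the Gabriel--Popescu theorem yields that $\Pcal_\Sigma(\Ccal_0) \to \Ccal$ is a left exact localization. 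I expect this step to be the main obstacle: unlike in the abelian setting the separatedness hypothesis is genuinely needed here --- without it $\Pcal_\Sigma(\Ccal_0) \to \Ccal$ may fail to be essentially surjective or a localization owing to nonzero $\infty$-connective objects --- so one must take care to invoke the correct prestable version of the statement.

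It then remains to treat the last assertion. For the ``only if'' direction, $A$ is the free left $A$-module on the unit of $\Mcal$, which is dualizable and hence (by rigidity) compact projective, so $A$ is a compact projective object of $\LMod_A(\Mcal)$; an equivalence $G \otimes_A -$ therefore forces $G$ to be compact projective. Conversely, if $G$ is compact projective then for every dualizable $X$ in $\Mcal$ the functor $X \otimes -$ on $\Ccal$ has the colimit-preserving right adjoint $X^\vee \otimes -$, so $X \otimes G$ is again compact projective; hence $\Ccal_0$ is a set of compact projective generators of $\Ccal$, and the left exact localization $\Pcal_\Sigma(\Ccal_0) \to \Ccal$ is an equivalence.
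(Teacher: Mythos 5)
Your proposal is correct and follows exactly the route the paper intends: the paper's own proof of this proposition consists of the single remark that it is completely analogous to the proof of proposition \ref{prop lex localization}, with the classical many-object Gabriel--Popescu theorem replaced by \cite{SAG} theorem C.2.1.6. You have simply spelled out that analogy in full, including the one genuinely new point --- that separatedness is what upgrades the $\Mcal$-generator $G$ to a colimit generator so that the prestable Gabriel--Popescu theorem applies.
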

\begin{proof}
Completely analogous to the proof of proposition \ref{prop lex localization}, where the role of the classical many object Gabriel-Popescu theorem  is played by \cite{SAG} theorem C.2.1.6.
\end{proof}

We now turn to a discussion of flatness in the context of $\Mcal$-linear Grothendieck prestable categories.

\begin{definition}
Let $\Ccal$ be an $\Mcal$-linear Grothendieck prestable category. We say that an object $X$ in $\Ccal$ is flat over $\mcal$ if the functor $- \otimes X : \Mcal \rightarrow \Ccal$ is left exact. In cases when the base $\mcal$ is clear from the context we simply say that $X$ is flat.
\end{definition}

\begin{example}
Let $R$ be a connective $E_\infty$-ring and let $\Ccal$ be an $R$-linear Grothendieck prestable category. It follows from \cite{SAG} proposition C.3.2.1 that an object $X$ in $\Ccal$ is flat if and only if $M \otimes X$ is $0$-truncated for all $0$-truncated $R$-modules $M$. Since every $0$-truncated $R$-module is a filtered colimit of cyclic $\pi_0(R)$-modules, we see that $X$ is flat if and only if $M \otimes X$ is $0$-truncated for every cyclic $\pi_0(R)$-module $M$.
\end{example}

\begin{proposition}\label{prop lazard prestable}
Let $X$ be an object of $\Mcal$. Then $X$ is flat if and only if it is a filtered colimit of compact projective objects.
\end{proposition}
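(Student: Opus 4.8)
The plan is to imitate the proof of Proposition \ref{prop lazard classico}, carried out at the level of mapping spaces and using that the unit $1_\Mcal$ is compact projective. For the ``if'' direction this is immediate: flatness is stable under filtered colimits, since filtered colimits commute with finite limits in $\Mcal$ by property (d) of Definition \ref{def prestable}, so it suffices to note that a compact projective object $X$ is flat --- and by rigidity $X$ is dualizable, hence $-\otimes X$ admits the right adjoint $X^\vee \otimes -$ and in particular preserves finite limits.

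For the converse, assume $X$ is flat, and let $\Mcal^\cp \subseteq \Mcal$ be the full subcategory of compact projective objects, which by rigidity coincides with the subcategory of dualizable objects; in particular $P \mapsto P^\vee$ defines a duality equivalence $D : \Mcal^\cp \xrightarrow{\sim} (\Mcal^\cp)^\op$. Consider the functor $F : (\Mcal^\cp)^\op \to \Spc$ with $F(P) = \Hom_\Mcal(P, X)$. It is enough to show that $F$ defines an ind-object of $\Mcal^\cp$: since the compact projective objects generate $\Mcal$ under colimits, a presentation of $F$ as a filtered colimit of representables attached to compact projectives $P_\alpha$ equipped with maps $P_\alpha \to X$ forces the induced map $\colim_\alpha P_\alpha \to X$ to be an equivalence after applying $\Hom_\Mcal(P,-)$ for every $P \in \Mcal^\cp$ (using compactness of $P$), hence to be an equivalence. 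Since $D$ is an equivalence, this is equivalent to showing that $F \circ D : \Mcal^\cp \to \Spc$, $Q \mapsto \Hom_\Mcal(Q^\vee, X) = \Hom_\Mcal(1_\Mcal, Q \otimes X)$, defines a pro-object of $\Mcal^\cp$.

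Following the abelian proof, let $H = \Hom_\Mcal(1_\Mcal, - \otimes X) : \Mcal \to \Spc$, with associated left fibration $p: \Ecal \to \Mcal$ whose objects are pairs $(Y, \rho : 1_\Mcal \to Y \otimes X)$. Flatness of $X$ makes $-\otimes X$ left exact, hence so is $H$, being its composite with the limit-preserving functor $\Hom_\Mcal(1_\Mcal, -)$. The base change $\Ecal^\cp := \Ecal \times_\Mcal \Mcal^\cp$ is the left fibration classifying $F \circ D$, so we must show $\Ecal^\cp$ is cofiltered, i.e. that every finite diagram $G : \Ical \to \Ecal^\cp$ admits a left cone. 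Since $\Mcal$ is complete, the composite $\Ical \to \Ecal \xrightarrow{p} \Mcal$ extends to a limit cone $\Ical^\lhd \to \Mcal$, and left exactness of $H$ lets this cone lift to a cone $G^\lhd : \Ical^\lhd \to \Ecal$ whose value at the cone point is a pair $(\bar Y, \rho : 1_\Mcal \to \bar Y \otimes X)$, with $\bar Y$ not necessarily compact projective. To finish, one approximates $(\bar Y, \rho)$: the subcategory $\Mcal^\cp$ is closed under finite coproducts in $\Mcal$ (as sifted colimits commute with finite products in $\Spc$), so the comma category $(\Mcal^\cp)_{/\bar Y}$ admits finite coproducts, is therefore sifted, and $\bar Y = \colim_{(\Mcal^\cp)_{/\bar Y}} P$; tensoring with $X$ yields the sifted colimit $\bar Y \otimes X = \colim_{(\Mcal^\cp)_{/\bar Y}}(P \otimes X)$, which $\Hom_\Mcal(1_\Mcal, -)$ preserves because $1_\Mcal$ is compact projective, so $\rho$ factors as $1_\Mcal \to P_0 \otimes X \to \bar Y \otimes X$ for some $P_0 \to \bar Y$ in $(\Mcal^\cp)_{/\bar Y}$. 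Then $(P_0, 1_\Mcal \to P_0 \otimes X)$ is an object of $\Ecal^\cp$ equipped with a map to $(\bar Y, \rho)$ in $\Ecal$, and composing with $G^\lhd$ --- using that $\Ecal^\cp \hookrightarrow \Ecal$ is fully faithful --- produces the required left cone.

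The step I expect to be most delicate is this last approximation: one must know that $\bar Y$ is recovered as the sifted colimit of its canonical diagram of compact projective objects, and that mapping out of $1_\Mcal$ commutes with that sifted colimit after tensoring with $X$, for which compact projectivity --- not merely compactness --- of $1_\Mcal$ is essential. One should also check that the left fibration attached to the left-exact functor $H$ genuinely admits $\Ical$-indexed limits computed in the base $\Mcal$, so that the cone $G^\lhd$ exists. Beyond these verifications the argument carries no new idea past the proof of Proposition \ref{prop lazard classico}.
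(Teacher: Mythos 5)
Your argument is correct and is precisely the paper's intended proof: the paper's own proof consists of the single line ``analogous to the proof of Proposition \ref{prop lazard classico}'', and you have carried out that analogy faithfully, including sound justifications for the two points you flag as delicate (the canonical sifted-colimit presentation of $\bar Y$ over $(\Mcal^\cp)_{/\bar Y}$, and the existence of the left cone $G^\lhd$ via left exactness of $H$). One small wording slip in the ``if'' direction: preservation of finite limits by $-\otimes X$ follows from $-\otimes X$ admitting a \emph{left} adjoint (namely $X^\vee \otimes -$, which by dualizability is simultaneously a left and a right adjoint to $-\otimes X$), not from it admitting a right adjoint.
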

\begin{proof}
Analogous to the proof of proposition \ref{prop lazard classico}.
\end{proof}

\begin{proposition}\label{prop tensoring left exact prestable}
Let $f: \Ccal \rightarrow \Ccal'$ and $g: \Dcal \rightarrow \Dcal'$ be  morphisms in $\Mod_{\Mcal}(\Groth_\infty)$. If $f$ and $g$ are left exact then $f \otimes_{\Mcal} g : \Ccal \otimes_\Mcal \Dcal \rightarrow \Ccal' \otimes_\Mcal \Dcal'$ is left exact.
\end{proposition}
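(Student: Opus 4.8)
The plan is to imitate the proof of proposition \ref{prop tensoring left exact} almost verbatim, replacing proposition \ref{prop lex localization} by proposition \ref{prop lex localization prestable} and using example \ref{example tensor with amod} as stated (it is formulated for an arbitrary commutative algebra in $\catl$, hence applies to $\Mcal$). Since $f \otimes_\Mcal g$ factors as $f \otimes_\Mcal \id_\Dcal$ followed by $\id_{\Ccal'} \otimes_\Mcal g$, and interchanging the roles of $f$ and $g$ costs nothing, it suffices to show that $f \otimes_\Mcal \id_\Dcal$ is left exact.

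For this I would first choose algebras $A$, $B$ in $\Mcal$ together with $\Mcal$-linear left exact localizations $p: \LMod_A(\Mcal) \rightarrow \Ccal$ and $q: \LMod_B(\Mcal) \rightarrow \Dcal$, as furnished by proposition \ref{prop lex localization prestable} (see the caveat below), and consider the commutative square
\[
\begin{tikzcd}
\Ccal \otimes_\Mcal \LMod_B(\Mcal) \arrow{r}{f \otimes \id} \arrow{d}{\id \otimes q} & \Ccal' \otimes_\Mcal \LMod_B(\Mcal) \arrow{d}{\id \otimes q} \\
\Ccal \otimes_\Mcal \Dcal \arrow{r}{f \otimes \id} & \Ccal' \otimes_\Mcal \Dcal.
\end{tikzcd}
\]
By example \ref{example tensor with amod} the top arrow is identified with the functor $\LMod_B(\Ccal) \rightarrow \LMod_B(\Ccal')$ induced by $f$, hence is left exact; so it suffices to prove that the left vertical arrow is a left exact localization and the right one is left exact, and by the symmetry $\Ccal \leftrightarrow \Ccal'$ only the former needs attention. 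Replacing $\Ccal$ by $\LMod_A(\Mcal)$ via $p$ gives the commutative square
\[
\begin{tikzcd}
\LMod_A(\Mcal) \otimes_\Mcal \LMod_B(\Mcal) \arrow{r}{p \otimes \id} \arrow{d}{\id \otimes q} & \Ccal \otimes_\Mcal \LMod_B(\Mcal) \arrow{d}{\id \otimes q} \\
\LMod_A(\Mcal) \otimes_\Mcal \Dcal \arrow{r}{p \otimes \id} & \Ccal \otimes_\Mcal \Dcal,
\end{tikzcd}
\]
whose top and left arrows are left exact localizations (again by example \ref{example tensor with amod}), so it remains to see that the diagonal $p \otimes q$ is a left exact localization. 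As in the abelian proof, $p \otimes q$ is an epimorphism in $\Pr^L$ that is the localization at the union of the two classes of maps inverted by $p \otimes \id$ and by $\id \otimes q$: a colimit preserving functor out of $\LMod_A(\Mcal) \otimes_\Mcal \LMod_B(\Mcal)$ factors through $\Ccal \otimes_\Mcal \Dcal$ (respectively through $\Ccal \otimes_\Mcal \LMod_B(\Mcal)$ or $\LMod_A(\Mcal) \otimes_\Mcal \Dcal$) exactly when its restriction along $\LMod_A(\Mcal) \otimes \LMod_B(\Mcal) \rightarrow \LMod_A(\Mcal) \otimes_\Mcal \LMod_B(\Mcal)$ factors through $\Ccal \otimes \Dcal$ (respectively through $\Ccal \otimes \LMod_B(\Mcal)$ or $\LMod_A(\Mcal) \otimes \Dcal$). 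Left exactness of $p \otimes q$ then follows from \cite{SAG} lemma C.4.3.1.

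The one step that is not a mechanical transcription of the abelian argument is the appeal to proposition \ref{prop lex localization prestable}, which requires $\Ccal$ and $\Dcal$ to be separated, whereas the statement is about arbitrary objects of $\Mod_\Mcal(\Groth_\infty)$; I expect this to be the real obstacle, since a non-separated Grothendieck prestable category is not a left exact localization of any module category. The approach I would take is to reduce the whole proposition to the case in which $\Ccal$, $\Ccal'$, $\Dcal$, $\Dcal'$ are separated and then run the argument above. For that reduction I would try to control $\Ccal \otimes_\Mcal \Dcal$ by means of the Postnikov truncations $\tau_{\leq n}$ and the separated quotients (which always land in separated categories), using the right-adjointability and Bar-resolution machinery behind proposition \ref{proposition action has colimit preserving adjoint} and \cite{SAG} remark C.3.5.4 to identify the relevant truncated tensor products; carrying out this reduction cleanly is, I expect, where the real work lies.
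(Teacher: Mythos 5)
Your main argument is exactly the paper's intended proof: the paper's own justification is literally ``analogous to the proof of proposition \ref{prop tensoring left exact}'', and your transcription of the two commutative squares, the identification of the horizontal arrows with $\LMod_B(f)$ and $\LMod_B(p)$, and the final appeal to \cite{SAG} lemma C.4.3.1 is exactly what that analogy amounts to. You are also right to single out the appeal to proposition \ref{prop lex localization prestable} as the one non-mechanical step: unlike its abelian counterpart \ref{prop lex localization}, the prestable Gabriel--Popescu theorem as stated in the paper requires the category to be separated, so the existence of $\Mcal$-linear left exact localizations $\LMod_A(\Mcal) \rightarrow \Ccal$ is only supplied for separated $\Ccal$ (and the argument needs such presentations for all four of $\Ccal, \Ccal', \Dcal, \Dcal'$). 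The paper's one-line proof does not address this point, so flagging it is a genuine contribution rather than a misreading.

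Two caveats about how you propose to close that gap. First, your parenthetical justification --- that a non-separated Grothendieck prestable category cannot be a left exact localization of any module category --- is not correct: sheafification exhibits the (possibly non-separated) category of sheaves of connective spectra on a site as a left exact localization of a separated, compactly projectively generated category. What is true is only that the paper gives you no way to \emph{produce} such a presentation outside the separated case. Second, and more seriously, the reduction you sketch is not just ``real work'' but faces a concrete obstruction: left exactness of a functor landing in a non-separated Grothendieck prestable category cannot be detected after applying $\tau_{\leq n}$ or passing to separated quotients, since comparing $F(\lim)$ with $\lim F$ level by level only shows the comparison map is $\infty$-connective. So the reduction to the separated case, as outlined, does not obviously terminate; one would need either an a priori identification of $\Ccal \otimes_\Mcal \Dcal$ with a limit of truncated tensor products, or a different route (e.g.\ deducing the relative statement from the absolute functoriality of $- \otimes -$ under left exact functors in \cite{SAG} section C.4 together with the Bar resolution). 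As it stands, your proposal reproduces the paper's argument in the separated case and correctly localizes the difficulty in general, but does not yet prove the proposition as stated.
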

\begin{proof}
Analogous to the proof of proposition \ref{prop tensoring left exact}.
\end{proof}
\begin{corollary}\label{coro tensor flats prestable}
Let $\Ccal, \Dcal$ be $\Mcal$-linear Grothendieck prestable categories, and let $X, Y$ be flat objects of $\Ccal$ and $\Dcal$ respectively. Then the object $X \otimes Y$ in $\Ccal \otimes_\Mcal \Dcal$ is flat.
\end{corollary}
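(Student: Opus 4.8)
The plan is to mimic the proof of Corollary \ref{coro tensor flats} verbatim, substituting $\Mcal$ for $\Acal$ and invoking Proposition \ref{prop tensoring left exact prestable} in place of Proposition \ref{prop tensoring left exact}. First I would use the universal property of $\Mcal$ as the free $\Mcal$-linear Grothendieck prestable category on one object (equivalently, the free $\Mcal$-module in $\Pr^L$ on one object): this produces a unique $\Mcal$-linear colimit preserving functor $F \colon \Mcal \to \Ccal$ with $F(1_\Mcal) \simeq X$, and similarly a unique $\Mcal$-linear colimit preserving functor $G \colon \Mcal \to \Dcal$ with $G(1_\Mcal) \simeq Y$. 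By construction $F$ is naturally identified with the functor $- \otimes X \colon \Mcal \to \Ccal$, so the hypothesis that $X$ is flat over $\Mcal$ is precisely the statement that $F$ is left exact; likewise $G$ is left exact.

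Next I would observe that $X \otimes Y$ is the image of the unit under the composite
\[
\Mcal \;\simeq\; \Mcal \otimes_\Mcal \Mcal \xrightarrow{\; F \otimes_\Mcal G \;} \Ccal \otimes_\Mcal \Dcal,
\]
where the first arrow is the inverse of the multiplication equivalence. Hence $X \otimes Y$ is flat over $\Mcal$ if and only if this composite functor is left exact. Since the first arrow is an equivalence it is left exact, and $F \otimes_\Mcal G$ is left exact by Proposition \ref{prop tensoring left exact prestable}; a composite of left exact functors is left exact, so the composite is left exact and we are done.

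I do not expect any genuine obstacle here, all of the real content having been isolated into Proposition \ref{prop tensoring left exact prestable}. The only points that require a moment of care are verifying that the $\Mcal$-linear extension $F$ of the object $X$ agrees up to natural equivalence with $- \otimes X$ (so that ``flat'' in the sense of the definition matches ``$F$ left exact''), and that $F \otimes_\Mcal G$ sends $1_\Mcal \otimes_\Mcal 1_\Mcal$ to $X \otimes Y$ under the identification $\Mcal \otimes_\Mcal \Mcal \simeq \Mcal$; both are formal consequences of the universal properties of free modules and of the relative tensor product.
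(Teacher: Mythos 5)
Your proposal is correct and is exactly the argument the paper intends: the printed proof simply says ``Analogous to the proof of corollary \ref{coro tensor flats},'' which is the verbatim transcription you describe, factoring $X \otimes Y$ through $F \otimes_\Mcal G$ and invoking Proposition \ref{prop tensoring left exact prestable}. No further comment is needed.
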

\begin{proof}
Analogous to the proof of corollary \ref{coro tensor flats}.
\end{proof}

In the presence of flatness, projectivity of objects may be checked after passing to $H_0$:

\begin{proposition}\label{prop check compact projective on heart}
Let $\ccal$ be an $\Mcal$-linear Grothendieck prestable category. Assume that $\ccal$ is generated under colimits by compact projective objects and that compact projective objects in $\ccal$ are flat. Then an object $X$ in $\ccal$ is projective if and only if it is flat and $H_0(X)$ is projective in $\ccal^\heartsuit$.
\end{proposition}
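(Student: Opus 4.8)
The plan is to treat the two directions separately, reducing the substance to a single structural fact about flat objects. For the forward implication, suppose $X$ is projective. Then $H_0(X)$ is projective in $\ccal^\heartsuit$ by part (1) of Proposition \ref{proposition projectives vs heart}. To see that $X$ is flat I would show it is a retract of a coproduct of compact projective objects: let $p : \bigoplus_\alpha G_\alpha \to X$ be the map assembled from all morphisms $G_\alpha \to X$ with $G_\alpha$ compact projective. Every morphism $H_0(G) \to H_0(X)$ with $G$ compact projective lifts along the effective epimorphism $X \to H_0(X)$ to a morphism $G \to X$ (because $G$ is projective), and the objects $H_0(G)$ generate $\ccal^\heartsuit$ by part (2) of Proposition \ref{proposition projectives vs heart}; hence $H_0(p)$ is an epimorphism, so $p$ is an effective epimorphism, which splits since $X$ is projective. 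Because compact projective objects of $\ccal$ are flat by hypothesis and flatness passes to coproducts and retracts (filtered colimits and finite products commute with finite limits in $\ccal$), $X$ is flat.

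For the reverse implication, suppose $X$ is flat with $H_0(X)$ projective in $\ccal^\heartsuit$. The first step is to produce a projective flat object $P'$ of $\ccal$ with $H_0(P') \cong H_0(X)$. Since $\ccal^\heartsuit$ is generated by the compact projective objects $H_0(G)$, the projective object $H_0(X)$ is a retract of $\bigoplus_\alpha H_0(G_\alpha) = H_0(P)$ for some coproduct $P = \bigoplus_\alpha G_\alpha$ of compact projective objects of $\ccal$, and $P$ itself is projective (coproducts of projective objects are projective, effective epimorphisms being stable under pullback) and flat. Writing $e_0$ for the idempotent on $H_0(P)$ with image $H_0(X)$, part (3) of Proposition \ref{proposition projectives vs heart} lifts $e_0$ to an idempotent $e$ on $P$ in $\Ho(\ccal)$ — here one uses that $P$ is projective — and splitting $e$, which is possible since $\ccal$ is idempotent complete, yields $P'$ as a retract of $P$; thus $P'$ is again projective and flat. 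It then remains to show that $X \simeq P'$.

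Now $X$ and $P'$ are both flat with isomorphic zeroth homology, so what is needed is that $H_0$ restricts to an equivalence from the category of flat objects of $\ccal$ to the category of flat objects of $\ccal^\heartsuit$ — equivalently, that a morphism of flat objects of $\ccal$ which is an isomorphism on $H_0$ is an equivalence. This is the relative, many-object analogue of the classical statement that $\pi_0$ identifies flat connective $R$-modules with flat $\pi_0(R)$-modules (\cite{HA} section 7.2.2), and establishing it is the crux of the argument; everything else is formal, resting on Proposition \ref{proposition projectives vs heart}. I would prove it along the lines of the classical case: $\ccal$ is separated, being generated under colimits by compact projective objects and hence equivalent to a category of product-preserving presheaves of spaces on a small additive $\infty$-category, so a morphism is an equivalence as soon as it is an isomorphism on every $H_n$; and for a morphism of flat objects, being an isomorphism on $H_0$ forces an isomorphism on all $H_n$, since the higher homology objects of a flat object are determined — functorially and compatibly with the action of $\Mcal$ — by the zeroth one. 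This last point is what one checks by adapting the arguments of \cite{HA} section 7.2.2 to the present setting (reducing, when $\ccal$ has a single compact projective generator, to a module category $\LMod_A(\Mcal)$ via Proposition \ref{prop lex localization prestable} and thence to $\Mcal = \Mod_R^\cn$, and handling the general case in the same way). Granting this structural input, $X \simeq P'$ is projective, completing the proof.
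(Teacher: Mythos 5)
Your skeleton matches the paper's in both directions: the forward implication via ``projective implies retract of a coproduct of compact projectives, hence flat'' together with part (1) of Proposition \ref{proposition projectives vs heart}, and the converse by producing a projective $P'$ with $H_0(P')\cong H_0(X)$ from Proposition \ref{proposition projectives vs heart} and then arguing that flat objects with the same $H_0$ coincide. The issue is the crux you yourself flag: the claim that for a map of flat objects an isomorphism on $H_0$ forces an equivalence. You defer this to ``adapting \cite{HA} section 7.2.2,'' via a reduction through Proposition \ref{prop lex localization prestable} to $\LMod_A(\Mcal)$ and ``thence to $\Mcal = \Mod^\cn_R$.'' That last step is not a reduction that can be performed: $\Mcal$ is an arbitrary rigid projectively generated base, not a priori of the form $\Mod^\cn_R$, so the classical statement cannot simply be invoked and the adaptation is exactly the content that remains to be proved. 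As written this is a genuine gap. The paper closes it in a few lines with no reduction at all: since $\ccal$ is separated (a consequence of projective generation), the lift $f\colon P'\to X$ of the isomorphism on $H_0$ (which exists because $P'$ is projective) is an equivalence as soon as $H_n(f)$ is an isomorphism for all $n$; and writing $H_n(f)=H_n(1_\Mcal\otimes f)$, flatness of source and target gives $H_n(f)=H_0\bigl(H_n(1_\Mcal)\otimes f\bigr)=H_0\bigl(H_n(1_\Mcal)\otimes H_0(f)\bigr)$, which is an isomorphism because $H_0(f)$ is. Supplying this computation (or an honest proof of your structural lemma in the general $\Mcal$-linear setting) is what your write-up is missing.

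A secondary point: you assert that $H_0$ restricts to an \emph{equivalence} from flat objects of $\ccal$ to flat objects of $\ccal^\heartsuit$. Full faithfulness here is more than you need and is not clearly true --- mapping spaces out of a merely flat object are computed as cofiltered limits over presentations by finite free objects, and a $\lim^1$ term can obstruct injectivity on homotopy classes; the analogue of \cite{HA} corollary 7.2.2.19 is stated for projectives, not flats. Fortunately you never need it: projectivity of $P'$ already supplies the lift $P'\to X$, after which only conservativity of $H_0$ on flat objects is required, and that is precisely the crux discussed above.
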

\begin{proof}
Since every projective object is a retract of a direct sum of compact projective objects, and flat objects are closed under retracts and direct sums, we see that every projective object of $\ccal$ is flat. The fact that $H_0$ sends projective objects to projective objects was already observed in proposition \ref{proposition projectives vs heart}. This finishes the proof of the only if direction.

 Assume now that $X$ is flat and $H_0(X)$ is projective. Applying proposition \ref{proposition projectives vs heart} we may find a projective object $X'$ in $\ccal$ and an isomorphism $H_0(X') = H_0(X)$. The fact that $X'$ is projective allows us to lift this isomorphism to a morphism $f: X' \rightarrow X$.   We claim that $f$ is an isomorphism. To do so it suffices to prove that $f \otimes 1_{\mcal}$ is an isomorphism. Since both $X$ and $X'$ are flat and $\ccal$ is separated we may reduce to proving that $H_0(f \otimes H_n(1_{\mcal}))$ is an isomorphism for all $n \geq 0$. This agrees with $H_0(H_0(f) \otimes H_n(1_\mcal))$, which is an isomorphism by virtue of the fact that $H_0(f)$ is an isomorphism.
\end{proof}

We now discuss the operation of passage to derived categories for linear Grothendieck abelian categories. Fix for the remainder of this section a symmetric monoidal Grothendieck abelian category $\acal$, rigid and generated by compact projective objects.

\begin{construction}\label{construction smon structure on der 1}
Let $\Acal^\cp$ be the full subcategory of $\Acal$ on the compact projective objects and equip $\acal^\cp$ with the symmetric monoidal structure restricted from $\acal$. Note that   $\der(\Acal)_{\geq 0}$ is  obtained by freely adjoining colimits to $\Acal^\cp$. We equip $\der(\Acal)_{\geq 0}$  with the unique colimit preserving extension of the existing symmetric monoidal structure on $\acal^{cp}$. Since $\der(\acal)$ is the stabilization of $\der(\acal)$, we may further extend our symmetric monoidal structure uniquely to a symmetric monoidal structure compatible with colimits on $\der(\acal)$.
\end{construction}

\begin{remark}
Construction \ref{construction smon structure on der 1} makes $\der(\Acal)_{\geq 0}$ into a rigid commutative algebra in $\Groth_\infty$. It is in fact the unique way to equip  $\der(\Acal)_{\geq 0}$ with a rigid commutative algebra structure making the truncation functor $\der(\Acal)_{\geq 0} \rightarrow \acal$ symmetric monoidal.
\end{remark}

A variant of construction \ref{construction smon structure on der 1} allows us to give the connective derived category of an $\acal$-linear Grothendieck abelian category the structure of a  $\der(\acal)_{\geq 0}$-linear Grothendieck prestable category:

\begin{construction}\label{construction smon structure on der 2}
Let $\ccal$ be an $\acal$-linear Grothendieck abelian category. We view the $\acal$-linear structure as a monoidal finite coproduct preserving functor $f: \acal^\cp \rightarrow \Funct^L_{\lex}(\ccal, \Ccal)$ where the target is the category of left exact colimit preserving endofunctors of $\ccal$. Passing to derived functors provides a monoidal equivalence 
\[
\der(-): \Funct^L_{\lex}(\ccal, \Ccal) \rightarrow \Funct^L_{\lex}(\der(\Ccal)_{\geq 0}, \der(\Ccal)_{\geq 0}).
\]
 We equip $\der(\ccal)_{\geq 0}$ with the $\der(\acal)_{\geq 0}$-linear structure arising from the  monoidal finite coproduct preserving functor $\der(-) \circ f : \acal^\cp \rightarrow \Funct^L_{\lex}(\der(\Ccal)_{\geq 0}, \der(\Ccal)_{\geq 0})$. We equip $\der(\ccal) = \Sp(\der(\ccal)_{\geq 0})$ with the induced $\der(\acal) = \Sp(\der(\acal)_{\geq 0})$-linear structure.
\end{construction}

\begin{remark}
Let $\Ccal$ be an $\acal$-linear Grothendieck abelian category. Then the $\der(\acal)_{\geq 0}$-linear structure on $\der(\ccal)_{\geq 0}$ from construction \ref{construction smon structure on der 2} is the unique such structure making the truncation functor $\der(\ccal)_{\geq 0} \rightarrow \ccal$ into a $\der(\ccal)_{\geq 0}$-linear functor.
\end{remark}

\begin{notation}
The inclusion $\acal \rightarrow \der(\acal)$ is generally not symmetric monoidal. When we wish to emphasize the distinction between both symmetric monoidal structures we will write $\otimes^L$ for the tensor product in $\der(\acal) $, and $\otimes$ for the tensor product in $\acal$. If it is clear from the context which operation is being used, we will simply write $\otimes$ instead of $\otimes^L$. The same considerations apply to the case of the action of $\der(\acal)$ on the   derived category of an $\acal$-linear Grothendieck abelian category.
\end{notation}

The following proposition relates the notion of flatness  of objects in $\acal$-linear Grothendieck abelian categories from section \ref{subsection abelian} with the notion studied in this section.

\begin{proposition}\label{prop flat in C vs DC}
Let $\ccal$ be a separated $\der(\acal)_{\geq 0}$-linear Grothendieck prestable category and let $X$ be an object of $\ccal$. The following are equivalent:
\begin{enumerate}[\normalfont(1)]
\item  $X$ is flat over $\der(\acal)_{\geq 0}$.
\item $X$ is $0$-truncated and $H_1(Y \otimes X) = 0$ for all $Y$ in $\acal$.
\item  $X$ is $0$-truncated and flat over $\acal$ (as an object of $\ccal^\heartsuit$).
\end{enumerate}
\end{proposition}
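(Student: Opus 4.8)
The plan is to route all three conditions, once one knows that $X$ is $0$-truncated, through the classical $\operatorname{Tor}$ groups of the Grothendieck abelian category $\ccal^\heartsuit$, equipped with its induced $\acal$-linear structure (for $Y$ in $\acal$ and $Z$ in $\ccal^\heartsuit$ one has $Y \otimes_\acal Z = H_0(Y \otimes Z)$). The first thing I would set up is the dictionary: for $Y$ in $\acal$ and $Z$ a $0$-truncated object of $\ccal$, $H_n(Y \otimes Z)$ computes $\operatorname{Tor}_n^\acal(Y,Z)$. The point here is rigidity of $\acal$: every compact projective $P$ of $\acal$ is dualizable in $\der(\acal)_{\geq 0}$, so $P \otimes -$ has a colimit preserving right adjoint and is $t$-exact on $\ccal$; hence a projective resolution of $Y$ by (possibly infinite direct sums of) compact projectives of $\acal$ --- which is at the same time a resolution of $Y$ in $\der(\acal)_{\geq 0}$ --- consists of objects $P_i$ with $P_i \otimes Z = P_i \otimes_\acal Z$ that are $(- \otimes_\acal Z)$-acyclic, and therefore computes $Y \otimes Z$ with the claimed homology. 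These identifications are moreover compatible with the long exact sequences coming from short exact sequences $0 \to Y' \to Y'' \to Y \to 0$ in $\acal$, which are simultaneously fiber and cofiber sequences in $\der(\acal)_{\geq 0}$ and to which the colimit preserving functor $- \otimes Z$ applies.

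Granting the dictionary, for $0$-truncated $X$ the equivalence $(2) \Leftrightarrow (3)$ is the standard fact that the colimit preserving (hence right exact) functor $- \otimes_\acal X \colon \acal \to \ccal^\heartsuit$ is left exact if and only if its first left derived functor $\operatorname{Tor}_1^\acal(-,X) = H_1(- \otimes X)$ vanishes, the nontrivial direction being the usual long exact sequence argument. I would also record the dimension-shifting consequence that $\operatorname{Tor}_1^\acal(-,X) \equiv 0$ forces $\operatorname{Tor}_n^\acal(-,X) \equiv 0$ for all $n \geq 1$, via a short exact sequence $0 \to Y' \to P \to Y \to 0$ with $P$ projective and the resulting isomorphisms $\operatorname{Tor}_{n+1}^\acal(Y,X) \cong \operatorname{Tor}_n^\acal(Y',X)$.

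The remaining input relating these to $(1)$ is the characterization of flatness over $\der(\acal)_{\geq 0}$ in terms of $0$-truncated objects (\cite{SAG} proposition C.3.2.1, applied exactly as in the example following the definition of flatness above): $X$ is flat over $\der(\acal)_{\geq 0}$ if and only if $Y \otimes X$ is $0$-truncated for every $0$-truncated $Y$, and the $0$-truncated objects of $\der(\acal)_{\geq 0}$ are precisely the objects of $\acal$. From this $(1) \Rightarrow (2)$ is immediate: taking $Y = 1_\acal$, which is the unit of $\der(\acal)_{\geq 0}$ and lies in $\acal$, gives that $X = 1_\acal \otimes X$ is $0$-truncated, and $Y \otimes X$ being $0$-truncated gives $H_1(Y \otimes X) = 0$ for all $Y$ in $\acal$. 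Conversely, if $(2)$ (or $(3)$) holds then $X$ is $0$-truncated, and by the dictionary together with dimension shifting $H_n(Y \otimes X) = \operatorname{Tor}_n^\acal(Y,X) = 0$ for all $n \geq 1$ and all $Y$ in $\acal$, so $Y \otimes X$ is $0$-truncated for every such $Y$, whence $X$ is flat over $\der(\acal)_{\geq 0}$ by the criterion. Together with $(2) \Leftrightarrow (3)$ this closes the cycle.

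I expect the only genuinely delicate step to be the identification $H_n(Y \otimes X) = \operatorname{Tor}_n^\acal(Y,X)$: it requires keeping the two symmetric monoidal structures in play --- the one on $\acal$ and the one $\otimes^L$ on $\der(\acal)_{\geq 0}$ acting on $\ccal$ --- carefully apart, and using rigidity of $\acal$ to know that compact projectives act $t$-exactly, so that projective resolutions in $\acal$ are at once resolutions in the derived sense and $(- \otimes_\acal X)$-acyclic. Everything else is the cited flatness criterion, for which separatedness of $\ccal$ is the relevant standing hypothesis, together with routine homological algebra in $\ccal^\heartsuit$.
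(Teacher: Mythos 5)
Your proof is correct and follows essentially the same route as the paper's: the $\operatorname{Tor}$-dictionary you set up via resolutions by compact projectives (using rigidity to get $t$-exactness of their action) is precisely the computation the paper performs for (3)$\Rightarrow$(1), the long exact sequence argument for (2)$\Leftrightarrow$(3) matches the paper's kernel--cokernel argument, and both arguments invoke the same truncatedness criterion for flatness. The one small imprecision is the placement of separatedness: it is not needed for the flatness criterion itself, but rather for the step where vanishing of $H_n(Y\otimes X)$ for all $n\geq 1$ is upgraded to $Y\otimes X$ being $0$-truncated; since it is a standing hypothesis, nothing is lost.
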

\begin{proof}
We first show that (1) implies (2). Since $X$ is flat and the unit $1_\acal$ is $0$-truncated, we have that $X = 1_\acal \otimes X$ is $0$-truncated. Furthermore, appealing once again to the flatness of $X$ we have that for every $Y$ in $\acal$ the object $Y \otimes X$ is $0$-truncated, and hence $H_1(Y \otimes X) = 0$, as desired. To see that (2) implies (3) we must show that if  $i: Z \rightarrow Z'$ is a monomorphism in $\Acal$ then $H_0(i \otimes X)$ is a monomorphism in $\ccal$. Indeed, the kernel of $H_0(i \otimes X)$ receives an epimorphism from $H_1(\coker(i)\otimes X)$, which vanishes.

It remains to prove that (3) implies (1). Let $Y$ be a $0$-truncated object of $\der(\acal)_{\geq 0}$. Pick a resolution $Y_\bullet$ of $Y$ by compact projective objects. Then $Y \otimes X$ is the realization of $Y_\bullet \otimes X$. Since $Y_\bullet$ is levelwise compact projective we have that $Y_\bullet \otimes X$ is a diagram of $0$-truncated objects. The fact that $X$ is a flat object of $\ccal^\heartsuit$ now implies that $Y_\bullet \otimes X$ is a resolution of $H_0(Y \otimes X)$. Since $\ccal$ is separated we have that $Y \otimes X = H_0(Y \otimes X)$ is $0$-truncated. Since $Y$ was arbitrary we conclude that $X$ is flat, as desired.
\end{proof}

\begin{remark}
Let $\ccal$ be an $\acal$-linear Grothendieck abelian category. For each $n \geq 0$ we denote by $\Tor_n(-,-): \acal \times \ccal \rightarrow \ccal$ the composite functor
\[
\acal \times \ccal = \der(\acal)^\heartsuit \times \der(\ccal)^\heartsuit \hookrightarrow \der(\acal)_{\geq 0} \times \der(\ccal)_{\geq 0} \xrightarrow{-\otimes-} \der(\ccal)_{\geq 0} \xrightarrow{H_n(-)} \ccal.
\]
Specializing proposition \ref{prop flat in C vs DC} we see that an object $X$ in $\der(\ccal)_{\geq 0}$ is flat over $\der(\acal)_{\geq 0}$ if and only if it is $0$-truncated and $\Tor_1(Y, X) = 0$ for all $Y$ in $\acal$.
\end{remark}


\ifx\inmain\undefined
\bibliographystyle{myamsalpha2}
\bibliography{References}
\fi


\section{Compact assembly, duality, and filtered colimits of categories}\label{section compactly assembled}

In \cite{JJcont}, Johnstone and Joyal introduced the notion of a continuous $(1,1)$-category as a categorification of the more classical notion of continuous poset, with the purposes of characterizing exponentiable Grothendieck $(1,1)$-topoi. In \cite{SAG}, an $(\infty,1)$-categorical generalization of this notion was developed, under the name of compactly assembled categories. In the appendix D, Lurie proves a fundamental result that shows that a presentable stable category is dualizable if and only if it is compactly assembled. 

We begin this section in \ref{subsection compactly assembled} by reviewing Lurie's results, and formulating a variant that we call $n$-strong compact assembly, which plays a similar role in classifying dualizable presentable additive $(n,1)$-categories for each $1 \leq n \leq \infty$. This is of fundamental importance in our paper: together with   proposition \ref{proposition dualizable is presentable}, it implies that dualizable cocomplete  additive  $(1,1)$-categories (resp. $(\infty,1)$-categories) are automatically Grothendieck abelian (resp. separated Grothendieck prestable) and have exact products.

In the same way that compact assembly is a weakening of the notion of generation by compact objects, $n$-strong compact assembly is a weakening of the notion of generation by $n$-strongly compact objects, where an object is said to be $n$-strongly compact if the functor it corepresents commutes with $(n,1)$-categorical sifted colimits. For categories which are compactly generated, a natural class of functors to consider between them are those which preserve compact objects. In the case of compactly assembled categories, this condition may be weakened to yield the notion of compact functor. We explore this in \ref{subsection colimits}, together with the related notion of $n$-strongly compact functor. 

Finally, in \ref{subsection lifting} we prove the main result of this section, that guarantees that the functor that assigns to each ($n$-strongly) compactly assembled category its full subcategory on the ($n$-strongly) compact objects preserves filtered colimits of diagrams with ($n$-strongly) compact transitions.  This will be of fundamental importance in section \ref{section G rings}, allowing us deduce the general case of theorem \ref{theorem principal 2 introduction} from the case of complete local Noetherian rings.

\subsection{Compact assembly and strong compact assembly}\label{subsection compactly assembled}

We begin with a review of the notion of compactly assembled category.

\begin{notation}
We denote by $\catomega$ the subcategory of $\cathat$ on those large categories admitting small filtered colimits and functors which preserve small filtered colimits. We will denote by $\Ind:  \cathat \rightarrow \catomega$ the left adjoint to the forgetful functor. In other words, $\Ind$ is the functor that freely adjoins small filtered colimits.
\end{notation}

\begin{definition}\label{def compactly assembled}
Let $\Ccal$ be a category admitting small filtered colimits. We say that $\Ccal$ is compactly assembled if the functor $\Ind(\Ccal) \rightarrow \Ccal$ which ind-extends the identity on $\Ccal$, admits a left adjoint.
\end{definition}

\begin{remark}
The notion of compactly assembled category was introduced, in the $(1,1)$-categorical context, in \cite{JJcont} under the name of continuous category. The $\infty$-categorical notion is explored in \cite{SAG} section 21.1.2. Note that our definition is slightly different from that of \cite{SAG} in that we do not require accessibility of $\ccal$. This allows us to treat $\ccal$ and $\Ind(\ccal)$ on equal footing: if $\ccal$ is a category with small filtered colimits then $\Ind(\ccal)$ is compactly assembled according to definition \ref{def compactly assembled}, but usually not accessible.
\end{remark}

\begin{remark}\label{remark compactly assembled presentable}
Let $\ccal$ be a presentable category, and assume that $\ccal$ is compactly assembled. Let $p: \Ind(\ccal) \rightarrow \ccal$ be the projection and $i: \ccal \rightarrow \Ind(\ccal)$ be its left adjoint. For each regular cardinal $\kappa$ let $\ccal^\kappa$ be the full subcategory of $\ccal$ on the $\kappa$-compact objects. Then $\Ind(\ccal) = \colim \Ind(\ccal^\kappa)$, and since $\ccal$ is presentable we have that $i$ factors through $\Ind(\ccal^\kappa)$ for some $\kappa$. The restriction of $p$ to $\Ind(\ccal^\kappa)$ is a limit preserving localization between presentable categories.
\end{remark}

The following is the content of \cite{SAG} theorem 21.1.2.10 (with appropriate modifications to remove the accessibility conditions):

\begin{theorem}\label{theorem comp assembled iff retract}
Let $\Ccal$ be a category admitting small filtered colimits. Then $\Ccal$ is compactly assembled if and only if it is a retract in $\catomega$ of a category which is generated under small filtered colimits by compact objects.
\end{theorem}

The following result explains the relevance of the notion of compact assembly for our purposes:

\begin{theorem}[\cite{SAG} proposition D.7.3.1 and corollary D.7.7.3] \label{theorem dualizable iff comp assembled}
Let $\Mcal$ be a compactly generated presentable symmetric monoidal stable category. Assume that compact and dualizable objects in $\mcal$ coincide (in other words, $\Mcal$ is rigid). Then an object of $\Mod_{\Mcal}(\Pr^L)$ is dualizable if and only if the underlying category is compactly assembled.
\end{theorem}

We now discuss a variant of the notion of compact assembly, where filtered colimits are replaced by sifted colimits. We fix throughout a constant $1 \leq n \leq \infty$. We begin with a preliminary discussion on the notion of sifted colimits in $(n,1)$-categories.

\begin{proposition}\label{prop equivalences initial}
Let $F: \Ical \rightarrow \Jcal$ be a functor of $(n,1)$-categories. The following are equivalent:
\begin{enumerate}[\normalfont (1)]
\item For every $(n,1)$-category $\Ccal$ and every limit diagram $\Jcal^\lhd \rightarrow \Ccal$, the induced diagram $\Ical^\lhd \rightarrow \Ccal$ is a limit diagram.
\item For every object $X$ in $\Jcal$, the geometric realization of the category $\Ical \times_{\Jcal} \Jcal_{/X}$ is $n$-connective.
\end{enumerate}
\end{proposition}
\begin{proof}
Assume first that (1) holds and let $X$ be an object in $\Jcal$. Then the projection $\Ical \times_{\Jcal} \Jcal_{/X} \rightarrow \Ical$ is a right fibration which classifies the functor $\Hom_{\Jcal}(-, X)|_{\Ical^\op}$. The geometric realization of $\Ical \times_{\Jcal} \Jcal_{/X}$ is therefore equivalent to the colimit of $\Hom_{\Jcal}(-, X)|_{\Ical^\op}: \Ical^\op \rightarrow \Spc$, which in turn agrees with the limit of the functor $\Hom_{\Jcal}(-, X)|_{\Ical}: \Ical \rightarrow \Spc^\op$. To prove (2) it suffices to show that the limit of the functor $\Hom_{\Jcal}(-, X)|_{\Ical}: \Ical \rightarrow (\Spc_{\leq n-1})^\op$ is the terminal space. Applying (1) we reduce to showing that the limit of the functor $\Hom_{\Jcal}(-, X): \Jcal \rightarrow (\Spc_{\leq n-1})^\op$ is the terminal space. As before, we may identify this with the $(n-1)$-truncation of the geometric realization of the right fibration classifying the functor $\Hom_{\Jcal}(-, X): \Jcal^\op \rightarrow \Spc$. The fact that (2) holds now follows from the fact that this right fibration admits a terminal object and is therefore contractible.

Assume now that (2) holds. Since the inclusion $\ccal \rightarrow \Pcal(\ccal)$ of $\ccal$ into its free cocompletion preserves all limits we may after replacing $\ccal$ with $\Pcal(\ccal)_{\leq n-1}$ assume that $\ccal$ is presentable. Let $\Dcal = (\Pcal(\ccal^\op)_{\leq n-1})^\op$ be the $(n,1)$-category obtained from $\ccal$ by freely adjoining small limits. Since the projection $\dcal \rightarrow \ccal$ preserves limits, it is enough to show that restriction along $F$ preserves limits valued in $\dcal$. Note that $\dcal$ is equivalent to the opposite of the category of accessible functors $\ccal \rightarrow \Spc_{\leq n-1}$, and in particular it has a limit preserving embedding into $\Funct(\ccal^\op, (\Spc_{\leq n-1})^\op)$. It is therefore enough to show that restriction along $F$ preserves limits valued in $\Funct(\ccal^\op, \Spc_{\leq n-1}^\op)$. Since the evaluation functors create limits we may now reduce to the case $\ccal = \Spc_{\leq n-1}^\op$. In this case every functor $G: \Jcal \rightarrow \Spc_{\leq n-1}^\op$ is the limit of corepresentable functors. Since the property that the limit of $G$ is preserved by restriction along $F$ is preserved by limits in $G$, it is enough to consider the case where $G$ is corepresentable. The arguments from the first paragraph of the proof show that this case is implied by (2).
\end{proof}

\begin{definition}
We say that a functor $F: \Ical \rightarrow \Jcal$ of $(n,1)$-categories is initial if it satisfies the equivalent conditions of proposition \ref{prop equivalences initial}. We say that an $(n,1)$-category $\Ical$ is $n$-sifted if the diagonal $\Ical^\op \rightarrow \Ical^\op \times  \Ical^\op$ is initial.
\end{definition}

\begin{definition}
Let $\Ccal$ be an $(n,1)$-category admitting small colimits indexed by $n$-sifted $(n,1)$-categories and let $X$ be an object in $\Ccal$. We say that $X$ is $n$-strongly compact if the functor $\Hom_\Ccal(X, -): \Ccal \rightarrow \Spc_{\leq n-1}$ preserves small $n$-sifted colimits.
\end{definition}

\begin{remark}
In the case $n = \infty$, our notion of $\infty$-strongly compact object agrees with the notion of compact projective object from \cite{HTT} section 5.5.8.
\end{remark}

\begin{notation}
Let $\cathat_n$ be the category of large $(n,1)$-categories. We denote by $\cathat^\Sigma_n$ the subcategory of $\cathat_n$ on the large $(n,1)$-categories admitting small $n$-sifted colimits, and functors which preserve small sifted colimits. The forgetful functor $\cathat_n^\Sigma \rightarrow \cathat_n$ admits a left adjoint, which we denote by $\Pcal_\Sigma^n$. In other words, $\Pcal_\Sigma^n$ is the functor that freely adjoins small $n$-sifted colimits to an $(n,1)$-category. If $n = \infty$ we denote this simply by $\Pcal_\Sigma$.
\end{notation}

\begin{remark}
For each $(n,1)$-category $\Ccal$ we have a Yoneda embedding $\Ccal \rightarrow \Pcal_\Sigma^n(\Ccal)$. Its image consists of $n$-strongly compact objects of $\Pcal_\Sigma^n(\Ccal)$, and generates it under $n$-sifted colimits. These properties characterize the categories of the form $\Pcal_{\Sigma}^n(\Ccal)$: if $\Dcal$ is an $(n,1)$-category with $n$-sifted colimits, generated under $n$-sifted colimits by $n$-strongly compact objects, then $\Dcal = \Pcal_{\Sigma}^n(\Ccal)$ where $\Ccal$ is the full subcategory of $\Dcal$ on the $n$-strongly compact objects.
\end{remark}

With the notion of $n$-sifted colimits at hand, we may define $n$-strong compact assembly in a way which is completely analogous to compact assembly.

\begin{definition}
Let $\Ccal$ be an $(n,1)$-category admitting small $n$-sifted colimits. We say that $\Ccal$ is $n$-strongly compactly assembled if the unique $n$-sifted colimit preserving extension  $\Pcal_{\Sigma}^n(\Ccal) \rightarrow \Ccal$  of the identity on $\Ccal$ admits a left adjoint.
\end{definition}

\begin{remark}\label{remark presentable strongly compactly assembled}
As in remark \ref{remark compactly assembled presentable}, one shows that if $\ccal$ is an $n$-strongly compactly assembled presentable category then $\ccal$ is a limit preserving localization of $\Pcal^n_\Sigma(\ccal^\kappa)$ for some regular cardinal $\kappa$.
\end{remark}

In the additive context, $n$-strongly compactly assembled categories have particularly good behavior:

\begin{proposition}\label{prop properties strongly compactly assembled} 
\hspace{1cm}
\begin{enumerate}[\normalfont (1)]
\item Let $\ccal$ be a $1$-strongly compactly assembled presentable additive  $(1,1)$-category. Then $\ccal$ is a Grothendieck abelian category with exact products.
\item Let $\ccal$ be an $\infty$-strongly compactly assembled presentable additive category. Then $\ccal$ is a separated Grothendieck prestable category such that products in $\Sp(\ccal)$ are exact.
\end{enumerate}
\end{proposition}
\begin{proof}
The property of being a Grothendieck abelian category with exact products (resp. a separated Grothendieck prestable category with exact products) is preserved under passage to limit preserving accessible localizations. The proposition now follows from remark \ref{remark presentable strongly compactly assembled}. 
\end{proof}

The following is a variant of theorem \ref{theorem comp assembled iff retract}:

\begin{proposition}\label{prop characterize strongly comp assembled}
Let $\Ccal$ be an $(n,1)$-category admitting $n$-sifted colimits. Then $\Ccal$ is $n$-strongly compactly assembled if and only if it is a retract in $\cathat_n^\Sigma$ of a category which is generated under $n$-sifted colimits by $n$-strongly compact objects.
\end{proposition}
\begin{proof}
If $\ccal$ is $n$-strongly compactly assembled then $\ccal$ is a retract in $\cathat_n^\Sigma$ of $\Pcal_\Sigma^n(\ccal)$. Conversely, assume that $\ccal$ is a retract in $\cathat_n^\Sigma$ of a category $\dcal$ which is generated under $n$-sifted colimits by $n$-strongly compact objects. Then the projection $p_\Ccal: \Pcal^n_\Sigma(\ccal) \rightarrow \ccal$ is a retract of the projection $p_\Dcal: \Pcal^n_\Sigma(\Dcal) \rightarrow \Dcal$. The functor $p_\Dcal$ admits a left adjoint which sends each $n$-strongly compact object to its image under the Yoneda embedding. The fact that $p_\Ccal$ has a left adjoint now follows from \cite{SAG} lemma 21.1.2.14. 
\end{proof}

We also have the following variant of theorem \ref{theorem dualizable iff comp assembled}:

\begin{theorem}\label{theorem dualizables over truncated connective}
Let $\Mcal$ be a presentable symmetric monoidal additive $(n,1)$-category. Assume that $\Mcal$ is generated under colimits by $n$-strongly compact objects, and that $n$-strongly compact objects and dualizable objects coincide in $\Mcal$ (in other words, $\Mcal$ is rigid). Then an object of $\Mod_{\Mcal}(\Pr^L)$ is dualizable if and only if the underlying $(n,1)$-category is $n$-strongly compactly assembled.
\end{theorem}

\begin{remark}\label{remark description duality comp}
Let $\ccal$ be a presentable additive $(n,1)$-category  generated under colimits by $n$-strongly compact objects. Denote by $\ccal^\Sigma$ the full subcategory of $\ccal$ on the $n$-strongly compact objects. Then there is a colimit preserving functor
\[
\ccal \otimes \Pcal_\Sigma^n((\ccal^\Sigma)^\op) \rightarrow \Sp_{\leq n-1}
\]
with target the category of $(n-1)$-truncated spectra, defined by the property that its restriction to $\ccal \times (\ccal^\Sigma)^\op$ recovers the restriction of the functor $\Hom^\enh_\ccal(-,-): \ccal^\op \times \ccal \rightarrow \Sp_{\leq n-1}$. The above functor is  the counit of a duality in $\Mod_{\Sp_{\leq n-1}}(\Pr^L)$. In particular, we see that the dual to $\ccal$ is also generated under colimits by $n$-strongly compact objects.
\end{remark}

\begin{proof}[Proof of theorem \ref{theorem dualizables over truncated connective}]
The assumptions on $\Mcal$ imply that the tensoring functor $\Mcal \otimes \Mcal \rightarrow \Mcal$ admits an $\Mcal$-linear colimit preserving right adjoint, and its image generates $\Mcal$ under colimits. It follows from this that $\Mcal$ is equivalent to the category of modules over the endomorphism algebra of $1_\Mcal$ in $\Mcal \otimes \Mcal$, and therefore it is dualizable as a module over $\Mcal \otimes \Mcal$.  Combined with remark \ref{remark description duality comp} we see that $\Mcal$ is smooth and proper as an algebra in $\Mod_{\Sp_{\leq n-1}}(\Pr^L)$, so that an $\Mcal$-module is dualizable if and only if its image in $\Mod_{\Sp_{\leq n-1}}(\Pr^L)$ is dualizable.

It remains to show that an object $\ccal$ in  $\Mod_{\Sp_{\leq n-1}}(\Pr^L)$ is dualizable if and only if it is $n$-strongly compactly assembled. Assume first that $\ccal$ is $n$-strongly compactly assembled. It follows from remark \ref{remark presentable strongly compactly assembled} that $\ccal$ is a retract in $\Mod_{\Sp_{\leq n-1}}(\Pr^L)$ of a category which is generated under colimits by $n$-strongly compact objects. The fact that $\ccal$ is dualizable now follows from remark \ref{remark description duality comp} together with the fact that $\Mod_{\Sp_{\leq n-1}}(\Pr^L)$ is idempotent complete.

Assume now that $\ccal$ is a dualizable object of $\Mod_{\Sp_{\leq n-1}}(\Pr^L)$. Pick a regular cardinal $\kappa$ such that $\ccal$ is $\kappa$-accessible. Then the colimit preserving functor $p: \Pcal_\Sigma^n(\ccal^\kappa) \rightarrow \ccal$ that extends the inclusion $\ccal^\kappa \rightarrow \ccal$ is a localization. It follows that $\id_{\ccal^\vee} \otimes p$ is also a localization, which implies in particular that the functor $\Funct^L(\ccal, \Pcal_\Sigma^n(\ccal^\kappa)) \rightarrow \Funct^L(\ccal, \ccal)$ of composition with $p$ is surjective. Therefore the identity on $\ccal$ admits a colimit preserving lift along $p$, which means that $\ccal$ is a retract in $\Pr^L$ of $\Pcal_\Sigma^n(\ccal^\kappa)$. The fact that $\ccal$ is $n$-strongly compactly assembled now follows from proposition \ref{prop characterize strongly comp assembled}.
\end{proof}

\begin{corollary}\label{coro properties dualizable 11}
Let $\acal$ be a symmetric monoidal Grothendieck abelian category, rigid and generated by compact projective objects. Let $\ccal$ be a dualizable   $\acal$-linear cocomplete category. Then $\ccal$ is a Grothendieck abelian category with exact products.
\end{corollary}
\begin{proof}
Combine propositions \ref{proposition dualizable is presentable} and \ref{prop properties strongly compactly assembled} with theorem \ref{theorem dualizables over truncated connective}.
\end{proof}

\begin{corollary}\label{coro properties dualizable infty}
Let $\Mcal$ be a symmetric monoidal Grothendieck prestable category, rigid and generated under colimits by compact projective objects. Let $\ccal$ be a dualizable   $\Mcal$-linear cocomplete category. Then $\ccal$ is a separated Grothendieck prestable category and products are t-exact in $\Sp(\ccal)$.
\end{corollary}
\begin{proof}
Combine propositions \ref{proposition dualizable is presentable} and \ref{prop properties strongly compactly assembled} with theorem \ref{theorem dualizables over truncated connective}.
\end{proof}

\begin{remark}\label{remark description duality over M}
Let $\Mcal$ be as in theorem \ref{theorem dualizables over truncated connective}. Let $\ccal$ be a dualizable $\Mcal$-module and let $\epsilon: \Ccal \otimes_\Mcal \ccal^\vee \rightarrow \Mcal$ be the counit of the duality. Then the composite map
\[
\ccal \otimes \ccal^\vee \rightarrow \ccal \otimes_{\Mcal} \ccal^\vee \xrightarrow{\epsilon} \Mcal \xrightarrow{\Hom^\enh_{\Mcal}(1_\Mcal, -)} \Sp_{\leq n-1}
\]
is the counit of a duality between $\ccal$ and $\ccal^\vee$ in the symmetric monoidal category of presentable additive $(n,1)$-categories. 

Assume now that $\ccal$ is generated under colimits by $n$-strongly compact objects. Then combining with remark \ref{remark description duality comp} we obtain an identification $\ccal^\vee = \Pcal_\Sigma^n((\ccal^\Sigma)^\op)$, such that $\epsilon$ is a colimit preserving $\Mcal$-linear functor with the property that its restriction to $\ccal \times  (\ccal^\Sigma)^\op$ recovers the restriction of the functor $\Hom^\enh_\ccal(-,-): \ccal^\op \times \ccal \rightarrow \Mcal$.
\end{remark}

The following proposition shows that passage to derived categories preserves the property of being strongly compactly assembled:

 \begin{proposition}\label{lemma derive strong compact assembly}
 Let $\ccal$ be a $1$-strongly compactly assembled Grothendieck abelian category. Then $\der(\ccal)_{\geq 0}$ is $\infty$-strongly compactly assembled.
 \end{proposition}
 \begin{proof}
 By remark \ref{remark presentable strongly compactly assembled}, we may pick a regular cardinal $\kappa$ such that the canonical functor $p : \Pcal_\Sigma^1(\ccal^\kappa) \rightarrow \ccal$ is a limit preserving localization. In particular, passing to derived categories it induces a left exact colimit preserving functor $p': \Pcal_\Sigma(\ccal^\kappa) \rightarrow \der(\ccal)_{\geq 0}$. It follows from the universal property of derived categories that $p'$ is the universal left exact colimit preserving functor from $\Pcal_\Sigma(\ccal^\kappa)$ into a separated  Grothendieck prestable category such that $p'(X) = 0$ for all $X$ in $\Pcal_\Sigma^1(\ccal^\kappa) $ such that $p(X) = 0$. Therefore $p'$ exhibits $\der(\ccal)_{\geq 0}$ as the left exact localization of $\Pcal_\Sigma(\ccal^\kappa)$ at the localizing class given by those objects $X$ such that $p(H_n(X)) = 0$ for all $n$ (see \cite{SAG} proposition C.5.2.7). 
 
 To prove that $\ccal$ is $\infty$-strongly compactly assembled it will suffice to show that $p'$ admits a left adjoint. By the adjoint functor theorem, it is enough to show that $p'$ preserves limits. Since $p'$ is already known to preserve finite limits, we may reduce to showing that $p'$ preserves products. Let $X_\alpha$ be a set-indexed family of objects of $\Pcal_\Sigma(\ccal^\kappa)$. We wish to show that $p'(\prod X_\alpha) = \prod p'(X_\alpha)$. By proposition \ref{prop properties strongly compactly assembled} it suffices to show that $H_n(p'(\prod X_\alpha)) = \prod H_n(p'(X_\alpha))$ for all $n \geq 0$. Since $p'$ is left exact and colimit preserving we may reduce to proving that $p'(H_n(\prod X_\alpha)) = \prod p'(H_n(X_\alpha))$. This follows from the fact that $p$ preserves products and that products are exact in $\Pcal_\Sigma(\ccal^\kappa)$.
  \end{proof}

\begin{remark}
Let $\ccal$ be an $\infty$-strongly compactly assembled Grothendieck prestable category. It follows from remark \ref{remark presentable strongly compactly assembled} that $\ccal^\heartsuit$ is a limit preserving localization of a Grothendieck abelian category generated by compact projective objects. Applying proposition \ref{prop characterize strongly comp assembled} we see that $\ccal^\heartsuit$ is $1$-strongly compactly assembled. Similarly, $\Sp(\ccal)$ is  a limit preserving localization of a compactly generated presentable stable category, and therefore $\Sp(\ccal)$ is compactly assembled.
\end{remark}

We finish by discussing two families of examples of categories which are (strongly) compactly assembled but not (strongly) compactly generated.

\begin{example}\label{example almost modules}
Let $R$ be a valuation ring with value group $\ZZ[1/2]$ and let $\mathfrak{m}$ be the maximal ideal of $R$. Then $\mathfrak{m}$ is idempotent and flat. It follows that the functor $- \otimes \mathfrak{m} : \Mod_R^\heartsuit \rightarrow \Mod_R^\heartsuit$ is a colimit preserving left exact colocalization. Its image is the Grothendieck abelian category $\operatorname{aMod}_{R}^\heartsuit$ of almost $R$-modules from \cite{GabberRamero}. This is a retract of $\Mod^\heartsuit_R$ in $\Pr^L$, and in particular it is $1$-strongly compactly generated and a dualizable Grothendieck abelian category. Any projective object of  $\operatorname{aMod}_R^\heartsuit$ defines a projective $R$-module $P$ with the property that $\mathfrak{m} \otimes_R P = P$. Since any nonzero projective $R$-module contains a copy of $R$ and $\mathfrak{m} \otimes_R R \neq R$, we have that $P$ is necessarily zero. In particular we have that  $\operatorname{aMod}_R^\heartsuit$ is not generated by $1$-strongly compact objects.

The category $\operatorname{aMod}_R^\heartsuit$ inherits an $R$-linear structure from $\operatorname{Mod}_R^\heartsuit$. If $R$ is an algebra over a field $k$ then $\operatorname{aMod}_R^\heartsuit$ gives an example of a nonzero dualizable $k$-linear Grothendieck abelian category without nonzero projective objects.

This example extends to the derived context. Namely, $- \otimes \mathfrak{m}: \Mod_R \rightarrow \Mod_R$ is a colimit preserving t-exact colocalization. Its image is the presentable stable category $\operatorname{aMod}_R$ of almost $R$-modules. It inherits a t-structure from $\Mod_R$, and is equivalent to the derived category of $\operatorname{aMod}_R^\heartsuit$. For each $1 \leq n \leq \infty$ the full subcategory $(\operatorname{aMod}^\cn_R)_{\leq n-1}$  on the $(n-1)$-truncated connective objects is $n$-strongly compactly assembled and has no nontrivial projective objects.

The presentable stable category $\operatorname{aMod}_R$ is compactly assembled by the same reasoning as above. We claim that it contains no nontrivial compact objects. This is equivalent to the assertion that if $M$ is a compact $R$-module such that $\mathfrak{m} \otimes_R M = M$ then $M = 0$. Assume for the sake of contradiction that $M \neq 0$, and let $n$ be such that $H_n(M) \neq 0$. Since $R$ is coherent we have that $H_n(M)$ is coherent. We may then write $H_n(M)$ as a finite direct sum of modules of the form $R/Rx$ for $x$ a nonunit in $R$.  The fact that $\mathfrak{m} \otimes_R M = M$ then implies that $\mathfrak{m} \otimes_R R/Rx = R/Rx$ for some nonunit $x$.   We now derive a contradiction from the fact that $\mathfrak{m} \otimes_R R/Rx$ is not finitely generated.
\end{example}

\begin{example}\label{example non torsion}
Let $R$ be a connected commutative ring such that the \'etale cohomology group $H^2(\Spec(R), \GG_m)$ is non-torsion (see \cite{Grothendieck} remark 1.11b, or chapter 7 in \cite{BrauerBook} for examples). Let $\mathcal{G}$ be a non-torsion $\GG_m$-gerbe on $\Spec(R)$, and let $\Mod^\heartsuit_{R, \mathcal{G}}$ be the associated twist of $\Mod^\heartsuit_R$  (see notation \ref{notation twist mod}). Then $\Mod^\heartsuit_{R, \mathcal{G}}$ is an invertible $R$-linear Grothendieck abelian category, and in particular it is $1$-strongly compactly assembled. 

We claim that the only compact projective object of $\Mod^\heartsuit_{R, \mathcal{G}}$ is zero. Assume that $X$ is a compact projective object, and let $\Dcal$ be the full subcategory of $\smash{\Mod^\heartsuit_{R, \mathcal{G}}}$ generated under colimits by $X$.  Since $\mathcal{G}$ is non-torsion, we have that $\Mod^\heartsuit_{R, \mathcal{G}}$  is not the category of modules over an Azumaya $R$-algebra, and therefore $\dcal \neq \smash{\Mod^\heartsuit_{R, \mathcal{G}}}$. Pick a faithfully flat \'etale $R$-algebra $R'$ such that the pullback of $\mathcal{G}$ along the projection $\Spec(R') \rightarrow \Spec(R)$ is trivial. Then we may identify $X \otimes_R R'$ with a finitely generated free $R'$-module. The subcategory of $\Mod_{R'}^\heartsuit$ generated by $X \otimes_R R'$ is $\dcal \otimes_R R'$, so we see that $X \otimes_R R'$ is not a generator for $\Mod_{R'}^\heartsuit$. The fact that $R$ is connected now implies that $X$ vanishes.

Taking $R$ to be a commutative algebra over a field $k$ yields an example of a dualizable $k$-linear Grothendieck abelian category which has no nontrivial compact projective objects. We note that this example extends to the connective derived context as well: for each $1 \leq n \leq \infty$ the full subcategory $(\der(\Mod^\heartsuit_{R, \mathcal{G}})_{\geq 0})_{\leq n-1}$ of the derived category of $\Mod^\heartsuit_{R, \mathcal{G}}$ on the connective $(n-1)$-truncated objects is $n$-strongly compactly assembled and has no nontrivial compact projective objects. Note that the situation is different once one stabilizes: the full derived category $\der(\Mod^\heartsuit_{R, \mathcal{G}})$ is compactly generated, as explained in \cite{ToenAzumaya}.
\end{example}


\subsection{Compact and strongly compact functors}\label{subsection colimits}

Our next goal is to study certain classes of filtered colimits in $\catl$ which are preserved by the forgetful functor to $\catomega$.  The relevant condition that we will need to impose on our diagrams is that the transition maps be compact functors. In the presence of right adjoints, compactness of functors is equivalent to the right adjoints preserving filtered colimits. In general, one has to pass to a suitable completion of the categories to be able to define this right adjoint.

\begin{notation}\label{notation right adjoint after completing}
For each object $\Ccal$ in $\catomega$ we denote by $\widehat{\Ccal}$ the (very large) category obtained by adjoining large colimits to $\Ccal$ while preserving the filtered small colimits in existence. Given a functor $F: \ccal \rightarrow \dcal$ in $\catomega$ we denote by $\widehat{F}: \widehat{\ccal} \rightarrow \widehat{\Dcal}$ the induced functor. Observe that $\widehat{F}$ is a colimit preserving functor between (very large) presentable categories, and it therefore admits a right adjoint.
\end{notation}

\begin{definition}
Let $F: \ccal \rightarrow \Dcal$ be a morphism in $\catomega$. We say that $F$ is compact if the right adjoint to the functor $\widehat{F}$ from notation \ref{notation right adjoint after completing} preserves large colimits.
\end{definition}

\begin{example}\label{example compact is compact}
Let $\Dcal$ be a category with small filtered colimits, and $F: \Delta^0 \rightarrow \Dcal$ be a functor that picks out an object $X$ in $\Dcal$. Then $F$ is compact if and only if $X$ is compact.
\end{example}

\begin{remark}
Let $F: \ccal \rightarrow \Dcal$ be a morphism in $\catomega$ and assume that $F$ admits a right adjoint $F^R$. Then $F$ is compact if and only if $F^R$ preserves small filtered colimits.
\end{remark}

\begin{remark}\label{remark compact sends compact to compact}
Compact functors are closed under compositions. In particular, it follows from example \ref{example compact is compact} that if a functor $F: \Ccal \rightarrow \Dcal$ in $\catomega$ is compact then it sends compact objects to compact objects. The converse is true provided that $\Ccal$ is generated under filtered colimits by compact objects.
\end{remark}

\begin{remark}\label{remark colimit of compacts}
Let $(\Ccal_\alpha)$ be a diagram in $\catomega$ with compact transition maps and colimit $\Ccal$. Denote by $F_{\alpha, \beta}: \Ccal_\alpha \rightarrow \ccal_\beta$ the transition maps, and $F_\alpha: \Ccal_\alpha \rightarrow \Ccal$ the induced functors. Then the functors $(\widehat{F_\alpha})^R$ exhibit $\widehat{\Ccal}$ as the limit of the categories $\widehat{\ccal_\alpha}$ along the transition maps $(\widehat{F_{\alpha, \beta}})^R$. It follows from this that $F_\alpha$ is compact for all $\alpha$.
\end{remark}

\begin{remark}
Let $\Ccal$ be an object of $\catl$. Then we may consider the (very large) category $\widetilde{\Ccal}$ obtained from $\Ccal$ by freely adjoining large colimits while preserving the small colimits in existence. Alternatively, $\widetilde{\Ccal}$ is obtained from $\Ccal$ by freely adjoining large $\kappa$-filtered colimits, where $\kappa$ is the smallest large cardinal. The category $\widehat{\Ccal}$ is then obtained from $\widetilde{\Ccal}$ by freely adjoining large colimits while preserving the large filtered colimits in existence.

Assume now given a functor $F: \Ccal \rightarrow \Dcal$ in  $\catl$. Then the induced functor $\widetilde{F}: \widetilde{\ccal} \rightarrow \widetilde{\dcal}$ is a colimit preserving functor between very large presentable categories, and it therefore has a right adjoint. The functor $F$ is compact if and only if the right adjoint to $\widetilde{F}$ preserves large filtered colimits.  In particular, similar arguments as in remark \ref{remark colimit of compacts} show that if $\Ccal$ is a colimit of a diagram $(\Ccal_\alpha)$ in $\catl$ with compact transition maps, then the functors $\Ccal_\alpha \rightarrow \Ccal$ are compact.
\end{remark}

\begin{example}
Let $\Dcal$ be a cocomplete category and $F: \Spc \rightarrow \Dcal$ be a colimit preserving functor. Then $F$ is compact if and only if $F(\Delta^0)$ is a compact object of $\Dcal$.
\end{example}

\begin{proposition}\label{prop filtered colimits preserved}
The forgetful functor $\catl \rightarrow \catomega$ preserves colimits of filtered diagrams with compact transitions.
\end{proposition}

The proof of proposition  \ref{prop filtered colimits preserved} works by reduction to the case of compactly generated categories:

\begin{lemma}\label{lemma colim of ind completions}
The forgetful functor $\catl \rightarrow \catomega$ preserves colimits of small filtered diagrams of compactly generated presentable categories and compact functors.
\end{lemma}
\begin{proof}
Let $(\Ccal_\alpha)$ be a small filtered diagram of compactly generated presentable categories and compact functors, and let $\Ccal$ be its colimit in $\catl$. Then $\Ccal$ is the colimit of the diagram $(\Ccal_\alpha)$ on the category $\Pr^L_\omega$ of compactly generated presentable categories and compact functors. Passing to compact objects induces an equivalence between this and the category $\Cat^{\rex, \id}$ of finitely cocomplete idempotent complete categories and right exact functors, so that $\Ccal^\omega$ is the colimit of $(\Ccal_\alpha^\omega)$ in $\Cat^{\rex,\id}$. Since the forgetful functor $\Cat^{\rex, \id} \rightarrow \Cat$ preserves filtered colimits we furthermore have that $\Ccal^\omega$ is the colimit of the diagram $(\Ccal_\alpha^\omega)$ in $\Cat$. The lemma now follows from the fact that the functor $\Cat \rightarrow \catomega$ of ind-completion is colimit preserving.
\end{proof}

\begin{proof}[Proof of proposition \ref{prop filtered colimits preserved}]
Assume given a filtered system $(\Ccal_\alpha)$ in $\catl$ with compact transitions, and let $\Ccal$ be its colimit. We must show that $\Ccal$ is also the colimit of this diagram in $\catomega$. Replacing $\Ccal_\alpha$ and $\Ccal$ with $\widetilde{\Ccal_\alpha}$ and $\widetilde{\Ccal}$ and changing the universe, we may assume that all the categories involved are in fact presentable, and the diagram is small.

Let $\kappa$ be a  regular cardinal such that all the categories $\Ccal_\alpha$ are $\kappa$-accessible for all $\alpha$ and all the transition functors and their right adjoints are $\kappa$-accessible. For each $\alpha$ the inclusion $\Ccal_\alpha^\kappa \rightarrow \Ccal_\alpha$ of the subcategory of $\kappa$-compact objects admits an ind-extension $p_\alpha: \Ind(\Ccal_\alpha^\kappa) \rightarrow \Ccal_\alpha$ which is   an accessible localization. Let $p: \Dcal \rightarrow \Ccal$ be the colimit of the maps $p_\alpha$, computed in $\Pr^L$. For each $\alpha$ denote by $g_\alpha: \Ind(\Ccal^\kappa_\alpha) \rightarrow \Dcal$ the canonical map, and note that $p$ exhibits $\Ccal$ as an accessible localization of $\Dcal$, at the union over all $\alpha$ of the image under $g_\alpha$ of the class of $p_\alpha$-local morphisms.

Assume now given an object $\Ecal$ in $\widehat{\Cat}^\omega$. We have a commutative square of spaces
\[
\begin{tikzcd}
\lim \Hom_{\catomega}(\Ind(\Ccal_\alpha^\kappa), \Ecal) & \arrow{l}{}  \Hom_{\catomega}(\Dcal, \Ecal) \\
\lim \Hom_{\catomega}(\Ccal_\alpha, \Ecal) \arrow{u}{\lim p_\alpha^*} & \arrow{l}{} \arrow{u}{p^*} \Hom_{\catomega}(\Ccal, \Ecal).
\end{tikzcd}
\]

By lemma \ref{lemma colim of ind completions}, the upper horizontal arrow is an equivalence. Furthermore, since $p$ and the maps $p_\alpha$ are all localizations, the vertical arrows are inclusions. Hence the bottom horizontal arrow is an inclusion. 

To prove our proposition it remains to show that the bottom horizontal arrow is surjective. In other words, we must show that if $F: \Dcal \rightarrow \Ecal$ is a filtered colimit preserving functor whose restriction to $\Ind(\Ccal^\kappa_\alpha)$ factors through $\Ccal_\alpha$ for all $\alpha$, then $F$ factors through $\Ccal$. It suffices for this to show that every $p$-local map is a filtered colimit of a diagram consisting of maps each of which is a image under $g_\alpha$ of a $p_\alpha$-local morphism for some $\alpha$.

For each $\alpha$ denote by $g_\alpha^R$ the right adjoint to $g_\alpha$. Since the identity of $\Dcal$ is the filtered colimit of the endofunctors $g_\alpha g_\alpha^R$, it suffices to show that $g_\alpha^R$ maps $p$-local maps to $p_\alpha$-local maps for each $\alpha$. This would follow if we are able to show that for every $\alpha$ the square
\[
\begin{tikzcd}
\Ind(\Ccal^\kappa_\alpha) \arrow{d}{p_\alpha} \arrow{r}{g_\alpha} & \Dcal \arrow{d}{p} \\
\Ccal_\alpha \arrow{r}{} & \Ccal
\end{tikzcd}
\]
is horizontally right adjointable. For this it is enough to prove that for each transition map the square
\[
\begin{tikzcd}
\Ind(\Ccal^\kappa_\alpha) \arrow{d}{p_\alpha} \arrow{r}{} & \Ind(\Ccal^\kappa_\beta) \arrow{d}{p_\beta} \\
\Ccal_\alpha \arrow{r}{} & \ccal_\beta 
\end{tikzcd}
\]
is horizontally right adjointable. This is a consequence of the fact that the right adjoints to the transition maps preserve filtered colimits.
\end{proof}

We now discuss a variant of the above where compactness is replaced by strong compactness. In what follows we fix a constant $1 \leq n  \leq \infty$.

\begin{notation}\label{notation extend F}
For each object $\Ccal$ in $\cathat_n^\Sigma$ we denote by $\widehat{\Ccal}$ the (very large) $(n,1)$-category obtained by adjoining large colimits to $\Ccal$ while preserving the $n$-sifted small colimits in existence. Given a functor $F: \Ccal \rightarrow \Dcal$ in $\cathat_n^\Sigma$ we denote by $\widehat{F}: \widehat{\Ccal} \rightarrow \widehat{\Dcal}$ the induced functor. Observe that $\widehat{F}$ is a colimit preserving functor between (very large) presentable categories, and it therefore admits a right adjoint.
\end{notation}

\begin{definition}
Let $F: \Ccal \rightarrow \Dcal$ be a morphism in $\cathat_n^\Sigma$. We say that $F$ is $n$-strongly compact if the right adjoint to the functor $\widehat{F}$ from notation \ref{notation extend F} preserves large colimits.
\end{definition}

\begin{example}\label{example n strongly compact from delta0}
Let $\Dcal$ be an object of $\cathat_n^\Sigma$ and $F: \Delta^0 \rightarrow \Dcal$ be a functor that picks out an object $X$ in $\Dcal$. Then $F$ is $n$-strongly compact if and only if $X$ is $n$-strongly compact.
\end{example}

\begin{remark}
Let $F: \Ccal \rightarrow \Dcal$ be a morphism in $\cathat_n^\Sigma$ and assume that $F$ admits a right adjoint $F^R$. Then $F$ is $n$-strongly compact if and only if $F^R$ preserves $n$-sifted colimits.
\end{remark}

\begin{remark}
Compositions of $n$-strongly compact functors are $n$-strongly compact. In particular, it follows from example \ref{example n strongly compact from delta0} that if a functor $F: \Ccal \rightarrow \Dcal$ in $\cathat_n^\Sigma$ is $n$-strongly compact then it maps  $n$-strongly compact objects to $n$-strongly compact objects. The converse is true provided that $\Ccal$ is generated under $n$-sifted colimits by $n$-strongly compact objects.
\end{remark}

The proof of proposition \ref{prop filtered colimits preserved} adapts to this setting, yielding the following:

\begin{proposition}
Let $\cathat^L_n$ be the subcategory of $\cathat$ on the large $(n,1)$-categories with small colimits and colimit preserving functors. Then the forgetful functor $\cathat^L_n \rightarrow \cathat_n^\Sigma$ preserves colimits of filtered diagrams with $n$-strongly compact transitions.
\end{proposition}


\subsection{Lifting of (strongly) compact objects}\label{subsection lifting}

We are now ready to discuss to what extent the operation of passing to compact objects commutes with taking filtered colimits of categories. The following is our main result on this topic:

\begin{theorem}\label{theorem compacts in filtered colimit}
Let $(\Ccal_\alpha)$ be a filtered diagram in $\catomega$ with compact transition maps, and let $\ccal$ be its colimit. Assume that $\Ccal_{\alpha}$ is compactly assembled for all $\alpha$. Then $\Ccal$ is compactly assembled and the induced functor $\colim \Ccal_\alpha^\omega \rightarrow \Ccal^\omega$ is an equivalence.
\end{theorem}

The proof of theorem \ref{theorem compacts in filtered colimit} requires some preliminary lemmas.
\begin{lemma}\label{lemma filtered colimit of cg}
Let $(\Ccal_\alpha)$ be a filtered diagram in $\catomega$ with compact transition maps, and let $\ccal$ be its colimit. Assume that $\ccal_\alpha$ is generated under filtered colimits by compact objects, for all $\alpha$. Then  $\Ccal$ is generated under filtered colimits by compact objects, and the induced map $\colim \Ccal_\alpha^\omega \rightarrow \Ccal^\omega$ is an equivalence.
\end{lemma}
\begin{proof}
By remark \ref{remark colimit of compacts} we have that the functors $\Ccal_\alpha \rightarrow \Ccal$ are all compact, and in particular they send compact objects to compact objects (remark \ref{remark compact sends compact to compact}). The category $\Ccal$ is generated under filtered colimits by the images of the functors $\Ccal_{\alpha}$. Since $\Ccal_{\alpha}$ is generated under filtered colimits by compact objects for all $\alpha$, we conclude that $\Ccal$ is generated under filtered colimits by compact objects. It remains to show that the map $\colim \Ccal_\alpha^\omega \rightarrow \Ccal^\omega$  is an equivalence.

Since the categories $\Ccal_\alpha$ and $\ccal$ admit filtered colimits, they are in particular idempotent complete, and hence $\ccal_\alpha^\omega$ and $\ccal^\omega$ are all idempotent complete. By \cite{HTT} corollary 4.4.5.21, we have that $\colim \ccal^\omega_\alpha$ is also idempotent complete. Hence the map $\colim \Ccal_\alpha^\omega \rightarrow \Ccal^\omega$ can be recovered by passing to compact objects of the induced functor $\Ind(\colim \Ccal_\alpha^\omega) \rightarrow \Ind(\Ccal^\omega)$. It thus suffices to show that the latter functor is an equivalence. 

Since $\Ind$ is a left adjoint, we have that $\Ind(\colim \Ccal_\alpha^\omega) = \colim \Ind(\ccal_\alpha^\omega)$ (where here the second colimit is computed in $\catomega$). Thus we must show that $\Ind(\ccal^\omega)$ is the filtered colimit of the categories $\Ind(\ccal_\alpha^\omega)$. This follows from the fact that $\Ccal$ and $\Ccal_\alpha$ are all generated under filtered colimits by compact objects.
\end{proof}

\begin{lemma}\label{lemma vertically left adj}
Let $F: \Ccal \rightarrow \Dcal$ be a compact functor in $\catomega$, where $\Ccal$ and $\Dcal$ are compactly assembled. Then the commutative square
\[
\begin{tikzcd}
\Ind(\Ccal) \arrow{r}{\Ind(F)} \arrow{d}{p_\Ccal} & \Ind(\Dcal) \arrow{d}{p_\Dcal} \\
\Ccal \arrow{r}{F} & \Dcal
\end{tikzcd}
\]
is vertically left adjointable. 
\end{lemma}
\begin{proof}
The fact that $\Ccal$ and $\Dcal$ are compactly assembled guarantees that the vertical arrows admit left adjoints, so we only need to verify that the square obtained by passage to left adjoints of the vertical arrows is strictly commutative. Consider the commutative cube
\[
\begin{tikzcd}
& \widehat{\Ind(\Ccal)} \arrow{rr}{\widehat{\Ind(F)}} \arrow[dd, pos=0.7, "\widehat{p_\Ccal}"] & & \widehat{\Ind(\Dcal)} \arrow{dd}{\widehat{p_\Dcal}} \\ 
\Ind(\Ccal) \arrow[rr, pos=0.7, "\Ind(F)"] \arrow{dd}{p_\Ccal} \arrow{ur}{} &  & \Ind(\Dcal) \arrow[dd, pos=0.7, "p_\Dcal"] \arrow{ur}{} & \\
 & \widehat{\Ccal} \arrow[rr,"\widehat{F}", pos=0.6] & & \widehat{\Dcal} \\
\Ccal \arrow{rr}{F} \arrow{ur}{} &  & \Dcal \arrow{ur}{} & 
\end{tikzcd}
\]
where the back face is obtained from the front face by adjoining all large colimits while preserving the small filtered colimits in existence. Here the diagonal arrows are fully faithful, and the left and right face are vertically left adjointable. We may thus reduce to showing that the square
\[
\begin{tikzcd}
\widehat{\Ind(\Ccal)} \arrow{r}{\widehat{\Ind(F)}} \arrow{d}{\widehat{p_\Ccal}} & \widehat{\Ind(\Dcal)} \arrow{d}{\widehat{p_\Dcal}} \\
\widehat{\Ccal} \arrow{r}{\widehat{F}} & \widehat{\Dcal}
\end{tikzcd}
\]
is vertically left adjointable. Since the vertical arrows admit left adjoints it will suffice to show that the above square horizontally right adjointable. The horizontal arrows in the above square are colimit preserving functors between very large presentable categories, so they admit right adjoints $(\widehat{F})^R$ and $\smash{(\widehat{\Ind(F)})^R}$ by the adjoint functor theorem. To finish the proof we must show that the induced natural transformation 
\[
\mu: \widehat{p_\Ccal} (\widehat{\Ind(F)})^R  \rightarrow (\widehat{F})^R \widehat{p_\Dcal} 
\]
is an isomorphism.

Since $p_\Ccal$ admits a fully faithful left adjoint it is a colocalization in $\catomega$, and hence $\widehat{p_\Ccal}$ is a colocalization of categories with large colimits. The adjoint functor theorem implies that $\widehat{p_\Ccal}$ also has a right adjoint $(\widehat{p_\Ccal})^R$, which is then necessarily fully faithful. Similarly, we have that $\widehat{p_\Dcal}$ admits a fully faithful right adjoint $(\widehat{p_\Dcal})^R$. It follows that $\mu$ is an isomorphism when restricted to the image of $(\widehat{p_\Dcal})^R$. 

We claim that the image of $(\widehat{p_\Dcal})^R$ contains $\Dcal$. Assume given an object $X$ in $\Dcal$. Then we may regard $X$ as an object of $\widehat{\Dcal}$, and its image under $(\widehat{p_\Dcal})^R$ is an object $Y$ in $\smash{\widehat{\Ind(\Dcal)}}$ with the property that 
\[
\Hom_{\widehat{\Ind(\Dcal)}}(-, Y)|_{\Dcal^\op} = \Hom_{\Dcal}(-, X) = \Hom_{\widehat{\Ind(\Dcal)}}(-, X)|_{\Dcal^\op}.
\] 
Since $\widehat{\Ind(\Dcal)}$ is the free category with large colimits on $\Dcal$ the inclusion $\Dcal \rightarrow \widehat{\Ind(\Dcal)}$ is dense, and hence $Y$ is equivalent to $X$. This shows that every object in $\Dcal$ belongs to the image of $(\widehat{p_\Dcal})^R$, as claimed. In particular, $\mu$ is an isomorphism when restricted to $\Dcal$.

The fact that $F$ and $\Ind(F)$ are compact implies that $(\widehat{F})^R$ and $(\widehat{\Ind(F)})^R$ preserve large colimits. It follows that the source and target of $\mu$ are colimit preserving, and hence $\mu$ is an isomorphism when restricted to the colimit closure of $\Dcal$. The lemma now follows from the fact that $\widehat{\Ind(\Dcal)}$ is generated under large colimits by $\Dcal$.
\end{proof}

\begin{proof}[Proof of theorem \ref{theorem compacts in filtered colimit}]
Denote by $g_{\alpha, \beta}: \Ccal_\alpha \rightarrow \ccal_\beta$ the transition functors. For each $\alpha$ denote by $p_\alpha: \Ind(\ccal_\alpha) \rightarrow \ccal_\alpha$ the projection, and by $i_\alpha$ its (fully faithful) left adjoint.

For each transition we have a commutative square
\[
\begin{tikzcd}[column sep = large]
\Ind(\ccal_\alpha) \arrow{d}{p_\alpha} \arrow{r}{\Ind(g_{\alpha, \beta})} & \Ind(\ccal_\beta) \arrow{d}{p_\beta}\\
\ccal_\alpha \arrow{r}{g_{\alpha, \beta}} & \ccal_\beta
\end{tikzcd}
\]
which is vertically left adjointable by lemma \ref{lemma vertically left adj}. Passing to the colimit in $\catomega$ we conclude that the induced map $p: \colim \Ind(\Ccal_\alpha) \rightarrow \ccal$ admits a fully faithful left adjoint, which is obtained as the colimit of the maps $i_\alpha$. We denote this left adjoint by $i$. 

It follows from lemma \ref{lemma filtered colimit of cg} that $\colim \Ind(\Ccal_\alpha)$ is compactly generated, and in particular it is compactly assembled. Since $\Ccal$ is a retract of it we conclude that $\Ccal$ is compactly assembled as well.

Passing to compact objects we obtain a commutative square
\[
\begin{tikzcd}
\colim (\Ind(\Ccal_\alpha)^\omega) \arrow{r}{} & (\colim \Ind(\Ccal_\alpha))^\omega \\
\colim \ccal_\alpha^\omega \arrow{u}{\colim {i_\alpha}} \arrow{r}{} & \ccal^\omega \arrow{u}{i} 
\end{tikzcd}
\]
where the colimits on the left column take place in $\cathat$ and the colimit on the top right corner takes place in $\catomega$. Here the vertical arrows are fully faithful, and the top horizontal arrow is an equivalence by lemma \ref{lemma filtered colimit of cg}. Hence the bottom horizontal arrow is fully faithful.

It only remains to show surjectivity. Let $X$ be a compact object in $\Ccal$. Since the top horizontal arrow in the above commutative square is an equivalence, there exists an $\alpha$ and a compact object $Y_\alpha$ in $\Ind(\ccal_\alpha)$ whose image in $\colim \Ind(\Ccal_\alpha)$ is given by $i(X)$. Let $\epsilon_\alpha: i_\alpha p_\alpha(Y_\alpha) \rightarrow Y_\alpha$ be the counit of the adjunction.

The image of $\epsilon_\alpha$ in $\colim \Ind(\ccal_\alpha)$ is given by the counit map $\epsilon: ipi(X) \rightarrow i(X)$, which is an isomorphism. Let $\nu: i(X) \rightarrow ipi(X)$ be an inverse to $\epsilon$. The compactness of $Y_\alpha$ implies that there exists a transition map $g_{\alpha, \beta}$ and a map $\nu_\beta: \Ind(g_{\alpha, \beta})(Y_\alpha) \rightarrow i_\beta p_\beta(\Ind(g_{\alpha, \beta})(Y_\alpha))$ whose image in $\colim(\Ind(\ccal_\alpha))$ is $\nu$. Replacing $\alpha$ with $\beta$ we may in fact assume that $\alpha = \beta$, so that $\nu_\alpha: Y_\alpha \rightarrow i_\alpha p_\alpha(Y_\alpha)$ lifts $\nu$.

The image of $\epsilon_\alpha \nu_\alpha$ in $\colim \Ind(\ccal_\alpha)$ is the identity on $i(X)$. As before, since $Y_\alpha$ is compact we may assume, after replacing $\alpha$ with some other index $\beta$ if necessary, that $\epsilon_\alpha \nu_\alpha$ is the identity on $Y_\alpha$. In other words, $Y_\alpha$ is a retract of $i_\alpha p_\alpha(Y_\alpha)$. Since $\ccal_\alpha$ is idempotent complete, we see that $Y_\alpha$ belongs to the image of $i_\alpha$. Write $Y_\alpha = i_\alpha(X_\alpha)$. Then $X_\alpha$ is a compact object in $\ccal_\alpha$ whose image in $\ccal$ is given by $X$.
\end{proof}

We also have a variant of theorem \ref{theorem compacts in filtered colimit} that applies to $n$-strong compactness for each $1 \leq n \leq \infty$.

\begin{notation}
For each object $\ccal$ in  $\cathat^\Sigma_n$ we denote by $\Ccal^\Sigma$ the full subcategory of $\Ccal$ on the $n$-strongly compact objects.
\end{notation}

\begin{theorem}\label{theorem lift strongly compacts}
Let $(\Ccal_\alpha)$ be a filtered diagram in $\cathat^\Sigma_n$ with $n$-strongly compact transition maps, and let $\Ccal$ be its colimit. Assume that $\Ccal_\alpha$ is $n$-strongly compactly assembled for all $\alpha$. Then $\Ccal$ is $n$-strongly compactly assembled and the induced functor $\colim \Ccal_\alpha^\Sigma \rightarrow \Ccal^\Sigma$ is an equivalence.
\end{theorem}

The proof of theorem \ref{theorem lift strongly compacts} is completely analogous to that of theorem \ref{theorem compacts in filtered colimit}.


\ifx\inmain\undefined
\bibliographystyle{myamsalpha2}
\bibliography{References}
\fi


\section{Smooth categories over a rigid semisimple base}\label{section invertible semisimple}

The goal of this section is to supply proofs of theorems \ref{theorem principal 2 introduction} and \ref{teo introduction spectral} in the case when $R$ is a field. We will in fact obtain this as a corollary of a more general result classifying smooth categories over a simisimple rigid symmetric monoidal Grothendieck abelian category $\acal$ (and its connective derived category).

We begin in \ref{subsection azumaya} with some general background concerning the notions of smooth, proper, fully dualizable, and invertible categories linear over a base presentable symmetric monoidal category. Specialized to the case of categories of modules this yields notions of smooth, proper, and Azumaya algebras. 

In \ref{subsection smooth 11} we prove the first main result of this section: if $\ccal$ is a smooth $\acal$-linear cocomplete category, then $\ccal$ is the category of modules over a smooth algebra in $\acal$. We carry this out in three steps:
\begin{itemize}
\item The fact that $\ccal$ is smooth is used to write the identity of $\ccal$ as a retract of an endofunctor that factors through $\acal$. This allows us to deduce that $\ccal$ is a spectral Grothendieck abelian category.
\item The spectrality of $\ccal$ is used to deduce that its dual $\ccal^\vee$ is locally finitely generated. Reversing the roles of $\ccal$ and $\ccal^\vee$ we conclude that $\ccal$ is semisimple.
\item The smoothness of $\ccal$ is used one last time to show that $\ccal$ admits a single compact projective $\acal$-generator.
\end{itemize} 

In \ref{subsection smooth infty} we prove a variant of the above result: if $\ccal$ is a smooth $\der(\acal)_{\geq 0}$-linear cocomplete category, then $\ccal$ is the category of modules in $\der(\acal)_{\geq 0}$ over a smooth algebra in $\acal$. Our proof of this has two main steps:
\begin{itemize}
\item  The smoothness of $\ccal$ is used to show that  $\Ho(\ccal)$ is a spectral category.
\item Combining the above with our general results on dualizable linear categories from \ref{subsection compactly assembled} we conclude that $\ccal$ is Grothendieck prestable, and the derived category of its heart. The result then follows as a consequence of the $(1,1)$-categorical theorem.
\end{itemize}

As discussed in the introduction, in the stable context there are many more examples of fully dualizable linear categories, and we make no attempt at classifying them. The problem becomes more tractable if instead of fully dualizable categories we study invertible categories: we will prove in  \ref{subsection invertible stable} that every invertible $\der(\acal)$-linear category is the category of modules in $\der(\acal)$ over an Azumaya algebra in $\acal$.


\subsection{Smooth, proper, and Azumaya algebras}\label{subsection azumaya}

We begin with some generalities concerning the notion of smoothness and properness for linear categories.

\begin{definition}\label{def smooth and proper}
Let $\Mcal$ be a presentable symmetric monoidal category. We say that an $\Mcal$-linear cocomplete category $\ccal$ is smooth (resp. proper) if and only if it is dualizable in $\Mod_\Mcal(\catl)$ and the unit (resp. counit) of the duality admits a colimit preserving $\Mcal$-linear right adjoint. We say that $\ccal$ is fully dualizable if it is both smooth and proper.
\end{definition}

\begin{remark}
Let $\Mcal$ be a presentable symmetric monoidal category. It follows from proposition \ref{proposition dualizable is presentable} that if a $\Mcal$-linear cocomplete category is smooth or proper then is automatically presentable.
\end{remark}

\begin{remark}\label{remark base change proper}
Let $f: \Mcal \rightarrow \Mcal'$ be a morphism of commutative algebras in $\Pr^L$. If $\ccal$ is a smooth (resp. proper) $\Mcal$-linear presentable category then $\ccal \otimes_\Mcal \Mcal'$ is a smooth (resp. proper) $\Mcal'$-linear presentable category.
\end{remark}

\begin{remark}\label{remark properness when comp gen}
Let $1 \leq n \leq \infty$. Let $\Mcal$ be a presentable symmetric monoidal additive $(n,1)$-category. Assume that $\Mcal$ is generated under colimits by $n$-strongly compact objects, and that $n$-strongly compact objects and dualizable objects of $\Mcal$ agree. Let $\ccal$ be a dualizable $\Mcal$-linear  presentable category.
\begin{itemize}
\item $\ccal$ is smooth if and only if the identity is $n$-strongly compact in $\Funct_{\Mcal}(\ccal, \ccal)$.
\item Assume that $\ccal$ is  generated under colimits by $n$-strongly compact objects. Then $\ccal$ is proper if and only if for every pair of $n$-strongly compact objects $X, Y$ in $\ccal$ the Hom object $\Hom^\enh_\ccal(X, Y)$ is an $n$-strongly compact object of $\Mcal$.
\end{itemize} 
\end{remark}

\begin{remark}\label{remark end of id dualizable}
Let $\Mcal$ be a presentable symmetric monoidal category and let $\ccal$ be a fully dualizable $\Mcal$-linear presentable category. Then the dual to the right adjoint of the unit (resp. counit) of the duality for $\ccal$ provides an $\Mcal$-linear left adjoint for the counit (resp. unit) of the duality for $\ccal$. We note that these adjoints themselves also have all adjoints, which are $\Mcal$-linear and colimit preserving, see for instance \cite{LurieField} proposition 4.2.3. In particular, it follows from this that the object of endomorphisms of the identity in $\Funct_\Mcal(\ccal, \ccal)$ is dualizable.
\end{remark}

Specializing to categories of modules, we obtain notions of smooth and proper algebras:

\begin{definition}
Let $\Mcal$ be a presentable symmetric monoidal category. An algebra object $A$ in $\Mcal$ is said to be smooth (resp. proper) if $\LMod_A(\Mcal)$ is a smooth (resp. proper) $\Mcal$-linear presentable category.
\end{definition}

\begin{remark}
Let $\Mcal$ be a presentable symmetric monoidal category. Then an algebra $A$ in $\Mcal$ is smooth (resp. proper) if and only if $A^\op$ is smooth (resp. proper).
\end{remark}

\begin{remark}
Let $\Mcal$ be a presentable symmetric monoidal category and let $A$ be an algebra in $\Mcal$. Then:
\begin{itemize}
\item $A$ is smooth if and only if $A$ is right dualizable as a right $A \otimes A^\op$-module.
\item $A$ is proper if and only if $A$ is dualizable as an object of $\Mcal$.
\end{itemize}
\end{remark}

\begin{remark}\label{remark characterizations smooth}
Let $1 \leq n \leq \infty$. Let $\Mcal$ be a presentable symmetric monoidal additive $(n,1)$-category. Assume that $\Mcal$ is generated under colimits by $n$-strongly compact objects, and that $n$-strongly compact objects and dualizable objects of $\Mcal$ agree. Let $A$ be an algebra in $\Mcal$. Then the following are equivalent:
\begin{itemize}
\item $A$ is smooth.
\item $A$ is $n$-strongly compact as an $A$-bimodule.
\item The canonical morphism of $A$-bimodules $A \otimes A \rightarrow A$ admits a section (in other words, $A$ is separable).
\end{itemize} 
\end{remark}

\begin{remark}\label{remark separable are semisimple}
Let $\acal$ be a semisimple rigid symmetric monoidal Grothendieck abelian category and let $A$ be a smooth algebra in $\acal$. It follows from remark \ref{remark characterizations smooth} that every left $A$-module in $\acal$ is a retract of a free left $A$-module. The fact that every object of $\acal$ is projective now implies that every left $A$-module is projective. Therefore $\LMod_A(\acal)$ is semisimple.
\end{remark}

\begin{remark}
Let $\acal$ be a Grothendieck abelian category, rigid and generated by compact projective objects, and let $A$ be a flat algebra in $\acal$. Then $A$ is smooth as an algebra in $\acal$ if and only if it is smooth as an algebra in $\der(\acal)_{\geq 0}$.
\end{remark}

We now concentrate in those smooth and proper algebras whose category of modules is invertible.

\begin{definition}
Let $\Mcal$ be a presentable symmetric monoidal category. An algebra object $A$ in $\Mcal$ is said to be Azumaya if $\LMod_A(\Mcal)$ is an invertible object of $\Mod_\Mcal(\Pr^L)$.
\end{definition}

\begin{remark}
Let $\Mcal$ be a presentable symmetric monoidal category and let $A$ be an algebra in $\Mcal$. Then $\LMod_A(\Mcal)$ is a dualizable object of $\Mod_\Mcal(\Pr^L)$, with dual given by $\RMod_A(\Mcal)$. It follows that $A$ is Azumaya if and only if $A^\op$ is Azumaya, and both conditions are equivalent to $A \otimes A^\op$ being Morita equivalent to the unit algebra in $\Mcal$.
\end{remark}

Our next proposition provides an alternative characterization of Azumaya algebras which makes no reference to Morita theory:

\begin{proposition}\label{prop equivalences azumaya}
Let $\Mcal$ be a presentable symmetric monoidal category and let $A$ be an algebra in $\acal$. Then $A$ is Azumaya if and only if the following conditions are satisfied:
\begin{enumerate}
\item $A$ is dualizable (as an object of $\Mcal$).
\item The functor $- \otimes A : \Mcal \rightarrow \Mcal$ is conservative.
\item The canonical left $A \otimes A^\op$-module structure on $A$ exhibits $A \otimes A^\op$ as the endomorphism algebra of $A$.
\end{enumerate}
\end{proposition}

The proof of proposition \ref{prop equivalences azumaya} depends on the following lemma:

\begin{lemma}\label{lemma equivalences conservative}
Let $\Mcal$ be a presentable symmetric monoidal category and let $X$ be a dualizable object of $\Mcal$. The following are equivalent:
\begin{enumerate}[\normalfont (1)]
\item  The functor $- \otimes X : \Mcal \rightarrow \Mcal$ is conservative.
\item The functor $- \otimes X^\vee: \Mcal \rightarrow \Mcal$ is conservative.
\item The object $X$ generates $\Mcal$ under colimits and tensors.
\end{enumerate}
\end{lemma}
\begin{proof}
We first show that (1) implies (2). Let $f: Y \rightarrow Z$ be a morphism in $\mcal$ such that $f \otimes X^\vee$ is an isomorphism. We wish to show that $f$ itself is an isomorphism. Since $X$ is a retract of $X \otimes X^\vee \otimes X$ we have that $f \otimes X$ is a retract of $f \otimes X \otimes X^\vee \otimes X$. The latter is an isomorphism and therefore $f \otimes X$ is an isomorphism too. The fact that $f$ is an isomorphism now follows from our assumption that $- \otimes X$ is conservative.

Next we show that (2) implies (3). Let $\Dcal$ be the full subcategory of $\Mcal$ generated by $X$ under colimits and tensors. We wish to show that $\Dcal = \Mcal$. To do so it suffices to show that the family of functors $\Hom_{\Mcal}(Z, -)$ over all $Z$ in $\Dcal$ is jointly conservative.  This would follow if we show that the functor $- \otimes X : \Mcal \rightarrow \Mcal$ admits a conservative right adjoint. Indeed, this has a right adjoint given by $- \otimes X^\vee$, which we have assumed to be conservative.

It remains to show that (3) implies (1). Let $f: Y \rightarrow Z$ be a morphism in $\mcal$ such that $f \otimes X$ is an isomorphism. We wish to show that $f$ is an isomorphism. Since $X$ generates $\Mcal$ under colimits and tensors we see that $f \otimes Y$ is an isomorphism for all $Y$ in $\mcal$. Specializing $Y$ to the unit of $\mcal$ shows that $f$ is an isomorphism, as desired. 
\end{proof}

\begin{proof}[Proof of proposition \ref{prop equivalences azumaya}]
Assume first that $A$ is Azumaya. Then the functor 
\[
F(-) = - \otimes_{A \otimes A^\op} A : \RMod_{A \otimes A^\op}(\Mcal) \rightarrow \Mcal
\]
 is an equivalence. In particular, the $A \otimes A^\op$-$1_\Acal$ bimodule $A$ is right dualizable. It follows from this that $A$ is right dualizable as an object of $\Mcal$, so that (1) holds. The fact that (3) holds follows from the $\Acal$-linearity of $F$ together with the fact that $A \otimes A^\op$ is the algebra of endomorphisms of $A \otimes A^\op$ as a right $A \otimes A^\op$-module. It now remains to prove (2). Since $F$ is colimit preserving and $\mcal$-linear and $A \otimes A^\op$ generates $\RMod_{A \otimes A^\op}(\mcal)$ under colimits and tensors, we see that $A$ generates $\mcal$ under colimits and tensors. Item (2) now follows from lemma \ref{lemma equivalences conservative}.

Assume now that $A$ satisfies conditions (1), (2) and (3). Let $F(-)$ be as above. We wish to show that $F$ is an equivalence. It follows from (1) that $A$ is right dualizable as a $A \otimes A^\op$-$1_\Acal$-bimodule. Hence $F$ admits a right adjoint, and the unit of the adjunction is given by tensoring with the morphism of $A \otimes A^\op$-bimodules $A \otimes A^\op \rightarrow A \otimes A^\vee$ induced from the left $A \otimes A^\op$-module structure on $A$. It follows from (3) that this is an isomorphism, so that $F$ is fully faithful.  Its image consists of the full subcategory of $\mcal$ generated by $A$ under colimits and tensors. The fact that $F$ is an equivalence now follows from (2) together with lemma \ref{lemma equivalences conservative}.
\end{proof}

\begin{corollary}\label{coro compare azumayas 1}
Let $\acal$ be a rigid symmetric monoidal Grothendieck abelian category generated by compact projective objects and let $\acal^\cp$ be the full subcategory of $\acal$ on the compact projective objects. Then an algebra $A$ in $\acal^\cp$ is Azumaya as an algebra in $\acal$ if and only if it is Azumaya as an algebra in $\der(\acal)_{\geq 0}$.
\end{corollary}
\begin{proof}
We verify that conditions  (2) and (3) from proposition \ref{prop equivalences azumaya} hold for $A$ as an algebra in $\acal$ if and only if they hold for $A$ as an algebra in $\der(\acal)_{\geq 0}$:
\begin{itemize}
\item Since $- \otimes A : \der(\acal)_{\geq 0} \rightarrow \der(\acal)_{\geq 0}$ is exact, it is conservative if and only if its restriction to the heart is conservative. This restriction agrees with the functor $- \otimes A: \acal \rightarrow \acal$.
\item Both $A \otimes A^\op$ and $\End(A) = A^\vee \otimes A$ belong to $\acal^\cp$, so the map $A \otimes A^\op \rightarrow \End(A)$ is an isomorphism when computed in $\der(\acal)_{\geq 0}$ if and only if it is an isomorphism when computed in $\acal$.
\end{itemize}
\vspace{-\baselineskip}
\end{proof}

\begin{corollary}\label{coro compare azumayas 2}
Let $\Mcal$ be a rigid symmetric monoidal Grothendieck prestable category generated under colimits by compact projective objects. Then an algebra $A$ in $\mcal$ is Azumaya if and only if it is Azumaya when regarded as an algebra in $\Sp(\mcal)$.
\end{corollary}
\begin{proof}
We verify that conditions  (2) and (3) from proposition \ref{prop equivalences azumaya} hold for $A$ as an algebra in $\Mcal$ if and only if they hold for $A$ as an algebra in $\Sp(\Mcal)$:
\begin{itemize}
\item The functor $- \otimes A: \Sp(\Mcal) \rightarrow \Sp(\Mcal)$ is t-exact, so it is conservative if and only if it is conservative when restricted to connective objects.
\item The map $A \otimes A^\op \rightarrow \End(A) = A^\vee \otimes A$ is compatible with the inclusion $\Mcal \rightarrow \Sp(\mcal)$, so it is an isomorphism when computed in $\Mcal$ if and only if it is an isomorphism when computed in $\Sp(\Mcal)$.
\end{itemize}
\vspace{-\baselineskip}
\end{proof}

\begin{corollary}
Let $\acal$ be a rigid symmetric monoidal Grothendieck abelian category generated by compact projective objects and let $\acal^\cp$ be the full subcategory of $\acal$ on the compact projective objects.  Then an algebra $A$ in $\acal^\cp$ is Azumaya as an algebra in $\acal$ if and only if it is Azumaya as an algebra in $\der(\acal)$.
\end{corollary}
\begin{proof}
Combine corollaries \ref{coro compare azumayas 1} and \ref{coro compare azumayas 2}.
\end{proof}


\subsection{Smooth \texorpdfstring{$(1,1)$}{(1,1)}-categories}\label{subsection smooth 11}

We are now ready to state our first main result, that describes smooth $(1,1)$-categories over a rigid semisimple base:

\begin{theorem}\label{theorem abelian}
Let $\Acal$ be a semisimple rigid symmetric monoidal Grothendieck abelian category and let $\Ccal$ be a smooth $\acal$-linear cocomplete category. Then $\Ccal = \LMod_A(\Acal)$ for some smooth algebra $A$ in $\acal$.
\end{theorem}

The remainder of this section is devoted to the proof of theorem \ref{theorem abelian}.

\begin{lemma}\label{lemma factor identity}
Let $\acal$ be a symmetric monoidal Grothendieck abelian category, rigid and generated by compact projective objects. Let $\ccal$ be a smooth $\acal$-linear presentable category. Then there exist objects $F$ in $\ccal^\vee$ and $G$ in $\ccal$ such that the identity of $\ccal$ is a retract of the composite functor
\[
\ccal \xrightarrow{F(-)} \acal \xrightarrow{- \otimes G} \ccal.
\]
\end{lemma}
\begin{proof}
Let $G$ be a colimit generator for $\ccal$ and $F$ be a colimit generator for $\ccal^\vee$. Then $G \otimes F$ is a colimit generator for $\ccal \otimes_\acal \ccal^\vee = \Funct_\acal(\ccal, \ccal)$. Applying corollary \ref{coro properties dualizable 11} we see that $\ccal$ is Grothendieck abelian, and therefore every object receives an epimorphism from a direct sum of copies of $G \otimes F$. The fact that $\ccal$ is smooth implies that the identity $\id_\ccal$ is compact projective, and it is therefore a retract of a finite direct sum of copies of $G \otimes F$. Replacing $G$ by a direct sum of copies of $G$ if necessary, we may in fact assume that $\id_\ccal$ is a retract of $G \otimes F$. The lemma now follows from the observation that under the equivalence $\Funct_\acal(\ccal, \ccal) = \ccal \otimes_\acal \ccal^\vee$, the object $G \otimes F$ corresponds to  $F(-) \otimes G$.
\end{proof}

\begin{lemma}\label{lemma conclude spectrality}
Let $\Ccal$ be an idempotent complete locally small classical additive category with small direct sums and let $\Dcal$ be a spectral category. Assume that the identity functor of $\ccal$ is a retract of a composite functor
\[
\ccal \xrightarrow{F} \Dcal \xrightarrow{G} \ccal
\]
where $F$ and $G$ preserve small direct sums. Then $\ccal$ is a spectral category.
\end{lemma}
\begin{proof}
We first show that $\ccal$ admits all kernels and cokernels. Let $f: X \rightarrow Y$ be a morphism in $\ccal$. We wish to show that $f$ admits a kernel and a cokernel. Since $f$ is a retract of $G(F(f))$ and $\ccal$ is idempotent complete, it suffices to show that $G(F(f))$ admits a kernel and a cokernel. The fact that $\Dcal$ is spectral implies that $F(f)$ is the direct sum of an isomorphism and a null map. Hence $G(F(f))$ is a direct sum of an isomorphism and a null map. Both of these kinds of maps admit both a kernel and a cokernel in $\ccal$, and therefore $G(F(f))$ also admits a kernel and a cokernel, as desired.

We now show that every monomorphism in $\ccal$ admits a retraction. Let $i: X \rightarrow Y$ be a monomorphism. Then $i$ is a retract of $G(F(i))$. Since $\Dcal$ is spectral, we may factor $F(i)$ as a composition of a retraction followed by a section. We now have a commutative diagram
\[
\begin{tikzcd}
X \arrow{d}{j_X} \arrow{rr}{i} & & Y\arrow{d}{j_Y} \\
G(F(X)) \arrow{r}{r} & Z  \arrow{r}{s'} & G(F(Y))
\end{tikzcd}
\]
where the vertical arrows and the map $s'$ are sections, and $r$ is a retraction. Let $s$ be a section for $r$ and $r_X, r_Y, r'$ be retractions for $j_X, j_Y$ and $s'$, respectively, and note that we may assume that the diagram obtained from the above by replacing $j_X, j_Y$ with $r_X, r_Y$ commutes. 

We claim that the composition $r_X s r' j_Y$ is a retraction for $i$. Note that
\[
r_X s r' j_Y i  = r_X s r' s' r j_X = r_X s r j_X.
\]
We must show that the above is the identity on $X$. Since $i$ is a monomorphism, it suffices to show that $ i r_X s r j_X$ is equal to $i$. Since the diagram obtained from the above by replacing the vertical arrows with their retractions commutes, we have that $i r_X s r j_X$  is the same as 
\[
r_Y s' r s r j_X = r_Y s' r j_X = r_Y j_Y i = i,
\] as desired.

In a similar way one proves that every epimorphism in $\ccal$ admits a section. In particular we see that $\ccal$ is an additive category with all kernels and cokernels and such that every monomorphism is a kernel and every epimorphism is a cokernel. Therefore $\ccal$ is abelian. 

Since we know that $\ccal$ is locally small, admits infinite direct sums, and every monomorphism in $\ccal$ is split, in order to show that $\ccal$ is spectral it only remains to show that it admits a generator and has left exact filtered colimits.

We first show that $\ccal$ has left exact filtered colimits. Let $f: X \rightarrow Y$ be a map in $\ccal$, and assume that $f$ is written as a filtered colimit of a family of monomorphisms $f_\alpha: X_\alpha \rightarrow Y_\alpha$. We must show that $f$ is a monomorphism. Since $f$ is a retract of $G(F(f))$, it suffices to show that $G(F(f))$ is a monomorphism. The fact that $\dcal$ is spectral implies that $G$ preserves monomorphisms, so we may reduce to showing that $F(f)$ is a monomorphism. Since $F$ preseves direct sums and every exact sequence in $\ccal$ is split, we have that $F$ preserves arbitrary colimits. Thus $F(f)$ is the colimit of the maps $F(f_\alpha)$, and using the fact that $\dcal$ is Grothendieck we may further reduce to showing that $F(f_\alpha)$ is a monomorphism for all $\alpha$. This follows from the fact that $f_\alpha$ is a monomorphism and thus it admits a retraction.

It remains to show that $\ccal$ admits a generator. Let $U$ be a generator for $\dcal$. Let $X$ be an object of $\ccal$. Since $\dcal$ is spectral, $F(X)$ is a retract of a direct sum of copies of $U$. Thus $G(F(X))$ is a retract of a direct sum of copies of $G(U)$, and since $X$ is a retract of $G(F(X))$ we see that $X$ is also a retract of a direct sum of copies of $G(U)$. Thus $G(U)$ is a generator for $\ccal$.
\end{proof}

\begin{lemma}\label{lemma dual is locally finitely generated}
Let $\ccal$ be a spectral Grothendieck abelian category. Then $\Funct^L(\ccal, \Ab)$ is a locally finitely generated Grothendieck abelian category.
\end{lemma}
\begin{proof}
By proposition \ref{prop classif spectral} there is a left exact localization $q: \LMod_A(\Ab) \rightarrow \ccal$ where $A$ is a left self injective von Neumann regular ring. The dual to $q$ supplies a colimit preserving fully faithful functor $i: \Funct^L(\ccal, \Ab) \rightarrow \RMod_A(\Ab)$, whose image is the full subcategory of $\RMod_A(\Ab)$ on those right $A$-modules $M$ with the property that $M \otimes_A f$ is invertible for every $q$-local morphism $f$. Since $A$ is von Neumann regular, $M \otimes_A f$ is invertible if and only if $M \otimes_A \Ker(f) = M \otimes_A \coker(f) = 0$. It follows that a right $A$-module $M$ belongs to the image of $i$ if and only if $M \otimes_A X =0$ for all $X$ in the kernel of $q$. Using again the fact that $A$ is von Neumann regular we conclude that the image of $i$ is closed under passage to subobjects. The lemma now follows from the fact that $\RMod_A(\Ab)$ is a locally finitely generated Grothendieck abelian category. 
\end{proof}

\begin{lemma}\label{lemma suffices to show generated by compact projectives}
Let $\acal$ be a symmetric monoidal Grothendieck abelian category, rigid and generated by compact projective objects. Let $\Ccal$ be a smooth $\acal$-linear Grothendieck abelian category, generated by compact projective objects. Then $\Ccal =\LMod_A(\Acal)$ for some smooth algebra $A$ in $\Acal$.
\end{lemma}
\begin{proof}
Let $\lbrace X_t \rbrace_{t \in T}$ be a generating set of compact projective objects for $\ccal$. For each finite subset $\alpha \subseteq T$ let $\ccal_\alpha$ be the smallest full subcategory of $\ccal$ closed under colimits and the action of $\acal$, and containing $\lbrace X_t \rbrace_{t \in \alpha}$. Note that each $\ccal_\alpha$ inherits an $\acal$-linear structure from $\ccal$, and that $\ccal = \colim \ccal_\alpha$. Let $\delta$ be the image of $1_\acal$ under the unit map $\acal \rightarrow \ccal \otimes_\acal \ccal^\vee$. We have $\ccal = \colim \ccal_\alpha$ and therefore $\ccal \otimes_\acal \ccal^\vee = \colim \ccal_\alpha \otimes_\acal \ccal^\vee$. Applying lemma \ref{lemma filtered colimit of cg} we see that $\delta$ belongs to $\ccal_\alpha \otimes_\acal \ccal^\vee$ for some $\alpha$ and therefore the identity on $\ccal$ belongs to the image of the inclusion $\Funct_\acal(\ccal, \ccal_\alpha) \rightarrow \Funct_\acal(\ccal, \ccal)$. This implies that the inclusion of $\ccal_\alpha$ into $\ccal$ admits a section, so that $\ccal = \ccal_\alpha$. The object $\bigoplus_{t \in \alpha} X_t$ is therefore a compact projective $\acal$-generator for $\ccal$. The lemma now follows from proposition \ref{prop lex localization}.
\end{proof}

\begin{proof}[Proof of theorem \ref{theorem abelian}]
Applying proposition \ref{proposition dualizable is presentable} we see that $\ccal$ and $\ccal^\vee$ are presentable. Combining lemma \ref{lemma factor identity} with lemma \ref{lemma conclude spectrality} we see that $\ccal$ and $\ccal^\vee$ are spectral categories. Combining remark \ref{remark description duality over M} with lemma \ref{lemma dual is locally finitely generated} we see that $\ccal$ and $\ccal^\vee$ are in fact semisimple. The theorem now follows from lemma \ref{lemma suffices to show generated by compact projectives}.
\end{proof}


\subsection{Smooth \texorpdfstring{$(\infty,1)$}{(∞,1)}-categories}\label{subsection smooth infty}

We now formulate a variant of theorem \ref{theorem abelian} that applies to smooth additive categories over a rigid semisimple base in the derived (but unstable) context:

\begin{theorem}\label{theorem prestable}
Let $\Acal$ be a semisimple rigid symmetric monoidal Grothendieck abelian category and let $\Ccal$ be a smooth $\der(\acal)_{\geq 0}$-linear cocomplete category. Then $\Ccal = \LMod_A(\der(\acal)_{\geq 0})$ for some smooth algebra $A$ in $\acal$.
\end{theorem}

The proof of theorem \ref{theorem prestable} requires some preliminary lemmas.

\begin{lemma}\label{lemma split identity en prestable}
Let $\acal$ be a symmetric monoidal Grothendieck abelian category, rigid and generated by compact projective objects. Let $\Ccal$ be a smooth $\der(\acal)_{\geq 0}$-linear presentable category. Then there exists an object $G$ in $\Ccal$ and an object $F$ in $\Ccal^\vee$ with the property that the identity functor of $\Ccal$ is a retract of the composite functor
\[
\Ccal \xrightarrow{F(-)} \der(\Acal)_{\geq 0} \xrightarrow{ G \otimes -} \Ccal.
\]
\end{lemma}
\begin{proof}
Analogous to the proof of lemma \ref{lemma factor identity}.
\end{proof}

\begin{lemma}\label{lemma ho is spectral prestable}
Let $\acal$ be a symmetric monoidal Grothendieck abelian category, rigid and generated by compact projective objects. Let $\Ccal$ be an invertible $\der(\acal)_{\geq 0}$-linear presentable category. Then $\Ho(\Ccal)$ is a spectral category.
\end{lemma}
\begin{proof}
It follows from lemma \ref{lemma identity retract} that we may factor the identity on $\Ho(\Ccal)$ as the composition
\[
\Ho(\Ccal) \xrightarrow{\Ho(F(-))} \Ho(\der(\Acal)_{\geq 0}) \xrightarrow{\Ho(G \otimes - )} \Ho(\Ccal)
\]
where $F$ and $G$ are objects of $\Ccal^\vee$ and $\Ccal$, respectively. The lemma now follows from  lemma \ref{lemma conclude spectrality}, since $\Ho(\der(\acal)_{\geq 0})$ is equivalent to the category of nonnegatively graded objects of $\acal$, which is semisimple.
\end{proof}

\begin{proof}[Proof of theorem \ref{theorem prestable}]
Applying \ref{coro properties dualizable infty} we see that $\ccal$ is a separated Grothendieck prestable category. By theorem \ref{theorem abelian} there exists a smooth algebra $A$ in $\acal$ and an $\acal$-linear equivalence $\LMod_A(\acal) = \ccal^\heartsuit$. Since $\LMod_A(\der(\acal)_{\geq 0}) = \der(\LMod_A(\acal))_{\geq 0}$, to prove the corollary it will suffice to show that $\ccal$ is the connective derived category of its heart. Since $\ccal$ is separated, we may reduce to showing that $\ccal$ is generated by $\ccal^\heartsuit$.

Let $X$ be an object of $\ccal$ and let $\mu: H_0(X) \rightarrow (\tau_{\geq 1}X) [1]$ be the morphism classifying the extension $\tau_{\geq 1} X \rightarrow X \rightarrow H_0(X)$.  By lemma \ref{lemma ho is spectral prestable} we may write $\mu$ as a sum of an isomorphism and a zero map. Since every summand of $H_0(X)$ is $0$-truncated and every summand of $(\tau_{\geq 1}X)[1]$ is $2$-connective, we must have that $\mu$ is homotopic to zero. It follows that the projection $X \rightarrow H_0(X)$ admits a section $H_0(X) \rightarrow X$, which is a morphism from a $0$-truncated object into $X$ inducing an isomorphism on $H_0$. Since $X$ was arbitrary we conclude that $\ccal^\heartsuit$ generates $\ccal$, as desired.
 \end{proof} 


\subsection{Invertible stable categories}\label{subsection invertible stable}

We now study invertible stable categories over a rigid semisimple base.

\begin{theorem}\label{theorem stable}
Let $\acal$ be a semisimple rigid symmetric monoidal Grothendieck abelian category and let $\ccal$ be an invertible cocomplete $\der(\acal)$-linear category. Then $\Ccal = \LMod_A(\der(\acal))$ for some Azumaya algebra $A$ in $\acal$.
\end{theorem}

The remainder of this section is devoted to the proof of theorem \ref{theorem stable}. 

\begin{notation}
Let $\ccal$ be a category. For each object $X$ in $\Ccal$ we denote by $[X]$  its image in the homotopy category $\Ho(\Ccal)$.
\end{notation}

\begin{lemma}\label{lemma homot category is rigid ss}
Let $\acal$ be a semisimple rigid  symmetric monoidal Grothendieck abelian category. Then the symmetric monoidal structure on $\Ho(\der(\Acal))$ induced from $\der(\Acal)$ is compatible with colimits, and makes $\Ho(\der(\Acal))$ into a semisimple rigid symmetric monoidal Grothendieck abelian category.
\end{lemma}
\begin{proof}
Passing to homologies provides an equivalence $\Ho(\der(\Acal)) = \Acal^\ZZ$. It follows from this that $\Ho(\der(\Acal))$ is a semisimple Grothendieck abelian category. Since the symmetric monoidal structure on $\der(\Acal)$ is compatible with infinite direct sums the same fact holds for the symmetric monoidal structure on $\Ho(\der(\Acal))$. We now conclude that the symmetric monoidal structure on $\Ho(\der(\Acal))$ is compatible with all colimits, since every coproduct preserving functor of spectral categories is colimit preserving.

It remains to show that dualizable and compact projective objects of $\Ho(\der(\Acal))$ coincide. The unit of $\Ho(\der(\Acal))$ is given by $[1_\Acal]$ where $1_\Acal$ is the unit of $\Acal$. We may write $1_\Acal$ as a finite direct sum of simple objects of $\Acal$. Objects which are indecomposable in $\Acal$ necessarily remain indecomposable in $\Ho(\der(\Acal))$, thus we see that the unit of $\Ho(\der(\Acal))$ is a finite direct sum of simple objects, which means that it is compact projective. This implies that every dualizable object of $\Ho(\der(\Acal))$ is compact projective. Conversely, assume given an object $X$ of $\der(\Acal)$ such that $[X]$ is a compact projective object of $\Ho(\der(\Acal))$. We want to prove that $[X]$ is dualizable. Write $X$ as a finite direct sum of objects $X_\alpha$ such that $[X_\alpha]$ is simple for all $\alpha$. Then for each $\alpha$ the object $X_\alpha$ is a shift of a simple object of $\Acal$. Since simple objects of $\Acal$ are dualizable we conclude that $X_\alpha$ is dualizable in $\der(\Acal)$ for all $\alpha$. Therefore $X$ is a dualizable object of $\der(\Acal)$ and $[X]$ is dualizable in $\Ho(\der(\Acal))$, as desired.
\end{proof}

\begin{lemma}\label{lemma identity retract}
Let $\acal$ be a semisimple rigid symmetric monoidal Grothendieck abelian category and let $\ccal$ be an invertible $\der(\acal)$-linear presentable category. Then there exists an object $G$ in $\ccal$ and an object $F$ in $\ccal^\vee$ with the property that the identity functor of $\ccal$ is a retract of the composite functor
\[
\Ccal \xrightarrow{F(-)} \der(\acal) \xrightarrow{G \otimes -} \ccal.
\]
\end{lemma}
\begin{proof}
Let $G$ be a colimit generator for $\Ccal$ and $F$ be  a colimit generator for $\Ccal^\vee$. Then $G \otimes  F$ is a colimit generator for $\Ccal \otimes_{\der(\Acal)} \Ccal^\vee = \der(\Acal)$. Denote by $X$ the induced object of $\der(\Acal)$. Let $\Ucal$ be the collection of simple objects of $\Acal$ which are direct summands of $H_n(X)$ for some integer $n$. Then every object $Y$ in the closure of $X$ under colimits has the property that $H_n(Y)$ is a direct sum of simples in $\Ucal$ for every integer $n$. In particular this applies to the unit $1_\Acal$ in $\Acal$. Replacing $G$ with $\bigoplus_{n \in \ZZ} G[n]$ we may assume that every simple in $\Ucal$ is a direct summand of $H_0(X)$. Replacing $G$ further with an infinite direct sum of copies of $G$ we may in fact assume that $1_\Acal$ is a direct summand of $H_0(X)$. Since every object of $\der(\Acal)$ is a direct sum of its homologies we in fact have that $1_\Acal$ is a direct summand of $X$. The lemma now follows from the observation that under the equivalences $\Funct_{\der(\Acal)}(\Ccal, \Ccal) = \Ccal \otimes_{\der(\Acal)} \Ccal^\vee = \der(\Acal)$ the objects $1_\Acal$ and $G \otimes F$ correspond to the identity on $\Ccal$ and $F(-) \otimes G$, respectively.
\end{proof}

\begin{lemma}\label{lemma dual of ho}
Let $\acal$ be a semisimple rigid symmetric monoidal Grothendieck abelian category and let $\ccal$ be an invertible $\der(\acal)$-linear presentable category.  Then:
\begin{enumerate}[\normalfont (1)]
\item $\Ho(\ccal)$ and $\Ho(\ccal^\vee)$ are spectral categories.
\item There is an equivalence of categories $\Ho(\ccal^\vee) = \Funct^L(\Ho(\ccal), \Ab)$.
\end{enumerate}
\end{lemma}
\begin{proof}
By lemma \ref{lemma identity retract} we have $\der(\acal)$-linear colimit preserving functors $F: \ccal \rightarrow \der(\acal)$ and $G: \der(\acal) \rightarrow \ccal$ such that the identity on $\ccal$ sits in a split retraction $\id_\ccal \xrightarrow{s} G \circ F \xrightarrow{r} \id_\ccal$. It follows in particular that we may write the identity on $\Ho(\Ccal)$ as a retract of  the composition
\[
\Ho(\Ccal) \xrightarrow{\Ho(F)} \Ho(\der(\Acal)) \xrightarrow{\Ho(G )} \Ho(\Ccal).
\]
 Combining lemmas \ref{lemma homot category is rigid ss} and  \ref{lemma conclude spectrality} we see that $\Ho(\ccal)$ is spectral. Reversing the role of $\ccal$ and $\ccal^\vee$ we have that $\Ho(\ccal^\vee)$ is also spectral. This proves (1).

It remains to show that $\Ho(\ccal)$ and $\Ho(\ccal^\vee)$ are dual to each other. By \cite{SAG} proposition 7.7.1 we have that $\ccal$ and $\ccal^\vee$ are dual presentable stable categories, and in particular $\ccal^\vee = \Funct^L(\ccal, \Sp)$. Composition with the functor $H_0: \Sp \rightarrow \Ab$ provides a functor $p: \Funct^L(\ccal, \Sp) \rightarrow \Funct^\oplus(\ccal, \Ab)$, where $\Funct^\oplus$ denotes the category of functors which preserve arbitrary direct sums. We have $\Funct^\oplus(\ccal, \Ab) = \Funct^\oplus(\Ho(\ccal), \Ab)$, and since $\Ho(\ccal)$ is spectral the latter agrees with $\Funct^L(\Ho(\ccal), \Ab)$. To prove the lemma it will suffice to show that $p$ induces an equivalence $\Ho(\Funct^L(\ccal,\Sp)) = \Funct^\oplus(\ccal, \Ab)$.

We have a commutative diagram of $(1,1)$-categories
\[
\begin{tikzcd}
\Ho(\Funct^L(\ccal, \Sp)) \arrow{r}{G^*} \arrow{d}{p} & \Ho(\Funct^L(\der(\acal), \Sp)) \arrow{d}{q} \arrow{r}{F^*}& \Ho(\Funct^L(\ccal, \Sp)) \arrow{d}{p} \\
\Funct^\oplus(\ccal, \Ab) \arrow{r}{G^*} & \Funct^\oplus(\der(\acal), \Ab) \arrow{r}{F^*} & \Funct^\oplus(\ccal, \Ab)
\end{tikzcd}
\]
where $q$ is the functor of composition with $H_0$. We claim that $q$ is an equivalence. Since $\der(\acal)$ is a rigid compactly generated presentable symmetric monoidal stable category, we have an equivalence $\der(\acal) = \Funct^L(\der(\acal), \Sp)$ induced by the pairing
\[
\der(\acal) \times \der(\acal) \xrightarrow{- \otimes -} \der(\acal) \xrightarrow{\Hom^\enh_{\der(\acal)}(1_\acal, -)} \Sp.
\]
To show that $q$ is an equivalence we have to prove that the composite functor
\[
\der(\acal) \times \der(\acal) \xrightarrow{- \otimes -} \der(\acal) \xrightarrow{\Ext^0_{\der(\acal)}(1_\acal, -)} \Ab
\]
induces an equivalence between $\Ho(\der(\acal))$ and the category of coproduct preserving functors $\der(\acal) \rightarrow \Ab$. Since $\Ho(\der(\acal))$ is semisimple, passing to the homotopy category induces an equivalence $\Funct^\oplus(\der(\acal),\Ab) = \Funct^L(\Ho(\der(\acal)), \Ab)$.  We may thus reduce to showing that the functor
\[
\Ho(\der(\acal)) \times \Ho(\der(\acal)) \xrightarrow{- \otimes -} \Ho(\der(\acal)) \xrightarrow{\Hom^\enh_{\Ho(\der(\acal))}([1_\acal], -)} \Ab
\]
is the counit of a self duality for $\Ho(\der(\acal))$ in $\Pr^L$. This is a consequence of the fact that $\Ho(\der(\acal))$ is a semisimple rigid symmetric monoidal Grothendieck abelian category.

We now prove that $p$ is faithful. Assume given a morphism $\mu$ in $\Ho(\Funct^L(\ccal, \Sp))$ such that $p(\mu) = 0$. Then $qG^*\mu = G^*p\mu = 0$, and since $q$ is an equivalence we have that $G^*\mu = 0$. Hence $\mu$ is a retract of $F^*G^*\mu = 0$, which means that $\mu = 0$. Combining this with the fact that $p$ is additive we deduce that $p$ is faithful.

Next we show that $p$ is fully faithful. Let $X, Y$ be objects in $\Ho(\Funct^L(\ccal, \Sp))$ and let $\nu: p(X) \rightarrow p(Y)$ be a morphism. We wish to show that $\nu$ lifts to a morphism $X \rightarrow Y$. We have a split retraction in the arrow category of $\Funct^\oplus(\ccal, \Ab)$ as follows:
\[
\begin{tikzcd}
p(X) \arrow{d}{\nu} \arrow{r}{s^*} & F^*G^*p(X) \arrow{d}{F^*G^*\nu} \arrow{r}{r^*} & p(X) \arrow{d}{\nu} \\
p(Y) \arrow{r}{s^*} & F^*G^*p(Y) \arrow{r}{r^*} & p(Y)
\end{tikzcd}
\]
The top and bottom row are the images under $p$ of the split retractions $\smash{X \xrightarrow{s^*} F^*G^*(X) \xrightarrow{r^*} X}$ and $\smash{Y \xrightarrow{s^*}F^*G^*(Y) \xrightarrow{r^*}Y}$. Since we have $\nu = r^* \circ (F^*G^*\nu) \circ s^*$, to prove that $\nu$ admits a lift it is enough to show that $F^*G^*\nu$ admits a lift along $p$ to a map $\xi: F^*G^*(X) \rightarrow F^*G^*(X)$. This may be done by setting $\xi = F^*q^{-1}G^*\nu$.

It remains to prove that $p$ is essentially surjective. Let $W$ be an object of $\Funct^\oplus(\ccal, \Ab)$. Then $W$ is a retract of $F^*G^*W = p F^*q^{-1}G^*W$. The fact that $p$ is surjective now follows from the fact that it is fully faithful and its source is idempotent complete.
\end{proof}

\begin{lemma}\label{lemma alcanza probar comp gen}
Let $\acal$ be a semisimple rigid symmetric monoidal Grothendieck abelian category and let $\ccal$ be an invertible $\der(\acal)$-linear presentable category. If $\Ho(\ccal)$ is semisimple then $\Ccal =\LMod_A(\der(\Acal))$ for some Azumaya algebra $A$ in $\Acal$.
\end{lemma}
\begin{proof}
The fact that $\Ho(\ccal)$ is semisimple implies  that $\ccal$ is compactly generated by those objects whose image in $\Ho(\ccal)$ is simple.  Let $\lbrace X_t \rbrace_{t \in T}$ be a generating set of compact objects for $\ccal$ with $[X_t]$ simple for all $t$. For each finite subset $\alpha \subseteq T$ let $\ccal_\alpha$ be the smallest full subcategory of $\ccal$ closed under colimits and the action of $\der(\acal)$, and containing $\lbrace X_t \rbrace_{t \in \alpha}$. Note that each $\ccal_\alpha$ inherits a $\der(\acal)$-linear structure from $\ccal$, and that $\ccal = \colim \ccal_\alpha$. Let $\delta$ be the image of $1_\acal$ under the unit map $\der(\acal) \rightarrow \ccal \otimes_{\der(\acal)} \ccal^\vee$. We have $\ccal = \colim \ccal_\alpha$ and therefore $\ccal \otimes_{\der(\acal)} \ccal^\vee = \colim \ccal_\alpha \otimes_{\der(\acal)} \ccal^\vee$. Applying lemma \ref{lemma filtered colimit of cg} we see that $\delta$ belongs to $\ccal_\alpha \otimes_{\der(\acal)} \ccal^\vee$ for some $\alpha$ and therefore the identity on $\ccal$ belongs to the image of the inclusion $\Funct_{\der(\acal)}(\ccal, \ccal_\alpha) \rightarrow \Funct_{\der(\acal)}(\ccal, \ccal)$. This implies that the inclusion of $\ccal_\alpha$ into $\ccal$ admits a section, so that $\ccal = \ccal_\alpha$. 

Changing $\alpha$ if necessary we may assume that $\alpha$ is minimal with the property that $\ccal = \ccal_\alpha$. Let $A$ be the opposite of the endomorphism algebra of $\bigoplus_{t \in \alpha} X_t$, so that we have a $\der(\acal)$-linear equivalence $\ccal = \LMod_A(\der(\acal))$. Note that any Azumaya algebra in $\der(\acal)$ which belongs to $\der(\acal)^\heartsuit$ is in fact an Azumaya algebra of $\acal$. Consequently, to prove the lemma it remains to show that $A$ belongs to $\acal$.  Let $t, t'$ be two distinct elements of $\alpha$. The minimality of $\alpha$ implies that if $Y$ is a compact projective object of $\acal$ and $n$ is an integer then $\Ext^0_{\ccal}(Y[ n] \otimes X_t , X_{t'}) = 0$, and therefore $\Hom^\enh_{\ccal}(X_t, X_{t'}) = 0$. It follows that $A = \prod_{t \in \alpha} \End^\enh_{\ccal}(X_t)$.  We may therefore reduce to showing that $\End^\enh_{\ccal}(X_t)$ belongs to $\acal$ for all $t$ in $\alpha$. Assume for the sake of contradiction that this does not hold. Then we may pick a nonzero integer $n$, a compact projective object $Y$ in $\acal$, and a nonzero map $Y \otimes X_t [ n] \rightarrow X_t$, making $X_t$  a summand of $Y \otimes X_t[n]$. It follows from this by induction that for every positive integer $m$ there is a nonzero map $Y^\otimes \otimes X_t[ mn] \rightarrow X_t$, and therefore $H_{mn}(A) \neq 0$. This contradicts the fact that $A$ is compact in $\der(\acal)$ (since it is an Azumaya algebra).
\end{proof}

\begin{proof}[Proof of theorem \ref{theorem stable}]
Applying proposition \ref{proposition dualizable is presentable} we see that $\Ccal$ and $\Ccal^\vee$ are presentable. By lemma \ref{lemma dual of ho} we have that $\Ho(\Ccal)$ and $\Ho(\Ccal^\vee)$ are spectral categories, and furthermore $\Ho(\ccal) = \Funct^L(\Ho(\ccal^\vee), \Ab)$. Applying lemma \ref{lemma dual is locally finitely generated} we see that $\Ho(\Ccal)$ is in fact semisimple.  The theorem now follows from lemma \ref{lemma alcanza probar comp gen}. 
\end{proof}


\ifx\inmain\undefined
\bibliographystyle{myamsalpha2}
\bibliography{References}
\fi


\section{Fully dualizable categories over commutative rings}\label{section G rings}

The main goal of this section is to supply proofs of theorems \ref{theorem principal 2 introduction}, \ref{teo introduction spectral}, and their variants. We may summarize our strategy in the $(1,1)$-categorical case as follows:
\begin{enumerate}[\normalfont (1)]
\item Using the results from \ref{subsection compactly assembled}, we have that if $\ccal$ is a fully dualizable $R$-linear cocomplete category then $\ccal$ is automatically Grothendieck abelian and has exact products. The main task is to show that $\ccal$ admits a compact projective generator \'etale locally on $\Spec(R)$.
\item  We first address the case when $R$ is a local Artinian commutative ring. This is done by induction on the length of $R$ as an $R$-module.   The case when $R$ has length $1$ (i.e., is a field) follows from the results of section \ref{section invertible semisimple}.  For the inductive step one shows that compact projective generators may be deformed along an elementary extension $R \rightarrow S$ of local Artinian commutative rings (see definition  \ref{definition elementary extension}).
\item  We then address the case when $R$ is a complete local Noetherian commutative ring with maximal ideal $\mathfrak{m}$. In this case one uses the result from step (2) to construct a compatible sequence of compact projective generators for $\Ccal \otimes_R R/\mathfrak{m}^n$, and in the limit an object $X$ of $\Ccal$. The full dualizability of $\ccal$ is used to show that $X$ is a generator for $\ccal$.  The Gabriel-Popescu theorem then shows that $\Ccal$ is a left exact localization of the category of modules over the endomorphism algebra of $X$, and we finally proceed  to show that this localization has trivial kernel.
\item The general case is proven using Popescu's smoothing theorem together with the result from step (3) and our main theorem from section \ref{section compactly assembled}.
\end{enumerate}

The $(1,1)$-categorical theorem is used to deduce the $(\infty,1)$-categorical theorem in the case of (classical) G-rings, by showing that any fully dualizable $R$-linear Grothendieck prestable category is the connective derived category of its heart. The general case is then proven by deforming compact generators along the Postnikov tower of $R$, with the clutching theorems from \cite{SAG} section 16.2 being applied for the inductive step.

As discussed in the introduction to the paper, we will deduce our main theorems from a general result that applies to graded categories as well. In this case, instead of working with $R$-linear cocomplete categories, we consider instead cocomplete categories linear over a base symmetric monoidal $R$-linear Grothendieck abelian or prestable category, which we assume to be rigid, generated under colimits by compact projective objects, proper over $R$, and with semisimple fibers on a dense subset of $\Spec(R)$.

 We begin this section in \ref{subsection proper} with some basic facts concerning proper $R$-linear Grothendieck abelian and prestable categories. In \ref{subsection completion} we collect a few results on the procedure of completion with respect to an ideal for objects of $R$-linear Grothendieck prestable categories, that will be needed when carrying out step (3) in our proof. 
 
 The proof of our main results is given in \ref{subsection fully dualizable 11} and \ref{subsection fully dualizable infty}. We then show in \ref{subsection rings of def} that these results extend beyond G-rings under additional compact generation hypothesis on $\ccal$ and $\ccal^\vee$.  We finish the section in \ref{subsection invertible stable over R} with a proof of theorem \ref{teo principal stable introduction}.


\subsection{Proper \texorpdfstring{$R$}{R}-linear categories}\label{subsection proper}

We will be interested in this section in properness for $R$-linear Grothendieck abelian and Grothendieck prestable categories generated under colimits by compact projective objects (corresponding to the cases $n = 1$ and $n = \infty$ of remark \ref{remark properness when comp gen}).  It follows from remark \ref{remark base change proper} that if $\ccal$ is a proper $R$-linear Grothendieck prestable category then $\ccal^\heartsuit$ is a proper $\pi_0(R)$-linear Grothendieck abelian category. The following proposition shows that the procedure of deriving Grothendieck abelian categories also preserves properness:

\begin{proposition}\label{prop proper clasica vs prestable}
Let $R$ be a commutative ring and let $\ccal$ be an $R$-linear Grothendieck abelian category. Assume that $\ccal$ is generated by compact projective objects and is proper over $R$. Then $\der(\ccal)_{\geq 0}$ is proper over $R$.
\end{proposition}
\begin{proof}
Let $X, Y$ be a pair of compact projective objects of $\ccal$. Then for each compact projective $R$-module $Z$ we have
\begin{align*}
\Hom_{\Mod_R^\cn}(Z, \Hom_{\der(\ccal)_{\geq 0}}^\enh(X, Y)) & = \Hom_{\der(\ccal)}(Z \otimes X, Y) \\ &= \Hom_{\ccal}(Z \otimes X, Y) 
\\ &= \Hom_{\Mod_R^\heartsuit}(Z, \Hom_{\ccal}^\enh(X, Y)).
\end{align*}
The above equivalence is functorial in $Z$ and therefore induces an isomorphism of $R$-modules $\Hom_{\der(\ccal)_{\geq 0}}^\enh(X, Y) = \Hom_{\ccal}^\enh(X, Y)$. The proof finishes by observing that an object of $\Mod_R^\heartsuit$ is compact projective if and only if it is compact projective inside $\Mod_R^\cn$.
\end{proof}

\begin{remark}\label{remark proper is derived}
Let $R$ be a commutative ring and let $\ccal$ be an $R$-linear Grothendieck prestable category. Assume that $\ccal$ is generated under colimits by compact projective objects and is proper over $R$. Then $\Hom_\ccal(X, Y)$ is $0$-truncated for each pair of compact projective objects of $\ccal$, and in particular we see that every compact projective object of $\ccal$ is $0$-truncated. It follows from this that there is an $R$-linear equivalence $\ccal = \der(\ccal^\heartsuit)$.
\end{remark}

In the presence of properness projective objects are automatically flat:

\begin{proposition}\label{prop flatness of compact projectives}
Let $R$ be a connective commutative ring spectrum and let $\ccal$ be an $R$-linear Grothendieck prestable category. Assume that $\ccal$ is generated under colimits compact projective objects and proper over $R$. Then every projective object of $\ccal$ is flat over $R$.
\end{proposition}
\begin{proof}
Since every projective object is a retract of a direct sum of compact projective objects, and flatness is preserved by retracts and direct sums it is enough to show that every compact projective object $X$ in $\ccal$ is flat over $R$. This amounts to showing that $M \otimes_R X$ is $0$-truncated for every $0$-truncated $R$-module $M$. To prove this it is enough to show that $\Hom^\enh_\ccal(Y, M \otimes_R X)$ is a $0$-truncated $R$-module for every compact projective object $Y$. This follows from the fact that  $\Hom^\enh_\ccal(Y, M \otimes_R X) = M \otimes_R \Hom^\enh_\ccal(Y, X)$ combined with the fact that dualizable $R$-modules are flat.
\end{proof}

We focus in the remainder of this section in the case where $R$ is a (classical) commutative ring. We will formulate our result for proper $R$-linear Grothendieck abelian categories generated by compact projective objects; it follows from remark \ref{remark proper is derived} that the Grothendieck prestable setting does not present extra generality in this case.

  We start by formulating a version of Nakayama's lemma:

\begin{proposition}\label{proposition nakayama}
Let $R$ be a local commutative ring with residue field $k$ and let $\ccal$ be an $R$-linear Grothendieck abelian category generated by compact projective objects and proper over $R$. Let $X$ be a finitely generated object of $\Ccal$. If  $X \otimes_R k = 0$ then $X = 0$.
\end{proposition}
\begin{proof}
Pick  an epimorphism $f: Y \rightarrow X$ with $Y$ compact projective. Passing to $R$-modules of maps from $Y$ we obtain an epimorphism
\[
\Hom^\enh_\ccal(Y, Y) \rightarrow \Hom^\enh_\ccal(Y, X) .
\]
 In particular, since the left hand side is a compact projective $R$-module we have that $\Hom^\enh_\ccal(Y, X)$ is a finitely generated $R$-module. We now have
 \[
\Hom^\enh_\ccal(Y, X) \otimes_R k   =  \Hom^\enh_\ccal(Y, X \otimes_R k) = 0.
\]
An application of Nakayama's lemma shows that $\Hom^\enh_\ccal(Y, X) = 0$. Hence $f = 0$ and therefore $X = 0$, as desired.
\end{proof}

\begin{corollary}\label{coro check epi on fiber}
Let $R$ be a local commutative ring with residue field $k$ and let $\ccal$ be an $R$-linear Grothendieck abelian category generated by compact projective objects and proper over $R$. Let $f: X \rightarrow Y$ be a morphism in $\ccal$, with $Y$ finitely generated. If $f \otimes_R k$ is an epimorphism then $f$ is an epimorphism.
\end{corollary}
\begin{proof}
Follows from proposition \ref{proposition nakayama} since $\operatorname{Coker}(f) \otimes_R k = \operatorname{Coker}(f \otimes_R k) = 0$.
\end{proof}

\begin{corollary}\label{coro lift equivalences of comp projectives}
Let $R$ be a local commutative ring with residue field $k$ and let $\ccal$ be an $R$-linear Grothendieck abelian category. Assume that $\ccal$ is generated by compact projective objects and proper over $R$. If $X$ and $Y$ are compact projective objects such that $X \otimes_R k$ and $Y \otimes_R k$ are isomorphic, then $X$ and $Y$ are isomorphic. Furthermore, any morphism $f: X \rightarrow Y$ such that $f \otimes_R k$ is an isomorphism, is itself an isomorphism.
\end{corollary}
\begin{proof}
Since $X$ is projective any isomorphism between $X \otimes_R k$ and $Y \otimes_R k$ may be lifted to a morphism $f: X \rightarrow Y$. It remains to show that in this case $f$ is an isomorphism. The map $f$ is an epimorphism by corollary \ref{coro check epi on fiber}. In particular, the kernel of $f$ is finitely generated as well. By proposition \ref{prop flatness of compact projectives} we have that $\Ker(f) \otimes_R k = \Ker(f \otimes_R k) = 0$. Another application of proposition \ref{proposition nakayama} shows that $f$ is a monomorphism, and therefore an isomorphism.
\end{proof}

We now prove that compact projective objects in fibers extend to \'etale neighborhoods.

\begin{proposition}\label{prop extend compact projectives}
Let $R$ be a commutative ring and let $\ccal$ be an $R$-linear Grothendieck abelian category. Assume that $\ccal$ is generated   by compact projective objects and proper over $R$. Let $\mathfrak{p}$ be a prime ideal of $R$ with residue field $k$ and let $X$ be a compact projective object of $\ccal \otimes_R k$.
\begin{enumerate}[\normalfont (1)]
\item If $R$ is complete local and $\mathfrak{p}$ is maximal then there exists a compact projective object in $\ccal$ whose image in $\ccal \otimes_R k$ recovers $X$.
\item There exists an \'etale morphism $R \rightarrow R'$ with $R' \otimes_R k \neq 0$ and a compact projective object in $\ccal \otimes_R R'$ whose image in $\ccal \otimes_R (k \otimes_R R')$ recovers $X \otimes_k (k \otimes_R R')$.
\end{enumerate}
\end{proposition}
\begin{proof}
We first address part (1). Denote $\ccal_k = \ccal \otimes_R k$. Since $\ccal_k$ is generated under colimits by objects of the form $Y \otimes_R k$ we may pick a compact projective object $Y$ in $\ccal$ such that  $X$ is a retract of $Y \otimes_R k$. Let $r$ be an idempotent endomorphism on $Y \otimes_R k$ with image $X$. The lemma will follow if we are able to show that $r$ admits a lift to an idempotent endomorphism of $Y$.

 Since $Y$ is compact projective the extension of scalars map $\End^\enh_\Ccal(Y) \rightarrow \End^\enh_{\Ccal_k}(Y_k)$ is equivalent to the  canonical map of $R$-algebras $\End^\enh_\ccal(Y) \rightarrow \End^\enh_\ccal(Y) \otimes_R k$. Set $A = \End^\enh_\ccal(Y)$, so that our map is given by the quotient $A \rightarrow A/\mathfrak{m}$.   We may identify $r$ with an idempotent element of $A/\mathfrak{m}A$, and our task is to show that this lifts to an idempotent element of $A$. Consider the sequence of square zero extensions $A/\mathfrak{m}A \leftarrow A/\mathfrak{m}^2A \leftarrow A/\mathfrak{m}^3 A \leftarrow \ldots$. We may lift $r$ to a compatible sequence of idempotents $r = r_1, r_2, \ldots$. Since $A$ is a compact projective $R$-module we have an isomorphism $A = \lim A/\mathfrak{m}^nA$ and hence the sequence induces in the limit the desired idempotent in $A$ lifting $r$.
 
 We now prove part (2).  As before, pick a compact projective object $Y$ in $\ccal$ such that $X$ is a retract of $Y \otimes_R k$ and let $A$ be the $R$-algebra of endomorphisms of $Y$. Then we have an $R$-linear fully faithful embedding $\LMod_A(\Mod^\heartsuit_R) \rightarrow \ccal$. Since $X$ belongs to $\LMod_A(\Mod^\heartsuit_R) \otimes_R k$ it is enough to prove the proposition in the case $\ccal = \LMod_A(\Mod^\heartsuit_R)$.  Write $R$ as a filtered colimit of a diagram of commutative rings $R_\alpha$ of finite type over $\ZZ$. Since the $R$-module underlying $A$ is dualizable, there exists an index $\alpha$ and an $R_\alpha$-algebra $A_\alpha$ whose underlying $R_\alpha$-module is dualizable, and such that $A = R \otimes_{R_\alpha} A_\alpha$.
 
 We assume without loss of generality that $\alpha$ is initial. For each index $\beta$ let $A_\beta =R_\beta \otimes_{R_\alpha} A_\alpha$, and let $k_\beta$ be the residue field of $R_\beta$ at the preimage of $\mathfrak{p}$ under the map $R_\beta \rightarrow R$. We have $k = \colim k_\beta$, so the idempotent of $A \otimes_R k$ with image $x$ lifts to an idempotent of $A_\beta \otimes_{R_\beta} k_\beta$ for some $\beta$. Replacing $R$ by $R_\beta$ and $\ccal$ by $\LMod_{A_\beta}(\Mod_{R_\beta}^\heartsuit)$ we may now assume that $R$ is of finite type over $\ZZ$, and in particular a G-ring.
 
By part (1) we may find a compact projective object in $\ccal \otimes_R R^\wedge_{\mathfrak{p}}$ whose image in $\ccal \otimes_R k$ recovers $X$. Combining theorem \ref{theorem lift strongly compacts} with Popescu's smoothing theorem we may find a smooth $R$-algebra $S$ such that $S \otimes_R k \neq 0$ and a compact projective object $Z$ in $\ccal \otimes_R S$ whose image in $\ccal \otimes_R (S \otimes_R k)$ recovers $X \otimes_k (S \otimes_R k)$. Pick a morphism of commutative rings $S \rightarrow R'$ such that the induced map $R \rightarrow R'$ is \'etale and $R' \otimes_R k \neq 0$.  Then $Z \otimes_S R'$ is a compact projective object of $\ccal \otimes_R R'$ with the desired property.
\end{proof}

\begin{corollary}\label{corollary extend equivalence etale locally}
Let $R$ be a commutative ring and let $\ccal, \ccal'$ be $R$-linear Grothendieck abelian categories. Assume that $\ccal$ and $\ccal'$ are generated   by compact projective objects and fully dualizable over $R$. Let $\mathfrak{p}$ be a prime ideal of $R$ with residue field $k$. If $\ccal \otimes_R k = \ccal' \otimes_R k$ then there exists an \'etale $R$-algebra $R'$ such that $R' \otimes_R k \neq 0$ and an equivalence $\ccal \otimes_R R' = \ccal' \otimes_R R'$.
\end{corollary}
\begin{proof}
We have that $\ccal$ and $\ccal'$ are dualizable over $R$, and furthermore $\Funct(\ccal, \ccal')$ and $\Funct(\ccal', \ccal)$ are Grothendieck abelian categories generated by compact projective objects and proper over $R$. Let $F: \ccal \otimes_R k \rightarrow \ccal' \otimes_R k$ and $G: \ccal' \otimes_R k \rightarrow \ccal \otimes_R k$ be inverse equivalences. Then $F$ and $G$ define compact projective objects in $\Funct(\ccal, \ccal') \otimes_R k$ and $\Funct(\ccal', \ccal) \otimes_R k$, respectively. An application of proposition \ref{prop extend compact projectives} shows that, after replacing $R$ with an \'etale $R$-algebra if necessary, we may find compact projective lifts $\overline{F}: \ccal \rightarrow \ccal'$ and $\overline{G}: \ccal' \rightarrow \Ccal$ for $F$ and $G$. The fact that $\ccal$ and $\ccal'$ are proper implies that $\overline{F} \circ \overline{G}$ and $\overline{G} \circ \overline{F}$ are also compact projective. Since $F \circ G$ and $G \circ F$ are identities, it follows from corollary \ref{coro lift equivalences of comp projectives} that $\overline{F} \circ \overline{G}$ and $\overline{G} \circ \overline{F}$ become isomorphic to identities after passing to a localization of $R$.
\end{proof}

\begin{corollary}\label{coro smooth and proper locally trivial}
Let $R$ be a commutative ring and let $A$ be a smooth and proper algebra in $\Mod^\heartsuit_R$. Let $\mathfrak{p}$ be a prime ideal of $R$ with residue field $k$. Then there exists an \'etale $R$-algebra $R'$ such that $R' \otimes_R k \neq 0$ with the property that $A \otimes_R R'$ is Morita equivalent to  a finite product of copies of $R'$.
\end{corollary}
\begin{proof}
Follows from corollary \ref{corollary extend equivalence etale locally} together with the fact that every separable algebra over $k$ becomes Morita equivalent to a finite product of copies of $k$ after passage to a finite separable field extension.
\end{proof}

\begin{remark}
The conclusion of proposition \ref{prop extend compact projectives} does not hold if we replace \'etale morphisms with Zariski morphisms: consider for instance $\ccal = \Mod^\heartsuit_{\ZZ[x,x^{-1}]}$ as a category linear over $\ZZ[x^2, x^{-2}]$.
\end{remark}

In what follows we will be working with proper $R$-linear categories with semisimple fibers. We will need the following:

\begin{proposition}\label{prop subcat closed under passage to subobjects}
Let $R$ be a commutative ring and let $\ccal$ be an $R$-linear Grothendieck abelian category. Assume that $\ccal$ is generated by compact projective objects, proper over $R$, and that the set of points $x$ in $\Spec(R)$ such that $\ccal \otimes_R \kappa(x)$ is semisimple is dense in the Zariski topology. Let $X$ be a compact projective object of $\ccal$. Then the full subcategory of $\ccal$ generated under colimits by $X$ is closed under passage to subobjects.
\end{proposition}

The proof of proposition \ref{prop subcat closed under passage to subobjects} requires some preliminary lemmas.

\begin{lemma}\label{lemma split after cover}
Let $R$ be a commutative ring and let $\ccal$ be an $R$-linear Grothendieck abelian category. Assume that $\ccal$ is generated by compact projective objects, proper over $R$, and fix a point in $\Spec(R)$ with residue field $k$ such that $\ccal \otimes_R k$ is semisimple. Let $X$ be a compact projective object of $\ccal$. Then there exists an \'etale morphism $R \rightarrow R'$ such that $R' \otimes_R k = k'$ is a field and a finite sequence of compact projective objects $Y_i$ in $\ccal \otimes_R R'$  such that $X \otimes_R R' = \bigoplus Y_i$ and $Y_i \otimes_R' k'$ is a simple object of $\ccal \otimes_R k'$ for every $i$.
\end{lemma}
\begin{proof}
For every finite separable field extension $F$ of $k$ let $n_F$ be the number of simple summands of $X \otimes_R F$ in $\ccal \otimes_R F$. This is bounded by the dimension of $\End^\enh_{\ccal \otimes_R F}(X \otimes_R F)$, which itself agrees with the dimension of $\End^\enh_{\ccal \otimes_R k}(X \otimes_R k)$. We may therefore choose $F$ such that $n_F$ is maximal. This extension has the property that the simple summands of $X \otimes_R F$ remain simple under further separable finite extensions.  Let $R \rightarrow S$ be an \'etale morphism such that $S \otimes_R k = F$. Write $X \otimes_R F$ as a sum of simple objects $S_i$. Applying proposition \ref{prop extend compact projectives} we may find an \'etale morphism $S \rightarrow R'$ with the property that $R' \otimes_R k \neq 0$, and a sequence of compact projective objects $Y_i$ in $\ccal \otimes_R R'$ such that $Y_i \otimes_R k = S_i \otimes_{F} (F \otimes_S R')$. Passing to a localization of $R'$ we may further assume that $R' \otimes_R k = k'$ is a finite separable field extension of $k$. Note that then $X \otimes_R k' = \bigoplus Y_i \otimes_{R} k $ is a decomposition in simples. It now follows from corollary \ref{coro lift equivalences of comp projectives} that after replacing $R'$ with a localization if necessary we have $X \otimes_R R' = \bigoplus Y_i$.
\end{proof}

\begin{lemma}\label{lemma epi after cover}
Let $R$ be a commutative ring and let $\ccal$ be an $R$-linear Grothendieck abelian category. Assume that $\ccal$ is generated by compact projective objects, proper over $R$, and fix a point in $\Spec(R)$ with residue field $k$ such that $\ccal \otimes_R k$ is semisimple. Let $X$ be a compact projective object of $\ccal$ and let $Y$ be a finitely generated subobject of $X$. Then there exists an \'etale morphism $R \rightarrow R'$ such that $R' \otimes_R k \neq 0$ with the property that that $Y \otimes_R R'$ receives an epimorphism from a finite direct sum of copies of $X \otimes_R R'$.
\end{lemma}
\begin{proof}
By lemma \ref{lemma split after cover} we may assume (after passing to a cover if necessary) that $X = \bigoplus X_i$ for a finite sequence of compact projective objects such that $S_i = X_i \otimes_R k $ is simple for all $i$. Pick an epimorphism $\rho: Z \rightarrow Y$ with $Z$ compact projective. Write $Z \otimes_R k = Q \oplus \bigoplus S_{i_\alpha}$ where $\Hom_{\ccal \otimes_R k}(S_i, Q) = 0$ for all $i$. Let $W = \bigoplus X_{i_\alpha}$. Since $W \otimes_R k$ is a direct summand of $Z \otimes_R k$ and $Z$ and $W$ are projective we may find maps $f:Z \rightarrow W$ and $g: W \rightarrow Z$ such that $(f \circ g) \otimes_R k$ is the identity. By corollary \ref{coro lift equivalences of comp projectives}, after passing to a localization of $R$ we may assume that $f \circ g$ is an isomorphism. We may therefore write $Z = W \oplus Z'$, where $Z' \otimes_R k = Q$. Note that $\Hom^\enh_\ccal(Z', X) \otimes_R k = \Hom^\enh_{\ccal \otimes_R k}(Q, X \otimes_R k) = 0$. After passing to a localization of $R$ we may assume that $\Hom^\enh_\ccal(Z', X) = 0$.  We conclude that the image of $\rho$ is equivalent to the image of $\rho \circ g$, which receives an epimorphism from a direct sum of copies of $X$ since $W$ does.
\end{proof}

\begin{proof}[Proof of proposition \ref{prop subcat closed under passage to subobjects}]
Let $\ccal'$ be the full subcategory of $\ccal$ generated by $X$. Let $Y$ be an object of $\ccal'$ and let $Z$ be a subobject of $Y$ inside $\ccal$. Our task is to show that $Z$ belongs to $\ccal'$. Since Grothendieck abelian categories satisfy \'etale descent, it suffices to prove this after passage to an \'etale cover of $\Spec(R)$. 

Since $Z$ is a colimit of finitely generated subobjects we may reduce to the case when $Z$ is finitely generated. Pick an epimorphism $S \otimes X \rightarrow Y$ where $S$ is a set. Then $Z$ receives an epimorphism from a subobject $Z'$ of $T \otimes X$, where $T$ is a finite subset of $S$. Replacing $X$ with $T \otimes X$ we may reduce to the case where $Z'$ is a subobject of $X$. By lemma \ref{lemma epi after cover} we may, after passing to a cover, assume that $Z'$ admits an epimorphism from a finite direct sum of copies of $X$. Replacing $X$ with a finite direct sum of copies of $X$ we may assume the existence of an epimorphism $X \rightarrow Z'$, which implies that there exists an epimorphism $X \rightarrow Z$. Let $W$ be its kernel. Write $W = \colim W_\alpha$ where $W_\alpha$ are finitely generated subobjects of $W$. Then  $Z = \colim X/W_\alpha$, so it suffices to prove that $X/W_\alpha$ belongs to $\ccal'$ for each $\alpha$. Replacing $Z$ by $X/W_\alpha$ and $W$ by $W_\alpha$ we may now assume that $W$ is finitely generated. Passing to a cover if necessary we may, thanks to lemma \ref{lemma epi after cover}, find an epimorphism $U \otimes X \rightarrow W$ for some finite set $U$. Then $Z$ is the cokernel of the map $U \otimes X \rightarrow X$, so it belongs to $\ccal'$, as desired.
\end{proof}

\begin{corollary}\label{coro C is locally noetherian}
Let $R$ be a Noetherian commutative ring and let $\ccal$ be an $R$-linear Grothendieck abelian category. Assume that $\ccal$ is generated by compact projective objects, proper over $R$, and that the set of points $x$ in $\Spec(R)$ such that $\ccal \otimes_R \kappa(x)$ is semisimple is dense in the Zariski topology. Then $\ccal$ is locally Noetherian. In particular, every finitely generated object of $\ccal$ admits a resolution by compact projective objects.
\end{corollary}
\begin{proof}
It suffices to prove that every compact projective object $X$ in $\ccal$ is Noetherian. By proposition \ref{prop subcat closed under passage to subobjects} it is enough to show that $X$ is a Noetherian object of the full subcategory $\ccal'$ of $\ccal$ generated  by $X$. This follows from the fact that the functor $\Hom^\enh_\ccal(X, -): \ccal \rightarrow \Mod_R^\heartsuit$ is conservative when restricted to $\ccal'$ and sends subobjects of $X$ to subobjects of the Noetherian $R$-module $\Hom^\enh_\ccal(X, X)$.
\end{proof}


\subsection{Completions and prestable categories}\label{subsection completion}

We now discuss some properties of the procedure of completion of objects of $R$-linear Grothendieck prestable categories.

\begin{proposition}\label{prop commute geom realiz and inverse limits}
Let $\Ccal$ be a Grothendieck prestable category, separated and such that products in $\Sp(\Ccal)$ are t-exact. Let $\ccal^{\NN^\op}$ be the category of inverse sequences in $\ccal$. Let $(X_{n, \bullet})$ be a semisimplicial object in $\ccal^{\NN^\op}$, where here we denote by $n$ the sequence index and by $\bullet$  the semisimplicial direction. Assume that for every $n, m \geq 0$ the map $X_{n+1, m} \rightarrow X_{n, m}$ induces an epimorphism on $H_0$. Then the limit functor  $\ccal^{\NN^\op} \rightarrow \ccal$ preserves the colimit of $(X_{n, \bullet})$.
\end{proposition}
\begin{proof}
We wish to show that the map $\xi: | \lim X_{n, \bullet} | \rightarrow \lim |X_{n, \bullet}|$ is an isomorphism. Since $\ccal$ is separated we may reduce to showing that $\xi$ induces an isomorphism on homologies. Fix $t \geq 0$. We will show that $\xi$ induces an isomorphism on $H_s$ for all $s < t-1$. 

Let $\Delta_{\text{inj}}$ be the wide subcategory of $\Delta$ on the injective maps, and let $\Delta_{\text{inj}, \leq t}$ be the full subcategory of $\Delta_{\text{inj}}$ on the simplices of dimension at most $t$. We have a commutative square
\[
\begin{tikzcd}
\colim_{\Delta^\op_{\text{inj}, \leq t}} \lim X_{n, \bullet} \arrow{r}{\xi_t} \arrow{d}{\nu_1} & \arrow{d}{\nu_2} \lim \colim_{\Delta^\op_{\text{inj}, \leq t}} X_{n,\bullet}\\
{| \lim X_{n, \bullet} |} \arrow{r}{\xi}& \lim |X_{n, \bullet}|.
\end{tikzcd}
\]
Since the inclusion $\Delta_{\text{inj}, \leq t} \rightarrow \Delta_{\text{inj}}$ is $t$-initial, we have that $\nu_1$ is an isomorphism on $H_s$ for all $s < t$. Furthermore, $\nu_2$ is an inverse limit of maps with that property, and since products in $\Sp(\ccal)$ are t-exact we have that $\nu_2$ is an isomorphism on $H_s$ for all $s < t-1$. To prove our claim it now suffices to show that $\xi_t$ is an isomorphism.

Our assumptions imply that the maps $ \colim_{\Delta^\op_{\text{inj}, \leq t}} X_{n+1,\bullet} \rightarrow  \colim_{\Delta^\op_{\text{inj}, \leq t}} X_{n,\bullet}$ induce epimorphisms on $H_0$. It follows from this together with the fact that products in $\Sp(\ccal)$ are t-exact that the limits $\lim X_{n,\bullet}$ and $\lim \colim_{\Delta^\op_{\text{inj}, \leq t}} X_{n,\bullet}$ are preserved  by the inclusion into $\Sp(\Ccal)$. The fact that $\xi_t$ is an isomorphism is now a consequence of the fact that $\Delta^\op_{\text{inj}, \leq t}$ is a finite category.
\end{proof}

\begin{corollary}\label{coro tensor preserves}
Let $R$ be a connective $E_\infty$-ring and let $\Ccal$ be an $R$-linear Grothendieck prestable category, separated and such that products in $\Sp(\Ccal)$ are t-exact. Let $X_0 \leftarrow X_1 \leftarrow X_2 \leftarrow \ldots$ be a sequence in $\ccal$ whose transitions induce epimorphisms on $H_0$. Let $M$ be an almost finitely presented  connective $R$-module. Then the map $M \otimes (\lim X_n) \rightarrow \lim (M \otimes X_n)$ is an isomorphism. 
\end{corollary}
\begin{proof}
Since $M$ is almost finitely presented there exists a simplicial resolution $M_\bullet$ of $M$ by compact projective $R$-modules. The result follows from proposition \ref{prop commute geom realiz and inverse limits} applied to the simplicial sequence $(M_\bullet \otimes X_n)$.
\end{proof}
 
 \begin{remark}
Let $R_0 \leftarrow R_1 \leftarrow R_2 \leftarrow \ldots $ be a sequence of connective $E_\infty$-rings and let $R$ be a connective $E_\infty$-ring equipped with a map $R \rightarrow \lim R_n$. Let $\Ccal$ be an $R$-linear Grothendieck prestable category. Then we have a sequence of $R$-linear Grothendieck prestable categories
\[
\Ccal \rightarrow \ldots \Ccal \otimes_R R_2 \rightarrow \Ccal \otimes_R R_1 \rightarrow \Ccal \otimes_R R_0
\]
which induces an $R$-linear functor $p: \Ccal \rightarrow \lim \Ccal \otimes_R R_n$. We think about objects in $\lim \Ccal \otimes_R R_n$ as compatible sequences $(X_n)$ with $X_n$ in $\ccal \otimes_R R_n$ for all $n \geq 0$. The functor $p$ has a right adjoint $p^R:  \lim \Ccal \otimes_R R_n \rightarrow \Ccal$ that sends a sequence $(X_n)$ to $\lim X_n$ (where here we regard each $X_n$ as an object of $\ccal$ via restriction of scalars). Note that in general $p^R$ does not preserve colimits and does not commute with the action of $\Mod_R^\cn$.
\end{remark}

\begin{corollary}\label{coro completes right adjoint 1}
Let $R_0 \leftarrow R_1 \leftarrow R_2 \leftarrow \ldots $ be a sequence of connective $E_\infty$-rings and let $R$ be a connective $E_\infty$-ring equipped with a map $R \rightarrow \lim R_n$. Assume that for every $n \geq 0$ the transition $R_{n+1} \rightarrow R_n$ induces an epimorphism on $\pi_0$. Let $\Ccal$ be an $R$-linear Grothendieck prestable category, separated and such that products in $\Sp(\Ccal)$ are t-exact. Then the right adjoint to the functor $p: \Ccal \rightarrow \lim \Ccal \otimes_R  R_n$ preserves geometric realizations and commutes with the action of the full subcategory of $\Mod_R^\cn$ on the almost finitely presented connective $R$-modules.
\end{corollary}
\begin{proof}
This is a direct consequence of proposition \ref{prop commute geom realiz and inverse limits} and corollary \ref{coro tensor preserves}.
\end{proof}

\begin{proposition}\label{prop completes ffff}
Let $R_0 \leftarrow R_1 \leftarrow R_2 \leftarrow \ldots $ be a sequence of connective $E_\infty$-rings and let $R$ be a connective $E_\infty$-ring equipped with a map $R \rightarrow \lim R_n$.  Assume the following:
\begin{itemize}
\item For every $n \geq 0$ the transition $R_{n+1} \rightarrow R_n$ induces an epimorphism on $\pi_0$.
\item The map from $R$ to the pro-$E_\infty$-ring defined by the sequence $(R_n)$ is an epimorphism in the category of pro-$E_\infty$-rings.
\item $R_n$ is almost finitely presented as an $R$-module for all $n$.
\end{itemize}
 Let $\ccal$ be an $R$-linear Grothendieck prestable category, separated and such that products in $\Sp(\Ccal)$ are t-exact. Then the right adjoint to the functor $p: \ccal \rightarrow \lim \ccal \otimes_R R_n$ is fully faithful.
\end{proposition}
\begin{proof}
We will prove the lemma by showing that the counit of the adjunction is an isomorphism. Let $(X_n)$ be an object of $\lim \ccal \otimes_R R_n$. Our goal is to prove that for every $s \geq 0$ the map $(\lim X_n) \otimes_R R_s \rightarrow X_s$ is an isomorphism. Our assumptions imply that $R_s$ is the limit  in the category of pro-$E_\infty$-rings of the sequence $R_s \otimes_R R_n$. It follows that the pro-object associated to the sequence $(R_s \otimes_R R_n)$ is equivalent to the constant pro-object on $R_s$, and therefore the projection $q: \ccal \otimes_R R_s \rightarrow \lim \ccal \otimes_R R_n \otimes_{R} R_s$ is an isomorphism. 

Consider now the object $(X_n \otimes_{R} R_s)$ in $\lim \ccal \otimes_R R_n \otimes_{R} R_s$. The fact that $q$ is an isomorphism implies that  the map $(\lim (X_n \otimes_{R} R_s)) \otimes_{R_s} (R_s \otimes_R R_s) \rightarrow X_s \otimes_{R} R_s$ is an isomorphism.  Since $R_s$ is almost finitely presented as an $R$-module there exists a simplicial $R$-module with colimit $R_s$  which is levelwise compact projective. Combining this with corollary \ref{coro tensor preserves} we obtain an equivalence $(\lim X_n) \otimes_{R} R_s = \lim (X_n \otimes_R R_s)$, so it follows that the map $(\lim X_n) \otimes_R (R_s \otimes_R R_s) \rightarrow X_s \otimes_R R_s$ is an isomorphism. Tensoring with $R_s$ over $R_s \otimes_R R_s$ we conclude that the map $(\lim X_n) \otimes R_s \rightarrow X_s$ is an isomorphism, as desired.
\end{proof}

\begin{proposition}\label{prop factors through pi0}
Let $R$ be a commutative ring and $x_1, \ldots, x_t$ be a finite sequence of elements of $R$. Consider for each $n \geq 1$ the commutative ring spectrum 
\[
R_n = R \otimes_{\ZZ[x_1, \ldots, x_t]}  \ZZ[x_1, \ldots, x_t]/(x_1^n,x_2^n \ldots, x_t^n).
\] Then for each $n \geq 1$ the canonical map $f_{2n, n}: R_{2n} \rightarrow R_n$ factors through $\pi_0(R_{2n})$.
\end{proposition}
\begin{proof}
We argue by induction on $t$. We consider first the case when $t = 1$, so that the sequence consists of a single element $x$. Let $F$ be the free commutative ring spectrum on $\Sigma (\pi_1(R_{2n}))$. Then the map $R_{2n} \rightarrow \pi_0(R_{2n})$ factors as a composition
\[
R_{2n} \rightarrow R_{2n}\otimes_{F} \SS \rightarrow \pi_0(R_{2n})
\]
where here $\SS$ denotes the sphere spectrum and the map $F \rightarrow \SS$ is zero on generators. The second map above is $2$-connective, and since $R_n$ is $1$-truncated it is enough to prove that $f_{{2n}, n}$ factors through $R_{2n}\otimes_{F} \SS$. This amounts to showing that $f_{2n, n}$ induces the zero map on $\pi_1$. Unwinding the definitions, we have that $\pi_1(f_{2n, n})$ is given by the map
\[
\Ker(x^{2n}: R \rightarrow R) \xrightarrow{x^n} \Ker(x^{n}: R \rightarrow R) 
\]
which is zero, as desired.

Assume now that $t > 1$ and that the proposition is known for all $s < t$. For each pair of positive integers $n, m$ let 
\[
R_{n, m} =  R \otimes_{\ZZ[x_1, \ldots, x_t]}  \ZZ[x_1, \ldots, x_t]/(x_1^n, x_2^n \ldots, x_{t-1}^n, x_t^m)
\]
and
\[
R'_{n, m} = R/(x_1^n, \ldots, x_{t-1}^n) \otimes_{\ZZ[x_t]} \ZZ[x_t]/(x_t^m).
\]
 We have a commutative square of commutative ring spectra
\[
\begin{tikzcd}
R_{2n, 2n} \arrow{d}{} \arrow{r}{} & \arrow{d}{} R_{n, 2n}  \\
R_{2n, n} \arrow{r}{} & R_{n, n} .
\end{tikzcd}
\]
Our task is to show that the diagonal map factors through $\pi_0(R_{2n, 2n})$.  Applying our inductive hypothesis to the sequence $x_1, \ldots, x_{t-1}$ we obtain a commutative diagram
\[
\begin{tikzcd}
R_{2n, 2n} \arrow{d}{} \arrow{r}{} & R'_{2n, 2n} \arrow{d}{} \arrow{r}{} &  \arrow{d}{} R_{n, 2n}  \\
R_{2n, n} \arrow{r}{} & R'_{2n, n} \arrow{r}{} & R_{n, n} .
\end{tikzcd}
\]
Our result now follows from another application of the inductive hypothesis to show that the map $R'_{2n, 2n} \rightarrow R'_{2n, n}$ factors through $\pi_0(R'_{2n, 2n}) = \pi_0(R_{2n, 2n})$.
\end{proof}

\begin{corollary}\label{coro fundamental completion}
Let $R$ be a commutative ring and let $I$ be an ideal in $R$ generated by elements $x_1, \ldots, x_t$. Then the pro-$E_\infty$-ring spectrum defined by the sequence $(R_n)$ from proposition \ref{prop factors through pi0} is equivalent to the one defined by the sequence $(R/I^n)$.
\end{corollary}

\begin{remark}\label{remark base change pro object}
Let $R \rightarrow S$ be a morphism of commutative rings and let $I$ be a finitely generated ideal of $R$. Then it follows from corollary \ref{coro fundamental completion} that the pro-$E_\infty$-ring spectrum defined by the sequence $(S \otimes_R R/I^n)$ is equivalent to the one defined by the sequence $(S/(SI)^n)$. In other words, this assignment of pro-$E_\infty$-ring spectra to pairs of a ring and a finitely generated ideal is preserved by base change.

Specializing this to the case $S = R/I^m$ for some $m \geq 1$ shows that the pro-$E_\infty$-ring defined by the sequence $(R/I^m \otimes_R R/I^n)$ is equivalent to $R/I^m$. It follows from this that the morphism $R \rightarrow \lim R/I^n$ induces an epimorphism of pro-$E_\infty$-ring spectra. In particular, if we assume that $R/I^n$ is an almost finitely presented $R$-module for all $n$ (which for instance holds whenever $R$ is Noetherian) then the sequence $R_n = R/I^n$ satisfies the conditions of proposition \ref{prop completes ffff}.
\end{remark}

If $R$ is a Noetherian commutative ring and $I \subseteq R$ is an ideal, then it follows from corollary \ref{coro completes right adjoint 1} that $M \otimes_R^L R^\wedge_I = \lim M \otimes^L_R R/I^n$ for every finitely generated $R$-module $M$. Combined with the fact that $R^\wedge_I$ is a flat $R$-module this implies that $\lim \Tor^R_s(M, R/I^n) = 0$ for all $s \geq 1$. A stronger claim is in fact true: the pro-$R$-module defined by the sequence $\Tor^R_s(M, R/I^n)$ vanishes for all $s \geq 1$. The following proposition generalizes this fact:
 
 \begin{proposition}\label{prop pro object 0 truncated}
 Let $R$ be a Noetherian commutative ring and $I\subseteq R$ be an ideal. Let $\ccal$ be an $R$-linear Grothendieck abelian category. Assume that $\ccal$ is generated by compact projective objects, proper over $R$, and that the set of points $x$ in $\Spec(R)$ such that $\ccal \otimes_R \kappa(x)$ is semisimple is dense in the Zariski topology. Let $X$ be a finitely generated object of $\ccal$. Then for every $s \geq 1$ the pro-object of $\ccal$ defined by the sequence $(\Tor_s(R/I^n, X))$ vanishes.
  \end{proposition}
\begin{proof}
Pick an epimorphism $Y \rightarrow X$ with $Y$ compact projective, and let $\Ccal'$ be the full subcategory of $\ccal$ generated by $Y$. Recall from proposition \ref{prop subcat closed under passage to subobjects} that $\ccal'$ is closed under passage to subobjects in $\ccal$. In particular the inclusion $\ccal' \rightarrow \ccal$ is left exact, so it commutes with $\Tor$. Replacing $\ccal$ with $\ccal'$ we may now assume that $Y$ is a compact projective generator for $\ccal$. In this case it is sufficient to prove that for every $s \geq 1$ the pro-$R$-module defined by the sequence 
\[
\Hom^\enh_\ccal(Y, \Tor_s(R/I^n, X)) = \Tor^R_s(R/I^n, \Hom^\enh_\ccal(Y, X))
\]
vanishes.  Replacing $\ccal$ by $\Mod_R^\heartsuit$ and $X$ by $\Hom^\enh_\ccal(Y, X)$ we may now reduce to the case when $\ccal = \Mod_R^\heartsuit$. The property that our pro-object vanishes is preserved by extensions in $X$, so it is enough to consider the case when $X = R/J$ for some ideal $J$. In this case the claim follows from remark \ref{remark base change pro object}.
\end{proof} 

 
\subsection{Fully dualizable \texorpdfstring{$(1,1)$}{(1,1)}-categories}\label{subsection fully dualizable 11}

The following is our main theorem concerning fully dualizable $R$-linear $(1,1)$-categories:

\begin{theorem}\label{theo abelian con coefficients}
Let $R$ be a G-ring and let $\acal$ be a symmetric monoidal $R$-linear Grothendieck abelian category. Assume the following:
\begin{itemize}
\item $\acal$ is rigid and generated by compact projective objects.
\item $\acal$ is proper over $R$.
\item The set of points $x$ in $\Spec(R)$ such that $\acal \otimes_R \kappa(x)$ is semisimple is dense in the Zariski topology.
\end{itemize}
Let $\ccal$ be a fully dualizable $\acal$-linear cocomplete category. Then there exists a faithfully flat \'etale morphism of commutative rings $R \rightarrow R'$ and a smooth and proper algebra $A$ in $\acal \otimes_R R'$  such that $\Ccal \otimes_{R} R'$ is equivalent to $\LMod_A(\acal \otimes_R R')$ as an $\acal \otimes_R R'$-linear category.
\end{theorem}

Before going into the proof, we record a few consequences.

\begin{corollary}\label{coro etale locally trivial}
Let $R$ be a  G-ring and let $\Ccal$ be an invertible $\Mod^\heartsuit_R$-linear cocomplete category. Then there exists a faithfully flat \'etale morphism of  commutative rings  $R \rightarrow R'$ such that $\Ccal \otimes_{R} R'$ is equivalent to $\Mod^\heartsuit_{R'}$ as an $R'$-linear category.
\end{corollary}
\begin{proof}
By theorem \ref{theo abelian con coefficients} we may after passing to a faithfully flat \'etale $R$-algebra assume that $\ccal$ is the category of left modules over an Azumaya $R$-algebra $A$. The corollary now follows from the fact that Azumaya $R$-algebras are \'etale locally Morita equivalent to the unit (see corollary \ref{coro smooth and proper locally trivial}).
\end{proof}

\begin{notation}\label{notation twist mod}
Let $\mathcal{L}: \operatorname{CRing}\rightarrow \Spc$ be the functor that associates to each commutative ring $R$ the space of $R$-linear Grothendieck abelian categories which are \'etale locally on $\Spec(R)$ equivalent to $\Mod_R^\heartsuit$. This is a sheaf for the \'etale topology, with a pointing given by the object $\Mod_\ZZ^\heartsuit$ in $\mathcal{L}(\ZZ)$. The resulting pointed object is equivalent to $B^2\GG_m$. If $\mathcal{G}$ is a $\GG_m$-gerbe  on $\Spec(R)$ we denote by $\Mod^\heartsuit_{R, \mathcal{G}}$ the induced point in $\mathcal{L}(\Spec(R))$.
\end{notation}

\begin{corollary}\label{coro exists gerbe invertible} 
Let $R$ be a  G-ring and let $\Ccal$ be an invertible $\Mod^\heartsuit_R$-linear cocomplete category. Then there exists a $\GG_m$-gerbe $\mathcal{G}$ on $\Spec(R)$ and an $R$-linear equivalence ${\ccal = \Mod^\heartsuit_{R, \mathcal{G}}}$
\end{corollary}
\begin{proof}
This is a direct consequence of corollary  \ref{coro etale locally trivial} and the definitions.
\end{proof}

\begin{corollary}\label{coro classify fully dualizables abelian} 
Let $R$ be a G-ring and let $\Ccal$ be a fully dualizable $\Mod^\heartsuit_R$-linear cocomplete category. Then there exists a finite \'etale $R$-algebra $\tilde{R}$, a $\GG_m$-gerbe $\mathcal{G}$ on $\Spec(\tilde{R})$ and an $R$-linear equivalence $\ccal = \Mod^\heartsuit_{\tilde{R}, \mathcal{G}} $.
\end{corollary}
\begin{proof}
Let $\tilde{R} = \End^\enh_{\Funct_R(\ccal, \ccal)}(\id_\ccal)$ be the $R$-linear center of $\ccal$. Then $\tilde{R}$ is a commutative $R$-algebra, and $\ccal$ may be equipped with a canonical $\tilde{R}$-linear structure. Since $\ccal$ is dualizable the formation of $\Funct_{R}(\ccal, \ccal)$ commutes with base change, and since $\id_\ccal$ is compact projective the formation of its endomorphisms commutes with base change as well. The fact that $\ccal$ is fully dualizable implies that $\tilde{R}$ is a dualizable $R$-module. The assertion that $\tilde{R}$ is \'etale may then be reduced by base change to the case where $R = k$ is an algebraically closed field. In this case theorem \ref{theo abelian con coefficients} implies that $\ccal$ is the category of left modules over a finite product of copies of $k$, which has  \'etale center.

It remains to show that $\ccal =  \smash{\Mod^\heartsuit_{\tilde{R}, \mathcal{G}}}$ for some gerbe $\mathcal{G}$ on $\Spec(\tilde{R})$.  By corollary \ref{coro exists gerbe invertible} it suffices to show that $\ccal$ is invertible over $\tilde{R}$.  This can be checked \'etale locally by virtue of the fact that $\ccal$ is a Grothendieck abelian category (corollary \ref{coro properties dualizable 11}). By an application of theorem \ref{theo abelian con coefficients} we may reduce to the case where  $\ccal$ is the category of left modules over a smooth and proper algebra $A$ in $\Mod_R^\heartsuit$. Applying corollary \ref{coro smooth and proper locally trivial} we may further reduce to the case when $A$ is a finite product of copies of $R$, in which case the assertion is clear.
\end{proof}

We devote the remainder of this section to the proof of theorem \ref{theo abelian con coefficients}.

\begin{definition}\label{definition elementary extension}
Let $f: R \rightarrow S$ be a morphism of local Artinian commutative rings, and let $\mathfrak{m}$ be the maximal ideal of $R$. We say that $f$ is an elementary extension if it is surjective and $\Ker(f)$ is isomorphic as an $R$-module to $R/\mathfrak{m}$.
\end{definition}

\begin{remark}
Let $f: R \rightarrow S$ be a morphism of local Artinian commutative rings and let $\mathfrak{m}$ be the maximal ideal of $R$. Then $f$ is an elementary extension if and only if it induces an isomorphism $S = R/Rx$ for some nonzero nonunit element $x$ in $R$ such that $x\mathfrak{m} = 0$.
\end{remark}

\begin{lemma}\label{lemma quotient is sequence of elementaries}
Let $f: R \rightarrow S$ be a surjective morphism of local Artinian commutative rings. Then there exists a sequence of elementary extensions of local Artinian commutative rings $R = R_0 \rightarrow R_1 \rightarrow R_2 \ldots \rightarrow R_n = S$.
\end{lemma}
\begin{proof}
Since $R$ is Noetherian any sequence of quotients of $R$ stabilizes. To prove the lemma it will therefore suffice to show that if we have a sequence of local Artinian commutative rings $R = R_0 \rightarrow R_1 \rightarrow \rightarrow R_2 \rightarrow \ldots \rightarrow R_k \rightarrow S$ such that $R_i \rightarrow R_{i+1}$ is an elementary extension for all $0 \leq i < k$, then either $R_k = S$ or there exists a factorization $R_k \rightarrow R_{k+1} \rightarrow S$ where the first map is an elementary extension. Replacing $R$ with $R_k$ we may reduce to showing that if $f$ is not an isomorphism then we have a factorization $R \rightarrow T \rightarrow S$ where the first map is an elementary extension. In this case $\Ker(f)$ is nonzero and since the only prime ideal of $R$ is $\mathfrak{m}$ and $R$ is Noetherian we see that $\mathfrak{m}$ is an associated prime of $\Ker(f)$. Let $x$ in $\Ker(f)$ be an element with annihilator $\mathfrak{m}$. Then the proof finishes by setting $T = R/Rx$.
\end{proof}

\begin{remark}\label{remark properties CS}
Let $f: R \rightarrow S$ be a surjective map of commutative rings, and let $\Ccal$ be an $R$-linear Grothendieck abelian category. Let $\Ccal_S = \Ccal \otimes_R S$. Then the extension of scalars functor $\Ccal \rightarrow \Ccal_S$ admits a fully faithful right adjoint $\iota$. The unit of this localization is given by tensoring with $f$, so that an object $X$ in $\Ccal$ belongs to the image of $\iota$ if and only if the map $R \otimes X \rightarrow S \otimes X$ is an isomorphism. We will often identify $\ccal_S$ with its image under $\iota$.

 For every object $X$ in $\Ccal$ we have an exact sequence
\[
\Ker(f) \otimes X \rightarrow R \otimes X \rightarrow S \otimes X \rightarrow 0
\]
so that $X$ belongs to $\ccal_S$ if and only if the first map above is zero. It follows that $x_\alpha$ is a set of generators for the $R$-module $\Ker(f)$ then $X$ belongs to $\ccal_S$ if and only if $x_\alpha: X \rightarrow X$ is zero for all $\alpha$.

It follows from the above description that $\ccal_S$ is closed under limits, colimits and passage to subobjects inside $\ccal$ (however note that it is not closed under passage to extensions in general). Furthermore, $\iota$ admits a right adjoint which sends each object $X$ to the intersection over all $\alpha$ of the kernel of $x_\alpha: X \rightarrow X$. In particular, if $\Ker(f)$ is finitely generated then the right adjoint to $\iota$ preserves filtered colimits, which implies that $\iota$ sends compact objects to compact objects.
\end{remark}

\begin{lemma}\label{lemma exist filtration}
Let $R$ be a local Artinian commutative ring with residue field $k$. Let $\ccal$ be an $R$-linear Grothendieck abelian category and let $X$ be an object of $\ccal$. Then there exists a finite filtration $0 = X_0 \subseteq X_1 \subseteq \ldots \subseteq X_t = X$ such that $X_{i+1} / X_i$ belongs to $\Ccal \otimes_R k$ for all $i$.
\end{lemma}
\begin{proof}
By lemma \ref{lemma quotient is sequence of elementaries} we may pick a sequence of elementary extensions of local Artinian commutative rings $R = R_0 \rightarrow R_1 \rightarrow \ldots \rightarrow R_n = k$. We will prove that the lemma holds for the rings $R_s$ by reverse induction on $s$. The case $s = n$ is clear. Assume now that $s < n$ and that the lemma is known for $R_{s+1}$. Let $x$ be a generator for the kernel of $R_s \rightarrow R_{s+1}$. Then we have an exact sequence $0 \rightarrow X' \rightarrow X \rightarrow X'' \rightarrow 0$ where $X' = \Ker(x: X \rightarrow X)$ and $X'' = \operatorname{Im}(x: X \rightarrow X)$. Since $x^2 = 0$ we have that both $X'$ and $X''$ belong to $\ccal \otimes_{R_{s}} R_{s+1}$.  The inductive hypothesis allows us to construct filtrations for $X'$ and $X''$ whose associated graded pieces belong to $\Ccal \otimes_R k$. We now obtain the desired filtration on $X$ by putting together these two filtrations.
\end{proof}

\begin{lemma}\label{lemma construct non split extension}
Let $R$ be a local Artinian commutative ring with residue field $k$.  Let $\Ccal$ be an $R$-linear Grothendieck abelian category such that $\ccal \otimes_R k$ is semisimple, and let $X$ be an object of $\ccal$.  Then one of the following two happen:
\begin{enumerate}[\normalfont (a)]
\item $X$ is projective.
\item There exists a non-split extension of $X$ by a simple object of $\ccal \otimes_R k$.
\end{enumerate}
\end{lemma}
\begin{proof}
Assume that $X$ is not  projective in $\ccal$. We will show that (b) holds. Choose  $M$ in $\Ccal$ such that $\Ext_\ccal^1(X, M) \neq 0$. By lemma \ref{lemma exist filtration} we may pick a filtration $0 = M_0 \subseteq M_1 \subseteq \ldots \subseteq M_t = M$ with $M_{i+1}/M_i$ in $\ccal_k$ for all $i$. Let $j$ be the smallest index such that $\Ext_\ccal^1(M, M_j) \neq 0$. Then $\Ext_\ccal^1(X, M_{j}/M_{j-1}) \neq 0$. Replacing $M$ by $M_{j}/M_{j-1}$ if necessary we may assume that $M$ belongs to $\ccal_k$. 

Since $\ccal$ is semisimple we may write $M$ as a direct sum of a family of simple objects $S_i$. Then $M$ is a direct summand of $\prod_i S_i$, and therefore $\Ext_\ccal^1(X, \prod_i S_i) \neq 0$. Set $W = \prod_i S_i$, and let $Z$ be the product of the family $S_i$ computed in the derived category $\der(\ccal)$. Then $W = \tau_{\geq 0}Z$, so we have an exact sequence 
\[
\Ext^0_{\der(\ccal)}(X, \tau_{\leq -1}Z) \rightarrow \Ext^1_{\der(\ccal)}(X, W) \rightarrow \Ext^1_{\der(\ccal)}(X, Z).
\]
Here the first term vanishes, and since the middle term is nonzero we conclude that the third term is nonzero. This is the same as $\prod_i \Ext_\ccal^1(X, S_i)$, so   we have that $\Ext_\ccal^1(X, S_i) \neq 0$ for some $i$, and therefore a non-split extension of $X$ by $S_i$ exists, as desired.
\end{proof}

\begin{lemma}\label{lemma flat into local artinian}
Let $R$ be a local Artinian commutative ring with residue field $k$ and let $\Ccal$ be an $R$-linear Grothendieck abelian category. Let $X$ be an object of $\ccal$. The following are equivalent:
\begin{enumerate}[\normalfont(1)]
\item $X$ is flat over $R$.
\item The map $\mu: \mathfrak{m} \otimes X \rightarrow R \otimes X = X$ induced from the inclusion $\mathfrak{m} \rightarrow R$ is a monomorphism.
\item $\Tor_1(k, X) = 0$.
\end{enumerate}
\end{lemma}
\begin{proof}
The fact that (1) implies (2) follows directly from the definitions. The equivalence of (2) and (3) follows  from the fact that we have an exact sequence
\[
0 = \Tor_1(R, X) \rightarrow \Tor_1(k, X) \xrightarrow{} \Tor_0(\mathfrak{m}, X) \rightarrow \Tor_0(R, X).
\]
Assume now that (3) holds. Since the property that $\Tor_1(Y, X) = 0$ is preserved under passage to extensions and filtered colimits and $\Mod^\heartsuit_{R}$ is generated by $k$ under extensions and filtered colimits we see that $\Tor_1(Y, X) = 0$ for all $Y$ in $\Mod^\heartsuit_{R}$. Assume now given a monomorphism $i: Z \rightarrow Z'$ in $\Mod^\heartsuit_{R}$. Then the kernel of $i \otimes \id_X : Z \otimes X \rightarrow Z' \otimes X$ receives an epimorphism from $\Tor_1(Z'/Z, X)$, and is therefore $0$. This proves that (1) holds.
\end{proof}

\begin{lemma}\label{lemma construct flat generator}
Let $f: R \rightarrow S$ be an elementary extension of local Artinian commutative rings with residue field $k$. Let $\Ccal$ be an $R$-linear Grothendieck abelian category such that $\ccal \otimes_R k$ is semisimple. Assume given a non-split extension
\[
0 \rightarrow M \rightarrow U \rightarrow X \rightarrow 0
\]
in $\ccal$, where $M$ is a simple object of $\Ccal \otimes_R k$ and $X$ is a projective object of $\ccal \otimes_R S$, flat over $S$, and such that $X \otimes_R k$ is simple. Then $U$ is projective, flat over $R$, and $U \otimes_R S = X$.
\end{lemma}
\begin{proof}
Let $\mathfrak{m}$ be the maximal ideal of $R$. Set $\Ccal_S = \Ccal \otimes_R S$ and $\ccal_k = \ccal \otimes_R k$.  Fix a generator $x$ for $\Ker(f)$. Since $X$ is projective in $\ccal_S$ we see that $U$ cannot belong to $\ccal_S$, and hence $x: U \rightarrow U$ is nonzero.  Since $x$ acts by zero on $X$ we have that $x: U \rightarrow U$ factors through $M$. Its image is a nonzero subobject of $M$, and since $M$ is simple we conclude that the image of $x: U \rightarrow U$ is equal to $M$. In particular, we claim:
\begin{itemize}
\item [$(\star)$] If  $N$ is an $R$-module on which $x$ acts by zero, the map $N \otimes M \rightarrow N \otimes U$ obtained by tensoring $N$ with the inclusion $M \rightarrow U$ is zero.
\end{itemize}
To see this, note that it is enough to show that $\id \otimes x : N \otimes U \rightarrow N \otimes U$ vanishes, which follows from the fact that this map is equivalent to $x \otimes \id: N \otimes U \rightarrow N \otimes U$.

By lemma \ref{lemma flat into local artinian}, to show that $U$ is flat over $R$ it suffices to show that the map $j: \mathfrak{m} \otimes U \rightarrow R \otimes U = U$ obtained by tensoring the inclusion $\mathfrak{m} \rightarrow R$ with $U$ is a monomorphism. Let $\mathfrak{m}_S = \mathfrak{m}/Rx$. We have a commutative diagram in $\Ccal$ with exact rows as follows:
 \[
 \begin{tikzcd}
& Rx \otimes U \arrow{d}{j_1} \arrow{r}{} & \mathfrak{m} \otimes U \arrow{r}{} \arrow{d}{j} & \mathfrak{m}_S \otimes U \arrow{d}{j_2} \arrow{r}{} & 0 \\
0 \arrow{r}{} & M \arrow{r}{} & U \arrow{r}{} & X \arrow{r}{} & 0
\end{tikzcd}
\]
To show that $j$ is a monomorphism it will suffice to show that $j_1$ and $j_2$ are monomorphisms.

We first consider $j_1$. Our assumptions guarantee that $Rx$ is isomorphic to $k$ as an $R$-module, so that $Rx \otimes U$ belongs to $\Ccal \otimes_R k$. The same holds for $M$. To show that $j_1$ is a monomorphism it will suffice to prove the following two assertions:
\begin{enumerate}[(a)]
\item $j_1$ is nonzero.
\item $Rx \otimes U$ is simple.
\end{enumerate}
We begin by addressing (a). To show that $j_1$ is nonzero it suffices to show that the map $Rx \otimes U \rightarrow R \otimes U = U$ obtained by tensoring the inclusion $Rx \rightarrow R$ with $U$ is nonzero. To do so it suffices to show that the map
\[
U = R \otimes U \xrightarrow{x \otimes \id_U} R \otimes U = U
\]
is nonzero . This is the same as the action of $x$ on $U$, which we have already shown to be nonzero.

It remains to address (b). Consider the exact sequence
\[
Rx \otimes M \rightarrow Rx \otimes U \rightarrow Rx \otimes X \rightarrow 0.
\]
Here $Rx \otimes X = k \otimes X$ is simple by our assumption on $X$. We may thus reduce to proving that the map $Rx \otimes M \rightarrow Rx \otimes U$ is zero. This follows from $(\star)$ since $x$ acts by zero on $Rx$. 

We now show that $j_2$ is a monomorphism. This is the composition of the map $\mu_1: \mathfrak{m}_S \otimes U \rightarrow \mathfrak{m}_S \otimes X$ obtained by tensoring $\mathfrak{m}_S$ with the projection $U \rightarrow X$, and the map $\mu_2: \mathfrak{m}_S \otimes X \rightarrow S \otimes X = X$ obtained by tensoring the inclusion $\mathfrak{m}_S \rightarrow S$ with $X$. The kernel of $\mu_1$ is the image of the map $\mathfrak{m}_S \otimes M \rightarrow \mathfrak{m}_S \otimes U$ obtained by tensoring $\mathfrak{m}_S$ with the inclusion $M \rightarrow U$. It now follows from $(\star)$ that $\mu_1$ is a monomorphism, since $x$ acts by $0$ on $\mathfrak{m}_S$. The map $\mu_2$ is a monomorphism by virtue of our assumption that $X$ is flat over $S$. We conclude that $j_2$ is a monomorphism, as desired.

We now show that $U$ is projective. We will do so by proving that $\Ext^1_\ccal(U, Y) = 0$ for all $Y$ in $\ccal$. By lemma \ref{lemma exist filtration} it suffices to address the case when $Y$ belongs to $\ccal_k$. Then we have
\[
\Ext^1_\ccal(U, Y) = \Ext^1_{\der(\ccal)}(U, Y) = \Ext^1_{\der(\ccal)\otimes_R k}( U \otimes_R k, Y) = \Ext^1_{\ccal \otimes_R k}(U \otimes_R k, Y)
\]
where here we use flatness of $U$ to identify $U \otimes_R^L k$ with $U \otimes_R k$. Recall from our proof of (a) that $U \otimes_R k = X \otimes_R k$. The fact that the above group vanishes is now a consequence of the fact that $X \otimes_R k$ is projective in $\ccal \otimes_R k$.

It remains to prove that $U \otimes_R S = X$. Consider the exact sequence
\[
S \otimes M \rightarrow S \otimes U \rightarrow S \otimes X \rightarrow 0.
\]
It suffices to show that the first map is zero. This follows from an application of $(\star)$, since $x$ acts by zero on $S$. 
\end{proof}

\begin{lemma}\label{lemma defo is compact}
Let $f: R \rightarrow S$ be a  surjective morphism of local Artinian commutative rings and let $\ccal$ be an $R$-linear Grothendieck abelian category. Let $X$ be a projective object of $\ccal$ such that $X \otimes_R S$ is compact in $\ccal \otimes_R S$. Then $X$ is compact in $\ccal$.
\end{lemma}
\begin{proof}
Applying lemma \ref{lemma quotient is sequence of elementaries} we may reduce to the case when $f$ is an elementary extension. Let $x$ be a generator of $\Ker(f)$. Let $Y_\alpha$ be a filtered diagram of objects of $\ccal$. For each $\alpha$ let $Y_\alpha' = \Ker(x: Y_\alpha \rightarrow Y_\alpha)$ and $Y_\alpha'' = \operatorname{Im}(x:Y_\alpha \rightarrow Y_\alpha)$. Then we have a commutative diagram of $R$-modules with exact rows
\[
\begin{tikzcd}[column sep = small]
0 \arrow{r}{} & \colim \Hom^\enh_\ccal(X, Y_\alpha') \arrow{r}{} \arrow{d}{} & \colim \Hom^\enh_\ccal(X, Y_\alpha) \arrow{r}{} \arrow{d}{} & \colim \Hom^\enh_\ccal(X, Y_\alpha')  \arrow{r}{} \arrow{d}{} & 0 \\
0 \arrow{r}{} &  \Hom^\enh_\ccal(X, \colim Y_\alpha')  \arrow{r}{} &  \Hom^\enh_\ccal(X,\colim Y_\alpha) \arrow{r}{}  &  \Hom^\enh_\ccal(X, \colim Y_\alpha'') \arrow{r}{} & 0.
\end{tikzcd}
\]
We wish to prove that the middle vertical arrow is an isomorphism. This will follow if we can prove that the other two vertical arrows are isomorphisms. This follows from the fact that $X \otimes_R S$ is compact in $\ccal \otimes_R S$, since $Y_\alpha'$ and $Y_\alpha''$ belong to $\ccal \otimes_R S$.
\end{proof}

\begin{lemma}\label{lemma deform generators abelian}
Let $f: R \rightarrow S$ be surjective map of local Artinian commutative rings with residue field $k$. Let $\acal$ be a symmetric monoidal $R$-linear Grothendieck abelian category. Assume that $\acal$ is rigid, generated by compact projective objects, proper over $R$, and that $\acal \otimes_R k$ is semisimple. Let $\Ccal$ be a fully dualizable $\acal$-linear Grothendieck abelian category. Assume given a finite family $\lbrace X_t \rbrace$ of objects of $\ccal \otimes_R S$ with the following properties:
\begin{itemize}
\item $X_t$ is compact projective and flat over $S$ for all $t$.
\item $X_t \otimes_S k$ is simple for all $t$.
\item $\bigoplus X_t$ is an $\acal \otimes_R S$-generator for $\ccal \otimes_R S$.
\end{itemize}
Then there exist a family of objects $X'_t$ of $\ccal$ with the following properties:
\begin{itemize}
\item $X'_t \otimes_R S = X_t$ for all $t$.
\item $X'_t$ is compact projective and flat over $R$ for all $t$.
\item  $\bigoplus X'_t$ is an $\acal$-generator for $\ccal$.
\end{itemize}
\end{lemma}
\begin{proof}
Applying lemma \ref{lemma quotient is sequence of elementaries} we may reduce to the case when $f$ is an elementary extension. Let $\ccal_S$, $\ccal_k$, $\acal_S$ and $\acal_k$ the base changes of $\ccal$ and $\acal$. By theorem \ref{theorem abelian} combined with remark \ref{remark separable are semisimple} we see that $\ccal_k$ is semisimple. Define for each $t$ an object $X'_t$ of $\ccal$, as follows:
\begin{itemize}
\item If $X_t$ is projective in $\ccal$ then $X'_t = X_t$.
\item If $X_t$ is not projective in $\ccal$, then using lemma \ref{lemma construct non split extension} construct a non-split extension $0 \rightarrow M_t \rightarrow U_t \rightarrow X_t \rightarrow 0$ in $\ccal$ where $M_t$ is simple in $\ccal_t$, and set $X'_t = U_t$. By lemma \ref{lemma construct flat generator} we have that $X'_t$ is  flat over $R$, projective, and satisfies $X'_t \otimes_R S = X_t$.
\end{itemize}

Lemma \ref{lemma defo is compact} implies that $X'_t$ is in fact compact projective for every $t$. We claim that  $X' = \bigoplus X'_t$ is an $\acal$-generator for $\ccal$. Let $Y$ be an arbitrary object of $\ccal$. By lemma \ref{lemma exist filtration} we may pick a filtration $0 = Y_0 \subseteq Y_1 \subseteq \ldots \subseteq Y_n = Y$ with successive quotients in $\ccal_k$. Pick for each $i$ an object $Z_i$ in $\acal_k$ and an epimorphism $Z_i \otimes (X' \otimes_R k) \rightarrow Y_{i+1}/Y_i$ in $\ccal_k$ (which exists since $X' \otimes_R S = \bigoplus X_t$ is an $\acal_S$-generator for $\ccal_S$). For each $i$ pick an epimorphism $Z'_i \rightarrow Z_i$ in $\acal$ with $Z'_i$ projective. Since $X'$ is projective the induced maps $Z'_i \otimes X' \rightarrow Y_{i+1}/Y_i$ may be lifted to a sequence of morphisms $Z'_i \otimes X' \rightarrow Y$. The resulting map $\bigoplus Z'_i \otimes X'\rightarrow Y$ is then an epimorphism. Since $Y$ was arbitrary we conclude that $X'$ is an $\acal$-generator for $\ccal$, as desired.

To prove the proposition it now suffices to show that for all $t$ the object $X_t$ is not projective in $\ccal$ (so that $X'_t = U_t$ is flat over $R$ for all $t$).  Assume for the sake of contradiction that there is an index $t$ such that $X_t$ is projective in $\ccal$. Since $\ccal$ admits a compact projective $\acal$-generator, it is generated by compact projective objects. Let $A$ be the algebra of endomorphisms of $X_t$ in $\acal$. Applying remark \ref{remark properness when comp gen} we see that $A$ is compact projective as an object of $\acal$, and therefore the $R$-module $\Hom^\enh_\acal(1_\acal, A)$ is compact projective.  This coincides with the $R$-module of endomorphisms of $X_t$, which is an $S$-module since $X_t$ belongs to $\ccal_S$. It follows that $\Hom^\enh_\acal(1_\acal, A) = 0$ and hence $X_t = 0$. This contradicts the fact that $X_t \otimes_S k$ is simple (and in particular nontrivial).
\end{proof}

\begin{lemma}\label{lemma compactness y completion}
Let $R$  complete local Noetherian commutative ring with maximal ideal $\mathfrak{m}$ and residue field $k$. Let $\ccal$ be an $R$-linear Grothendieck abelian category, generated by compact projective objects and proper over $R$. Let $X$ be an object of $\ccal$.
\begin{enumerate}[\normalfont(1)]
\item If $X$ is compact projective then $X = \lim X \otimes_R R/\mathfrak{m}^n$.
\item If $X = \lim X \otimes^L_R R_n$ and $X \otimes^L_R k$ is compact projective in $\der(\ccal)_{\geq 0} \otimes_R k$, then $X$ is compact projective.
\end{enumerate}
\end{lemma}
\begin{proof}
We first prove part (1). Set $R_n = R/\mathfrak{m}^n$ for all $n \geq 1$.  Let $\eta: X \rightarrow \lim X \otimes_R R_n$ be the canonical map. We wish to show that $\eta$ is an isomorphism. It suffices for this to prove that $\Hom^\enh_\ccal(Y, \eta)$ is an isomorphism for all compact projective objects $Y$ in $\ccal$. This is equivalent to the canonical map
\[
\Hom^\enh_\ccal(Y, X) \rightarrow \lim \Hom^\enh_{\ccal \otimes_R R_n}(Y \otimes_R R_n, X \otimes_R R_n)
\]
which is, in turn, equivalent to the map
\[
\Hom^\enh_\ccal(Y, X) \rightarrow \lim \Hom^\enh_{\ccal}(Y, X) \otimes_R R_n.
\]
The above is an equivalence since $\Hom_{\Mcal}^\enh(Y, X)$ is a dualizable $R$-module.

We now prove part (2).  By  proposition \ref{prop extend compact projectives} we may pick a compact projective object $X'$ in $\ccal$ whose image in $\ccal \otimes_R k$ is isomorphic to $X \otimes_R k$.  The fact that $X'$ is projective allows us to lift this isomorphism to a morphism $f: X' \rightarrow X$ in $\ccal$. Let $Z$ be the cofiber of $f$ inside $\der(\ccal)_{\geq 0}$, and note that $Z \otimes^L_R k = 0$. It follows from this that $Z \otimes^L_R R_n = 0$ for all $n \geq 1$. Combining this with an application of part (1) to $X'$, we see that $\Ext^1_{\der(\ccal)}(Z, X') = 0$. Hence $Z$ is a retract of  $X$, which implies $Z = \lim Z \otimes^L_R R_n = 0$. We conclude that $f$ is an isomorphism, and therefore $X$ is compact projective, as desired.
\end{proof}

\begin{lemma}\label{lemma deal with completes classical}
Let $R$ be a complete local Noetherian ring with residue field $k$ and let $\acal$ be a symmetric monoidal $R$-linear Grothendieck abelian category. Assume that $\acal$ is rigid, generated by compact projective objects, proper over $R$, and that $\acal \otimes_R k$ is semisimple. Let $\ccal$ be a fully dualizable $\acal$-linear Grothendieck abelian category. Then $\ccal$ admits a compact projective $\acal$-generator.
\end{lemma}
\begin{proof}
Let $\mathfrak{m}$ be the maximal ideal of $R$. For each $n \geq 1$ set $R_n = R/\mathfrak{m}^n$, $\acal_n = \acal \otimes_R R_n$ and $\Ccal_n = \Ccal \otimes_R R_n$. By theorem \ref{theorem abelian} combined with remark \ref{remark separable are semisimple} we see that $\ccal_1$ is semisimple and admits a compact projective $\acal_1$-generator $X_1$. Since $X_1$ is a finite sum of simple objects we may apply lemma \ref{lemma deform generators abelian} inductively to find a compatible sequence of compact projective $\acal_n$-generators $X_n$ in $\ccal_n$, flat over $R_n$. 

Set $X = \lim X_n$.  Embed $\ccal$ inside $\der(\ccal)_{\geq 0}$, and note that we have for each $n$ an equivalence $(\der(\ccal)_{\geq 0} \otimes_R R_n)^\heartsuit = \ccal_n$. Since $X_n$ is flat over $R_n$ we see that its image inside $\der(\ccal)_{\geq 0} \otimes_R R_n$ is also flat over $R_n$. Hence the sequence $(X_n)$ defines an object of $\lim \der(\ccal)_{\geq 0}  \otimes_R R_n$. We may identify $X$ with the image of $(X_n)$ under the right adjoint to the projection $p: \der(\ccal)_{\geq 0} \rightarrow \lim \der(\ccal)_{\geq 0}  \otimes_R R_n$. Applying corollary \ref{coro properties dualizable 11} we see that products in $\ccal$ are exact, and therefore by proposition \ref{prop completes ffff} we have that $X \otimes_R^L R_n = X_n$ for all $n \geq 1$. In particular, $X \otimes_R R_n = X_n$ for all $n$.

We will show that $X$ is a compact projective $\acal$-generator for $\ccal$. We begin by proving that it is an $\acal$-generator. Changing the roles of $\Ccal$ and $\ccal^\vee$ in the previous arguments we see that  $\ccal^\vee$ contains an object $Y$ such that $Y_n = Y \otimes_R R_n$  is a compact projective $\acal_n$-generator for $\ccal^\vee_n$ for all $n$.  Consider the object $X \otimes Y$ inside $\Ccal \otimes_\acal \ccal^\vee$, and note that $(X \otimes Y) \otimes_R R_n = X_n \otimes Y_n$ is a compact projective $\acal_n$-generator for  $\ccal_n \otimes_{\acal_n} \ccal^\vee_n$ for all $n$.

Let $\delta$ be the image of the unit $1_\acal$ under the unit map $\eta: \acal \rightarrow \ccal \otimes_\acal \ccal^\vee$, and note that $\delta$ is compact projective since $\ccal$ is smooth. For each $n \geq 1$ set $\delta_n = \delta \otimes_R R_n$. By lemma \ref{lemma compactness y completion} applied to $\acal$ we have $1_\acal = \lim 1_{\acal} \otimes_R R_n$.  Since $\ccal$ is fully dualizable the map $\eta$ admits a left adjoint, and in particular preserves limits. It follows that $\delta = \lim \delta_n$. Fix an epimorphism $Z \otimes (X_1 \otimes Y_1) \rightarrow  \delta_1$, where $Z$ is a compact projective object of $\acal_1$. By proposition \ref{prop extend compact projectives} we may find a compact projective object $Z'$ in $\acal$ whose image in $\acal_1$ recovers $Z$.  Replacing $Y$ with $Y \oplus Z' \otimes Y$ we may now assume the existence of an epimorphism $\rho_1 : X_1 \otimes Y_1 \rightarrow \delta_1$. Using the fact that $X_n \otimes Y_n$ is projective for all $n$ we may construct inductively a compatible sequence of maps $\rho_n:X_n \otimes Y_n \rightarrow \delta_n$, which in the limit defines a morphism $\rho: X \otimes Y  \rightarrow \delta$. 

We claim that $\rho$ admits a section. To prove this it suffices to show that the induced map 
\[
\rho_*: \Hom^\enh_{\ccal \otimes_\acal \ccal^\vee}(\delta, X \otimes Y ) \rightarrow \Hom^\enh_{\ccal \otimes_\acal \ccal^\vee}(\delta, \delta)
\] is an epimorphism in $\acal$. Using corollary \ref{coro check epi on fiber} we may reduce to proving that $\rho_* \otimes_R k$ is an epimorphism in $\acal_1$. Since $\delta$ is compact projective we have that $\rho_* \otimes_R k$ is equivalent to the map 
\[
(\rho_1)_* : \Hom^\enh_{\ccal_1 \otimes_{\acal_1} \ccal_1^\vee}(\delta_1, X_1 \otimes Y_1 ) \rightarrow \Hom^\enh_{\ccal_1 \otimes_{\acal_1} \ccal_1^\vee}(\delta_1, \delta_1)
\]
which is an epimorphism since $\delta_1$ is projective and $\rho_1$ is an epimorphism.

Let $\ccal'$ be the smallest subcategory of $\ccal$ closed under colimits, the action of $\acal$, and containing $X$. Then the induced map $\ccal' \otimes_\acal \ccal^\vee \rightarrow \ccal \otimes_\acal \ccal^\vee$ is fully faithful. Its image contains $X \otimes Y$ and is closed under retracts, so it also contains $\delta$. It now follows that the identity of $\ccal$ belongs to the image of the functor $\Funct_{\acal}(\ccal, \ccal') \rightarrow \Funct_{\acal}(\ccal, \ccal)$ of composition with the inclusion $i: \ccal' \rightarrow \ccal$. Therefore $i$ admits a section, which implies that $\ccal' = \ccal$. This concludes that proof that $X$ is an $\acal$-generator for $\ccal$.

It remains to show that $X$ is compact projective. Let $A$ be (the opposite of) the endomorphism algebra of $X$ inside $\acal$. By proposition \ref{prop lex localization prestable} the functor of tensoring with $X$ yields an $\acal$-linear left exact localization $q: \LMod_A(\acal) \rightarrow \ccal$, which is an equivalence if and only if $X$ is compact projective. We will finish the proof by showing that $q$ is an equivalence. Since $q$ is left exact it is enough to prove that if $M$ is a $0$-truncated left $A$-module such that $q(M) = 0$ then $M = 0$. Since $\LMod_A(\acal)$ is generated by compact projective objects it is enough to show that if $N$ is a finitely generated subobject of $M$ then $N = 0$. The fact that $q$ is left exact implies that $q(N) = 0$. Replacing $M$ by $N$ we may now reduce to the case when $M$ is finitely generated as a left $A$-module.

For each $n$ let $A_n$ be (the opposite of) the endomorphism algebra of $X_n$ inside $\acal_n$, and observe that we have an equivalence of algebras $A = \lim A_n$ (where here we regard $A_n$ as an algebra in $\acal$ via restriction of scalars). The fact that $X_n$ is compact projective for all $n$ implies that the sequence of algebras $A_n$ is compatible with base changes. Since $\LMod_{A_n}(\acal_n)$ is a fully dualizable $\acal_n$-linear category, we see that $A_n$ is proper, and in particular flat. It follows from this that $(A_n)$ defines an object in $\lim \der(\acal_n)_{\geq 0}$. An application of  proposition \ref{prop completes ffff} now shows that $A \otimes^L_R R_n = A_n$ for all $n \geq 1$. By virtue of part (2) of lemma \ref{lemma compactness y completion} we have that $A$ is compact projective as an object of $\acal$, and therefore  $M$ is finitely generated as an object of $\acal$. 

Consider now the commutative square of categories
\[
\begin{tikzcd}
\LMod_A(\acal) \arrow{d}{q} \arrow{r}{f^*} & \LMod_{A_1}(\acal_1) \arrow{d}{q_1} \\
\Ccal \arrow{r}{f^*} & \Ccal_1
\end{tikzcd}
\]
obtained from $q$ by tensoring with $\Mod_R^\heartsuit \rightarrow \Mod_{R_1}^\heartsuit$. Observe that this is horizontally right adjointable. Denote by $f_*$ the right adjoints to the horizontal arrows. Then $f_*q_1(M \otimes_R k) = q f_* (M \otimes_R k) = q(M)\otimes_R k = 0$, and since restriction of scalars is conservative we have $q_1(M \otimes_R k) = 0$. We may identify the functor $q_1$ with the functor of tensoring with the right $A_1$-module $X_1$ in $\Ccal_1$, which is an equivalence by virtue of the fact that $X_1$ is a compact projective $\acal_1$-generator for $\ccal_1$. It follows that $M \otimes_R k = 0$. The fact that $M = 0$ now follows from an application of proposition  \ref{proposition nakayama}.
\end{proof}

\begin{lemma}\label{lemma deal with filtered colimits classical}
Let $R_\alpha$ be a filtered diagram of commutative rings with colimit $R$. Assume given an index $\alpha_0$, a symmetric monoidal $R_{\alpha_0}$-linear Grothendieck abelian category $\acal_{\alpha_0}$ rigid and generated by compact projective objects, and a smooth $\acal_{\alpha_0}$-linear Grothendieck abelian category $\Ccal_{\alpha_0}$. If $\Ccal_{\alpha_0} \otimes_{R_{\alpha_0}} R$ has a compact projective $\acal_{\alpha_0} \otimes_{R_{\alpha_0}} R$-generator then there exists a transition $\alpha_0 \rightarrow \alpha$ such that $\Ccal_{\alpha_0} \otimes_{R_{\alpha_0}} R_\alpha$ has a compact projective $\acal_{\alpha_0} \otimes_{R_{\alpha_0}} R_\alpha$-generator.
\end{lemma}
\begin{proof}
Without loss of generality we assume that $\alpha_0$ is an initial index. For each $\alpha$ let $\acal_\alpha$ and $\ccal_\alpha$ be the base changes of $\acal_{\alpha_0}$ and $\ccal_{\alpha_0}$ to $R_\alpha$, and similarly denote by $\acal$ and $\ccal$ the base changes to $R$. Observe that we have $\Mod_R^\heartsuit = \colim \Mod_{R_\alpha}^\heartsuit$ in $\Pr^L$ (see the argument from \cite{HA} lemma 7.3.5.12), and therefore $\acal = \colim \acal_\alpha$ and $\ccal = \colim \ccal_\alpha$. 

Since $\ccal_\alpha$ is a dualizable $\acal_\alpha$-linear category for all $\alpha$ we see that the categories $\ccal_\alpha$ are $1$-strongly compactly assembled. Similarly, $\ccal$ is $1$-strongly compactly assembled. Let $X$ be a compact projective $\acal$-generator for $\ccal$. Applying theorem \ref{theorem lift strongly compacts} we may pick an index $\alpha$ and a compact projective lift $X_\alpha$ of $X$ to $\ccal_\alpha$. Restricting our diagram to the undercategory of $\alpha$ if necessary we may without loss of generality assume that $\alpha = \alpha_0$ is an initial index.

For each $\beta$ let $X_\beta = X_\alpha \otimes_{R_\alpha} R_\beta$. We will finish the proof by showing that there exists an index $\beta$ such that $X_\beta $ is an $\acal_\beta$-generator for $\ccal_\beta$. For each $\beta$ let $\dcal_{\beta}$ be the smallest subcategory of $\ccal_\beta$ containing $X_\beta$ and closed under colimits and the action of $\acal_\beta$. Then the action of $\acal_{\beta}$ on $\ccal_\beta$ restricts to give $\dcal_{\beta}$ an $\acal_\beta$-linear structure. Note that for every transition $\beta \rightarrow \beta'$ we have $\dcal_{\beta'} = \dcal_{\beta} \otimes_{R_\beta} R_{\beta'}$. Furthermore, the base change of these to $R$ recovers $\ccal$, and in particular we have $\ccal \otimes_\acal \ccal^\vee = \colim \dcal_\beta \otimes_{\acal_\beta} \ccal_\beta^\vee$. Let $\delta$ be the image of $1_\acal$ under the unit map $\acal \rightarrow \ccal \otimes_\acal \ccal^\vee$, and for each $\beta$ let $\delta_\beta$ be the image of $1_{\acal_\beta}$ under the unit map $\acal_\beta \rightarrow \ccal_\beta \otimes_{\acal_\beta} \ccal_\beta^\vee$. 
 Another application of theorem  \ref{theorem lift strongly compacts} shows that there exists $\beta$ and a compact projective lift $Y_\beta$ of $\delta$ to $ \dcal_\beta \otimes_{\acal_\beta} \ccal_\beta^\vee$. The image of $Y_\beta$ inside $\ccal_\beta \otimes_{\acal_\beta} \ccal_\beta^\vee$ is a compact projective object whose image in $\ccal \otimes_\acal \ccal^\vee$ agrees with the image of $\delta_\beta$. A final application of theorem \ref{theorem lift strongly compacts} shows that (changing $\beta$ if necessary) we may assume that $Y_\beta = \delta_\beta$. Hence  $\dcal_\beta \otimes_{\acal_\beta} \ccal_\beta^\vee$ contains $\delta_\beta$, which implies that the image of the inclusion $\Funct_{\acal_\beta}(\ccal_\beta, \dcal_\beta) \rightarrow \Funct_{\acal_\beta}(\ccal_\beta, \ccal_\beta)$ contains the identity. It follows that the inclusion $\dcal_\beta \rightarrow \ccal_\beta$ has a section, and therefore $\ccal_\beta = \dcal_\beta$. This shows that $X_\beta$ is an $\acal_\beta$-generator for $\ccal_\beta$, as desired.
\end{proof} 
 
  \begin{lemma}\label{lemma deal with products}
Let $\lbrace R_i \rbrace$ be a finite family of connective $E_\infty$-rings with product $R$. Let $\acal$ be a symmetric monoidal $R$-linear Grothendieck abelian category, rigid and generated by compact projective objects. Let $\ccal$ be an $\acal$-linear Grothendieck abelian category. Assume that  $\ccal \otimes_R R_i$ admits a compact projective $\acal\otimes_R R_i$-generator for all $i$.  Then there exists an algebra $A$ in $\acal  $  such that $\Ccal  $ is equivalent to $\LMod_A(\acal  )$ as an $\acal $-linear category.
 \end{lemma}
 \begin{proof}
 By proposition \ref{prop lex localization prestable} it suffices to show that $\Ccal $ admits a compact projective $\acal  $-generator. Zariski descent for Grothendieck abelian categories implies that the $\acal  $-linear functors $p^*_i: \Ccal  \rightarrow \Ccal \otimes_R R_i$ form a product diagram. For each $i$ the functor $p_i^*$ admits an $\acal  $-linear right adjoint $(p_i)_*$, which is also left adjoint to $p_i^*$. Pick for each $i$  a compact projective $\acal\otimes_R R_i$-generator  $X_i$ for $\ccal \otimes_R R_i$. Then $\bigoplus (p_i)_* X_i$ is a compact projective $\acal $-generator for $\ccal  $. 
  \end{proof}
 
\begin{proof}[Proof of theorem \ref{theo abelian con coefficients}]
By corollary \ref{coro properties dualizable 11} we have that $\ccal$ and $\ccal^\vee$ are Grothendieck abelian categories with exact products. By lemma \ref{lemma deal with products} it suffices to show that for every point in $\Spec(R)$ with residue field $k$ such that $\acal \otimes_R k$ is semisimple there exists an \'etale morphism $R \rightarrow R'$ such that $R' \otimes_R k \neq 0$ having  the property that $\ccal \otimes_R R'$ admits a compact projective $\acal \otimes_R R'$-generator.  Applying lemma \ref{lemma deal with completes classical} we see that $\ccal \otimes_R R_{\mathfrak{p}}^\wedge$ admits a compact projective $\acal \otimes_R  R_{\mathfrak{p}}^\wedge$-generator. It now follows from lemma \ref{lemma deal with filtered colimits classical} together with Popescu's smoothing theorem that there exists a smooth $R$-algebra $S$ with the property that $S \otimes_R k \neq 0$ and $\ccal \otimes_R S$ admits a compact projective $\acal \otimes_R S$-generator.  Pick a morphism of commutative rings $S \rightarrow R'$  such that the induced map $R \rightarrow R'$ is \'etale and $R' \otimes_R k \neq 0$. Then $R'$ has the desired property.
\end{proof}


\subsection{Fully dualizable \texorpdfstring{$(\infty,1)$}{(∞,1)}-categories}\label{subsection fully dualizable infty}

The following is our main theorem concerning fully dualizable $R$-linear categories:

\begin{theorem}\label{theo prestable con coefficients}
Let $R$ be an $E_\infty$-ring such that $\pi_0(R)$ is a G-ring, and let $\Mcal$ be a symmetric monoidal $R$-linear Grothendieck prestable category. Assume the following:
\begin{itemize}
\item $\Mcal$ is rigid and generated under colimits compact projective objects.
\item $\Mcal$ is proper over $R$.
\item The set of points $x$ in $\Spec(R)$ such that $(\Mcal \otimes_R \kappa(x))^\heartsuit$ is semisimple is dense in the Zariski topology.
\end{itemize}
Let $\ccal$ be a fully dualizable $\Mcal$-linear cocomplete category. Then there exists a faithfully flat \'etale morphism of connective $E_\infty$-rings $R \rightarrow R'$ and a smooth and proper algebra $A$ in $\Mcal \otimes_R R'$  such that $\Ccal \otimes_{R} R'$ is equivalent to $\LMod_A(\Mcal \otimes_R R')$ as an $\Mcal \otimes_R R'$-linear category.
\end{theorem}

Before going into the proof, we record a few consequences.

\begin{corollary}\label{coro etale locally trivial prestable}
Let $R$ be an $E_\infty$-ring such that $\pi_0(R)$ is a G-ring and let $\Ccal$ be an invertible $\Mod^\cn_R$-linear cocomplete category. Then there exists a faithfully flat \'etale morphism of connective $E_\infty$-rings $R \rightarrow R'$ such that $\Ccal \otimes_{R} R'$ is equivalent to $\Mod^\cn_{R'}$ as an $R'$-linear category.
\end{corollary}
\begin{proof}
By theorem \ref{theo prestable con coefficients} we may after passing to a faithfully flat \'etale $R$-algebra assume that $\ccal$ is the category of left modules over a connective Azumaya $R$-algebra $A$. The corollary now follows from the fact that connective Azumaya $R$-algebras are \'etale locally Morita equivalent to the unit (\cite{SAG} theorem 11.5.7.11).
\end{proof}

\begin{notation}
Let $\mathscr{L}: \CAlg(\Sp) \rightarrow \Spc$ be the functor from connective $E_\infty$-rings into spaces which associates to each connective $E_\infty$-ring $R$ the space of $R$-linear Grothendieck prestable categories which are \'etale locally on $\Spec(R)$ equivalent to $\Mod_R^\cn$. Then $\mathscr{L}$ is a sheaf for the \'etale topology, with a pointing given by the object $\Mod^\cn_{\SS}$ in $\mathscr{L}(\SS)$.  The resulting pointed object is equivalent to $B^2\operatorname{GL}_1$. For each $\operatorname{GL}_1$-gerbe $\mathcal{G}$ on $\Spec(R)$ we will denote by $\Mod^\cn_{R, \mathcal{G}}$ the associated twist of $\Mod^\cn_{R}$.
\end{notation}

\begin{corollary}\label{coro exists gerbe invertible prestable} 
Let $R$ be an $E_\infty$-ring such that $\pi_0(R)$ is a G-ring and let $\Ccal$ be an invertible $\Mod^\cn_R$-linear cocomplete category. Then there exists a $\operatorname{GL}_1$-gerbe $\mathcal{G}$ on $\Spec(R)$ and an $R$-linear equivalence ${\ccal = \Mod^\cn_{R, \mathcal{G}}}$
\end{corollary}
\begin{proof}
This is a direct consequence of corollary  \ref{coro etale locally trivial prestable} and the definitions.
\end{proof}

\begin{corollary}\label{coro classify fully dualizables prestable} 
Let $R$ be an  $E_\infty$-ring such that $\pi_0(R)$ is a G-ring and let $\Ccal$ be a fully dualizable $\Mod^\cn_R$-linear cocomplete category. Then there exists a finite \'etale $R$-algebra $\tilde{R}$, a $\operatorname{GL}_1$-gerbe $\mathcal{G}$ on $\Spec(\tilde{R})$ and an $R$-linear equivalence $\ccal = \Mod^\cn_{\tilde{R}, \mathcal{G}} $.
\end{corollary}
\begin{proof}
Let $\tilde{R} = \End^\enh_{\Funct_R(\ccal, \ccal)}(\id_\ccal)$ be the $R$-linear center of $\ccal$. Then $\tilde{R}$ is an $E_2$ $R$-algebra, and $\ccal$ may be equipped with a canonical $\tilde{R}$-linear structure. Since $\ccal$ is dualizable the formation of $\Funct_{R}(\ccal, \ccal)$ commutes with base change, and since $\id_\ccal$ is compact projective the formation of its endomorphisms commutes with base change as well. The fact that $\ccal$ is fully dualizable implies that $\tilde{R}$ is a dualizable $R$-module.  The assertion that $\tilde{R}$ is \'etale may then be reduced by base change to the case where $R = k$ is an algebraically closed field. In this case theorem \ref{theo prestable con coefficients} implies that $\ccal$ is the category of left modules over a finite product of copies of $k$, which has \'etale center. 

We note that since $\tilde{R}$ is $E_\infty$ then $\tilde{R}$ admits a unique enhancement to an $E_\infty$ $R$-algebra. It remains to show that $\ccal =  \smash{\Mod^\cn_{\tilde{R}, \mathcal{G}}}$ for some $\operatorname{GL}_1$-gerbe $\mathcal{G}$ on $\Spec(\tilde{R})$. By corollary \ref{coro exists gerbe invertible} it suffices to show that $\ccal$ is invertible over $\tilde{R}$. This can be checked \'etale locally by virtue of the fact that $\ccal$ is a Grothendieck prestable category (corollary \ref{coro properties dualizable infty}). By an application of theorem \ref{theo prestable con coefficients} we may reduce to the case where  $\ccal$ is the category of left modules over a smooth and proper algebra $A$ in $\Mod_R^\cn$. Applying corollary \ref{coro smooth and proper locally trivial} we may further reduce to the case when $\pi_0(A)$ is a finite product of copies of $\pi_0(R)$. Since $A$ is flat over $R$ we in fact have that $A$ is a finite product of copies of $R$. In this case the claim is clear.
\end{proof}

We devote the remainder of this section to the proof of theorem \ref{theo prestable con coefficients}.

\begin{notation}\label{notation sq zero}
Let $R$ be an $E_\infty$-ring. For each $R$-module $M$ we denote by $R \oplus M$ the corresponding split square zero extension of $R$ by $M$. Recall that if $M$ is an $R$-module then an $M[1]$-valued derivation on $R$ is a morphism of $E_\infty$-rings $\delta: R \rightarrow R \oplus M[1]$ whose composition with the projection $R \oplus M[1] \rightarrow R$ recovers the identity on $R$. Such a derivation defines a square zero extension of $R$ by $M$ which we will denote by $R \oplus^\delta M$. In other words, $R \oplus^\delta M$ is defined by the pullback
\[
\begin{tikzcd}
R \oplus^\delta M \arrow{d}{} \arrow{r}{} & R \arrow{d}{(\id, 0)} \\
R \arrow{r}{\delta} & R \oplus M[1] .
\end{tikzcd}
\]
\end{notation}

\begin{lemma}\label{lemma deal with square zero isomorphisms}
Let $R$ be a connective $E_\infty$-ring. Let $M$ be a connective $R$-module and $\delta: R \rightarrow R \oplus M[1]$ be an $M[1]$-valued derivation. Let $\Ccal$ be a $R \oplus^\delta M$-linear Grothendieck prestable category and let $X, Y$ be compact projective objects of $\Ccal$. If $X \otimes_{R \oplus^\delta M} R$ is isomorphic to $Y \otimes_{R \oplus^\delta M} R$ (as objects of $\Ccal \otimes_{R \oplus^\delta M} R)$ then $X$ is isomorphic to $Y$.
\end{lemma}
\begin{proof}
Let $\Ccal_R = \Ccal \otimes_{R \oplus^\delta M} R$ and $X_R, Y_R$ be the images of $X, Y$ in $\Ccal_R$. Let $f: X_R \rightarrow Y_R$ be an isomorphism and $g: Y_R \rightarrow X_R$ be an inverse. Let $U: \Ccal_R \rightarrow \Ccal$ be the forgetful functor and $\eta_X: X \rightarrow U(X_R)$, $\eta_Y:Y \rightarrow U(X_Y)$ the unit maps. We have that $\eta_Y: Y \rightarrow U(Y_R)$ is obtained by tensoring $Y$ with  the projection $R \oplus^\delta M \rightarrow R$ and therefore induces an epimorphism on $H_0$. The projectivity of $X$ allows us to pick a lift $\overline{f}: X \rightarrow Y$ of $U(f) \circ \eta_X$ against $\eta_Y$. Similarly, we may pick a lift $\overline{g}: Y \rightarrow X$ of $U(g) \circ \eta_Y$ against $\eta_X$. Note that the images of $\overline{f}$ and $\overline{g}$ under the map $\Ccal \rightarrow \Ccal_R$ recover $f$ and $g$, respectively. 

We claim that   $\overline{f}$ and $\overline{g}$ are inverses. By symmetry it suffices to prove that $\overline{g}$ is a left inverse to $\overline{f}$. Let $h = \overline{g} \circ \overline{f}$. Then $h: X \rightarrow X$ is a lift of the identity on $X_R$. Let $A$ be the $R \oplus^\delta M$-algebra of endomorphisms of $X$, and let $A_R = A \otimes_{R \oplus^\delta M} R$. The fact that $X$ is compact projective implies that $A_R$ is the $R$-algebra of endomorphisms of $X_R$. The endomorphism $h$ defines an element $[h]$ in $\pi_0(A)$ whose image in $\pi_0(A_R)$ is the unit. We regard $\pi_0(A)$ as a classical $\pi_0(R \oplus^\delta M)$-algebra, so that $\pi_0(A_R) = \pi_0(A)/K\pi_0(A)$ where $K$ is the kernel of the map $\pi_0(R \oplus^\delta M) \rightarrow \pi_0(R)$. Since $K$ is square zero we see that $\pi_0(A)$ is a square zero extension of $\pi_0(A_R)$, and therefore $[h]$ is invertible. This shows that $h$ is an isomorphism, as desired.
\end{proof}

\begin{lemma}\label{lemma deal with square zero previa}
Let $R$ be a connective $E_\infty$-ring. Let $M$ be a connective $R$-module and  $\delta: R \rightarrow R \oplus M[1]$ be an $M[1]$-valued derivation. Let $\Ccal$ be a separated $R \oplus^\delta M$-linear Grothendieck prestable category. Then every compact projective object of $\Ccal \otimes_{R \oplus^\delta M} R$ admits a lift to a compact projective object of $\Ccal$.
\end{lemma}
\begin{proof}
Let $\Ccal_{R} =\Ccal \otimes_{R \oplus^\delta M} R$ and $\Ccal_{R \oplus M[1]} =\Ccal \otimes_{R \oplus^\delta M} (R \oplus M[1])$. Fix a compact projective object $X$ of $\Ccal_R$. Applying \cite{SAG} proposition 16.2.2.1 to the commutative square from notation \ref{notation sq zero} we obtain a pullback square of categories
\[
\begin{tikzcd}
\Ccal \arrow{r}{} \arrow{d}{} & \Ccal_R \arrow{d}{0^*} \\
\Ccal_R \arrow{r}{\delta^*} & \Ccal_{R \oplus M[1]}.
\end{tikzcd}
\] 
We have that both $\delta^*X$ and $0^*X$ have image $X$ under extension of scalars along $R \oplus M[1] \rightarrow R$. It follows from lemma \ref{lemma deal with square zero isomorphisms} that there exists an isomorphism $\delta^*X = 0^*X$, so that we may identify both objects with a (compact projective) object of $\Ccal_{R \oplus M[1]}$ which we denote $X_{R \oplus M[1]}$. The triple $(X, X_{R \oplus M[1]}, X)$ defines an object of $\Ccal$ which we will denote $\overline{X}$. This is a lift of $X$, so to prove the lemma it will suffice to show that $\overline{X}$ is compact projective.

We first show that $\overline{X}$ is compact. Assume given a filtered diagram $Y_\alpha$ in $\Ccal$ and denote by $(Y_\alpha)_R$ and $(Y_\alpha)_{R \oplus M[1]}$ its base changes. We want to show that the  map $\colim \Hom_{\ccal}(\overline{X}, Y_\alpha) \rightarrow \Hom_{\ccal}(\overline{X}, \colim Y_\alpha)$ is an isomorphism. This follows from the compactness of $X$ and $X_{R\oplus M[1]}$, since this map is the pullback of the map  
\[
\colim \Hom_{\ccal_R}(X, (Y_\alpha)_R) \rightarrow  \Hom_{\ccal_R}(X,\colim  (Y_\alpha)_R )
\]
with itself over
\[
\colim \Hom_{\ccal_{R\oplus M[1]}}(X_{R\oplus M[1]}, (Y_\alpha)_{R\oplus M[1]}) \rightarrow  \Hom_{\ccal_{R\oplus M[1]}}(X_{R\oplus M[1]},\colim  (Y_\alpha)_{R\oplus M[1]} ).
\]

It remains to prove that $\overline{X}$ is projective. This amounts to showing that if $Y$ is an object of $\Ccal$ then $\pi_0 \Hom_{\ccal}(\overline{X}, \Sigma Y) = 0$. Let $Y_R = Y \otimes_{R \oplus^\delta M} R$ and $Y_{R \oplus M[1]} = Y \otimes_{R \oplus^\delta M} (R \oplus M[1])$. We have a pullback square
\[
\begin{tikzcd}
 \Hom_{\ccal}(\overline{X}, \Sigma Y)\arrow{r}{} \arrow{d}{} & \Hom_{\ccal_R}(X, \Sigma Y_R) \arrow{d}{0^*} \\
  \Hom_{\ccal_R}(X, \Sigma Y_R)  \arrow{r}{\delta^*} & \Hom_{\Ccal_{R \oplus M[1]}}(X_{R \oplus M[1]},\Sigma Y_{{R \oplus M[1]}}).
\end{tikzcd}
\]
The projectivity of $X$ and $X_{R \oplus M[1]}$ guarantees that 
\[
\pi_0(\Hom_{\ccal_R}(X, \Sigma Y_R)) = \pi_0( \Hom_{\Ccal_{R \oplus M[1]}}(X_{R \oplus M[1]},\Sigma Y_{{R \oplus M[1]}})) = 0.
\] We may thus reduce to showing that the right vertical map is a surjection on $\pi_1$. This is the same as showing that the map $0^*: \Hom_{\ccal_R}(X,  Y_R) \rightarrow \ \Hom_{\Ccal_{R \oplus M[1]}}(X_{R \oplus M[1]}, Y_{{R \oplus M[1]}})$ is a surjection on $\pi_0$. In other words, we have to prove that any map $X_{R \oplus M[1]} \rightarrow Y_{{R \oplus M[1]}}$ lifts to a map $ X \rightarrow Y_R$. This is the same as showing that the composite map $X \rightarrow X_{R \oplus M[1]} \rightarrow Y_{R \oplus M[1]}$ admits a factorization through $Y_R \rightarrow Y_{R \oplus M[1]}$ (where here we identify $ X_{R \oplus M[1]}$ and $ Y_{R \oplus M[1]}$ with their restriction of scalars along $(\id, 0): R \rightarrow R \oplus M[1]$).  This follows from the projectivity of $X$.
\end{proof}

\begin{lemma}\label{lemma deal with square zero para prestable}
Let $R$ be a connective $E_\infty$-ring. Let $M$ be a connective $R$-module and  $\delta: R \rightarrow R \oplus M[1]$ be an $M[1]$-valued derivation. Let $\Ccal$ be a separated $R \oplus^\delta M$-linear Grothendieck prestable category. Let $X_\alpha$ be a family of compact projective objects of $\ccal$ whose image in  $\Ccal \otimes_{R \oplus^\delta M} R$ is a generating family. Then the family $X_\alpha$ generates $\Ccal$ under colimits.
\end{lemma}
\begin{proof}
Let $\Ccal'$ be the full subcategory of $\Ccal$ generated under colimits by the family $X_\alpha$. We want to show that $\ccal' = \ccal$. Let $\ccal_R, \ccal_{R \oplus M[1]}, \ccal'_R, \ccal'_{R \oplus M[1]}$ be the base changes of $\ccal$ and $\ccal'$.  Applying \cite{SAG} proposition 16.2.2.1 to the commutative square from notation \ref{notation sq zero} we see that $\Ccal = \Ccal_R \times_{\Ccal_{R \oplus M[1]}} \Ccal_R$ and $\Ccal' = \Ccal'_R \times_{\Ccal'_{R \oplus M[1]}} \Ccal'_R$. We may thus reduce to showing that $\ccal'_R = \Ccal_R$. We note that $\ccal'_R$ is a full subcategory of $\ccal_R$ closed under colimits and containing the image of the composite functor $\ccal' \rightarrow \Ccal \rightarrow \ccal_R$. In particular, $\ccal'_R$ contains the objects $X_\alpha \otimes_{R \oplus^\delta M} R$.  Our claim now follows from the fact that this family was assumed to generate $\ccal_R$ under colimits.
\end{proof}

\begin{lemma}\label{lemma completes equiv postnikov}
Let $R$ be a connective $E_\infty$-ring and let $\Ccal$ be an $R$-linear Grothendieck prestable category. Assume that $\ccal$ is separated and that products in $\Sp(\ccal)$ are t-exact. Then the functor $p:  \Ccal \rightarrow \lim \Ccal \otimes_R \tau_{\leq n}R$ is an equivalence.
\end{lemma}
\begin{proof}
For each $n \geq 0$ let $R_n = \tau_{\leq n} R$.  We first prove that $p^R$ is fully faithful, by showing that the counit $p p^R(M_n) \rightarrow (M_n)$ is an isomorphism for all sequences $(M_n)$. This amounts to showing that the canonical map $\mu: (\lim M_n) \otimes_R R_s \rightarrow M_s$ is an isomorphism for all $s \geq 0$. This map factors, for each $t \geq s$, as a composition 
\[
(\lim M_n) \otimes_R R_s \xrightarrow{\mu_1} M_t \otimes_R R_s \xrightarrow{\mu_2} M_s
\]
where $\mu_1$ is induced from the projection $\lim M_n \rightarrow M_t$, and $\mu_2$ is induced from the $R$-linear map $M_t \rightarrow M_s$.

For each $n \geq 0$ the transition $M_{n+1} \rightarrow M_n$ is obtained by tensoring the $(n+1)$-connective map $R_{n+1} \rightarrow R_n$ with $M_{n+1}$. In particular, it is itself $(n+1)$-connective. Since products in $\Sp(\ccal)$ are t-exact we have that the map $(\lim M_n) \rightarrow M_t$ is $(t+1)$-connective and therefore $\mu_1$ is $(t+1)$-connective as well. 

The map $\mu_2$ is equivalent to the induction along $R_t \rightarrow R_s$ of the map $\mu_2': M_t \otimes_R R_t \rightarrow M_t$ induced from the identity on $M_t$. The map $\mu_2'$ is obtained by tensoring $M_t$ with the $(t+1)$-connective map $R_t \otimes_R R_t \rightarrow R_t$, and is therefore $(t+1)$-connective. It follows that $\mu_2$ is also $(t+1)$-connective. Now $\mu$ is a composition of $(t+1)$-connective maps so it is $(t+1)$-connective. Letting $t \rightarrow \infty$ we see that $\mu$ is $\infty$-connective. The fact that $\ccal$ is separated now implies that $\mu$ is an equivalence, as desired.

To prove that $p$ is an equivalence it now suffices to show that it is conservative. Since every morphism in $\Mod_R^\cn$ is the fiber of its cofiber it is enough to prove that if $X$ is an object of $\ccal$ such that $p(X) = 0$ then $X = 0$. Since $\ccal$ is separated it is enough for this to prove that $H_t(X) = 0$ for all $t \geq 0$. Arguing by induction on $t$, we may assume that $H_s(X) = 0 $ for all $s < t$. Replacing $X$ by $\Omega^t(X)$ we may reduce to the case $t = 0$. Since $p(X) = 0$ we have in particular that $X \otimes_R \pi_0(R) = 0$. The fact that $H_0(X) = 0$ now follows from the fact that the map $X \rightarrow X \otimes_R \pi_0(R)$ is $1$-connective.
\end{proof}

\begin{lemma}\label{lemma check compact projective by tensoring}
Let $R$ be a connective $E_\infty$-ring and let $\Ccal$ be an $R$-linear Grothendieck prestable category. Assume that $\ccal$ is separated and that products in $\Sp(\ccal)$ are t-exact. Let $X$ be an object of $\ccal$. Then $X$ is compact projective if and only if $X \otimes_R \tau_{\leq n} R$ is a compact projective object of $\Ccal \otimes_R \tau_{\leq n} R$ for all $n \geq 0$.
\end{lemma}
\begin{proof}
For each $n \geq 0$ set $R_n = \tau_{\leq n} R$, $\ccal_n =\ccal \otimes_R R_n$, and $X_n = X \otimes_R R_n$. The only if direction follows from the fact that the extension of scalars functors $\ccal \rightarrow \ccal_n$ admit colimit preserving right adjoints. It remains to prove the if direction.

We first show that $X$ is projective. To prove this it suffices to show that if $Y$ is an object of $\ccal$ then $\Hom_\ccal(X, \Sigma Y)$ is connected. Set $Y_n = Y \otimes_R R_n$ for all $n \geq 0$. Applying lemma \ref{lemma completes equiv postnikov} we  have $\Hom_\ccal(X, \Sigma  Y) = \lim \Hom_{\ccal_n}(X_n, \Sigma  Y_n)$. Since $X_n$ is projective for all $n$ each of the spaces $\Hom_{\ccal_n}(X_n, \Sigma  Y_n)$ is connected. We may therefore reduce to proving that the transitions $\Hom_{\ccal_{n+1}}(X_{n+1}, \Sigma  Y_{n+1}) \rightarrow \Hom_{\ccal_n}(X_n, \Sigma  Y_n)$ induce surjections on $\pi_1$. This is the same as showing that the transitions $\Hom_{\ccal_{n+1}}(X_{n+1}, Y_{n+1}) \rightarrow \Hom_{\ccal_n}(X_n,  Y_n)$ induce surjections on $\pi_0$. In other words, we have to show that every map $X_{n+1} \rightarrow Y_n$ in $\ccal_{n+1}$ factors through $Y_{n+1}$. This is a consequence of the fact that $X_{n+1}$ is projective and the morphism $Y_{n+1} \rightarrow Y_n$ is $0$-connective.

We now show that $X$ is compact. Assume given a filtered diagram $Y_\alpha$ in $\ccal$. We wish to prove that $\colim \Hom_{\ccal}(X, Y_\alpha) = \Hom_{\ccal}(X, \colim Y_\alpha)$. We will do so by proving that it induces an isomorphism on $\pi_t$ for all $t \geq 0$. Using the fact that $X$ is projective we obtain equivalences 
\[
\pi_t(\Hom_{\ccal}(X, Y_\alpha)) = \pi_t(\Hom_{\ccal}(X, \Sigma^t H_t(Y_\alpha)))
\]
 and 
 \[\pi_t(\Hom_{\ccal}(X, \colim Y_\alpha)) = \pi_t(\Hom_{\ccal}(X, \colim \Sigma^t  H_t(Y_\alpha))).
\]
We may therefore reduce to proving that we have an equivalence $\colim \Hom_\Ccal(X, \Sigma^t H_t(Y_\alpha)) = \Hom_\ccal(X, \colim \Sigma^t H_t(Y_\alpha))$. Replacing $Y_\alpha$ with $\Sigma^t H_t(Y_\alpha)$ we may now reduce to the case when $Y_\alpha$ is $t$-truncated for all $\alpha$. To prove this it suffices to show that $\tau_{\leq t}(X)$ is a compact object of $\ccal_{\leq t}$. Since the functor $\Mod_R^\cn \rightarrow \Mod_{R_t}^\cn$ induces an equivalence on $t$-truncated objects we have that the functor $\ccal \rightarrow \ccal_t$ induces an equivalence on $t$-truncated objects as well. Hence we may reduce to proving that $\tau_{\leq t}(X_t)$ is a compact object of $(\ccal_t)_{\leq t}$. This is a consequence of the fact that $X_t$ is compact in $\ccal_t$.
\end{proof}

\begin{lemma}\label{lemma deal with postnikov}
Let $R$ be a connective $E_\infty$-ring and let $\Mcal$ be a symmetric monoidal $R$-linear Grothendieck prestable category. Assume that $\Mcal$ is rigid and generated under colimits by compact projective objects. Let $\ccal$ be a separated $\Mcal$-linear Grothendieck prestable category such that products in $\Sp(\ccal)$ are t-exact. If $\ccal \otimes_R \pi_0(R)$ admits a compact projective $\Mcal\otimes_R \pi_0(R)$-generator then $\ccal$ admits a compact projective $\Mcal$-generator.
\end{lemma}
\begin{proof}
For each $n \geq 0$ set $R_n = \tau_{\leq n} R$, $\Mcal_n = \Mcal \otimes_R R_n$ and $\Ccal_n = \Ccal \otimes_R R_n$.  Fix a compact projective $\Mcal\otimes_R \pi_0(R)$-generator $X_0$ for $\ccal_0$. Applying lemma \ref{lemma deal with square zero previa} inductively we may construct a compatible sequence of compact projective objects $X_n$ in $\ccal_n$.  An inductive application of lemma \ref{lemma deal with square zero para prestable} implies that the family of objects obtained by tensoring $X_n$ with a compact projective object of $\Mcal$ generates $\ccal_n$ under colimits. It follows that $X_n$ is a compact projective $\Mcal_n$-generator of $\ccal_n$ for every $n \geq 0$.

By lemma \ref{lemma completes equiv postnikov} there exists an object $X$ in $\Ccal$ such that $X \otimes_R R_n = X_n$ for all $n$. It follows from lemma \ref{lemma check compact projective by tensoring} that $X$ is compact projective. It remains to show that $X$ is an $\Mcal$-generator for $\ccal$. Let $\ccal'$ be the smallest full subcategory of $\ccal$ containing $X$ and closed under colimits and the action of $\Mcal$. Then $\ccal'$ is a $\Mcal$-linear Grothendieck prestable category generated under colimits by compact projective objects. We wish to show that the inclusion $\ccal' \rightarrow \ccal$ is an equivalence. By lemma \ref{lemma completes equiv postnikov} it is enough to prove that $\ccal' \otimes_R R_n = \ccal_n$ for all $n \geq 0$. Note that $\ccal' \otimes_R R_n$ is a full subcategory of $\ccal_n$  closed under colimits and the action of $\Mcal_n$, and containing $X_n$. Our claim now follows from the fact that $X_n$ is a $\Mcal_n$-generator for $\ccal_n$.
\end{proof}

\begin{proof}[Proof of theorem \ref{theo prestable con coefficients}]
By corollary \ref{coro properties dualizable infty} we have that $\ccal$ is a separated Grothendieck prestable category and products in $\Sp(\ccal)$  are t-exact. Applying theorem \ref{theo abelian con coefficients} we may assume, after changing base to a faithfully flat \'etale $R$-algebra, that $\ccal^\heartsuit$ admits a compact projective $\Mcal^\heartsuit$-generator.   By proposition \ref{prop lex localization prestable} it suffices to show that $\ccal$ admits a compact projective $\Mcal$-generator. Applying lemma \ref{lemma deal with postnikov} we may reduce to the case when $R$ is $0$-truncated. It now suffices to show that $\ccal$ is the connective derived category of its heart.  For this it is enough to prove that $\ccal^\heartsuit$ generates $\ccal$ under colimits.

Let $\ccal'$ be the full subcategory of $\ccal$ generated under colimits by $\ccal^\heartsuit$, and note that $\ccal'$ inherits an $\Mcal$-linear structure from $\ccal$. Let $\delta$ be the image of $1_{\Mcal}$ under the unit map $\Mcal \rightarrow \ccal \otimes_\Mcal \ccal^\heartsuit$. The fact that $\ccal$ is fully dualizable implies that $\delta$ is compact projective and $0$-truncated. Since $\ccal \otimes_\Mcal \ccal^\heartsuit$ is generated by the objects of the form $X \otimes Y$ with $X$ in $\ccal$ and $Y$ in $\ccal^\heartsuit$, we may find finite sequences of objects $X_i$ in $\ccal$ and $Y_i$ in $\ccal^\vee$ and a morphism $f: \bigoplus (X_i \otimes Y_i) \rightarrow \delta$ which induces an epimorphism on $H_0$. The fact that $\delta$ is $0$-truncated implies that $f$ factors through $\bigoplus (H_0(X_i) \otimes Y_i)$, and since $\delta$ is projective we see that $\delta$ is a direct summand of $\bigoplus (H_0(X_i) \otimes Y_i)$. It follows in particular that $\delta$ belongs to $\ccal' \otimes_\Mcal \ccal^\vee$, which implies that the identity on $\ccal$ belongs to the image of the inclusion $\Funct_\Mcal(\ccal, \ccal') \rightarrow \Funct_\Mcal(\ccal, \ccal)$. In other words, the inclusion $\ccal' \rightarrow \ccal$ admits a section, which implies that $\ccal' = \ccal$, as desired.
\end{proof}


\subsection{Rings of definition of fully dualizable categories}\label{subsection rings of def}

Our next goal is to discuss how to extend the above results beyond the context of G-rings, under an additional compact generation hypothesis. The basic mechanism is given by the following:

\begin{proposition}\label{prop lift fully dualizables}
Let $\Mcal_\alpha$ be a filtered diagram of commutative algebras in $\Pr^L$ with compact transition functors. Assume that for every $\alpha$ the category $\Mcal_\alpha$ is generated under colimits by dualizable objects and has compact unit.  Let $\ccal$ be a fully dualizable $\Mcal$-linear cocomplete category, and assume that $\ccal$ and $\ccal^\vee$ are generated under colimits by compact objects. Then there exists an index $\alpha$, a fully dualizable $\Mcal_\alpha$-linear cocomplete category $\ccal_\alpha$, and an equivalence $\ccal_\alpha \otimes_{\Mcal_\alpha} \Mcal =\ccal$. Furthermore, if $\ccal$ is invertible then $\ccal_\alpha$ may be chosen to be invertible as well.
\end{proposition}
\begin{proof}
 Let $\Pr^L_\omega$ be the subcategory of $\Pr^L$ on the compactly generated categories and compact functors. Note that $\Pr^L_\omega$ is a compactly generated presentable category and the inclusion into $\Pr^L$ preserves colimits. Furthermore, the symmetric monoidal structure on $\Pr^L$ restricts to a symmetric monoidal structure on $\Pr^L_\omega$, which in turn restricts to a symmetric monoidal structure on its full subcategory $(\Pr^L_\omega)^\omega$ on the compact objects. 
 
 Our assumptions imply that $\Mcal_\alpha$ is a commutative algebra in $\Pr^L_\omega$ for every $\alpha$, and therefore $\Mcal$ is also a commutative algebra in $\Pr^L_\omega$. We now have $\Mod_{\Mcal}(\Pr^L_\omega) = \colim \Mod_{\Mcal_\alpha}(\Pr^L_\omega)$ which implies, after passing to compact objects, that $\Mod_{\Mcal}(\Pr^L_\omega)^\omega = \colim \Mod_{\Mcal_\alpha}(\Pr^L_\omega)^\omega$.

By proposition \ref{proposition dualizable is presentable} we have that $\ccal$ and its dual are presentable. Since $\Mcal_\alpha$ is generated under colimits by dualizable objects for all $\alpha$ the same thing holds for $\Mcal$, which implies that the action functors $\Mcal \times \ccal \rightarrow \ccal$ and $\Mcal \times \ccal^\vee \rightarrow \ccal^\vee$ preserve compact objects. It follows that $\ccal$ and $\ccal^\vee$ belong to $\Mod_{\Mcal}(\Pr^L_\omega)$. Combining this with the fact that $\ccal$ is fully dualizable in $\Mod_\Mcal(\Pr^L)$ we see that $\ccal$ is a fully dualizable object of $\Mod_{\Mcal}(\Pr^L_\omega)$. Since the unit in $\Mod_{\Mcal}(\Pr^L_\omega)$ is compact we have that $\ccal$ is a fully dualizable object of $\Mod_{\Mcal}(\Pr^L_\omega)^\omega$. The proposition now follows from the fact that $\Mod_{\Mcal}(\Pr^L_\omega)^\omega = \colim \Mod_{\Mcal_\alpha}(\Pr^L_\omega)^\omega$, since the functor that sends each symmetric monoidal $2$-category to its space of fully dualizable (resp. invertible) objects preserves filtered colimits.
\end{proof}
 
 \begin{corollary}\label{coro rings of definitions 11}
Let $\acal$ be a symmetric monoidal Grothendieck abelian category, rigid, generated by compact projective objects, and proper over $\ZZ$. Assume that for every field $k$ the Grothendieck abelian category $\acal \otimes k$ is semisimple. Let $R$ be a commutative ring and let $\ccal$ be a fully dualizable  $\Acal \otimes R$-linear cocomplete category. Assume that $\ccal$ and $\ccal^\vee$ are compactly generated. Then there exists a subalgebra $S \subseteq R$ of finite type over $\ZZ$, a fully dualizable $\acal \otimes S$-linear cocomplete category $\dcal$, and an $\acal \otimes R$-linear equivalence $\dcal \otimes_S R = \ccal$. Furthermore, if $\ccal$ is invertible then $\dcal$ may be chosen to be invertible as well.
 \end{corollary}
 \begin{proof}
 Specialize proposition \ref{prop lift fully dualizables} to the filtered diagram indexed by the poset of finite type subalgebras of $R$, that sends a subalgebra $S$ to $\acal \otimes S$.
 \end{proof}

\begin{corollary}\label{coro los coros classicos extienden}
Corollaries \ref{coro etale locally trivial},  \ref{coro exists gerbe invertible}, and \ref{coro classify fully dualizables abelian} hold over an arbitrary commutative ring, provided that $\ccal$ and $\ccal^\vee$ are compactly generated.
\end{corollary} 
\begin{proof}
Follows directly from the case $\acal = \Ab$ of corollary \ref{coro rings of definitions 11} together with the fact that finite type commutative rings are G-rings.
\end{proof}

In the same way, we have:

\begin{corollary}
Corollaries \ref{coro etale locally trivial prestable}, \ref{coro exists gerbe invertible prestable} and \ref{coro classify fully dualizables prestable} hold over an arbitrary connective $E_\infty$-ring, provided that $\ccal$ and $\ccal^\vee$ are compactly generated.
\end{corollary}

\begin{remark}
Corollary \ref{coro etale locally trivial prestable} was proven for arbitrary connective $E_\infty$-rings in \cite{SAG} theorem 11.5.7.11, under the condition that $\ccal$ and $\ccal^\vee$ are compactly generated Grothendieck prestable categories.
\end{remark}


\subsection{Invertible stable categories}\label{subsection invertible stable over R}

We finish with a classification of invertible stable categories over truncated connective $E_\infty$-rings.

\begin{theorem}\label{theorem stable over R}
Let $R$ be a truncated connective $E_\infty$-ring and let $\Ccal$ be an invertible $R$-linear cocomplete stable category. Then $\Ccal = \LMod_A(\Mod_R)$ for some Azumaya algebra $A$ in $\Mod_R$.
\end{theorem}

The remainder of this section is devoted to a proof of theorem \ref{theorem stable over R}.

\begin{lemma}\label{lemma conservative pullback}
Let 
\[
\begin{tikzcd}
R \arrow{r}{} \arrow{d}{} & R_0 \arrow{d}{} \\
R_1 \arrow{r}{} & R_{01}
\end{tikzcd}
\]
be a pullback diagram of $E_\infty$-rings. Assume given a morphism $f: \Ccal \rightarrow \Dcal$ in $\Mod_{\Mod_R}(\Pr^L)$. If the induced functors $\Ccal \otimes_{R} R_0 \rightarrow \Dcal \otimes_R R_0$ and $\Ccal \otimes_R R_1 \rightarrow \Dcal \otimes_R R_1$ are equivalences then $f$ is an equivalence.
\end{lemma}
\begin{proof}
Let $\Ccal_0 = \Ccal \otimes_R R_0$, $\Ccal_1 = \Ccal \otimes_R R_1$ and $\Ccal_{01} = \Ccal \otimes_R R_{01}$, and define $\Dcal_0, \Dcal_1, \Dcal_{01}$ similarly. We have a commutative square of categories
\[
\begin{tikzcd}
\Ccal \arrow{d}{f} \arrow{r}{} & \Ccal_0 \times_{\Ccal_{01}} \Ccal_1 \arrow{d}{} \\
\Dcal \arrow{r}{} & \Dcal_0 \times_{\Dcal_{01}} \Dcal_1
\end{tikzcd}
\]
where the right vertical arrow is an isomorphism by our hypothesis. The horizontal arrows in the above square are fully faithful by \cite{SAG} proposition 16.2.1.1, and hence $f$ is fully faithful. To finish the proof it will suffice to show that $\Dcal/\Ccal = 0$. We have $(\Dcal/\Ccal)\otimes_{R} R_0 = \Dcal_0/\Ccal_0 = 0$ and $(\Dcal/\Ccal) \otimes_R R_1 = \Dcal_1 / \Ccal_1 = 0$. Hence the projection $\Dcal/\Ccal \rightarrow 0$ induces equivalences after tensoring with $R_0$ and $R_1$. It follows that the projection $\Dcal/\Ccal \rightarrow 0$ is fully faithful, and hence it is an isomorphism, as desired.
\end{proof}

\begin{lemma}\label{lemma deal with square zero para stable}
Let $R$ be a connective $E_\infty$-ring, let $M$ be a connective $R$-module and $\delta: R \rightarrow R \oplus M[1]$ be an $M[1]$-valued derivation. Let $\Ccal$ be an $R \oplus^\delta M$-linear presentable stable category and assume given an $R$-linear equivalence $\varphi: \Ccal \otimes_{R \oplus^\delta M} R  = \Mod_R$. Then there is an $R \oplus^\delta M$-linear equivalence $\Ccal = \Mod_{R \oplus^\delta M}$ which recovers $\varphi$ after tensoring with $R$.
\end{lemma}
\begin{proof}
We have a commutative diagram of categories with invertible vertical arrows
\[
\begin{tikzcd}[column sep = small]
\Ccal \otimes_{R \oplus^\delta M } R \arrow{r}{}  \arrow{d}{\varphi} & \Ccal \otimes_{R \oplus^\delta M } R \otimes_R ( R \oplus M[1] ) \arrow{r}{=} \arrow{d}{} &  \arrow{d}{} \Ccal \otimes_{R \oplus^\delta M } R \otimes_R ( R \oplus M[1] ) & \arrow{l}{} \Ccal \otimes_{R \oplus^\delta M} R  \arrow{d}{\varphi}\\
\Mod_R \arrow{r}{} & \Mod_{R \oplus M[1]} \arrow{r}{\psi} & \Mod_{R \oplus M[1]} & \arrow{l}{} \Mod_R.
\end{tikzcd}
\]
where the leftmost horizontal arrows are given by induction along $\delta$, the rightmost horizontal arrows are given by induction along $(\id,0)$, and the top middle horizontal arrow is induced from the commutativity of the square in notation \ref{notation sq zero}. The equivalence $\psi$ is $R \oplus M[1]$-linear and is therefore given by tensoring with an invertible $R \oplus M[1]$-module $L$. Since $\delta$ is an isomorphism on $\pi_0$, there exists an invertible $R$-module $L'$ whose extension of scalars along $\delta$ recovers $L$. We may now extend the above commutative diagram as follows:
\[
\begin{tikzcd}[column sep = small]
\Ccal \otimes_{R \oplus^\delta M } R \arrow{r}{}  \arrow{d}{\varphi} & \Ccal \otimes_{R \oplus^\delta M } R \otimes_R ( R \oplus M[1] ) \arrow{r}{=} \arrow{d}{} &  \arrow{d}{} \Ccal \otimes_{R \oplus^\delta M } R \otimes_R ( R \oplus M[1] ) & \arrow{l}{} \Ccal \otimes_{R \oplus^\delta M} R  \arrow{d}{\varphi}\\
\Mod_R \arrow{r}{} \arrow{d}{-\otimes_R L'} & \Mod_{R \oplus M[1]} \arrow{r}{\psi} \arrow{d}{- \otimes_{R \oplus M[1]} L} & \Mod_{R \oplus M[1]} \arrow{d}{\id} & \arrow{l}{} \Mod_R \arrow{d}{\id} \\
\Mod_R \arrow{r}{} & \Mod_{R \oplus M[1]} \arrow{r}{\id} & \Mod_{R \oplus M[1]} & \arrow{l}{} \Mod_R 
\end{tikzcd}
\]
Passing to pullbacks of the first and third rows we obtain an $R \oplus^\delta M$-linear equivalence
\[
\xi:( \Ccal \otimes_{R \oplus^\delta M} R) \times_{\Ccal \otimes_{R \oplus^\delta M} (R \oplus M[1])} ( \Ccal \otimes_{R \oplus^\delta M} R)  = \Mod_R \times_{\Mod_{R \oplus M[1]}} \Mod_R.
\]
Denote by 
\[
\iota: \Mod_{R \oplus^\delta M} \rightarrow \Mod_R \times_{\Mod_{R \oplus M[1]}} \Mod_R
\]
 and 
 \[
 \iota_{\Ccal}: \Ccal \rightarrow ( \Ccal \otimes_{R \oplus^\delta M} R) \times_{\Ccal \otimes_{R \oplus^\delta M} (R \oplus M[1])} ( \Ccal \otimes_{R \oplus^\delta M} R) 
 \]
 the canonical functors. By \cite{SAG} proposition 16.2.1.1 both $\iota$ and $\iota_\Ccal$ are fully faithful.
 
 We claim that $\xi \circ \iota_\Ccal$ factors through $\iota$. Let $X$ be an object of $\Ccal$. We wish to show that $\xi \iota_\Ccal(X)$ belongs to the image of $\iota$. We have 
 \[
 X = (X \otimes_{R \oplus^\delta M} R) \times_{X \otimes_{R \oplus^\delta M} (R \oplus M[1])} (X \otimes_{R \oplus^\delta M} R)
 \]
 so it suffices to prove that $\xi \iota_\Ccal( X \otimes_{R \oplus^\delta M} R)$ and $\xi \iota_\Ccal(X \otimes_{R \oplus^\delta M} (R \oplus M[1]))$ belong to the image of $\iota$. In other words, we may reduce to the case when $X$ is obtained by restriction of scalars along $R \oplus^\delta M \rightarrow R$.  Since $\Ccal \otimes_{R \oplus^\delta M} R$ is equivalent to $\Mod_R$, which is generated under colimits and shifts by $R$, we may further reduce to the case when $X$ is given by restriction of scalars of $\varphi^{-1}(R)$. In this case the image of $X$ in $\Ccal \otimes_{R \oplus^\delta M} R$ is given by $\varphi^{-1}(R \otimes_{R \oplus^\delta M} R)$. Hence $\xi \iota_\Ccal(X)$ is an object of $\Mod_R \times_{\Mod_{R \oplus M[1]}} \Mod_R$ whose coordinates are given by 
 \[
 (R \otimes_{R \oplus^\delta M} R \otimes_R L', R \otimes_{R \oplus^\delta M} (R\oplus M[1]), R \otimes_{R \oplus^\delta M} R).
\]
It follows that $\xi \iota_\Ccal(X)$ is a shift of an object in $\Mod_R^\cn \times_{\Mod^\cn_{R \oplus M[1]}} \Mod_R^\cn$. The fact that it belongs to the image of $\iota$ now follows from \cite{SAG} theorem 16.2.0.2.

We now have a well defined $R \oplus^\delta M$-linear functor $f: \Ccal \rightarrow \Mod_{R \oplus^\delta M}$ with the property that it recovers the equivalence $\varphi$ after tensoring with $R$. The proof concludes by an application of lemma \ref{lemma conservative pullback}.
\end{proof}

\begin{lemma}\label{lemma deal with filtered colimits stable}
Let $R_\alpha$ be a filtered diagram of commutative rings with colimit $R$. Assume given an index $\alpha_0$ and a smooth $R_{\alpha_0}$-linear presentable stable category $\ccal_{\alpha_0}$. If $\ccal_{\alpha_0} \otimes_{R_{\alpha_0}} R$ has a compact generator then there exists a transition $\alpha_0 \rightarrow \alpha$ such that $\ccal_{\alpha_0} \otimes_{R_{\alpha_0}} R_\alpha$ has a compact generator.
\end{lemma}
\begin{proof}
Analogous to the proof of lemma \ref{lemma deal with filtered colimits classical}.
\end{proof}

\begin{lemma}\label{lemma postnikov compact generator}
Let $R$ be a truncated connective $E_\infty$-ring and let $\ccal$ be an invertible $R$-linear presentable stable category. Assume that $\ccal \otimes_R \pi_0(R)$ admits a compact generator. Then $\ccal$ admits a compact generator.
\end{lemma}
\begin{proof}
By \cite{AGBrauer} theorem 5.11 we may find a faithfully flat \'etale $\pi_0(R)$ algebra $S_0$ such that $\ccal \otimes_{R} S_0$ is equivalent to $\Mod_{S_0}$ as an $S_0$-linear category. Let $S$ be a faithfully flat \'etale $R$-algebra such that $S \otimes_R \pi_0(R) = S_0$. We have $\pi_0(S) = S_0$, and hence $(\ccal \otimes_R S) \otimes_S \pi_0(S)$ is equivalent to $\Mod_{\pi_0(S)}$ as a $\pi_0(S)$-linear category. An inductive application of lemma \ref{lemma deal with square zero para stable} shows that $\ccal \otimes_R S$ is equivalent to $\Mod_S$. The lemma now follows from \cite{AGBrauer} theorem 6.16. 
\end{proof}

\begin{notation}
Let $R$ be a commutative ring and let $\ccal$ be an $R$-linear presentable stable category. Let $x$ be an element of $R$. We denote by $\ccal_{x\normalfont{\text{-nil}}}$ the kernel of the extension of scalars functor $\ccal \rightarrow \ccal \otimes_R R[x^{-1}]$.
\end{notation}

\begin{lemma}\label{lemma compact generator on completion}
Let $R$ be a commutative ring and $\ccal$ be an invertible $R$-linear presentable stable category. Let $x$ be an element of $R$ and assume that $\ccal \otimes_R R/Rx$ admits a compact generator. Then $\ccal_{x\normalfont{\text{-nil}}}$ admits a compact generator.
\end{lemma}
\begin{proof}
By \cite{AGBrauer} theorem 5.11 we may find a faithfully flat \'etale $R/Rx$ algebra $S_0$ such that $\ccal \otimes_R S_0$ is equivalent to $\Mod_{S_0}$ as an $S_0$-linear category. Let $S$ be a faithfully flat \'etale $R$-algebra such that $S \otimes_R R/Rx = S_0$. By \cite{AGBrauer} theorem 6.16. it suffices to show that $\ccal_{x\normalfont{\text{-nil}}} \otimes_R S$ admits a compact generator.  This category is equivalent to $(\ccal \otimes_R S)_{y\normalfont{\text{-nil}}}$ where $y$ is the image of $x$ in $S$. Replacing $R$ by $S$ we may now assume that $\ccal \otimes_R R/Rx$ is equivalent to $\Mod_{R/Rx}$ as an $R/Rx$-linear category.

An iterated application of lemma \ref{lemma deal with square zero para stable} identifies the inverse system of categories
\[
\ccal \otimes_R R/Rx \leftarrow \ccal \otimes_R R/Rx^2 \leftarrow \ccal \otimes_R R/Rx^3 \leftarrow \ldots
\]
with the inverse system
\[
\Mod_{R/Rx} \leftarrow \Mod_{R/Rx^2} \leftarrow \Mod_{R/Rx^3} \leftarrow \ldots.
\]
Passing to limits, we obtain an equivalence
\[
\ccal_{x\normalfont{\text{-nil}}} = (\Mod_R)_{x\normalfont{\text{-nil}}}
\]
and the lemma now follows from the fact that the right hand side admits a compact generator (namely, the cofiber of $x: R \rightarrow R$).
\end{proof}

\begin{lemma}\label{lemma handle extensions}
Let $R$ be a commutative ring and $\ccal$ be an invertible $R$-linear presentable stable category. Let $x$ be an element of $R$ and assume that both $\ccal \otimes_R R/Rx$ and $\ccal \otimes_R R[x^{-1}]$ admit a compact generator. Then $\ccal$ admits a compact generator. 
\end{lemma}
\begin{proof}
We have a short exact sequence of stable categories
\[
0 \rightarrow \ccal_{x\text{-nil}} \rightarrow \ccal \rightarrow \ccal \otimes_R R[x^{-1}] \rightarrow 0
\]
where the functors admit colimit preserving right adjoints. The third category is assumed to be compactly generated, while the first category is compactly generated by lemma \ref{lemma compact generator on completion}. By \cite{Efimov} proposition 3.3 we have that $\ccal$ is compactly generated. The fact that $\ccal$ admits a compact generator now follows from \cite{AGBrauer} lemma 3.9.
\end{proof}

\begin{proof}[Proof of theorem \ref{theorem stable over R}]
By proposition \ref{proposition dualizable is presentable} we see that $\Ccal$ and $\Ccal^\vee$ are presentable. Our goal is to show that $\ccal$ admits a compact generator. By lemma \ref{lemma postnikov compact generator} we may reduce to the case when $R$ is $0$-truncated.

 Assume for the sake of contradiction that $\ccal$ does not admit a compact generator. Consider the poset $P$ consisting of those ideals $I$ of $R$ with the property that $\ccal \otimes_R R/I$ does not admit a compact generator (where we order the ideals by inclusion). It follows from lemma \ref{lemma deal with filtered colimits stable} that $P$ is closed under filtered colimits inside the poset of all ideals of $R$. Since $P$ is nonempty (as it contains $0$) we deduce that $P$ has a maximal element $I$. Replacing $R$ by $R/I$ we may reduce to the case when $I = 0$.  In other words, we may assume that  $\ccal \otimes_R R/J$ admits a compact generator for every nonzero ideal $J$. 

We claim that $R$ is reduced. Let $x$ be an element of $R$ such that $x^2 = 0$. An application of lemma \ref{lemma handle extensions} shows that $\ccal \otimes_{R} R/{Rx}$ does not admit a compact object. It follows that $Rx = 0$, so that $x = 0$ and $R$ is reduced, as claimed.

If $R$ is the zero ring the desired assertion is clear, so suppose now that $R$ is nonzero. We claim that $R$ is an integral domain. Let $x$ be a nonzero element of $R$, and suppose given another element $y$ such that $xy = 0$. By lemma \ref{lemma handle extensions} we see that $\ccal \otimes_R R[x^{-1}]$ does not admit a compact generator. Since the map $R \rightarrow R[x^{-1}]$ factors through $R/Ry$ we deduce that $\ccal \otimes_R R/Ry$ does not admit a compact generator. Hence $y = 0$, so that $R$ is an integral domain, as desired.

Let $F$ be the fraction field of $R$. By theorem \ref{theorem stable} we have that $\ccal \otimes_R F$ admits a compact generator. Applying lemma \ref{lemma deal with filtered colimits stable} we deduce the existence of a nonzero element $x$ of $R$ such that $\ccal \otimes_R R[x^{-1}]$ admits a compact generator. An application of lemma \ref{lemma handle extensions} shows that $\ccal$ admits a compact generator, which is a contradiction.
\end{proof}


\ifx\inmain\undefined
\bibliographystyle{myamsalpha2}
\bibliography{References}
\fi


\bibliographystyle{myamsalpha2}
\bibliography{References}

\end{document}